\title{Determinant map for the prestack of Tate objects}
\author{Aron Heleodoro}
\date{\today}
\dedicatory{In memory of Tom Nevins.}
\begin{document}

\maketitle

% \begin{center}
%     \textit{In memory of Tom Nevins.}
% \end{center}

\begin{abstract}
    We construct a map from the prestack of Tate objects over a commutative ring $k$ to the stack of $\Gm$-gerbes. The result is obtained by combining the determinant map from the stack of perfect complexes as proposed by Sch\"urg-To\"en-Vezzosi with a relative $S_{\bullet}$-construction for Tate objects as studied by Braunling-Groechenig-Wolfson. Along the way we prove a result about the K-theory of vector bundles over a connective $\Einf$-ring spectrum which is possibly of independent interest.
\end{abstract}

% \paragraph{Key Words.} Higher loop groups, $S_{\bullet}$-construction, Tate Objects, Weight Structures.

% \paragraph{Mathematics Subject Classification 2010.}
%     \href{http://www.ams.org/mathscinet/msc/msc2010.html?t=14Fxx&btn=Current}{14F99}
%     --
%     \href{https://mathscinet.ams.org/mathscinet/msc/msc2010.html?t=18Gxx&btn=Current}{18G99}
%     --
%     \href{https://mathscinet.ams.org/mathscinet/msc/msc2010.html?t=22Exx&btn=Current}{22E67}.

\tableofcontents

\section*{Introduction}

Let $k$ be a field of characteristic $0$ and consider $\GL_{\infty}$ the group of continuous automorphisms of $k((t))$\footnote{Equivalently, automorphisms of $k((t))$ as a Tate object.}. It is a standard fact (see \cite{Faltings}, \cite{Tits} and \cite{FZ}*{\S 1}) that one has a canonical central extension
\begin{equation}
\label{eq:one-dimensional-central-extension}
1 \ra \Gm \ra \widehat{\GL}_{\infty} \ra \GL_{\infty} \ra 1.    
\end{equation}

In \cite{FZ} (see also \cite{Osipov-Zhu} for the case when $k$ is any commutative ring) the authors construct an analogue of this central extension for $\GL^{(2)}_{\infty}$, i.e.\ the group of automorphisms of $k((t_1))((t_2))$ in some appropriate sense (see \cite{FZ}*{\S 3.1.2} for a discussion of the topology used), equivalently as a $2$-Tate object over $k$. Namely, they construct a central extension of $\GL^{(2)}_{\infty}$ by $\BGm$, the Picard groupoid over a point.

This paper originated from two motivations: 
\begin{enumerate}[1)]
    \item trying to understand the analogue of \cite{FZ} for higher iterations, that is construct central extensions of the group of automorphisms of $k((t_1))\cdots((t_n))$;
    \item have a construction of the group central extension that also gives the Lie algebra central extension, namely a central extension of $\mathfrak{gl}^{(n)}_{\infty}$, the endomorphism Lie algebra of $k((t_1))\cdots((t_n))$.
\end{enumerate}

To accomplish the first goal, we need to work in a much more general context than \cite{FZ}. The first generalization is that our algebro-geometric objects are higher stacks, i.e.\ functors of points that take values in spaces rather than sets. This step is clearly necessary once one is convinced that the central extensions should be by higher deloopings of $\Gm$, i.e.\ $\K(n-1,\Gm)$. The second generalization is that we need to consider derived stacks, or rather prestacks. One reason is that it is not clear how to make sense of a geometric object classifying higher Tate objects over a scheme without considering perfect complexes. The second, more serious, reason is that to carry out our strategy to achieve the second goal, we need to make use of the theory of parametrized formal moduli problems, as developed in \cites{GR-I,GR-II} and, as explained in their introduction, derived geometry is essential for that.

The main result of this paper is the construction of a determinant map at the level of prestacks.

\begin{thmannounce}[Corollary \ref{cor:higher-determinant-for-prestacks}]
\label{thmannounce:determinant-map}
For $k$ a commutative ring, one has a map of prestacks over $k$
\[\sD: \sTate \ra \sBPicgr,\]
where $\sTate$ is the prestack of Tate objects and $\sBPicgr$ is the prestack classifing graded $\Gm$-gerbes.

More generally, for any $n \geq 1$ one has a map of prestacks
\[\sD^{(n)}: \sTate^{(n)} \ra \sBPicgrn{n}.\]
\end{thmannounce}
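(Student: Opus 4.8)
The plan is to construct $\sD$ by transporting the Sch\"urg-To\"en-Vezzosi determinant through the relative $S_{\bullet}$-construction of Braunling-Groechenig-Wolfson, exploiting that this construction simultaneously realizes $\sTate$ as a categorical delooping of the prestack of vector bundles and realizes $\sBPicgr$ as the bar construction $B\sPicgr$ on the Picard prestack.

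First I would isolate the feature of the determinant that makes it transportable. Since the determinant of a perfect complex is multiplicative in cofiber sequences, up to addition of the $\bZ$-gradings, the map $\sDet\colon\sPerf\to\sPicgr$ is an additive invariant: it promotes to a morphism of connective spectrum-valued prestacks $\K(\sPerf)\to\mathbf{pic}^{\gr}$, where $\mathbf{pic}^{\gr}$ is the graded Picard spectrum prestack --- a grouplike $\Einf$-monoid with homotopy concentrated in degrees $0$ and $1$ (the grading and the invertible sheaf) and $\Omega^{\infty}\mathbf{pic}^{\gr}=\sPicgr$. This is where the auxiliary $K$-theory result is used: it identifies $\K(\sPerf_{R})\simeq\K(\sVect_{R})$ for $R$ a connective $\Einf$-ring, so that the determinant --- defined by Sch\"urg-To\"en-Vezzosi on perfect complexes over derived schemes --- may equally be regarded as an additive invariant of the idempotent-complete exact category of vector bundles, which is the input that the $S_{\bullet}$-construction and the Tate formalism actually consume, and nothing is lost in passing to the derived setting.

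Next I would feed this into the delooping. Applying the relative $S_{\bullet}$-construction termwise to $\sDet$ gives a map of simplicial prestacks, and passing to geometric realizations, the target gives $|S_{\bullet}\sPicgr|\simeq B\sPicgr=\sBPicgr$ because $\sPicgr$ is grouplike, while on the source the Braunling-Groechenig-Wolfson delooping --- the natural equivalence $\K(\oTate(\mathcal{C}))\simeq\Sigma\K(\mathcal{C})$ for idempotent-complete exact $\infty$-categories, applied to $\mathcal{C}=\sVect$ --- identifies the realization with the $K$-theory prestack of Tate objects. Composing the canonical map $\sTate\to\Omega^{\infty}\K(\sTate)$ with $\Omega^{\infty}$ of the suspended determinant yields
\[
\sD\colon\ \sTate\longrightarrow\Omega^{\infty}\K(\sTate)\;\simeq\;\Omega^{\infty}\Sigma\,\K(\sVect)\xrightarrow{\,\Omega^{\infty}\Sigma\sDet\,}\Omega^{\infty}\Sigma\,\mathbf{pic}^{\gr}\;\simeq\;B\sPicgr\,=\,\sBPicgr ,
\]
natural in the test ring; concretely it assigns to a Tate object its graded $\Gm$-gerbe of determinant theories, which on automorphism groups recovers the classical relative-determinant central extension.

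For the iterated statement the same mechanism runs with the $S_{\bullet}$-construction --- equivalently suspension of connective spectra --- applied $n$ times: $\sTate^{(n)}$ corresponds to the $n$-fold iteration of the Tate construction on $\sVect$, with $\K(\sTate^{(n)})\simeq\Sigma^{n}\K(\sVect)$, while $\sBPicgrn{n}=B^{(n)}\sPicgr=\Omega^{\infty}\Sigma^{n}\mathbf{pic}^{\gr}$, so composing $\sTate^{(n)}\to\Omega^{\infty}\K(\sTate^{(n)})$ with $\Omega^{\infty}\Sigma^{n}\sDet$ gives $\sD^{(n)}$, and $\sD^{(1)}=\sD$. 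The step I expect to be the genuine obstacle is making the Braunling-Groechenig-Wolfson delooping precise and functorial \emph{at the level of prestacks}: promoting an equivalence proven for exact categories to a natural equivalence of spectrum-valued functors, matching the hands-on functor of points of $\sTate$ (lattices and quotients inside an $\Ind$-$\Pro$ object) with the abstract realization, and checking that this identification carries enough coherence to be iterated $n$ times --- which forces one to work with the $\Einf$-monoidal, not merely $\bE_{1}$-monoidal, structures so that honest $n$-fold deloopings exist. The $K$-theory-of-vector-bundles lemma is exactly what legitimizes the reduction from perfect complexes to vector bundles and so bridges the derived-stack determinant with the exact-categorical input the remainder of the argument requires.
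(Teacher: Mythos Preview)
Your architecture matches the paper's: obtain a determinant $\K(\Perf(R)) \to \sPicgr(R)$, use the relative $S_\bullet$-construction for a map $\K(\Tate(\Perf(R))) \to B\K(\Perf(R))$, compose, precompose with $\imath$, and iterate. The displayed composite you write is essentially the paper's construction in Corollary~\ref{cor:higher-determinant-for-prestacks}.

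However, you have misplaced the obstacle, and this leaves a genuine gap. You take the Sch\"urg--To\"en--Vezzosi determinant as given input, but their construction is only carried out over a field of characteristic zero, whereas Theorem~\ref{thmannounce:determinant-map} is stated for an arbitrary commutative ring $k$. Constructing $\K(\Perf(R)) \to \sPicgr(R)$ over general $k$ is the actual content of Section~\ref{sec:determinant} and is where both auxiliary theorems are spent: Theorem~\ref{thmannounce:K-theory} reduces $\K(\Perf(R))$ to $\K(\Vect(R))$ and then (via split cofibrations) to additive K-theory, so that the problem becomes producing a map of simplicial spaces $\B_\bullet\sVect(R) \to \B_\bullet\sPicgr(R)$; Theorem~\ref{thmannounce:Vect-classical} (that $\sVect$ is smooth-extended) then allows this map---defined classically from the top exterior power on ordinary finitely generated projective modules---to be left Kan extended to all simplicial commutative $k$-algebras. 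Your proposal omits this entirely. By contrast, the step you flag as the ``genuine obstacle''---making the relative $S_\bullet$-construction functorial at the prestack level---is handled by a direct $\infty$-categorical transcription of Braunling--Groechenig--Wolfson (Section~\ref{subsec:relative-S-construction}) and is not where the work lies; the paper only needs it as a \emph{map} $\K(\Tate(\sC)) \to B\K(\sC)$, never as an equivalence, so your invocation of $\K(\Tate(\sC))\simeq\Sigma\K(\sC)$ is stronger than required.

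A related correction: the paper applies the Tate construction to $\Perf(R)$, not to $\Vect(R)$, so Theorem~\ref{thmannounce:K-theory} is not used to ``feed the Tate formalism'' as you suggest. Its role is entirely internal to the construction of the determinant on $\K(\Perf(R))$.
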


The proof of this result rests in two auxiliary theorems which can be of independent interest. They hold in different regimes, Theorem \ref{thmannounce:K-theory} holds for arbitrary connective $\Einf$-algebras, whereas Theorem \ref{thmannounce:Vect-classical} only holds for simplicial commutative rings over an arbitrary commutative ring $k$. 

The first result is of a K-theoretic nature. It is a variant on the theorem of the heart (cf. \cite{Barwick-heart}) for weight structures.

\begin{thmannounce}[Corollary \ref{cor:K-theory-Perf}]
\label{thmannounce:K-theory}
Let $R$ be a connective $\Einf$-ring. The canonical inclusion $\Vect(R) \ra \Perf(R)$, of the subcategory generated by retracts of finite direct sums of $R$ induces an equivalence
\[\K(\Vect(R)) \overset{\simeq}{\ra} \K(\Perf(R))\]
of connective $K$-theory spectra.
\end{thmannounce}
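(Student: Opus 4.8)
The plan is to deduce the statement from the theorem of the heart for \emph{weight} structures, the weight-theoretic counterpart of Barwick's theorem of the heart (cf.\ \cite{Barwick-heart}): if a stable $\infty$-category $\sC$ carries a bounded weight structure, then the tautological inclusion of its heart $\sC^{\heartsuit}$, regarded as an additive (split-exact) $\infty$-category, induces an equivalence $\K(\sC^{\heartsuit}) \xrightarrow{\simeq} \K(\sC)$ on connective $K$-theory. Granting this, everything reduces to producing a bounded weight structure on $\Perf(R)$ whose heart is exactly $\Vect(R)$.

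To construct the weight structure I would invoke Bondarko's generation criterion (in its stable $\infty$-categorical form): a full additive subcategory $\sB \subseteq \sC$ that is idempotent-complete, \emph{negative} (i.e.\ $\Map_{\sC}(B,B')$ is connective for all $B,B' \in \sB$), and generates $\sC$ as a thick subcategory is the heart of a unique bounded weight structure on $\sC$. I apply this with $\sB = \Vect(R) \subseteq \Perf(R) = \Mod_R^{\omega}$. The thick subcategory generated by $\Vect(R)$ is all of $\Perf(R)$ because it already contains $R$; and $\Vect(R)$ is closed under retracts inside the idempotent-complete category $\Mod_R$, hence is itself idempotent-complete. The single place where the hypothesis on $R$ enters is negativity: writing $P \in \Vect(R)$ as a retract of $R^{\oplus m}$ exhibits $\Map_{\Mod_R}(P,Q)$ as a retract of $\Map_{\Mod_R}(R^{\oplus m},Q)\simeq Q^{\oplus m}$, which is connective precisely because $R$, and hence $Q$, is connective; equivalently $\Ext^{i}_R(P,Q)=0$ for $i>0$. (The same computation shows every cofiber sequence in $\Perf(R)$ with all three terms in $\Vect(R)$ splits, so the additive structure appearing on the heart is the expected one.)

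Boundedness of the resulting weight structure is then formal: the full subcategory of $\Perf(R)$ on objects of finite weight amplitude is closed under shifts, retracts, and extensions and contains the heart, hence contains $R$, hence is a thick subcategory containing $R$ and so is all of $\Perf(R)$. Concretely this encodes the fact that over a connective ring every perfect module admits a finite filtration whose associated graded pieces are finite free modules placed in various degrees, obtained by a standard successive-approximation (minimal cell) construction that terminates because $M$ is compact. Applying the theorem of the heart to this weight structure, and noting that the equivalence it produces is the one induced by $\Vect(R)\hra\Perf(R)$, gives $\K(\Vect(R))\xrightarrow{\simeq}\K(\Perf(R))$; both sides are connective --- the source as the additive $K$-theory of an additive $\infty$-category, the target as the Waldhausen $K$-theory of a stable $\infty$-category --- so this is an equivalence of connective spectra, as asserted.

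The step I expect to be the genuine obstacle is having available --- or else reproving --- a sufficiently robust $\infty$-categorical theorem of the heart for weight structures; if it is not taken as a black box, one would adapt the d\'evissage and additivity arguments underlying \cite{Barwick-heart}, replacing the Postnikov filtration by the weight filtration. A secondary subtlety is checking that the heart furnished by the generation criterion is \emph{exactly} $\Vect(R)$ and nothing larger, which is where connectivity of $R$ together with idempotent-completeness of $\Vect(R)$ are both essential; the connectivity hypothesis cannot be dropped, since $\Vect(R)\hra\Perf(R)$ is far from a $K$-theory equivalence for non-connective $R$.
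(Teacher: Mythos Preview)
Your proposal is correct and follows essentially the same strategy as the paper: both arguments reduce the statement to the theorem of the heart for weight structures (Theorem~\ref{thm:K-theory-heart-of-weight}), applied to a bounded weight structure on $\Perf(R)$ whose heart is $\Vect(R)$.

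The one point of divergence is how the weight structure is produced. You invoke Bondarko's generation criterion, reducing everything to the single check that $\Vect(R)$ is negative (i.e.\ mapping spaces between objects of $\Vect(R)$ are connective), which you dispatch cleanly via the retract-of-a-free-module argument. The paper instead writes down the weight structure explicitly: $\Perf(R)_{w \geq 0}$ is the connective perfect modules and $\Perf(R)_{w \leq 0}$ is the perfect modules of projective amplitude $\leq 0$ (Definition~\ref{defn:projective-amplitude}); it then verifies the weight-structure axioms by hand (Lemma~\ref{lem:weight-structure-on-Perf}), identifies the heart as $\Vect(R)$ via a direct characterization (Lemma~\ref{lem:heart-is-projective}), and checks non-degeneracy and boundedness separately (Proposition~\ref{prop:non-degenerated-and-bounded-weight-structure}). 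Your route is slicker and packages more into a black box; the paper's route has the advantage of making the aisle and coaisle concretely visible, which is occasionally useful elsewhere. Your closing remark that the theorem of the heart for weight structures is the genuine input is exactly right: the paper supplies a proof of it in the appendix (\S\ref{subsec:weight-appendix}), adapting Waldhausen's cell-decomposition argument.
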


While this paper was being written Theorem \ref{thmannounce:K-theory} appeared as the main result of \cite{Fontes}. We notice that a closely analogous statement was already present in \cite{Sosnilo}*{Corollary 4.1} and the classical reference \cite{EKMM} has a form of this statement for categories of modules over an $\mbox{E}_1$-algebra.

% The proof of Theorem \ref{thmannounce:K-theory} uses the notion of weight structures and a version of Barwick's theorem of the heart for them. 

The second result is a statement about when a prestack is smooth-extended, i.e.\ it is the left Kan extension of its restriction to the category of smooth affine schemes.

\begin{thmannounce}[Theorem \ref{thm:sVect-is-smooth-extended}]
\label{thmannounce:Vect-classical}
Let $k$ be a commutative ring. The derived stack $\sVect$ over $k$ is smooth-extended, i.e.\ one has an equivalence of stacks
\[
\LLKEsm(\smooth{\sVect}) \ra \sVect,
\]
between $\sVect$ and the left Kan extension of the restriction of $\sVect$ to smooth (classical) affine schemes over $k$ followed by sheafification.
\end{thmannounce}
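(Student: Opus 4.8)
The plan is to unwind the definition and reduce the statement to two facts: that affine schemes smooth over $k$ are trivially smooth-extended, and that $\sVect$ is assembled out of such schemes along its smooth atlas in a way compatible with $\LLKEsm$.

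First, the unwinding. Write $j$ for the inclusion of the category of smooth classical affine $k$-schemes into $\Schaff$; we must show the counit $\LLKEsm(j^{*}\sVect)\to\sVect$ is an equivalence. Since $\LLKEsm=L\circ\LKEsm$ with $L$ the stackification and $\sVect$ is already a stack, this amounts to showing that the map of prestacks $\LKEsm(j^{*}\sVect)\to\sVect$ is a local equivalence for the topology — that is, locally surjective on connected components and exhibiting $\sVect$ as the stackification of its source. Concretely $\LKEsm(j^{*}\sVect)(\Spec A)=\colim_{B\to A}\sVect(B)$, the colimit running over the category $\mathcal J_{A}$ of smooth classical $k$-algebras equipped with a $k$-algebra map to $A$. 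I would first record that $\mathcal J_{A}$ is sifted: the diagonal $\mathcal J_{A}\to\mathcal J_{A}\times\mathcal J_{A}$ is a right adjoint, with left adjoint $(B_{1},B_{2})\mapsto B_{1}\otimes_{k}B_{2}$ — again a smooth classical $k$-algebra with an induced map to $A$ — hence cofinal, so that $\mathcal J_{A}$-colimits commute with finite products.

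Second, the base case and the assembly. If $X$ is affine and smooth over $k$, then $h_{X}$ is a sheaf, and since $X$ lies in the source of $j$ the formula for left Kan extension along a fully faithful inclusion gives $\LKEsm(j^{*}h_{X})\simeq h_{X}$; thus $X$ is smooth-extended, as is every $k$-scheme smooth over $k$, by Zariski descent. Now $\sVect$ carries the smooth atlas $\coprod_{n\ge 0}\Spec k\to\sVect$ — the trivial bundles of all ranks — whose \v{C}ech nerve is a simplicial diagram of smooth $k$-schemes (disjoint unions of finite products of the $\GL_{n,k}$), exhibiting $\sVect$ as a geometric realization, formed in stacks, of smooth $k$-schemes built out of the smooth affine groups $\GL_{n,k}$ by bar constructions. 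Local surjectivity on $\pi_{0}$ is then immediate: a vector bundle on $\Spec A$ is Zariski-locally free, hence Zariski-locally pulled back from $\Spec k$ — and this is the one place where stackification is genuinely used. The remaining, higher content is that the relevant identifications are already visible before stackifying: if $B,B'$ are smooth affine over $k$ with maps to $A$ and $W$ on $B$, $W'$ on $B'$ both restrict to a bundle $V$ on $\Spec A$, then $\mathrm{Isom}_{B\otimes_{k}B'}(W\boxtimes\mathcal{O},\,\mathcal{O}\boxtimes W')$ is representable by a smooth affine $k$-scheme, hence smooth-extended by the base case, so the chosen isomorphism over $A$ already identifies $(B,W)$ with $(B',W')$ inside $\colim_{\mathcal J_{A}}\sVect(B)$; iterating with the corresponding $\Aut$-schemes handles identifications of identifications, and so on.

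The main obstacle is turning that last paragraph into a proof. Because $\LLKEsm$ is built from the inclusion $\Stk\hookrightarrow\PStk$, which does not preserve colimits, its compatibility with the bar constructions above is not formal; equivalently, one must verify that the sifted colimit $\colim_{\mathcal J_{A}}\sVect(B)$ computes $\Aut_{A}(V)$ and all of its higher deloopings correctly, even though $\pi_{i}$ does not commute with sifted colimits. I would package this as a preliminary permanence lemma — smooth-extendedness can be checked locally for the topology, and is inherited by the quotient of a smooth-extended prestack by a smooth affine groupoid all of whose terms are smooth-extended — so that the theorem reduces to the base case together with the $\pi_{0}$-surjectivity above; the proof of that lemma, via the $\mathrm{Isom}$-scheme bootstrap just sketched, is where I expect the main work to lie. (Alternatively, if a general statement to the effect that smooth Artin stacks locally of finite presentation over $k$ are smooth-extended is available, one could simply invoke it.) Theorem~\ref{thmannounce:K-theory} does not appear to be needed for this argument.
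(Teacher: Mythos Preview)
Your approach is essentially the same as the paper's: both reduce $\sVect$ to $\sBGL$ via the Zariski-local triviality of vector bundles, and both deduce that $\sBGL$ is smooth-extended from the fact that each $\GL_n$ is. The two differences are worth noting, because in one place your argument is cleaner than the paper's and in the other you are making your own life harder.

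First, your base case is better. You observe that any smooth affine $k$-scheme $X$ is trivially smooth-extended, since $j$ is fully faithful and $j_!$ sends the representable $j^*h_X$ back to $h_X$. The paper instead invokes Mathew's criterion (preservation of filtered colimits, behaviour under henselian surjections) to prove that $\GL_n$ is smooth-extended as a prestack; since $\GL_n$ is represented by the classical smooth $k$-algebra $k[x_{ij}][\det^{-1}]$, your one-line argument suffices and the henselian machinery is unnecessary here.

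Second, your worry about the bar construction is misplaced, and the ``permanence lemma'' with the $\mathrm{Isom}$-scheme bootstrap, while workable, is overkill. The functor $\LLKEsm = L \circ \LKEsm$ is a composite of two left adjoints, and $j^* = \smooth{(-)}$ is also a left adjoint; hence all three preserve colimits of \emph{prestacks}. The paper exploits this directly: form $\BGL_n = |\B_{\bullet}\GL_n|$ as a geometric realization in prestacks, commute $\LKEsm$ and $j^*$ past the realization to get $\LKEsm(\smooth{\BGL_n}) \simeq \BGL_n$, and only then sheafify. The only residual point is the identification $\LLKEsm(\smooth{\BGL_n}) \simeq \LLKEsm(\smooth{\sBGL_n})$, which amounts to the observation that $\BGL_n \to \sBGL_n$ is a local equivalence and remains so after restriction and left Kan extension. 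You never need to analyze $\colim_{\mathcal{J}_A}\sVect(B)$ directly, nor to bootstrap through $\mathrm{Isom}$- and $\Aut$-schemes; working at the prestack level from the start dissolves the obstacle you identified.
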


The proof of Theorem \ref{thmannounce:Vect-classical} uses an observation by A.\ Mathew (see \cite{EHKSY}*{Appendix A}). One also needs a concrete description of $\sVect$ in terms of a disjoint union of $\BGL_n$ for arbitrary $n$.

One reason for the formulation in such a generality of the index map from Theorem \ref{thmannounce:determinant-map} is the ease with which we can use it to construct the sought-for central extensions.

\begin{thmannounce}[See \S \ref{subsec:central-extensions}]
\label{thmannounce:central-extensions}
For $k$ an arbitrary commutative ring. Let $\sLG$ denote the stack of automorphisms of $k((t))$ as a Tate object, one has a canonical central extension
\[1 \ra \Gm \ra \widetilde{\sLG} \ra \sLG \ra 1.\]

Furthermore, for any $n\geq 2$ one has a canonical central extension
\[1 \ra \sBPicgrn{n-2} \ra \widetilde{\sLGL{n}} \ra \sLGL{n} \ra 1,\]
where $\sLGL{n}$ is the stack of automorphisms of the $n$-Tate object $k((t_1))\cdots((t_n))$.
\end{thmannounce}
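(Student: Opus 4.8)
The plan is to extract the central extensions directly from the determinant map $\sD^{(n)}\colon \sTate^{(n)} \to \sBPicgrn{n}$ of Theorem \ref{thmannounce:determinant-map} by restricting to automorphism groups. First I would recall that for a fixed $n$-Tate object $V = k((t_1))\cdots((t_n))$, the stack $\sLGL{n}$ sits inside $\sTate^{(n)}$ as (a connected component of) the loop stack $\Omega_{V}\sTate^{(n)}$, i.e.\ the automorphisms of the point $V$; concretely $\sLGL{n} \simeq \Aut(V)$ where $V$ is viewed as an $R$-point of $\sTate^{(n)}$ for varying $R$. Applying $\sD^{(n)}$ and taking loops at $V$ produces a map of group stacks
\[
\Omega_{V}\sD^{(n)}\colon \sLGL{n} = \Omega_{V}\sTate^{(n)} \ra \Omega_{\sD^{(n)}(V)}\sBPicgrn{n}.
\]
The key computation is to identify the target: since $\sBPicgrn{n}$ is an iterated delooping of the Picard groupoid, its loop stack at any point is $\sBPicgrn{n-1}$ (up to the grading shift, which for $n\geq 2$ contributes the $\Gm$ vs.\ $\sBPicgr$ discrepancy — for $n=1$ one lands in $\sBPicgr$, whose loops give exactly $\Gm$, and more generally looping $n-1$ times past the graded layer yields $\sBPicgrn{n-2}$). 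So $\Omega_{V}\sD^{(n)}$ is a homomorphism $\sLGL{n} \to \sBPicgrn{n-2}$ (reading $\sBPicgrn{0} = \Gm$ in the case $n=2$, and for $n=1$ the map lands in $\sBPicgr$ and one further deloops down to $\Gm$ — I will phrase the $n=1$ case separately to match the statement).

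Next I would form the central extension as the homotopy fiber sequence associated to this homomorphism. Given a map of group stacks $f\colon G \to A$ with $A$ abelian (here $A = \sBPicgrn{n-2}$, or $\Gm$), the \emph{fiber of the induced map on classifying stacks}, $\widetilde{G} := G \times_{A} \pt$ taken over the basepoint, is not quite what we want; rather, the central extension
\[
1 \ra A \ra \widetilde{G} \ra G \ra 1
\]
is obtained by pulling back the universal central extension, i.e.\ $\widetilde{G} = G \times_{\B A} \pt$ using a delooping $\B f\colon \B G \to \B A$ (equivalently, $\widetilde G$ is the total space of the $A$-gerbe on $\B G$ classified by $\B f$, restricted appropriately). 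Concretely: the determinant map, being a map of prestacks, deloops — $\sTate^{(n)}$ and $\sBPicgrn{n}$ both carry enough additional structure (the $S_\bullet$/Waldhausen structure on Tate objects, the infinite-loop structure on $\sBPicgrn{n}$) that $\sD^{(n)}$ refines to a map of the relevant deloopings, and restricting the $(n{-}1)$-fold delooping of $\sD^{(n)}$ to the component of $\B^{n-1}(\text{pt}) \simeq \B^{n-2}\sLGL{n}$ (loop the automorphism stack the right number of times), then taking fibers, yields the extension with kernel $\sBPicgrn{n-2}$. For $n=1$ the same procedure with $\sD = \sD^{(1)}$ restricted to $\sLG = \Aut(k((t)))$ and one delooping of $\sBPicgr$ past its graded piece gives the kernel $\Gm$.

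Finally, I would check \emph{centrality}: the extension produced this way is automatically central because the kernel $A$ is an abelian group stack and the extension arises from a map to the \emph{classifying} stack $\B A$, which only remembers $A$ up to its $\Einf$-structure; equivalently, conjugation in $\widetilde G$ acts on $A$ through the homomorphism $G \to \Aut(A)$ induced by $\B f$, which is trivial since $\B f$ factors through a connected (indeed, infinite-loop) target with no nontrivial automorphism action on $A$. The main obstacle I anticipate is \textbf{bookkeeping the grading shift and the delooping degree}: one must carefully track how many times to loop/deloop, and reconcile the "graded" decoration $\sBPicgrn{\bullet}$ appearing in $\sD^{(n)}$ with the ungraded $\sBPicgrn{n-2}$ in the statement — presumably the grading contributes a harmless $\bZ$-factor that splits off (or is absorbed when restricting to a single Tate object of fixed "type"), and for $n=1$ the graded Picard stack $\sBPicgr$ looped once gives $\Gm$ on the nose. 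A secondary technical point is verifying that $\sD^{(n)}$ genuinely deloops the required number of times, which should follow from the additivity/$S_\bullet$ input used to construct it in the first place (Theorem \ref{thmannounce:K-theory} and the Braunling–Groechenig–Wolfson relative $S_\bullet$-construction), together with the fact that $\sBPicgrn{n}$ is an iterated bar construction by definition.
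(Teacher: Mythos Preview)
Your approach is essentially the paper's: apply $\sD^{(n)}$ to the point $V = k((t_1))\cdots((t_n))$, pass to the induced map on automorphism stacks, and take the fiber. However, you have the index off by one. Looping $\sD^{(n)}\colon \sTate^{(n)} \ra \sBPicgrn{n}$ once at $V$ yields a homomorphism of group prestacks
\[
\sD^{(n)}_{*}\colon \sLGL{n} \;\ra\; \Aut_{\sBPicgrn{n}}\bigl(\sD^{(n)}(V)\bigr) \;\simeq\; \sBPicgrn{n-1},
\]
not $\sBPicgrn{n-2}$ as you write. The extension is then $\widetilde{\sLGL{n}} := \Fib(\sD^{(n)}_{*})$, and its kernel is $\Omega\,\sBPicgrn{n-1}$, which equals $\sBPicgrn{n-2}$ for $n\geq 2$ and equals $\Omega\,\sPicgr \simeq \Gm$ for $n=1$. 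This is exactly the mechanism in your second paragraph with $\B A = \sBPicgrn{n-1}$; you had it right there but contradicted yourself in the first paragraph.

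The remainder of your write-up overcomplicates things. There is no need to deloop $\sD^{(n)}$ any further, nor to invoke the $S_{\bullet}$-construction or additivity again: once $\sD^{(n)}$ exists as a map of prestacks (which is what Theorem~\ref{thmannounce:determinant-map} provides), functoriality alone gives the induced map on automorphism groups, and that is all the paper uses (see Definition~\ref{defn:n-central-extension-prestack}). The ``grading shift'' you worry about is also a non-issue: the discrete factor $\bZ$ in $\sPicgr$ contributes nothing to $\Omega\,\sPicgr$, so for $n=1$ one lands directly on $\Gm$ without any splitting argument. Centrality follows, as you note, because the target is already the classifying stack of an abelian group object.
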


The central extensions of Theorem \ref{thmannounce:central-extensions} agree with the ones constructed in the literature where they have been defined. Namely, when restricted to classical affine schemes and truncated to a scheme (i.e.\ Set-valued functor of points) the extension $\sLG$ agrees with the central extension constructed by Frenkel-Zhu in \cite{FZ}.

% and $\sLGL{2}$ agree with the central extensions of $\GL_{\infty}$ and $\GL^{(2)}_{\infty}$, respectively, as constructed by Frenkel and Zhu (\cite{FZ}).

One advantage of our formulation is that it provides a framework to construct the central extensions of the corresponding higher loop Lie algebras in relation with the central extension of groups. Namely, following the steps of \cite{GR-I} for prestacks of Tate type one can rigorously take the derivate of the maps in Theorem \ref{thmannounce:determinant-map} to obtain a map between dg Lie algebras over $k$, i.e.\ a central extension of the Lie algebra underlying $\sLG$ and $\sLGL{n}$ for $n\geq 2$. The central extensions obtained for those Lie algebras should also recover results from \cite{FHK}. This will be the subject of further work.

We briefly describe the contents of each section. Section 1 recalls the K-theory of Waldhausen categories; the expert can safely skip this material. Section 2 has a discussion of functors left Kan extended from smooth algebras and proves Theorem \ref{thmannounce:Vect-classical}. Section 3 develops the main input for the construction of the higher determinant map, that is the determinant for the prestack of perfect complexes. Section 4 constructs the determinant for Tate objects and the central extensions.

\subsection*{Conventions.}

Unless otherwise stated we have the following terminology:
\begin{itemize}
    \item this paper is written in the language of $\infty$-categories (as developed in \cite{HTT}), unless otherwise noted all categorical concepts (categories, functors, etc.) and constructions (limits, left Kan extensions, etc.) are to be understood in the sense of $\infty$-categories;
    \item an ordinary category shall mean a $1$-category in the usual sense;
    \item the term space means an $\infty$-groupoid, we will denote by $\Spc$ the ($\infty$-)category of spaces;
    \item all schemes and stacks are derived, a prestack means a functor from (derived) schemes to spaces (for most of the generality of this paper one can take the set up of \cite{SAG}*{Chapter 25}\footnote{See \S \ref{subsec:stacks-smooth-extended} below for more details on our conventions.});
    \item a classical scheme means a scheme in the ordinary sense, i.e.\ non-derived.
\end{itemize}

\subsection*{Acknowledgments}

It is my pleasure to thank Ben Antieau, Emily Cliff, Elden Elmanto and Jeremiah Heller for discussions about different contents of this paper, specially I would like to thank A.\ Mathew for discussions about how to prove certain results over arbitrary commutative rings. I am specially grateful to Nick Rozenblyum whose continued support during this project made it possible. Finally, I would like to thank the anonymous referee for comments that improved the presentation and the statement of some of the results of this paper.

\section{Preliminaries on K-theory}
\label{sec:K-theory}

This section recalls some definitions and results about the K-theory of Waldhausen categories\footnote{The $\infty$-categorical analogue of an (ordinary) category with cofibrations.}. This theory was originally developed in \cite{Barwick}, and mostly all the results, with the exception of Theorem \ref{thm:K-theory-heart-of-weight}, are contained in there. In the appendix we present some proofs which could be illuminating. The online notes \cite{Lurie-K-theory} also contain a good amount of what is discussed in this section. Note that to prove the equivalence (Corollary \ref{cor:K-theory-Perf}) we need to construction the determinant map in \S \ref{sec:determinant}, it is not sufficient to work with the K-theory of stable categories (cf. \cite{BGT}).

\subsection{Set up and definitions}

\subsubsection{Categories with cofibrations}
\label{subsubsec:Waldhausen-categories}

We introduce the main kind of categories of which we will study the K-theory.

\begin{defn}
\label{defn:Waldhausen-categories}
A \emph{category with cofibrations} is a pair $(\sC,\co(\sC))$, $\sC$ a presentable pointed category and $\co(\sC)$ a class of morphisms satisfying
\begin{enumerate}[(i)]
    \item for any $X \in \sC$, the map $\ast \ra X$ is in $\co(\sC)$;
    \item the composition of two cofibrations is a cofibration;
    \item for a map $f:X \ra Y$ in $\co(\sC)$ and an arbitrary map $X \ra X'$ consider the pushout diagram
    \begin{center}
        \begin{tikzcd}
        X \ar[r,"f"] \ar[d] & Y \ar[d] \\
        X' \ar[r,"f'"] & Y'
        \end{tikzcd}
    \end{center}
    the map $f':X' \ra Y'$ is also a cofibration.
\end{enumerate}
\end{defn}

\begin{rem}
We will also use the name \emph{Waldhausen category} for a category with cofibrations, as introduced in \cite{Barwick}*{Definition 2.7}. One can realize the category of Waldhausen categories denoted by $\Wald$ as a subcategory of an appropriate category of pairs (see \cite{Barwick}*{\S 2.13 and 2.14} for details). 

If the cofibrations are clear from the context we will drop $\co(\sC)$ from the notation and just write $\sC$ for a Waldhausen category.
\end{rem}

\begin{example}
\label{ex:minimal-cofibrations}
For $\sC$ a stable category, consider $\co_{\rm min}(\sC)$ the smallest class of morphisms containing $0 \ra X$ for all objects $X$ in $\sC$ and stable under pushouts and compositions. This defines a category with cofibrations $(\sC,\co_{\rm min}(\sC))$.
\end{example}

\begin{example}
For $\sC$ a category with finite direct sums\footnote{I.e.\ $\sC$ admits finite limits and colimits and they agree up to a contractible space of choices.}, let $\co_{\rm split}(\sC)$ be the class of maps $f:X \ra Y$ such that $f$ is isomorphic to a map $X \overset{\id_X \oplus 0}{\ra} X \oplus Z$ for some $Z \in \sC$. This gives $(\sC,\co_{\rm split}(\sC))$ the structure of a category with cofibrations, we refer to this choice as the \emph{split cofibrations}.
\end{example}

\begin{example}
\label{ex:maximal-cofibrations}
For any presentable pointed category $\sC$ we can take $\co_{\rm max}(\sC)$ to consist of all morphisms in $\sC$.
\end{example}

\begin{example}
Let $\b{C}$ be a pointed ordinary category with a class of cofibrations $\co(\b{C})$ (in the sense of Waldhausen \cite{Waldhausen}*{\S 1.1}). Then $(\N\b{C},\N\co(\b{C}))$ is a Waldhausen category.
% \todo{More precisely describe $\N\co(\b{C})$.} Apparently it is clear what \N\co(\b{C}) means. Why?
\end{example}

We now introduce Waldhausen's $S_{\bullet}$-construction for Waldhausen categories.

\begin{defn}
For every $n \geq 0$ we let $\sS_{n}\sC$ be the subcategory of $\Fun(N([n]\overset{\leq }{\times} [n]),\sC)$\footnote{Here $[n]\overset{\leq }{\times} [n]$ is the subset of pairs $(i,j) \in [n]\times [n]$ such that $i \leq j$ and morphisms are those restricted from the category $[n] \times [n]$.} satisfying:
\begin{enumerate}[(i)]
    \item for all $i \leq j \leq k$ the map $X_{i,j} \ra X_{i,k}$ is a cofibration;
    \item for all $i$, $X_{i,i} \simeq \ast$;
    \item for all $i \leq j \leq k$ the diagram
    \begin{center}
        \begin{tikzcd}
        X_{i,j} \ar[r] \ar[d] & X_{i,k} \ar[d] \\
        \ast \ar[r] & X_{j,k}
        \end{tikzcd}
    \end{center}
    is a pushout.
\end{enumerate}
\end{defn}

It is clear that the assignment $[n] \mapsto \sS_{n}\sC$ determines a simplicial object in the category of pointed presentable categories. We will denote by $S_{\bullet}\sC$ the simplicial object in $\Spc$ obtained by passing to the underlying $\infty$-groupoid levelwise.

\begin{rem}
Actually, for each $[n] \in \Delta^{\rm op}$ the category $\sS_n(\sC)$ comes endowed with cofibrations, and one can consider $\sS_{\bullet}\sC$ as a simplicial object in $\Wald$ (see \cite{Barwick}*{Definition 5.6.}).
\end{rem}

One single iteration of the $S_{\bullet}$-construction is enough to define the underlying K-theory space.

\begin{defn}
\label{defn:K-theory-space}
For $(\sC,\co(\sC))$ a Waldhausen category, its K-theory space is
\[K_{\rm Spc}(\sC) = \Omega\left|S_{\bullet}\sC\right|.\]
\end{defn}

To upgrade K-theory to a spectrum\footnote{We will not consider the non-connective version of K-theory in this article, so any time we refer to the K-theory spectrum it is tacitly assumed to be its connective version.} we need to consider iterations of the $S_{\bullet}$-construction. We first introduce some notation on multi-simplicial objects to be able to do that.

For $n \in \Delta^{\rm op}$ let $\Ar_n = N([n]\overset{\leq}{\times} [n])$, and for each $k\geq 1$ we will denote by $[n^{(k)}] = ([n_1],\ldots,[n_k])$ an element of $(\Delta^{\rm op})^k$. Similarly, let $\Ar_{n^{(k)}} = \Ar_{n_1}\times \cdots \times \Ar_{n_k}$.

\begin{defn}
\label{defn:iterated-S-construction}
Let $k \geq 1$. 
For each object $n^{(k)} \in (\Delta^{\rm op})^k$ we define the subcategory $\sS_{n^{(k)}}\sC$ to be the fiber product
\begin{center}
    \begin{tikzcd}
    \sS_{n^{(k)}}\sC \ar[r] \ar[d] & \Fun(\Ar_{n^{(k)}},\sC) \ar[d] \\
    \prod^k_{i=1}\sS_{n_i}\sC \ar[r] & \prod^k_{i=1}\Fun(\Ar_{n_i},\sC).
    \end{tikzcd}
\end{center}
where the vertical map is the restriction to each factor and the horizontal one is the canonical inclusion.

The $k$th iterated $S_{\bullet}$-construction on $\sC$ is the functor
\[S_{\bullet_{k}}\sC = \sS_{\bullet_k}\sC^{\simeq},\]
where we pass to underlying $\infty$-groupoids levelwise.
\end{defn}

% As we mentioned we will use the iterated $S_{\bullet}$-construction to explicitly describe a spectrum. This is the content of the next Lemma. Though we need a bit more notation before the proof.

Let $\Spc^{(n)} = \Fun\left(\b{N}(\Delta^{\times n})^{\rm op},\Spc\right)$ denote the category of $n$-simplicial spaces, by adjunction this is equivalent to $\Fun\left(\b{N}(\Delta^{\rm op}),\Spc^{(n-1)}\right)$, i.e.\ simplicial objects in $(n-1)$-simplicial spaces. We consider the adjunction\footnote{This is a partial skeleton and relative truncation adjunction.}
\[
\begin{tikzcd}
\Fun\left(\b{N}(\Delta^{\rm op}),\Spc^{(n-1)}\right) \ar[r,shift left,"\tr^{(1)}_{1}"] & \Fun\left(\b{N}(\Delta^{\rm op}_{\leq 1}),\Spc^{(n-1)}\right) \ar[l,shift left,"\sk^{(1)}_{1}"]
\end{tikzcd}
\]
where $\Delta^{\rm op}_{\leq 1} \subset \Delta^{\rm op}$ is the full subcategory generated by $[0]$ and $[1]$.

Hence, for any $X_{\bullet_{n}} \in \Spc^{(n)}$ one has a counit map
\[(\sk^{(1)}_{1}\circ \tr^{(1)}_{1}X)_{\bullet_n} \ra X_{\bullet_n}\]
which induces a map upon geometric realization\footnote{I.e. the total geometric realization of the $n$-simplicial object.}
\begin{equation}
\label{eq:geometric-realization-1-skeleton-inclusion}
\left|\sk^{(1)}_{1}\circ \tr^{(1)}_{1}X_{\bullet_n}\right| \ra \left|X_{\bullet_n}\right|.    
\end{equation}

\begin{lem}
\label{lem:suspension-iterated-S-construction}
For every $k \geq 1$ there exists a canonical map
\begin{equation}
\label{eq:suspension-iterated-S-construction}
    \Sigma|S_{\bullet_k}\sC| \ra |S_{\bullet_{k+1}}\sC|.
\end{equation}
\end{lem}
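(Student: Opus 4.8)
The plan is to identify the $(k+1)$st iterated $S_\bullet$-construction, regarded as a simplicial object in $n$-simplicial spaces for $n = k$, with a simplicial object whose $1$-skeleton is already the suspension of $|S_{\bullet_k}\sC|$, and then to invoke the general counit map \eqref{eq:geometric-realization-1-skeleton-inclusion}. Concretely, write $X_{\bullet_{k+1}} = S_{\bullet_{k+1}}\sC$ as a simplicial object $[m] \mapsto S_{m,\bullet_k}\sC$ in $\Spc^{(k)}$, where the outer simplicial direction is the newly adjoined copy of $\Delta^{\rm op}$. First I would compute the low-dimensional terms of this outer simplicial object: by property (ii) in the definition of $\sS_\bullet$, one has $\sS_0\sC \simeq \ast$, so $S_{0,\bullet_k}\sC \simeq \ast$; and the $1$-simplices are $\sS_1\sC \simeq \sC$ (an object of $\sS_1\sC$ is just a choice of $X_{0,1}$, the conditions being vacuous), so $S_{1,\bullet_k}\sC \simeq S_{\bullet_k}\sC$. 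Hence $\tr^{(1)}_1 X_{\bullet_{k+1}}$ is the $1$-truncated simplicial object $[0] \mapsto \ast$, $[1] \mapsto S_{\bullet_k}\sC$, with both face maps to the point and the degeneracy the inclusion of the point.

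Next I would observe that the geometric realization (in the outer direction) of $\sk^{(1)}_1 \tr^{(1)}_1 X_{\bullet_{k+1}}$ is precisely the unreduced suspension of $S_{\bullet_k}\sC$ glued along the two cone points, i.e.\ the reduced suspension $\Sigma|S_{\bullet_k}\sC|$ — this is the standard fact that the realization of the simplicial object $[\ast \leftleftarrows Y \cdots]$ built by left Kan extension from the span $\ast \leftarrow Y \rightarrow \ast$ computes $\Sigma Y$, using that both legs hit the point. Taking total geometric realizations in all $k+1$ simplicial directions and using that geometric realization commutes with $\Sigma$ (it is a colimit), this gives
\[
\left|\sk^{(1)}_1 \circ \tr^{(1)}_1 S_{\bullet_{k+1}}\sC\right| \simeq \Sigma\left|S_{\bullet_k}\sC\right|.
\]
Composing this equivalence with the counit map \eqref{eq:geometric-realization-1-skeleton-inclusion} for $X_{\bullet_{k+1}} = S_{\bullet_{k+1}}\sC$ (with $n = k$) produces the desired canonical map $\Sigma|S_{\bullet_k}\sC| \ra |S_{\bullet_{k+1}}\sC|$, and naturality of the counit makes the map canonical.

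The main obstacle I anticipate is purely bookkeeping rather than conceptual: one must be careful about which of the $k+1$ simplicial coordinates of $\sS_{\bullet_{k+1}}\sC$ is being singled out as the "outer" direction in Definition \ref{defn:iterated-S-construction}, and check that the fiber-product description there really does restrict, in that outer coordinate at levels $0$ and $1$, to $\ast$ and to $S_{\bullet_k}\sC$ respectively — this uses that the fiber product defining $\sS_{n^{(k+1)}}\sC$ degenerates appropriately when one of the $n_i$ is $0$ or $1$. I would also want to make explicit that "total geometric realization" can be computed one simplicial variable at a time (Fubini for colimits), so that pulling the outer realization out past the inner $k$ realizations is legitimate; this is where the identification with $\Sigma$ of the inner realization takes place. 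Everything else — that $\Sigma$ commutes with colimits, that the counit is natural — is formal.
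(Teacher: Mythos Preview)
Your proposal is correct and follows essentially the same approach as the paper: both single out one simplicial direction of the iterated $S_\bullet$-construction, compute that its $0$th and $1$st levels are $\ast$ and $S_{\bullet_k}\sC$ respectively, identify the realization of the $1$-skeleton with $\Sigma|S_{\bullet_k}\sC|$, and then invoke the counit map \eqref{eq:geometric-realization-1-skeleton-inclusion}. Your write-up is in fact slightly more careful about the bookkeeping (Fubini for realizations, which coordinate is singled out) than the paper's own proof, and your indexing matches the statement directly whereas the paper's proof is phrased for $S_{\bullet_k}\sC$ and implicitly reindexes.
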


\begin{proof}
For $S_{\bullet_k}\sC$ we notice that 
\[\tr^{(1)}_{1}S_{\bullet_{k}}\sC([0],[n^{(k-1)}]) \simeq \ast, \;\;\mbox{and}\;\;\tr^{(1)}_{1}S_{\bullet_{k}}\sC([1],[n^{(k-1)}]) \simeq S_{\bullet_{k-1}}\sC([n^{(k-1)}]) \]
for all $[n^{(k-1)}] \in (\Delta^{\rm op})^{n-1}$. 

Thus one has
\[\left|\sk^{(1)}_{1}\circ \tr^{(1)}_{1}S_{\bullet_k}\sC\right| \simeq \left|
\begin{tikzcd}
S_{\bullet_{k-1}}\sC \ar[r,shift left] \ar[r,shift right] & \ast
\end{tikzcd}
\right| \simeq \Sigma\left|S_{\bullet_{k-1}}\sC\right|.
\]

So the map (\ref{eq:geometric-realization-1-skeleton-inclusion}) applied to $S_{\bullet_k}\sC$ yields the desired map.
\end{proof}

Recall that one concrete way to think about the category of spectra $\Spctr$ is as
\[\Spctr = \colim_{\Sigma^{\circ n}}\Spc,\]
i.e.\ the colimit of $\Spc$ in the category of presentable categories and left exact functors under iterations of the suspension functor.

\begin{defn}
\label{defn:K-theory-spectrum}
For any category with cofibrations $(\sC,\co(\sC))$ we let
\[\K(\sC) = (|S_{\bullet}\sC|,|S_{\bullet_2}\sC|,\ldots)\]
be its K-theory spectrum, with structure maps given by Lemma \ref{lem:suspension-iterated-S-construction}.
\end{defn}

\begin{rem}
\label{rem:functoriality-S-construction}
For $F: \sC \ra \sD$ a functor preserving pushouts and the distinguished point of $\sC$, $F$ applied to $\sS_n\sC$ levelwise factors through $\sS_{n}\sD$ inside $\Fun(\Ar_n,\sD)$. Thus, any such functor induces a map between the $S_{\bullet}$-constructions and its iterations, hence between $\K(\sC) \ra \K(\sD)$.
\end{rem}

\begin{rem}
\label{rem:relation-K-theory-spectrum-to-K-theory-space}
Notice that given the K-theory spectrum associated to a Waldhausen category $\sC$, we can recover its K-theory space by
\[\Omega(\K(\sC)_0)\]
where $\K(\sC)_0$ denotes the $0$th space underlying the spectrum $\K(\sC)$\footnote{This is sometimes denoted $\Omega^{\infty}\K(\sC)$ in the literature.}.
\end{rem}

\begin{rem}
One has a canonical map from the underlying $\infty$-groupoid of a Waldhausen category to its K-theory space
\[
\imath: \sC^{\simeq} \ra \K_{\rm Spc}(\sC).
\]
Indeed, since $\left|\sk\circ\tr\sS_{\bullet}\sC\right| \simeq \sigma \sC^{\simeq}$, (\ref{eq:geometric-realization-1-skeleton-inclusion}) applied to $S_{\bullet}\sC$ gives $\Sigma \sC^{\simeq} \ra \left|\sS_{\bullet}\sC\right|$, the map $\imath$ is obtained by taking loops.

In fact, the adjunction
\[
\begin{tikzcd}
\Spc \ar[r,shift left,"\Sp(-)"] & \Spctr \ar[l,shift left,"(-)_0"]
\end{tikzcd}
\]
between the category of spaces and the category of spectra gives a map
\begin{equation}
\label{eq:imath-for-spectra}
\Sp(\imath): \Sp(\sC^{\simeq}) \ra \Sp(\K(\sC)_0) \simeq \Sp(\K_{\rm Spc}(\sC)) \ra \K(\sC).    
\end{equation}
We will abuse notation and denote the map (\ref{eq:imath-for-spectra}) by $\imath$ and not make a distinction in notation when we consider $\sC^{\simeq}$ as a spectrum or a space.
% We denote by
% \[
% \imath: \sC^{\simeq} \ra \K_{\rm Spc}(\sC)
% \]
% the induced map of spaces. 
%By iterating the $S_{\bullet}$-construction and considering the partial skeleton and partial truncation adjunction, one has a map between successive suspensions of $\sC^{\simeq}$ and the $n$th spaces of $\K(\sC)$. But this is maybe not the map that I want.

%Notice that there is no straight-forward way to compare $\sC^{\simeq}$ with $\K(\sC)$.
\end{rem}

% \todo{Prove the equivalence of suspension of $|S_{\bullet}\sC|$ with iterating $S_{\bullet}$ and geometric realizing.}

\begin{rem}
\label{rem:S-construction-is-delooping}
In fact, the map in Lemma \ref{lem:suspension-iterated-S-construction} is an equivalence. Indeed, this follows from \cite{Barwick}*{Proposition 6.10}. As \cite{Barwick} we let $\D(\Wald)$ denote the category obtained by formally adjoining sifted colimits to $\Wald$. Thus, given any Waldhausen category $\sC$, the colimit $|\sS_{\bullet}\sC|$ belongs to $\D(\Wald)$ and $\left|\sS_{\bullet_2}\sC\right|$ as well. Finally, we claim that the functor that sends $\sC \in \Wald$ to $|S_{\bullet}\sC| \in \Spc$ factors through $\D_{\rm fiss}(\Wald)$, from which our claim follows.
\end{rem}

\subsubsection{Additive K-theory}

In the case where $\sC$ has finite direct sums one can define its additive K-theory. This is easier to define than the K-theory spectrum via the $S_{\bullet}$-construction, but it is just as good in some situations, namely when all the cofibrations are split (see Theorem \ref{thm:additive-K-theory-agrees-when-split-cofibrations}). We don't know of a published reference for this, we learned of such results from \cite{Lurie-K-theory}.

Consider $(\sC,\oplus)$ a pointed symmetric monoidal stable category\footnote{I.e. this is a coCartesian symmetric monoidal structure (see \cite{HA}*{\S 2.4.3}).}, its underlying $\infty$-groupoid $\sC^{\simeq}$ canonically has the structure of an $\Einf$-monoid in spaces, namely the straightening of the coCartesian fibration defining the symmetric monoidal structure on $(\sC^{\simeq})^{\oplus}$. Thus, all one needs to produce a connective spectrum is to ensure that $\sC^{\simeq}$ is grouplike. A systematic way of achieving this is by performing a group completion, which can be concretely described as
\[\sC^{\simeq, \rm gp} \simeq \Omega\left|\B_{\bullet}\sC^{\simeq}\right|,\]
where $\B_{\bullet}\sC^{\simeq}$ is the simplicial object in spaces obtained from taking $n$ copies of $\sC^{\simeq}$ on each degree with the corresponding simplicial maps given by multiplication and projections.

\begin{defn}
\label{defn:additive-K-theory}
For $\sC$ a pointed category with finite coproducts, we let 
\[
\Ka(\sC)
\]
denote the connective spectrum associated to the group-like $\Einf$-monoid in spaces $\Omega\left|\B_{\bullet}\sC^{\simeq}\right|$\footnote{The interested reader is refered to \cite{HA}*{\S 5.2.6} for a discussion about this equivalence which we won't make explicit.}.
\end{defn}

It is immediate to see that the construction of additive K-theory is also functorial in $\sC$.

\begin{rem}
\label{rem:map-additive-to-K-theory}
For any Waldhausen category $\sC$ with finite direct sums, we notice that for each $n \geq 1$, one has a functor
\begin{align*}
\centering
G_n: \B_n\sC & \ra S_n\sC  \\
(X_1,\ldots,X_{n}) & \mapsto (X_1 \ra X_1\oplus X_2 \ra \cdots \ra \oplus^n_{i=1}X_i), 
\end{align*}
where the maps on the right are cofibrations by (i) and (iii) of Definition \ref{defn:Waldhausen-categories}. 

In particular, the above map induces a map
\[
\Ka(\sC) \ra \K_{\rm Spc}(\sC)
\]
between K-theory spaces. By following the correspondence between grouplike $\Einf$-spaces and connective spectra one also has a map $\Ka(\sC) \ra \K(\sC)$ of connective spectra.
\end{rem}

One has the following comparison between additive and usual K-theories.

\begin{thm}
\label{thm:additive-K-theory-agrees-when-split-cofibrations}
Suppose that $\sC$ is a Waldhausen category with split cofibrations, and finite direct sums, whose homotopy category $\sh\sC$ is additive. Then the functor
\[\Ka(\sC) \overset{\simeq}{\ra} \K(\sC)\]
described in Remark \ref{rem:map-additive-to-K-theory} is an equivalence.
\end{thm}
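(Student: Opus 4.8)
The plan is to identify both spectra with the K-theory of a suitable split Waldhausen category and show the two $S_\bullet$-type constructions agree. The key conceptual point is that when all cofibrations are split, the classifying diagram built out of $\sS_n\sC$ collapses: every object of $\sS_n\sC$ is, up to equivalence, a tuple $(X_1,\ldots,X_n)$ of objects of $\sC$ together with the canonical partial-sum filtration, because a composition of split cofibrations is itself (non-canonically, but coherently once $\sh\sC$ is additive) a split cofibration. So I would first show that the functor $G_n:\B_n\sC \ra \sS_n\sC$ of Remark \ref{rem:map-additive-to-K-theory}, after passing to underlying $\infty$-groupoids, is an equivalence of spaces for every $n$. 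The inverse sends an object $(X_{i,j})$ of $\sS_n\sC^{\simeq}$ to the tuple of successive "quotients" $(X_{0,1}, X_{1,2}, \ldots, X_{n-1,n})$ — note $X_{i-1,i}$ is the cofiber of $\ast \ra X_{i-1,i}$, but more to the point it is a retract summand of $X_{0,i}$ by the split-cofibration hypothesis — and one checks $G_n$ and this assignment are mutually inverse on $\pi_0$ and on all homotopy; the additivity of $\sh\sC$ is exactly what guarantees the splitting data can be chosen compatibly so that the relevant mapping spaces match. This is essentially an instance of the additivity theorem, but in the split setting it is elementary.

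Second, I would check that these levelwise equivalences $G_\bullet: \B_\bullet\sC^{\simeq} \ra S_\bullet\sC$ assemble into an equivalence of simplicial spaces. The simplicial structure maps on $S_\bullet\sC$ (face maps given by omitting a row/column, degeneracies by repeating) correspond under $G_\bullet$ precisely to the bar-construction structure maps on $\B_\bullet\sC^{\simeq}$ (multiplication and projection) — this is a direct comparison once the identification in the previous paragraph is in place. Taking geometric realizations and looping gives $\Ka(\sC) \simeq \K_{\rm Spc}(\sC)$ as spaces, i.e.\ on $0$th spaces of the respective spectra (using Remark \ref{rem:relation-K-theory-spectrum-to-K-theory-space}).

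Third, to upgrade to an equivalence of spectra I would run the same argument with the iterated constructions: one has bar-type multi-simplicial objects $\B_{\bullet_k}\sC^{\simeq}$ built from the $\Einf$-monoid structure, and the functors $G_n$ iterate (by Remark \ref{rem:functoriality-S-construction}, since $G_n$ is induced by a pushout-preserving construction) to give maps $\B_{\bullet_k}\sC^{\simeq} \ra S_{\bullet_k}\sC$ compatible with the suspension/delooping structure maps of Lemma \ref{lem:suspension-iterated-S-construction}. Since each level is an equivalence by the first step, the induced map of spectra $\Ka(\sC) \ra \K(\sC)$ is a levelwise equivalence, hence an equivalence. Alternatively, and perhaps more cleanly, once the $0$th-space equivalence is established one notes both $\Ka(\sC)$ and $\K(\sC)$ are connective spectra and the comparison map of Remark \ref{rem:map-additive-to-K-theory} is a map of spectra inducing an equivalence on underlying infinite loop spaces, hence an equivalence; but verifying it is genuinely a spectrum map still requires the compatibility with deloopings, so the iterated-construction approach is the honest one.

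The main obstacle is the first step: showing $G_n$ is an equivalence on underlying groupoids and not merely surjective on components. The subtlety is coherence — a single split cofibration $X \ra X \oplus Z$ carries a preferred splitting, but a length-$n$ flag of split cofibrations does not obviously decompose $X_{0,n}$ as $\bigoplus X_{i-1,i}$ in a way that is natural in the whole diagram, and one must pin down the space of such decompositions rather than just their existence. This is where the hypothesis that $\sh\sC$ is additive (rather than merely having direct sums) enters decisively: it forces the relevant spaces of splitting data to be contractible, so that the fiber of $G_n$ over any object is contractible. I would handle this by an induction on $n$, using the pushout squares in the definition of $\sS_n\sC$ to express $\sS_n\sC$ as an iterated "extension" and reducing to the $n=1$ case where the statement is the definition of split cofibrations together with the observation $\sS_1\sC^\simeq \simeq \sC^\simeq \simeq \B_1\sC^\simeq$.
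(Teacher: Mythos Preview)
Your central claim in step one is false: the maps $G_n: \B_n\sC^{\simeq} \ra S_n\sC$ are \emph{not} levelwise equivalences of spaces. Already for $n=2$ and $\sC = \Vect(k)$ this fails. On the component containing $(Y_1,Y_2)$, the map on automorphism spaces is the inclusion
\[
\Aut(Y_1)\times\Aut(Y_2) \hookrightarrow \Aut_{\sS_2\sC}(Y_1 \hookrightarrow Y_1\oplus Y_2 \twoheadrightarrow Y_2),
\]
and the right-hand side is the block upper-triangular parabolic: an automorphism of the cofiber sequence is a matrix $\begin{pmatrix}\alpha & b \\ 0 & \gamma\end{pmatrix}$ with $b \in \Hom(Y_2,Y_1)$ arbitrary. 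Equivalently, the homotopy fiber of $G_2$ over a split sequence $A \hookrightarrow B \twoheadrightarrow C$ is the set of splittings, a torsor under $\Hom(C,A)$, which is not contractible. Additivity of $\sh\sC$ guarantees that a splitting \emph{exists}, not that the space of splittings is contractible; you have confused these two statements. Your inductive reduction to $n=1$ cannot succeed because the $n=2$ case is already false.

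The paper's proof avoids this problem by proving something weaker that still suffices: that $G_n$ becomes an equivalence \emph{after group completion}. The mechanism is an additive cancellation trick. One sets up an action of $\B_{n-1}\sC$ on $S_n\sC$ and shows the one-sided bar construction $S_n\sC \otimes_{\B_{n-1}\sC} \ast$ is equivalent to $\sC^{\simeq}$; the key computation is a natural equivalence
\[
(g_{n-1}\circ q_{n-1}) \sqcup \id_{S_n\sC_{Z/}} \simeq \id_{S_n\sC_{Z/}} \sqcup \id_{S_n\sC_{Z/}},
\]
where $q_{n-1}$ takes successive quotients and $g_{n-1}$ is your $G_{n-1}$ (relative to a base object $Z$). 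This identity holds because $X_i \sqcup_Z X_i \simeq X_i \oplus (X_i/X_{i-1})\oplus\cdots$ splits once $\sh\sC$ is additive, but it only implies $g_{n-1}\circ q_{n-1} \simeq \id$ after one is allowed to cancel the extra $\id$ summand, i.e.\ after group completion. A diagram chase using this lemma and the stability of $\EinfMon^{\rm gp}$ then shows $|\B_\bullet\sC|^{\rm gp} \ra |S_\bullet\sC|^{\rm gp}$ is an equivalence, which is exactly the statement for the underlying spectra. Your proposed ``elementary'' route does not exist; the passage through group completion is essential.
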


We provide a proof of this result in \S \ref{subsec:additive-K-theory-appendix} of the appendix.

\subsection{Main results}

\subsubsection{Additivity Theorem}

The Additivity Theorem is a basic important result in K-theory, in the context of ($\infty$-)categories with cofibrations it was originally proved by Barwick \cite{Barwick}. For our convenience we review this statement below.

Assume $\sC$ is a Waldhausen category which admits finite colimits. Given $\Fun(\Delta^1,\sC)$ the category of morphisms, one has a functor
\begin{align}
\label{eq:Cone-functor}
    F: \Fun(\Delta^1,\sC) & \ra \sC\times\sC \\
    (X \overset{f}{\ra}Y) & \mapsto (X,\Cofib(f)) \nonumber
\end{align}

\begin{thm}[Additivity]
\label{thm:additivity}
The map on K-theory induced by the functor $F$ 
\[\K(\Fun(\Delta^1,\sC)) \ra \K(\sC)\times \K(\sC)\]
is an equivalence of K-theory spectra.
\end{thm}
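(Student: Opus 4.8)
The plan is to adapt Waldhausen's proof of the Additivity Theorem to the $\infty$-categorical $S_\bullet$-formalism recalled above. Write $\sE:=\Fun(\Delta^1,\sC)$, equipped with its natural Waldhausen structure; since $\sC$ admits finite colimits so does $\sE$ (computed levelwise), so $\Cofib$ is defined on $\sE$ as well. I would first reduce to K-theory \emph{spaces}. The $S_\bullet$-construction is compatible with finite products and with passage to arrow categories, in the sense that $\sS_{n^{(k)}}(\sC\times\sC)\simeq\sS_{n^{(k)}}\sC\times\sS_{n^{(k)}}\sC$ and $\sS_{n^{(k)}}\Fun(\Delta^1,\sC)\simeq\Fun(\Delta^1,\sS_{n^{(k)}}\sC)$ as Waldhausen categories, compatibly with $F$. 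Hence it suffices to prove that $|S_\bullet F|$ is an equivalence of spaces for every Waldhausen category $\sC$ admitting finite colimits; applying this to each $\sS_{n^{(k)}}\sC$ and reassembling over $k$ upgrades the conclusion to an equivalence of spectra.

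The real content is the \emph{functor form} of additivity: for exact functors $g',g,g''\colon\sA\to\sB$ equipped with a natural cofiber sequence $g'\rightarrowtail g\twoheadrightarrow g''$ (i.e.\ a natural transformation $g'\Rightarrow g$ that is pointwise a cofibration, with pointwise cofiber $g''$), the induced maps satisfy $g_*\simeq g'_*+g''_*$ as maps $\K(\sA)\to\K(\sB)$, the sum taken in the infinite loop space. The theorem follows from this by the standard formal comparison of the various shapes of the Additivity Theorem (see \cite{Waldhausen}*{\S 1.3--1.4} and \cite{Barwick}*{\S 7}): with $\pi_0,\pi_{01}\colon\sE\to\sC$ the exact domain and cofiber functors, so that $F=(\pi_0,\pi_{01})$, one has the section $\sigma\colon\sC\times\sC\to\sE$, $(X,Z)\mapsto(X\rightarrowtail X\sqcup Z)$ — well-defined by axioms (i) and (iii) of Definition \ref{defn:Waldhausen-categories}, with $\Cofib(X\rightarrowtail X\sqcup Z)\simeq Z$, whence $F\circ\sigma\simeq\id$ — and one applies the functor form to natural cofiber sequences of exact endofunctors of $\sE$ to deduce $\sigma_*\circ F_*\simeq\id_{\K(\sE)}$; together with $F_*\circ\sigma_*\simeq\id$, this shows $F_*$ is an equivalence.

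To prove the functor form, observe that the datum of a natural cofiber sequence $g'\rightarrowtail g\twoheadrightarrow g''$ is exactly that of an exact functor $G\colon\sA\to\sS_2\sB$ into the Waldhausen category of cofiber sequences in $\sB$, recovering $g',g,g''$ as $\partial_2 G,\partial_1 G,\partial_0 G$ under the face maps $\sS_2\sB\to\sS_1\sB\simeq\sB$. So everything reduces to the single case $\sB':=\sS_2\sB$: one must show that $(\partial_0,\partial_2)\colon\K(\sS_2\sB)\to\K(\sB)\times\K(\sB)$ is an equivalence and that, under this identification, the remaining face map $\partial_1$ realizes the $\Einf$-addition of $\K_{\rm Spc}(\sB)$. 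Unwinding, this is precisely the assertion that the simplicial space $[n]\mapsto|S_\bullet(\sS_n\sB)|$ arising from the simplicial Waldhausen category $\sS_\bullet\sB$ satisfies the Segal condition — the Segal maps $|S_\bullet(\sS_n\sB)|\to|S_\bullet(\sS_1\sB)|^{\times n}$ are equivalences — from which it further follows that the realization of $[n]\mapsto|S_\bullet(\sS_n\sB)|$ deloops $|S_\bullet\sB|$ (recovering the equivalence of Remark \ref{rem:S-construction-is-delooping}) and that $\partial_1$ is the multiplication of the resulting loop space, i.e.\ addition.

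The Segal condition is the genuine mathematical content, and the main obstacle. I would establish it via the classical "extra degeneracy" argument: adjoining a fresh minimal element to the poset indexing the $S$-construction exhibits $\sS_{n+1}\sB$ as (the analogue of) a path object over $\sS_n\sB$, which forces the relevant comparison map to become an equivalence after applying $|S_\bullet(-)|$. In the $\infty$-categorical setting this is carried out through Barwick's formalism of fissile $\D$-categories, inside which $|S_\bullet(-)|$ becomes a suspension functor and additivity is essentially formal (cf.\ \cite{Barwick}*{\S\S 6--7} and Remark \ref{rem:S-construction-is-delooping}). Once this is in place, all remaining steps are formal bookkeeping with cofiber sequences of exact functors.
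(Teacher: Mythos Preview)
Your chain of reformulations (theorem $\Leftrightarrow$ functor form $\Leftrightarrow$ Segal condition for $[n]\mapsto|S_\bullet(\sS_n\sB)|$) is correct and standard, but it is contentless: the Segal condition at $n=2$ \emph{is} the theorem, since $\sS_2\sB\simeq\Fun(\Delta^1,\sB)$ and $(\partial_0,\partial_2)$ is $F$ up to swapping factors. So all the work sits in your final paragraph, where you promise to prove the Segal condition. There you offer two hints: an ``extra degeneracy'' argument, and an appeal to Barwick's fissile formalism. The extra degeneracy by itself only gives contractibility of the path object $|S_\bullet(\sS_{\bullet+1}\sB)|$; turning that into the Segal equivalence still requires identifying the fiber of the face map and showing the resulting sequence is a fiber sequence after realization---this is exactly the hard step (Waldhausen's Lemma~1.4.3 in the classical story), and you have not supplied it. Your second hint, deferring to Barwick's \S\S6--7 and Remark~\ref{rem:S-construction-is-delooping}, is circular relative to a self-contained proof: those results are precisely the machinery whose output is additivity.

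For comparison, the paper gives two proofs. The main-text proof simply cites Barwick's additivization theorem directly---which is what your argument collapses to once the reformulation layer is peeled off, so your detour through the functor form buys nothing if citing Barwick is acceptable. The appendix gives a genuinely independent argument, following McCarthy: rather than reformulating, it shows directly that the map $S_\bullet\Fun(\Delta^1,\sC)\to S_\bullet\sC$ (projection to the source) is a \emph{realization fibration} in the sense of Rezk, using an auxiliary bisimplicial object $S_{\bullet_2}\!\left.F\right|_{\sD}$ and Rezk's local-to-global criterion. That technique does the work your ``extra degeneracy'' gesture leaves undone, and is the substantive alternative if you want a proof that does not ultimately rest on Barwick's black box.
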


\begin{proof}
The above is a consequence of \cite{Barwick}*{Corollary 7.12.1}. In \cite{Barwick} one has that for any pre-additive theory, for instance $(-)^{\simeq}: \Wald \ra \Spc$, the functor that sends a Waldhausen category to its underlying $\infty$-groupoid, its \emph{additivization}\footnote{We refer the reader to \cite{Barwick}*{Theorem 7.4.} where this condition is spelled out in seven equivalent forms.} is given by
\[\K_{\rm Spc}(\sC) = ((\sC)^{\simeq})^{\rm add.} \simeq \Omega\left|S_{\bullet}\sC\right|.\]
The result now follows from \cite{Barwick}*{Theorem 7.4.4}.
\end{proof}

\begin{rem}
\label{rem:proof-of-additivity}
For completeness we give an independent direct proof of this result in \S \ref{subsec:additivity-appendix}, see Proposition \ref{prop:additivity-appendix}.
\end{rem}

% \towrite{Do I want anything else in this section?}

\subsubsection{Fibration Theorem}

In this section we follow \cite{Barwick}*{\S 9}. The formulation of the analogue of Waldhausen's (generic) fibration theorem (cf. \cite{Waldhausen}*{\S 1.6.}) in the context of categories needs an $\infty$-categorical analogue of the notion of a category with weak equivalences for Waldhausen categories. Since weak equivalences already have a precise meanings associated to it, we will follow Barwick and use labelled morphisms to refer to the class of morphisms which are the $\infty$-categorical analogue of weak equivalences.
 
\begin{defn}
\label{defn:labelled-Waldhausen-category}
A \emph{labeled Waldhausen category} $(\sC,w\sC)$ is a Waldhausen category $\sC$ and a collection of \emph{labeled morphisms} $w\sC$, which 
\begin{enumerate}[(i)]
    \item contain all isomorphisms of $\sC$;
    \item satisfy the glueing axiom, that is for all $X_{00} \overset{\ell}{\ra} X_{01}$, $X_{10} \overset{\ell}{\ra} X_{11}$ and $X_{20} \overset{\ell}{\ra} X_{21}$ labeled morphisms, and maps $X_{20} \la X_{00} \hra X_{10}$ and $X_{21} \la X_{01} \hra X_{11}$ the induced map
\begin{equation}
    \label{eq:glueing-condition}
    X_{20}\sqcup_{X_{00}}X_{10} \ra X_{21}\sqcup_{X_{01}}X_{11}
\end{equation}
is labeled.
\end{enumerate}
\end{defn}

\begin{example}
Let $(\b{C},\co \b{C},w\b{C})$ be a category with cofibrations and weak equivalence, in the sense of Waldhausen (cf. \cite{Waldhausen}*{\S 1.2}), then $(\N \b{C}, \N\mbox{co}\b{C},\N w\b{C})$ is a labeled Waldhausen category (see \cite{Barwick}*{Example 9.3}).
\end{example}

\begin{example}
As in Examples \ref{ex:minimal-cofibrations} and \ref{ex:maximal-cofibrations} before, one can take $w\sC$ to contain only the isomorphisms of $\sC$, or all the morphisms of $\sC$, respectively.
\end{example}

% \todo{Non-trivial example.}

Before stating Barwick's version of the generic fibration theorem we need to introduce some notation and a technical condition. 

Suppose $(\sC,w\sC)$ is a labeled Waldhausen category. We consider
\[\sB(\sC,w\sC) = \colim_{\Delta^{\rm op}}\sB_{n}(\sC,w\sC),\]
where 
\[\sB_{n}(\sC,w\sC) = \{X_0 \overset{\ell}{\ra} X_1 \overset{\ell}{\ra} \cdots \overset{\ell}{\ra} X_n \; | \; X_{i} \in \sC, \; \forall 0 \leq i \leq n\},\]
the superscript $\ell$ denotes that the morphism is labeled.

\begin{rem}
\label{rem:Wald-does-not-have-sifted-colimits}
Notice that $\sB(\sC,w\sC)$ does not belong to the category $\Wald$, since this category doesn't admit geometric realizations. However, it belongs to the category $\sP^{N\Delta^{\rm op}}_{\empty}(\Wald)$\footnote{See \cite{HTT}*{\S 5.6} for this notation, or also \cite{Barwick}*{\S 4.14}, where this category is denoted $D(\Wald)$.} obtained from $\Wald$ by formally adjoining geometric realizations. We will denote by $\jmath: \Wald \hra \sP^{N\Delta^{\rm op}}_{\empty}(\Wald)$ the natural inclusion.
\end{rem}

The technical condition for the fibration theorem to hold is an $\infty$-categorical analogue of Waldhausen's existence of cylinder functors. 

% For a labeled Waldhausen category $\sC$ Barwick encodes that as a property of the functor $\Hom(-,\sC):\Wald \ra \Spc$. We take the following more concrete criterion as its definition.

\begin{defn}
\label{defn:enough-cofibrations}
Let $\sC$ be a labeled Waldhausen category, suppose that there exists a functor 
\[F: \Fun(\Delta^1,\sC) \ra \Fun(\Delta^1,\sC)\]
and a natural transformation $\eta: \id \Rightarrow F$, such that:
\begin{enumerate}[(i)]
    \item $F$ takes labeled morphisms to labeled cofibrations;
    \item if $f$ is a labeled cofibration, then $\eta_f$ is an equivalence;
    \item if $f$ is labeled, then $\eta_f$ is objectwise labeled.
\end{enumerate}
Then we say that $(\sC,w\sC)$ has \emph{enough cofibrations}.
\end{defn}

\begin{rem}
In \cite{Barwick} Barwick defines the notion of $\sC$ having enough cofibrations as a property of the functor $\Hom(-,\sC): \Wald \ra \Spc$ (see \cite{Barwick}*{Definition 9.21}). He then proves (\cite{Barwick}*{Lemma 9.22}) that the concrete conditions of Definition \ref{defn:enough-cofibrations}. are sufficient to guarantee that $\sC$ has enough cofibrations. The reason we take those conditions as the definition is that they are what one can actually check about a given labeled Waldhausen category.
\end{rem}

The following is \cite{Barwick}*{Theorem 9.24} applied to the case of K-theory\footnote{Namely, the additive theory (cf. \cite{Barwick}*{Definition 7.1. for $\sE = \Spc$.}) $\phi$ is taken to be the K-theory space functor $\K$, i.e.\ the additivization (cf. \cite{Barwick}*{Definition 7.10.}) of $(-)^{\simeq}:\Wald \ra \Spc$.}.

\begin{thm}
\label{thm:fibration}
For $(\sC,w\sC)$ a labeled Waldhausen category with enough cofibrations, and let 
\[
\sC^{w} = \{X \in \sC \; | \; 0 \overset{\ell}{\ra} X \}.
\] 

The following is a pullback square in spectra
\begin{equation}
\label{eq:fibration-thm-square}
    \begin{tikzcd}
    \K(\sC^w) \ar[r] \ar[d] & \K(\sC) \ar[d] \\
    \ast \ar[r] & \jmath_{!}\K(\sB(\sC,w\sC))
    \end{tikzcd}
\end{equation}
where $\jmath_{!}\K$ denotes the left Kan extension of $\K: \Wald \ra \Spctr^{\geq 0}$\footnote{The reason that one can consider $K$ to be valued in connective spectra is \cite{Barwick}*{Proposition 7.10}.} to $\sP^{N\Delta^{\rm op}}_{\empty}(\Wald)$.
\end{thm}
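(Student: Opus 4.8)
The plan is to deduce the statement directly from Barwick's generic fibration theorem \cite{Barwick}*{Theorem 9.24}, which produces precisely such a pullback square for an arbitrary additive theory $\phi$ applied to a labeled Waldhausen category with enough cofibrations. The first step is to recall, exactly as in the proof of the Additivity Theorem \ref{thm:additivity}, that the K-theory space functor $\K_{\rm Spc} = \Omega|S_{\bullet}(-)| : \Wald \ra \Spc$ is the additivization of the pre-additive theory $(-)^{\simeq}: \Wald \ra \Spc$ in the sense of \cite{Barwick}*{\S 7}; in particular it is an additive theory valued in $\sE = \Spc$, hence a legitimate input to \cite{Barwick}*{Theorem 9.24}. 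Moreover, by \cite{Barwick}*{Proposition 7.10} this additivization takes grouplike values, so it lifts canonically to the functor $\K: \Wald \ra \Spctr^{\geq 0}$ into connective spectra that appears in the statement.

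The second step is to check that the hypotheses of \cite{Barwick}*{Theorem 9.24} are met by $(\sC, w\sC)$. By the remark following Definition \ref{defn:enough-cofibrations}, the concrete data $(F,\eta)$ there supplies, via \cite{Barwick}*{Lemma 9.22}, the property that $(\sC,w\sC)$ has enough cofibrations in Barwick's sense; and the constructions $\sC^w = \{X \in \sC \mid 0 \overset{\ell}{\ra} X\}$ and $\sB(\sC,w\sC) = \colim_{\Delta^{\rm op}}\sB_n(\sC,w\sC) \in \sP^{N\Delta^{\rm op}}_{\empty}(\Wald)$ are exactly the ones entering loc.\ cit. Applying the theorem to $\phi = \K_{\rm Spc}$ shows that the square \eqref{eq:fibration-thm-square}, with $\jmath_!\K$ read as the left Kan extension of $\K_{\rm Spc}$ along $\jmath: \Wald \hra \sP^{N\Delta^{\rm op}}_{\empty}(\Wald)$, is a pullback of spaces. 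Since all four vertices are grouplike and $\Omega^{\infty}: \Spctr^{\geq 0} \ra \Spc$ is conservative, preserves limits, and preserves geometric realizations, this pullback of spaces lifts uniquely to the asserted pullback of connective spectra, with $\jmath_!\K$ now the left Kan extension of $\K: \Wald \ra \Spctr^{\geq 0}$.

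I do not anticipate a genuine obstacle: the mathematical content is entirely \cite{Barwick}*{\S 9}, and what remains is bookkeeping — matching our notions of labeled Waldhausen category, enough cofibrations, $\sC^w$, and $\sB(\sC,w\sC)$ with Barwick's, and tracking the target category carefully so that $\jmath_!\K$ is formed in connective spectra rather than merely in spaces. The one point perhaps worth spelling out is that, because $\sP^{N\Delta^{\rm op}}_{\empty}(\Wald)$ is freely generated from $\Wald$ under geometric realizations, $\jmath_!\K$ preserves geometric realizations, so that $\jmath_!\K(\sB(\sC,w\sC)) \simeq \colim_{\Delta^{\rm op}}\K(\sB_n(\sC,w\sC))$; this is the description of the bottom-right corner that is actually used downstream.
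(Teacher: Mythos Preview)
Your proposal is correct and matches the paper's approach exactly: the paper does not give an independent proof of this statement but simply records it as \cite{Barwick}*{Theorem 9.24} specialized to the additive theory $\phi = \K$, with the footnote that $\K$ lands in connective spectra by \cite{Barwick}*{Proposition 7.10}. Your write-up is in fact more detailed than the paper's treatment, spelling out the bookkeeping (matching Definitions \ref{defn:labelled-Waldhausen-category}--\ref{defn:enough-cofibrations} with Barwick's, the lift to connective spectra, and the identification $\jmath_!\K(\sB(\sC,w\sC)) \simeq \colim_{\Delta^{\rm op}}\K(\sB_n(\sC,w\sC))$) that the paper leaves implicit.
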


% \todo{Implication for $K$-theory spectra?}

\subsubsection{Cell Decomposition Theorem}

In this section we discuss a result analogous to \cite{Waldhausen}*{\S 1.7} for the case of Waldhausen categories. This section has some overlap with \cite{Fontes}. First we need a definition.

\begin{defn}
Let $\sC$ be a pointed category with finite colimits\footnote{In \cite{Fontes} stable $\infty$-categories are considered, however all that one needs to define a weight structure is a suspension functor on $\sC$.}. A \emph{weight structure} on $\sC$ is the data of a pair of subcategories $(\sC_{w \leq 0},\sC_{w\geq 0})$ satisfying:
\begin{enumerate}[(i)]
    \item $\sC_{w\geq 1} = \Sigma\sC_{w\geq 0} \subset \sC_{w\geq 0}$ and $\sC_{w \leq 0} \subset \sC_{w\leq 1} = \Sigma \sC_{w \leq 0}$;
    \item for $X \in \sC_{w \leq 0}$ one has
    \[
    \Hom_{\sC}(X,Y) \simeq 0
    \]
    for all $Y \in \sC_{w\geq 1}$;
    \item for any $X \in \sC$ there is a decomposition
    \[
    X' \ra X \ra X''
    \]
    where $X' \in \sC_{w \leq 0}$ and $X'' \in \sC_{w\geq 1}$.
\end{enumerate}
We will refer to the subcategory $\sC^{\heartsuit}_{w} = \sC_{w \leq 0} \cap \sC_{w \geq 0}$ as the heart of the weight structure.
\end{defn}

\begin{rem}
The notion of a weight structure was introduced in \cite{Bondarko} for a triangulated category, but it makes sense in any category with a suspension functor. In particular, if a stable category $\sC$ has a weight structure, then the triangulated category $\h\sC$ has a weight structure in the sense of \cite{Bondarko}*{Definition 4.1}.
\end{rem}

\begin{rem}
We notice that weight structure has a flavor very similar to that of a t-structure with an important distinction, as pointed out in \cite{Bondarko-summary}*{Remark 4.4. 4.}, the heart of a weight structure is a notion of projective objects in $\sC$. So it is \emph{not} the case that for a stable category $\sC$ with a weight structure the subcategory $\sC^{\heartsuit}_{w}$ is abelian.
\end{rem}

We recall another result in K-theory in this case simply for motivation.

\begin{thm}[Theorem of the heart (\cite{Barwick-heart})]
\label{thm:cell-decomposition}
For $\sE$ a stable category with a bounded t-structure, one has equivalences of K-theory
\[
\K(\sE^{\heartsuit}) \overset{\simeq}{\ra} \K(\sE_{\geq 0}) \;\;\; \mbox{and}\;\;\; \K(\sE_{\geq 0}) \overset{\simeq}{\ra} \K(\sE).
\]
\end{thm}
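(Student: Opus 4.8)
The statement is the $\infty$-categorical theorem of the heart, and the quickest honest route is to invoke \cite{Barwick-heart} directly; nevertheless, the argument I would reconstruct from the tools already assembled above runs as follows. Because the $t$-structure on $\sE$ is bounded, $\sE_{\geq 0}$ has finite colimits, so I would regard $(\sE_{\geq 0},\co_{\rm max})$ as a Waldhausen category and equip it with the labeled morphisms $w$ consisting of those maps $f$ inducing an equivalence on $\tau_{\leq 0}$ (equivalently, $\Cofib(f)\in\sE_{\geq 1}$ and $\Fib(f)\in\sE_{\geq 1}$). The verification that $(\sE_{\geq 0},w)$ is a labeled Waldhausen category with enough cofibrations in the sense of Definition \ref{defn:enough-cofibrations} is the routine part: the gluing axiom follows from $\sE_{\geq 1}\subset\sE$ being stable under the relevant pushouts, and the cylinder functor $F$ together with $\eta:\id\Rightarrow F$ is produced from the mapping-cylinder factorization of a morphism in $\sE_{\geq 0}$. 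By construction $(\sE_{\geq 0})^{w}=\{X\mid 0\overset{\ell}{\to}X\}=\sE_{\geq 1}$.

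The substantive step is to identify $\jmath_{!}\K(\sB(\sE_{\geq 0},w))$ with $\K(\sE^{\heartsuit})$. Here one performs a dévissage: any chain $X_{0}\overset{\ell}{\to}\cdots\overset{\ell}{\to}X_{n}$ of labeled morphisms has a common value of $\tau_{\leq 0}$ lying in $\sE^{\heartsuit}$, and applying the Additivity Theorem (Theorem \ref{thm:additivity}) repeatedly to split each $X_{i}$ along its Postnikov layers shows that the inclusion of the ``heart layer'' induces an equivalence on $\K$ at the level of the nerve-of-labeled-morphisms simplicial object. Granting this, the Fibration Theorem (Theorem \ref{thm:fibration}) yields a fiber sequence of connective spectra
\[\K(\sE_{\geq 1})\longrightarrow\K(\sE_{\geq 0})\longrightarrow\K(\sE^{\heartsuit}).\]
Shifting the $t$-structure, the same fiber sequence holds with $(\sE_{\geq 0},\sE_{\geq 1})$ replaced by $(\sE_{\geq -m},\sE_{\geq -m+1})$ for every $m$, and since each term $\sE^{\heartsuit}$ is, up to the shift equivalences, the common ``associated graded'' of the filtration, a straightforward induction using boundedness (so that $\sE_{\geq 0}$ has finite $t$-amplitude on the relevant subquotients) collapses the tower and produces the first equivalence $\K(\sE^{\heartsuit})\overset{\simeq}{\to}\K(\sE_{\geq 0})$.

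For the second equivalence, boundedness gives $\sE=\colim_{m}\sE_{\geq -m}$ as a filtered union of Waldhausen subcategories, and $\K$ commutes with filtered colimits; hence it is enough that each inclusion $\sE_{\geq -m}\hookrightarrow\sE_{\geq -m-1}$ is a $\K$-equivalence, which, after the shift identifications, is the statement that $\K(\sE_{\geq 1})\to\K(\sE_{\geq 0})$ is an equivalence — and this follows from the first part applied to the shifted $t$-structure, since both source and target are then identified with $\K(\sE^{\heartsuit})$ via the heart equivalences constructed above. I expect the genuine obstacle to be the dévissage identifying $\sB(\sE_{\geq 0},w)$ with $\sE^{\heartsuit}$: making precise that ``replace an object by its graded pieces in the heart'' is a $K$-theory equivalence at the level of chains of $\tau_{\leq 0}$-equivalences is exactly where the real content of \cite{Barwick-heart} lies, whereas the remaining steps are bookkeeping with the Fibration and Additivity Theorems.
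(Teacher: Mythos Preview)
The paper does not prove this statement. Theorem \ref{thm:cell-decomposition} is quoted verbatim from \cite{Barwick-heart} and introduced with the words ``We recall another result in K-theory in this case simply for motivation''; no proof is given. What the paper \emph{does} prove is the weight-structure analogue, Theorem \ref{thm:K-theory-heart-of-weight}, in \S\ref{subsec:weight-appendix}. That argument follows Waldhausen's cell-decomposition strategy: one introduces auxiliary filtered-object categories $\sE_n(\sC)$ (diagram (\ref{eq:defn-of-cell-decomposition-categories})), applies the Fibration Theorem to each of these, uses Additivity to split the middle term as a product of hearts, and then passes to the colimit over $n$, where the cofiber term dies by an Eilenberg swindle. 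This is structurally different from your approach of labeling $\sE_{\geq 0}$ directly and trying to identify $\sB(\sE_{\geq 0},w)$ with the heart.

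Beyond the dévissage step you already flag as the hard part, your sketch has a genuine gap in the inductive/circular portion. From the fiber sequence $\K(\sE_{\geq 1})\to\K(\sE_{\geq 0})\to\K(\sE^{\heartsuit})$ alone you cannot conclude $\K(\sE^{\heartsuit})\overset{\simeq}{\to}\K(\sE_{\geq 0})$: shifting gives the same fiber sequence at every level, so the ``tower'' $\K(\sE_{\geq 0})\leftarrow\K(\sE_{\geq 1})\leftarrow\cdots$ has all terms abstractly equivalent and never collapses. Boundedness of the $t$-structure is a condition on objects, not a uniform bound, so there is no finite stage at which the induction terminates. Likewise in your second part, knowing that $\K(\sE_{\geq 1})$ and $\K(\sE_{\geq 0})$ are each abstractly equivalent to $\K(\sE^{\heartsuit})$ does not show that the \emph{inclusion} $\sE_{\geq 1}\hookrightarrow\sE_{\geq 0}$ induces an equivalence: the square relating the two heart embeddings does not commute, since $\Sigma\sE^{\heartsuit}$ lands in $\sE_{\geq 1}$, not in $\sE^{\heartsuit}\subset\sE_{\geq 0}$. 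This is exactly why the paper's (and Waldhausen's) route through finite cell decompositions and the swindle is needed rather than a naive descending induction on connectivity.
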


One would like to prove a version of the above result for weight structures. It is not a surprise that we need some finiteness conditions on the weight structure for this to hold.

\begin{defn}
\label{defn:non-degenerate-bounded-weight}
Given a weight structure $(\sC_{w\leq 0},\sC_{w \geq 0})$ one says that
\begin{enumerate}[(i)]
    \item the weight structure is \emph{non-degenerate} if
    \[
    \bigcap_{n \ra \infty}\sC_{w \geq n} = \bigcap_{n \ra -\infty} \sC_{w \leq n} = 0;
    \]
    \item the weight structure is \emph{bounded} if
    \[
    \bigcup_{n \geq 0} (\sC_{w \leq n} \cap \sC_{w \geq -n}) = \sC.
    \]
\end{enumerate}
\end{defn}

\begin{example}
For $\Spctr^{\omega}$ the stable category of finite spectra, the sphere spectrum $\bS$ generates a weight structure. As follows from the proof of Theorem 4.3.2.III (ii) in \cite{Bondarko}) this can be described more explicitly as follows. $\Spctr^{\omega}_{w \geq 0}$ is the smallest subcategory of $\Spctr^{\omega}$ containing $\Sigma^n\bS$ for $n\geq 0$ and closed under finite extensions and taking idempotents. In other words, $\Spctr^{\omega}_{w \geq 0}$ is the category of finite connective spectra, and $\Spctr^{\omega}_{w \leq 0}$ is defined to be the orthogonal complement of $\Spctr^{\omega}_{w \geq 1}$\footnote{Notice $\Spctr^{\omega}_{w \geq 1}$ is equivalent to the category of finite spectra whose non-positive homotopy groups vanish.}, i.e.\
the subcategory of $Y \in \Spctr^{\omega}$ such that
\begin{equation}
\label{eq:orthogonal-condition}
    \Hom(Y,X) \simeq 0
\end{equation}
for all $X \in \Spctr^{\omega}_{w \geq 1}$.

Since $\Spctr^{\omega}_{w \geq 1}$ is generated by $\Sigma^1\bS$ under finite extensions and idempotent completion, condition (\ref{eq:orthogonal-condition}) concretely says that $Y \in \Spctr^{\omega}_{w \leq 0}$ if and only if
\[
\Hom(Y,\Sigma^n\bS) \simeq 0 
\]
for all $n\geq 1$. That is $Y$ has integral homology concentrated in non-positive degree. 
% \todo{Why exactly?}
\end{example}

\begin{thm}[\cite{Fontes}*{Theorem 4.1.}]
\label{thm:K-theory-heart-of-weight}
Suppose $\sC$ is a Waldhausen category with a non-degenerate and bounded weight structure, then
\[
\K(\sC^{\heartsuit}_{w}) \overset{\simeq}{\ra} \K(\sC) \;\;\; \mbox{and} \;\;\; \K(\sC^{\heartsuit}_{w}) \ra \K(\sC_{w \geq 0})
\]
are equivalences of spectra.
\end{thm}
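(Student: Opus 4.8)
\textbf{Proof proposal for Theorem \ref{thm:K-theory-heart-of-weight}.}

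The plan is to imitate Barwick's proof of the theorem of the heart for $t$-structures, replacing the role of the connective part of a $t$-structure by the ``weight $\geq 0$'' part of the weight structure and exploiting the fact that the heart of a weight structure consists of (homotopy-)projective objects, so that cofibre sequences with both outer terms in $\sC^\heartsuit_w$ split up to homotopy. Concretely, I would equip $\sC_{w\geq 0}$ with a labeled Waldhausen structure whose cofibrations are those maps $f$ with $\Cofib(f) \in \sC_{w\geq 0}$ and whose labeled morphisms are the maps that become isomorphisms after passing to the ``associated graded'' for the weight filtration; equivalently I would try to find, for each object of $\sC_{w \geq 0}$, a finite filtration with subquotients in (shifts of) $\sC^\heartsuit_w$, using boundedness of the weight structure and axiom (iii) of a weight structure repeatedly. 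This is the analogue of the cellular/CW filtration in Waldhausen's cell decomposition theorem, and it is where non-degeneracy and boundedness are really used: non-degeneracy guarantees the filtration actually converges (there is no ``infinitely negative'' or ``infinitely positive'' tail), and boundedness guarantees it is finite.

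First I would set up the labeled Waldhausen category $(\sC_{w\geq 0}, w)$ and check it has \emph{enough cofibrations} in the sense of Definition \ref{defn:enough-cofibrations}: one needs a functor $F:\Fun(\Delta^1,\sC_{w\geq 0}) \ra \Fun(\Delta^1,\sC_{w\geq 0})$ together with $\eta:\id \Rightarrow F$ replacing an arbitrary morphism by a cofibration, which here amounts to the weight-decomposition being sufficiently functorial — the relevant functoriality comes from the fact that weight decompositions, while not unique, are unique up to non-canonical isomorphism and can be chosen naturally after passing to a ``décalage''-type construction; I would invoke or adapt the argument of \cite{Fontes} (or the corresponding step in \cite{Barwick-heart}) for this. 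Next, with the fibration theorem (Theorem \ref{thm:fibration}) in hand, I would analyze the pullback square \eqref{eq:fibration-thm-square} for this labeled structure: the subcategory $\sC_{w\geq 0}^w$ of objects $X$ with $0 \overset{\ell}{\ra} X$ should be exactly (the Waldhausen category underlying) $\sC^\heartsuit_w$ with split cofibrations, and the term $\jmath_!\K(\sB(\sC_{w\geq 0},w))$ should be contractible because the labeled category is, in an appropriate sense, ``filtered by'' copies of the heart so that $\sB$ becomes simplicially contractible — this is the step where one really uses that subquotients land in shifts of the heart and that, by the orthogonality axiom (ii) of a weight structure, there are no higher Ext's obstructing the splitting. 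Contractibility of the $\sB$-term then gives $\K(\sC^\heartsuit_w) \overset{\simeq}{\ra} \K(\sC_{w\geq 0})$.

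To get the second equivalence $\K(\sC^\heartsuit_w) \overset{\simeq}{\ra} \K(\sC)$ I would either (a) run the same fibration-theorem argument on all of $\sC$, now filtering an arbitrary object by a two-sided weight filtration (here non-degeneracy in \emph{both} directions is essential), so that again the labeled $\sB$-term is contractible; or (b) show directly that the inclusion $\sC_{w\geq 0} \hra \sC$ induces an equivalence on $\K$, using boundedness to write every object of $\sC$ as a (finite) iterated cofibre of shifts of objects of $\sC_{w\geq 0}$ and invoking additivity (Theorem \ref{thm:additivity}) together with the $S_\bullet$-delooping (Remark \ref{rem:S-construction-is-delooping}) as in the standard cofinality/devissage arguments. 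Composing the two equivalences yields the theorem; alternatively one deduces $\K(\sC_{w\geq 0})\overset{\simeq}{\ra}\K(\sC)$ and combines.

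\textbf{Main obstacle.} The hard part will be the functoriality needed to verify ``enough cofibrations'': unlike for a $t$-structure, weight decompositions $X' \ra X \ra X''$ are genuinely non-unique and \emph{not} functorial on the nose, so one cannot directly promote axiom (iii) to a functor $\Fun(\Delta^1,\sC) \ra \Fun(\Delta^1,\sC)$ with the properties required by Definition \ref{defn:enough-cofibrations}. Handling this — presumably by a suitable cofibrant-replacement/path-space trick, or by working with the category of weight-filtered objects and a Rees-type construction rather than with $\sC$ itself, and then checking the resulting labeled Waldhausen category has the same K-theory as $\sC$ — is the technical heart of the proof and the place where I expect to spend most of the effort; the contractibility of the $\sB$-term and the identification $\sC^w \simeq \sC^\heartsuit_w$ are comparatively formal once the labeled structure is correctly in place.
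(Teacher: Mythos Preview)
Your high-level plan (fibration theorem plus a cell-decomposition argument) is the right one, and your diagnosis of the obstacle is exactly correct: putting the labeled structure directly on $\sC_{w\geq 0}$ or on $\sC$ will founder on the non-functoriality of weight decompositions. The fix you propose at the end---work with a category of weight-filtered objects rather than with $\sC$ itself---is precisely what the paper does, so you should commit to that from the outset rather than treat it as a fallback.

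Concretely, the paper introduces for each $n\geq 1$ a Waldhausen category $\sE_n(\sC)$ of objects equipped with a finite filtration $X_0\hookrightarrow\cdots\hookrightarrow X_n$ whose $i$-th subquotient lies in $\sC^i_w=\sC_{w\leq i}\cap\sC_{w\geq i}$, labeled by declaring $f$ labeled iff $f_n$ is an equivalence. Because the filtration is part of the data, ``enough cofibrations'' becomes an explicit and easy check: the functor $F$ replaces $X_i$ by the pullback $Y_i\times_{Y_{i+1}}F(X_{i+1})$ (with $F(X_n)=X_n$), no choice of weight decomposition required. One then shows $\sE_n(\sC)^w\simeq\sC^{\heartsuit}_w$ and $\sE_n(\sC)\simeq\sC^{\heartsuit}_w\times\sC^1_w\times\cdots\times\sC^n_w$, and $\colim_n\sE_n(\sC)\simeq\sC$ by boundedness.

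The one genuinely wrong expectation in your proposal is that the $\sB$-term is contractible. For each finite $n$ it is not: the fibration theorem plus additivity identify $\jmath_!\K(\sB(\sE_n(\sC),w\sE_n(\sC)))\simeq\K(\sC^1_w\times\cdots\times\sC^n_w)$, which is nonzero. The vanishing only occurs after taking the filtered colimit over $n$, where the transition maps shift the factors and an Eilenberg swindle kills $\colim_n\K(\prod_{i=1}^n\sC^i_w)$. So the argument is: apply the fibration theorem to each $\sE_n(\sC)$, identify the cofiber via additivity, pass to the colimit, and swindle. This replaces both your contractibility claim and your separate step (b) for the inclusion $\sC_{w\geq 0}\hookrightarrow\sC$.
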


We give a proof of this theorem in \S \ref{subsec:weight-appendix} of the appendix. 

\subsection{Application: K-theory of vector bundles}

In this section we apply Theorem \ref{thm:K-theory-heart-of-weight} to the case of vector bundles over an affine scheme. Let $R$ be a connective $\Einf$-ring spectrum, and consider the category $\Perf(R)$ of compact objects in $\Mod_R$. Recall that the category $\Vect(R)$ of finitely generated and projective $R$-modules is the smallest subcategory of $\Mod_R$ containing $R$ and closed under finite direct sums and retracts.

We denote by $\Mod^{\geq n}_R$ the subcategory of $n$-connective $R$-modules, i.e. those whose homotopy groups vanishes for all $k < n$. One has the following concept introduced in \cite{Lurie-K-theory}*{Lecture 19}.

\begin{defn}
\label{defn:projective-amplitude}
An $R$-module $M$ is said to have \emph{projective amplitude} $\leq n$, if for any $N\in \Mod^{\geq (n+1)}_R$ one has
\[
\Hom_R(M,N) \simeq 0.
\]
\end{defn}

The next result is crucial to endow the category $\Perf(R)$ with a weight structure.

\begin{lem}
\label{lem:heart-is-projective}
For $M\in \Perf(R)$ the following are equivalent:
\begin{enumerate}[(1)]
    \item $M$ is finitely generated and projective;
    \item $M$ is connective and has projective amplitude $\leq 0$.
\end{enumerate}
\end{lem}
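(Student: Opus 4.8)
The plan is to prove the two implications separately, with the implication $(1)\Rightarrow(2)$ being essentially formal and $(2)\Rightarrow(1)$ carrying the content. For $(1)\Rightarrow(2)$: a finitely generated projective module is by definition a retract of a finite free module $R^{\oplus m}$, which is connective; connectivity is closed under retracts, so $M$ is connective. For projective amplitude, it suffices to check the mapping-space vanishing on the generator $R$ and use that $\Hom_R(-,N)$ sends finite colimits and retracts in the first variable to finite limits and retracts; since $\Hom_R(R,N)\simeq N$ and $N\in\Mod_R^{\geq 1}$ has $\pi_0\Hom_R(R,N)=\pi_0 N=0$ and likewise all nonnegative homotopy, one gets $\Hom_R(R,N)$ is $1$-connective hence has vanishing $\pi_0$; a retract of a finite sum of such modules still has $\pi_{\leq 0}=0$, which is exactly projective amplitude $\leq 0$. (One should be slightly careful that projective amplitude $\leq 0$ means $\Hom_R(M,N)\simeq 0$, i.e.\ \emph{all} homotopy vanishes, not just $\pi_0$; but $N\in\Mod_R^{\geq 1}$ and $M$ connective already forces $\Hom_R(M,N)$ to be connective, and the $\pi_0$ computation upgrades to a full contractibility by the same retract argument applied degreewise, or by noting $\tau_{\leq 0}$ of the mapping spectrum vanishes and it is already connective.)

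For the substantive direction $(2)\Rightarrow(1)$: assume $M\in\Perf(R)$ is connective with projective amplitude $\leq 0$. First I would observe that projective amplitude $\leq 0$ says precisely that the functor $\Hom_R(M,-)$ is \emph{right} $t$-exact in the sense that it carries $\Mod_R^{\geq 1}$ into spectra with vanishing $\pi_{\leq 0}$; combined with connectivity of $M$ (which makes $\Hom_R(M,-)$ left $t$-exact, carrying $\Mod_R^{\geq 0}$ to connective spectra), this means $\Hom_R(M,-)$ preserves the heart condition: $\pi_0\Hom_R(M,N)\cong\Hom_{\pi_0 R}(\pi_0 M,\pi_0 N)$ naturally, and more importantly that $M$ behaves like a projective object relative to the Postnikov filtration. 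Concretely, pick a surjection $\pi_0 R^{\oplus m}\twoheadrightarrow \pi_0 M$ of $\pi_0 R$-modules (possible since $M$ is perfect, hence $\pi_0 M$ is finitely generated over $\pi_0 R$), and lift it to a map $R^{\oplus m}\to M$. I claim this map admits a section: it is enough to lift the identity of $\pi_0 M$ through $\pi_0(R^{\oplus m}\to M)$, i.e.\ to solve a lifting problem against $\Hom_R(M,-)$ applied to the fiber of $R^{\oplus m}\to M$; since that map is surjective on $\pi_0$, its fiber $F$ lies in $\Mod_R^{\geq 1}$ after shifting appropriately — more precisely, the cofiber $C$ of $R^{\oplus m}\to M$ is connective with $\pi_0 C=0$, so $C\in\Mod_R^{\geq 1}$, and projective amplitude $\leq 0$ gives $\Hom_R(M,C)\in\Mod^{\geq 1}$, so the obstruction to splitting in $\pi_0\Hom_R(M,-)$ vanishes. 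Hence $M$ is a retract of $R^{\oplus m}$, so $M\in\Vect(R)$.

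I expect the main obstacle to be the bookkeeping in the splitting argument: one must be careful about whether to work with the fiber or cofiber of $R^{\oplus m}\to M$, and to correctly identify the relevant connectivity so that projective amplitude $\leq 0$ applies. The clean way to organize it is the following: let $C=\Cofib(R^{\oplus m}\to M)$; connectivity of $M$ and the fact that $R^{\oplus m}\to M$ is surjective on $\pi_0$ give $C\in\Mod_R^{\geq 1}$. The cofiber sequence yields an exact sequence $\Hom_R(M,R^{\oplus m})\to\Hom_R(M,M)\to\Hom_R(M,C)$, and projective amplitude $\leq 0$ forces $\pi_0\Hom_R(M,C)=0$ (in fact the whole connective truncation vanishes), so $\id_M\in\pi_0\Hom_R(M,M)$ lifts to a map $M\to R^{\oplus m}$ splitting the surjection. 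Then $M$ is a retract of $R^{\oplus m}$, hence finitely generated projective by definition of $\Vect(R)$. A secondary point worth spelling out is why $\pi_0 M$ is finitely generated over $\pi_0 R$: this is because $M$ is perfect, so it is a retract of a finite iterated extension of shifts of $R$, and $\pi_0$ of such is a finitely generated $\pi_0 R$-module; alternatively it follows from $M\in\Perf(R)$ being a compact object together with $M$ connective. I would present the whole argument in a few lines once these connectivity estimates are in place.
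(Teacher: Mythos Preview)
Your argument for $(2)\Rightarrow(1)$ is essentially identical to the paper's: pick a map $R^{\oplus m}\to M$ inducing a surjection on $\pi_0$, observe that the cofiber $C$ lies in $\Mod_R^{\geq 1}$, and use projective amplitude $\leq 0$ to kill the obstruction in $\pi_0\Hom_R(M,C)$, producing a section. Your exposition is in fact cleaner than the paper's, which contains a small slip (it writes ``the composite $R^{\oplus n}\to N$ is null-homotopic'' where it should say $M\to N$).

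For $(1)\Rightarrow(2)$ you take a different route. You argue directly from the characterization of $\Vect(R)$ as retracts of finite free modules: since $\Hom_R(R,N)\simeq N$ has $\pi_0=0$ for $N\in\Mod_R^{\geq 1}$, the same holds for retracts of $R^{\oplus m}$. The paper instead invokes the abstract definition of projectivity (that $\Hom_R(M,-)$ commutes with geometric realizations in $\Mod_R^{\geq 0}$) together with a \v{C}ech-type resolution of $N$ to compute $\pi_0\Hom_R(M,N)\simeq 0$. Your approach is more elementary and avoids the resolution; the paper's approach emphasizes the categorical meaning of projectivity. Both are valid.

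One caution: your parenthetical about upgrading ``$\pi_0=0$'' to ``all homotopy vanishes'' is confused and in fact cannot work as stated. For $M=R$ and $N\in\Mod_R^{\geq 1}$ the mapping space $\Hom_R(R,N)\simeq \Omega^{\infty}N$ has $\pi_k\simeq\pi_k N$, which is typically nonzero for $k\geq 1$; so the mapping space is \emph{not} contractible, and your claim that ``$M$ connective forces $\Hom_R(M,N)$ to be connective'' does not help (for the mapping spectrum it is false, for the mapping space it is vacuous). The paper's Definition~\ref{defn:projective-amplitude} should be read as asserting $\pi_0\Hom_R(M,N)=0$ (this is all the paper proves and all that is used in $(2)\Rightarrow(1)$), so the worry is unfounded and you should simply drop the parenthetical.
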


\begin{proof}
(1) $\Rightarrow$ (2): 
Recall that by definition an $R$-module $M$ is projective\footnote{See \cite{HA}*{Definition 7.2.2.4} and \cite{HTT}*{Definition 5.5.8.18}.} if it is a projective object of $\Mod^{\geq 0}_R$, i.e.\ if the functor
\[
\Hom_R(M,-): \Mod^{\geq 0}_{R} \ra \Spc
\]
commutes with geometric realizations in $\Mod^{\geq 0}_R$.
% By Proposition \ref{prop:equivalent-conditions-projective-modules} and Remark \ref{rem:projective-commutes-with-geometric-realization} for $M$ projective one has that $\Hom_R(M,-)$ commutes with geometric realizations. 
Thus for any $N \in \Mod^{\geq 1}_R$ consider
\[
C_n(N) = 0\times_{N[1]}0\times_{N[1]}\cdots\times_{N[1]}0
\]
a C\v{e}ch resolution of $N$, i.e.\ $\colim_{\Delta^{\rm op}}C_{\bullet}(N) \simeq N$\footnote{The geometric realization is taken in the category $\Mod^{\geq 0}_R$.}. So we can compute 
\[
\pi_0(\Hom(M,N)) \simeq \pi_0(\Hom_R(M,\left|C_{\bullet}(N)\right|)) \simeq \pi_0(\left|\Hom_R(M,C_{\bullet}(N))\right|) \simeq \pi_0(\ast) \simeq 0.
\]

(2) $\Rightarrow$ (1): For $M \in \Perf(R)$, we notice that the functor
\begin{align*}
\Mod^{\rm disc.}_{\pi_0(R)} \ra & \Spc \\
N \mapsto & \Hom_{\pi_0(R)}(\pi_0(M),N)
\end{align*}
from discrete modules over $\pi_0(R)$ commutes with filtered colimits, since\footnote{Indeed, by the t-structure on $\Mod_R$ one has 
\[
\Hom_R(M,N) \simeq \Fib\left(\Hom_R(\tau_{\geq 1}(M),N) \ra \Hom_R(\tau_{\leq 0}(M)[1],N)\right)
\]
and $\Hom_R(\tau_{\geq 1}(M),N) \simeq 0$, since $N \in \Mod^{\leq 0}_R$.}
\[
\Hom_{\pi_0(R)}(\pi_0(M),N) \simeq \Hom_R(M,N).
\]

Thus $\pi_0(M)$ is finitely generated, so we can pick a morphism
\[
p: R^{\oplus n} \ra M
\]
which induces a surjection on $\pi_0$. Consider $N = \Cofib(R^{\oplus n} \ra M)$, by construction, one has $N \in \Mod^{\geq 1}_R$. So the composite
\[
R^{\oplus n} \ra N
\]
is null-homotopic. Hence, there exists a section of $p$ which exhibits $M$ as a direct summand of $R^{\oplus n}$.
\end{proof}

In light of the previous lemma we define a weight structure on $\Perf(R)$ as follows. Let $\Perf(R)_{w \leq 0}$ be the subcategory of perfect modules of projective amplitude $\leq 0$ and $\Perf(R)_{w \geq 0}$ the subcategory of connective perfect modules.

\begin{lem}
\label{lem:weight-structure-on-Perf}
The categories $(\Perf(R)_{w \leq 0},\Perf(R)_{w \geq 0})$ form a weight structure on $\Perf(R)$. Moreover, the heart $\Perf(R)^{\heartsuit}_{w}$ is the category of finitely generated projective modules.
\end{lem}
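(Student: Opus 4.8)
The plan is to verify the three axioms of a weight structure for the pair $(\Perf(R)_{w \leq 0}, \Perf(R)_{w \geq 0})$ directly, using the homotopy-theoretic description of projective amplitude and connectivity. For axiom (i), I first observe that the suspension $\Sigma$ of a module of projective amplitude $\leq 0$ has projective amplitude $\leq 1$, and dually that $\Sigma$ of a connective module is $1$-connective; the key inclusions $\Perf(R)_{w\geq 1} = \Sigma \Perf(R)_{w\geq 0} \subset \Perf(R)_{w\geq 0}$ and $\Perf(R)_{w\leq 0}\subset \Sigma\Perf(R)_{w\leq 0} = \Perf(R)_{w\leq 1}$ are then immediate: a $1$-connective module is in particular connective, and a module of projective amplitude $\leq 0$ has projective amplitude $\leq 1$ (the defining $\Hom$-vanishing against $\Mod^{\geq 2}_R \subset \Mod^{\geq 1}_R$ is weaker). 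I should be slightly careful to match the indexing conventions, but this is bookkeeping.

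Axiom (ii) is essentially the definition of projective amplitude unwound: if $X \in \Perf(R)_{w\leq 0}$ then by Definition \ref{defn:projective-amplitude} we have $\Hom_R(X,Y) \simeq 0$ for all $Y \in \Mod^{\geq 1}_R$, and $\Perf(R)_{w\geq 1} = \Sigma\Perf(R)_{w\geq 0}$ consists of $1$-connective perfect modules, which are in particular objects of $\Mod^{\geq 1}_R$. So the orthogonality holds with no extra work.

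Axiom (iii), the existence of a weight decomposition $X' \to X \to X''$ with $X' \in \Perf(R)_{w\leq 0}$ and $X'' \in \Perf(R)_{w\geq 1}$, is the substantive point; I expect this to be the main obstacle. The natural candidate is an iterated cell attachment: given a perfect $R$-module $X$, build from below by repeatedly choosing finite free modules mapping onto the bottom homotopy group, exactly as in the proof of (2)$\Rightarrow$(1) in Lemma \ref{lem:heart-is-projective}. Concretely, after shifting so that $X$ is connective, one picks $R^{\oplus n_0} \to X$ surjective on $\pi_0$, sets $X_1 = \Fib$ (or works with cofibers going up), and iterates; perfectness guarantees this terminates after finitely many steps up to a shift, producing a finite filtration of $X$ with free associated graded in a bounded range of degrees. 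Truncating this filtration at the appropriate stage gives $X'$ built from free modules in degrees $\leq 0$ (hence of projective amplitude $\leq 0$ by the computation in Lemma \ref{lem:heart-is-projective}, since projective amplitude $\leq 0$ is closed under extensions and finite direct sums of copies of $R$) and cofiber $X''$ which is $1$-connective. The point requiring care is that the filtration can be arranged to be compatible with the connectivity grading — i.e.\ that the "cells" of $X$ can be sorted by degree — which follows because $X$ has homotopy groups bounded below and a perfect module has a cell structure with finitely many cells; one attaches all $0$-cells and negative-degree cells first.

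Finally, the identification of the heart: $\Perf(R)^{\heartsuit}_w = \Perf(R)_{w\leq 0} \cap \Perf(R)_{w\geq 0}$ is exactly the category of perfect modules that are both connective and of projective amplitude $\leq 0$, which by Lemma \ref{lem:heart-is-projective} is precisely $\Vect(R)$, the finitely generated projective $R$-modules. This last step is immediate from the lemma and requires no further argument.
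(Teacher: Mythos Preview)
Your treatment of axioms (i), (ii), and the identification of the heart matches the paper's proof essentially verbatim: (i) is the evident shift computation, (ii) is the definition of projective amplitude, and the heart statement is exactly Lemma \ref{lem:heart-is-projective}.

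Where you diverge is axiom (iii). The paper asserts that the inclusions $\Perf(R)_{w\leq 0}\subset\Perf(R)$ and $\Perf(R)_{w\geq 0}\subset\Perf(R)$ admit adjoints $\tau_{w\leq 0}$, $\tau_{w\geq 0}$ and uses the resulting adjunction triangle as the weight decomposition. This is terse to the point of being questionable: weight decompositions are famously \emph{non}-functorial, so the existence of such adjoints is not something one can simply invoke, and in any case it is essentially what is being proved. Your approach---building $X'$ by attaching finite free cells $R^{\oplus a_k}[k]$ for $k\leq 0$ to kill homotopy groups from the bottom up until the cofiber is $1$-connective---is the standard and honest argument. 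A small correction: you do not need the full cell structure to terminate (that is closer to the boundedness statement in Proposition \ref{prop:non-degenerated-and-bounded-weight-structure}); you only need finitely many steps from the bottom connectivity of $X$ up through degree $0$, and the cofiber $X''=X/X'$ is then automatically perfect and $1$-connective. The point that each $R[k]$ with $k\leq 0$ has projective amplitude $\leq 0$, and that this class is closed under extensions, is exactly right and is what makes $X'$ land in $\Perf(R)_{w\leq 0}$. So your route is genuinely different for (iii), and arguably more rigorous than what the paper writes.
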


\begin{proof}
For condition (i), all we have to notice is that if $M$ has projective amplitude $0$, then $M[-1]$ has projective amplitude $1$, which is clear.

Condition (ii) is true by definition of projective amplitude.

Finally, to check condition (iii) we notice that the inclusion $\Perf(R)_{w \leq 0} \subseteq \Perf(R)$ (resp. $\Perf(R)_{w \geq 0} \subseteq \Perf(R)$) admits a right adjoint $\tau_{w\leq 0}$ (resp. left adjoint $\tau_{w \geq 0}$). And similarly for $\Perf_{w \leq n}$ or $\Perf_{w \geq n}$ for any $n$. Hence, for any $M \in \Perf(R)$ the canonical adjunction maps give
\[
\tau_{w \leq 0}(M) \ra M \ra \tau_{w \geq 1}(M)
\]
whose composite is null-homotopic, by definition of projective amplitude.

The second statement is exactly the content of Lemma \ref{lem:heart-is-projective}.
\end{proof}

\begin{prop}
\label{prop:non-degenerated-and-bounded-weight-structure}
The weight structure on $\Perf(R)$ is non-degenerate and bounded.
\end{prop}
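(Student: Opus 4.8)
The plan is to verify the two conditions of Definition~\ref{defn:non-degenerate-bounded-weight} directly, using the fact that $\Perf(R)$ is compactly generated and that $R$ is connective.

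\textbf{Non-degeneracy.} First I would unwind what $\bigcap_{n\ra\infty}\Perf(R)_{w\geq n}$ is. By definition $\Perf(R)_{w\geq n} = \Sigma^n\Perf(R)_{w\geq 0}$ consists of the $n$-connective perfect $R$-modules; hence an object $M$ in the intersection is a perfect module which is $n$-connective for all $n$, i.e.\ $\pi_k(M) = 0$ for all $k$, so $M \simeq 0$. For the other intersection, suppose $M \in \bigcap_{n}\Perf(R)_{w\leq n}$; this says $M$ has projective amplitude $\leq n$ for every $n\in\bZ$, i.e.\ $\Hom_R(M,N) \simeq 0$ for every $N\in\Mod^{\geq (n+1)}_R$ and every $n$. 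Apply this with $N = M$ itself shifted far enough: since $M$ is perfect it is $m$-connective for some $m$, so $M\in\Mod^{\geq m}_R$, and taking $n = m-1$ gives $\Hom_R(M,M)\simeq 0$, whence $\id_M = 0$ and $M\simeq 0$. This gives $\bigcap_{n\ra -\infty}\Perf(R)_{w\leq n} = 0$ as well.

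\textbf{Boundedness.} I need to show every $M\in\Perf(R)$ lies in $\Perf(R)_{w\leq n}\cap\Perf(R)_{w\geq -n}$ for some $n\geq 0$. Since $M$ is a perfect $R$-module, it is bounded below: $M\in\Mod^{\geq -n}_R = \Perf(R)_{w\geq -n}$ for some $n$, after enlarging $n$ if necessary. For the upper bound, the content is that a perfect module has finite projective amplitude. This follows from the structure of $\Perf(R)$: $\Perf(R)$ is the smallest stable subcategory of $\Mod_R$ containing $R$ and closed under retracts, so $M$ is a retract of a finite iterated extension of shifts $\Sigma^{a_i}R$ of the free module. Each $\Sigma^{a_i}R$ has projective amplitude $\leq -a_i$ (as $\Hom_R(\Sigma^{a_i}R,N) \simeq \Sigma^{-a_i}N$, which vanishes when $N$ is $(-a_i+1)$-connective). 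The class of modules of projective amplitude $\leq m$ is closed under retracts and, for the $\Hom$-vanishing condition, extensions raise the bound by at most a controlled amount; so a finite cell object built from finitely many shifts $\Sigma^{a_i}R$ has projective amplitude $\leq \max_i(-a_i)$. Hence $M\in\Perf(R)_{w\leq n}$ for $n$ large, and combining with the lower bound finishes boundedness.

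\textbf{Main obstacle.} The delicate point is the boundedness of projective amplitude under extensions: the subcategory $\Perf(R)_{w\leq 0}$ is \emph{not} closed under arbitrary extensions in the stable sense (extensions can decrease connectivity of a module in the ``wrong'' direction), so one must argue with the finite cell filtration of $M$ and track how the projective amplitude bound grows. Concretely, if $X' \ra X \ra X''$ is a cofiber sequence with $X'$ of projective amplitude $\leq a$ and $X''$ of projective amplitude $\leq b$, then $X$ has projective amplitude $\leq \max(a,b)$, which is immediate from the long exact sequence applied to $\Hom_R(-,N)$; iterating over the finitely many cells of $M$ gives a uniform bound, and the retract step is harmless since projective amplitude $\leq m$ is a condition cut out by vanishing of a mapping space and hence inherited by retracts. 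I would write this extension estimate out as the one genuine lemma-step and treat the rest as bookkeeping.
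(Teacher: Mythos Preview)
Your proof is correct. For non-degeneracy your argument is essentially the paper's: both reduce to a single $\Hom$ computation, though the paper plugs in $R$ (using $\Hom_R(M,R)\simeq M^\vee$ and $\Hom_R(R,M)\simeq M$) while you plug in $M$ itself, using that a perfect module is bounded below to get $\Hom_R(M,M)\simeq 0$.

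For boundedness you take a genuinely different route. The paper argues by duality: if $M$ had unbounded projective amplitude, one would find perfect $N\in\Perf(R)^{\geq n+1}$ with $\Hom_R(M,N)\not\simeq 0$ for all $n$, hence $\Hom_R(N^\vee,M^\vee)\not\simeq 0$ with $N^\vee$ arbitrarily coconnective, contradicting that $M^\vee$ is again perfect and hence bounded below. Your approach instead uses the finite cell structure of a perfect module and the elementary lemma that projective amplitude is subadditive along cofiber sequences. Your argument is more hands-on and avoids invoking dualizability; the paper's is slicker once you accept that perfect modules are dualizable and bounded below. Both are short.

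One remark on your ``main obstacle'': you are overcomplicating it. Your own extension lemma shows that $\Perf(R)_{w\leq m}$ \emph{is} closed under extensions (if $X',X''$ have projective amplitude $\leq m$ then so does $X$), so there is no subtlety about extensions going ``the wrong way''. The only thing you actually need is that a perfect module is built from \emph{finitely many} cells $\Sigma^{a_i}R$, so that $\max_i(-a_i)$ is finite; that is the honest content of the boundedness step, and it is immediate from the definition of $\Perf(R)$ as the thick subcategory generated by $R$.
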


\begin{proof}
First we check that the weight structure is non-degenerate.
Suppose that $M \in \Perf(R)_{w \leq -n}$ for all $n \geq 0$. Then for any $N \in \Perf(R)^{\geq -n+1}$ we have
\[\Hom_R(M,N) \simeq 0.\]
In particular,
\[
\Hom_R(M,R) \simeq M^{\vee} \simeq 0,
\]
so $M \simeq 0$.

Similarly, if $M \in \Perf(R)_{w \leq n}$ for all $n \geq 0$, then for any $N \in \Perf(R)_{w \leq n-1}$
\[
\Hom_R(N,M) \simeq 0.
\]
In particular,
\[
\Hom_R(R,M) \simeq M \simeq 0.
\]

Now we check the boundedness condition. For $M \in \Perf(R)$, by Corollary 7.2.4.5 from \cite{HA} we know that $M \in \Perf(R)^{\geq -n}$ for some $n \in \bN$, i.e.\ $M \in \Perf(R)_{w \geq -n}$ for some $n \in \bN$. Now assume by contradiction that $M$ has unbounded projective amplitude, i.e.\ for all $n\geq 1$, there exists $N \in \Perf(R)^{\geq (n+1)}$ such that
\[
\Hom_R(M,N) \not\simeq 0.
\]
Thus
\[
\Hom_R(N^{\vee},M^{\vee}) \not\simeq 0,
\]
for $N^{\vee} \in \Perf(R)^{\leq -n-1}$. That is a contradiction with $M \in \Perf(R)^{\geq -m}$ for some $m\in \bN$.
\end{proof}

Proposition \ref{prop:non-degenerated-and-bounded-weight-structure} and Theorem \ref{thm:K-theory-heart-of-weight} imply the following result.

\begin{cor}
\label{cor:K-theory-Perf}
Let $R$ be any connective $\Einf$-ring spectrum, then there is an equivalence of K-theory spectra
\[
\K(\Vect(R)) \overset{\simeq}{\ra} \K(\Perf(R))
\]
where $\Vect(R)$ is the category of finitely generated projective modules over $R$.
\end{cor}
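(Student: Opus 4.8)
The plan is to deduce Corollary \ref{cor:K-theory-Perf} directly from Theorem \ref{thm:K-theory-heart-of-weight}, which provides the general comparison $\K(\sC^{\heartsuit}_w) \overset{\simeq}{\ra} \K(\sC)$ for a Waldhausen category equipped with a non-degenerate bounded weight structure. By Lemma \ref{lem:weight-structure-on-Perf}, the pair $(\Perf(R)_{w\leq 0}, \Perf(R)_{w\geq 0})$ is a weight structure on $\Perf(R)$, and by Proposition \ref{prop:non-degenerated-and-bounded-weight-structure} it is non-degenerate and bounded. So the hypotheses of Theorem \ref{thm:K-theory-heart-of-weight} are met with $\sC = \Perf(R)$, and the theorem yields an equivalence $\K(\Perf(R)^{\heartsuit}_w) \overset{\simeq}{\ra} \K(\Perf(R))$.

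It then remains to identify the heart $\Perf(R)^{\heartsuit}_w$ with $\Vect(R)$ \emph{as Waldhausen categories}, i.e.\ compatibly with the cofibration structures, so that the induced map on K-theory is the one asserted in the corollary. The identification on the level of underlying categories is exactly the second assertion of Lemma \ref{lem:weight-structure-on-Perf} (equivalently Lemma \ref{lem:heart-is-projective}): $\Perf(R)^{\heartsuit}_w = \Perf(R)_{w\leq 0}\cap \Perf(R)_{w\geq 0}$ consists precisely of the connective perfect modules of projective amplitude $\leq 0$, which are the finitely generated projective $R$-modules, i.e.\ $\Vect(R)$. For the Waldhausen structure, I would observe that the cofibrations on $\Perf(R)^{\heartsuit}_w$ (inherited from the weight structure / from the ambient category as in the setup of Theorem \ref{thm:K-theory-heart-of-weight}) are the split injections: any cofibration $M\hookrightarrow N$ in the heart has cofiber again in the heart (cofibers of maps between objects of $\Perf(R)_{w\geq 0}$ of projective amplitude $\leq 0$ are still connective of projective amplitude $\leq 0$, since projectivity of $M$ forces the sequence $M\to N\to N/M$ to split), so every cofibration in the heart is a split monomorphism; conversely split monomorphisms are cofibrations. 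Hence $\Perf(R)^{\heartsuit}_w \simeq \Vect(R)$ with its split cofibration structure, and the canonical inclusion $\Vect(R) \hra \Perf(R)$ is precisely the composite $\Perf(R)^{\heartsuit}_w \hra \Perf(R)$, so the equivalence from Theorem \ref{thm:K-theory-heart-of-weight} is the map in the statement.

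The main obstacle, such as it is, lies in this last bookkeeping: verifying that the cofibration structure on the heart coming out of Theorem \ref{thm:K-theory-heart-of-weight} really is the split one (so that $\K$ of the heart is computed via the relevant $S_\bullet$-construction and agrees with what one expects for $\Vect(R)$), and that the comparison map produced by the theorem agrees on the nose with the map induced by the inclusion $\Vect(R)\ra \Perf(R)$ as in Remark \ref{rem:functoriality-S-construction}. Both points are formal once one unwinds that the heart of a bounded weight structure is a category of projective objects (cf.\ the remark following the definition of weight structure): every short exact sequence in the heart splits, so the ambient cofibrations restrict to split cofibrations, and the naturality of the $S_\bullet$-construction identifies the two maps. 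Everything else is an immediate citation of the preceding lemmas and of Theorem \ref{thm:K-theory-heart-of-weight}.
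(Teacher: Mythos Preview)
Your proposal is correct and follows exactly the paper's approach: the paper deduces the corollary in one line from Proposition \ref{prop:non-degenerated-and-bounded-weight-structure} together with Theorem \ref{thm:K-theory-heart-of-weight}, using the identification of the heart from Lemma \ref{lem:weight-structure-on-Perf}. Your additional bookkeeping about the Waldhausen structure on the heart is more than the paper spells out; one small slip is that the splitting of $M\to N\to N/M$ comes from the projective amplitude of $N/M$ (the connecting map $N/M\to M[1]$ vanishes since $M[1]\in\Mod^{\geq 1}_R$), not from projectivity of $M$.
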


\begin{rem}
\label{rem:t-structure-does-not-work}
We notice that the above Corollary doesn't follow from Barwick's theorem of the heart for t-structure even for the ring of dual numbers $R = k[\epsilon]/(\epsilon^2)$ for $|\epsilon| = 1$. Indeed, the category $\Perf(R)$ does not have a bounded t-structure. Consider $k \in \Mod_R$ it follows that for all $n \in \bZ_{\geq 0}$
\[\tau_{\geq -n}(k) \simeq (R[n] \ra \cdots \ra R[0]),\]
which is a compact object of $\Mod^{\geq -n}_{R}$, i.e.\ $k$ is almost perfect. However, 
\[
\Hom(k,-) \simeq \lim_{n \in \bZ_{\geq 0}} \Hom(\tau_{\geq -n}(k),-)
\]
does not commute with filtered colimits, i.e.\ $k$ is not compact in $\Mod_R$.
\end{rem}

\section{Preliminaries on smooth-extended prestacks}
\label{sec:prestacks}

In this section we review basic aspects of the theory of derived algebraic geometry over simplicial commutative rings (for instance as developed in \cite{SAG}*{\S 25.1.1}). We discuss the concept of a prestack (and stack) left Kan extended from smooth algebras and we prove that the stack of vector bundles over an arbitrary commutative ring $k$ is left Kan extended from smooth algebras (Theorem \ref{thm:sVect-is-smooth-extended}). Together with the observation that over an ordinary commutative ring $k$ all smooth simplicial $k$-algebras are discrete, i.e.\ have vanishing positive homotopy groups, this result will be important to bootstrap the usual determinant from finitely generated modules over an ordinary ring to the category of finitely generated (derived) modules over a genuinely derived ring.

\subsection{Stacks smooth extended}
\label{subsec:stacks-smooth-extended}

% We follow \cite{SAG}*{\S 25.1.1}. 
Consider $k$ a commutative commutative ring\footnote{More generally, one could consider $k$ a simplicial commutative ring.} let $\oAlg^{\rm Poly.}_{k}$ denote the ordinary category whose objects are finite polynomial rings over $k$ and morphisms $k$-algebra homomorphisms. 

We define the category of simplicial commutative $k$-algebras to be
\[
\sAlg_k := \left\{F: \N\oAlg^{\rm Poly.}_{k} \ra \Spc \; | \; F \mbox{ preserves finite products } \right\}.
\]

We refer the reader to \cite{SAG}*{Remark 25.1.1.3} for a discussion of why this category recovers the usual (i.e.\ commutative ring objects in simplicial sets) definition of simplicial commutative $k$-algebras.

A prestack over $k$ is a functor
\[
\sX: \sAlg_k \ra \Spc.
\]
We will say that $\sX$ is a stack if $\sX$ satisfies \'etale descent.

\begin{notation}
We will denote by $\Schaff$ the opposite of the category $\sAlg_k$, the subcategory of classical affine schemes will be denote by $\Schaffcl$. Given a prestack $\sX: (\Schaff)^{\rm op} \ra \Spc$ we denote $\classical{\sX}: (\Schaffcl)^{\rm op} \ra \Spc$ its restriction to classical affine schemes.
\end{notation}

Given a map $f:R \ra R'$ in $\sAlg_k$ we say that $f$ is \emph{formally smooth} if the relative cotangent complex $\bL_{R'/R}$ is concentrated in degrees $\leq -1$, i.e.
\[
\bL_{R'/R} \in \Mod(R)^{\leq -1}.
\]

Moreover, one says that $f:R \ra R'$ is of \emph{finite presentation} if the functor
\[
\Hom_{(\sAlg_{k})_{R/}}(-,R'): (\sAlg_{k})_{R/} \ra \Spc
\]
preserves filtered colimits, i.e.\ $R'$ is a compact object of $(\sAlg_{k})_{R/}$, the category of simplicial commutative $k$-algebras under $R$.

\begin{defn}
For a map $f:R \ra R'$ we say that $f$ is \emph{smooth} if it is formally smooth and of finite presentation. Let $\sAlg^{\rm sm}_k$ denote the subcategory of $\sAlg_k$ consisting of smooth simplicial commutative $k$-algebras.
\end{defn}

\begin{notation}
To alleviate the notation we introduce the following:
\begin{align*}
    \smooth{(-)}: \PStk & \ra \Fun(\sAlg^{\rm sm}_k,\Spc) \\
    \sX & \mapsto \smooth{\sX} := \left.\sX\right|_{\sAlg^{\rm sm}_k}
\end{align*}
and
\begin{align*}
    \LKEsm: \Fun(\sAlg^{\rm sm}_k,\Spc) & \ra \PStk \\
    \sX_0 & \mapsto \LKE_{\sAlg^{\rm sm}_k \hra \sAlg_k}(\sX_0).
\end{align*}
\end{notation}

Given a prestack $\sX \in \PStk$ one has a canonical map
\begin{equation}
    \label{eq:defn-map-smooth-extended-prestacks}
    \LKEsm(\smooth{\sX}) \ra \sX.
\end{equation}

\begin{defn}
A prestack $\sX$ is said to be \emph{smooth-extended} if the map (\ref{eq:defn-map-smooth-extended-prestacks}) is an isomorphism.
\end{defn}

Similarly, given $\sX$ a stack, one has a canonical map
\begin{equation}
    \label{eq:defn-map-smooth-extended-stacks}
    \LLKEsm(\smooth{\sX}) \ra \sX,
\end{equation}
where $\LLKEsm$ means we take the fppf sheafification after the left Kan extension.

\begin{defn}
For a stack $\sX$, one says that $\sX$ is smooth-extended if the canonical map (\ref{eq:defn-map-smooth-extended-stacks}) is an equivalence.
\end{defn}

In the next section we will prove that the stack of vector bundles is smooth-extended. For that we will use a technical observation from A.\ Mathew. To formulate it we first introduce the following notion.

Given a map $f:R \ra R'$ of simplicial commutative $k$-algebras, one says that $f$ is a \emph{henselian surjection} if
\begin{enumerate}[(i)]
    \item $\pi_0(f)$ is a surjection; and
    \item $(R,\pi_0(f))$ is a henselian pair\footnote{See \cite{stacks-project}*{Tag 09XE} for a definition.}.
\end{enumerate}

\begin{prop}[\cite{EHKSY}*{Proposition A.0.1}]
\label{prop:Mathew-criterion}
Given a prestack $\sX: \sAlg_k \ra \Spc$ suppose that:
\begin{enumerate}[(i)]
    \item $\sX$ preserves filtered colimits;
    \item for every henselian surjection $A \ra B$, the map $\sX(A) \ra \sX(B)$ is an effective epimorphism\footnote{I.e. $\sX(B)$ is the colimit of $\{\sX(A^{\times^n_B})\}_{n\in \Delta^{\rm op}}$.};
    \item for every henselian surjection $A \ra C \la B$, the square
    \[
    \begin{tikzcd}
    \sX(A\underset{C}{\times}B) \ar[r] \ar[d] & \sX(B) \ar[d] \\
    \sX(A) \ar[r] & \sX(C)
    \end{tikzcd}
    \]
    is Cartesian.
\end{enumerate}
Then the canonical map
\[
\LKEsm(\smooth{\sX}) \ra \sX
\]
is an isomorphism.
\end{prop}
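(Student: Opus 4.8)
The aim is to show the counit $c\colon F\ra\sX$ is an equivalence, where $F:=\LKEsm(\smooth{\sX})$ and $F(A)=\colim_{(R,\,R\ra A)\in(\sAlg^{\rm sm}_k)_{/A}}\sX(R)$. Note $c$ is already an equivalence on smooth algebras, since left Kan extension along the fully faithful $\sAlg^{\rm sm}_k\hra\sAlg_k$ restricts to the identity. Both $\sX$ and $F$ preserve filtered colimits: for $\sX$ this is hypothesis (i), and for $F$ it holds because a smooth $k$-algebra is of finite presentation, hence compact in $\sAlg_k$, so $(\sAlg^{\rm sm}_k)_{/(-)}$ commutes with filtered colimits and colimits commute among themselves. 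Since every $A$ is a filtered colimit of finitely presented algebras, it suffices to treat $A$ of finite presentation over $k$; and, since $c$ is a natural transformation of finitary functors agreeing on smooth algebras, it already agrees on every ind-smooth algebra, i.e.\ on every filtered colimit of smooth algebras.

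For $A$ finitely presented, I would produce an ``ind-smooth henselian cover'': pick a surjection $P_0=k[x_1,\ldots,x_n]\sra\pi_0(A)$, lift it to a map $P_0\ra A$, and let $\widetilde R$ be the henselization of the pair $\bigl(P_0,\ker(P_0\ra\pi_0 A)\bigr)$. Then $\widetilde R$ is a filtered colimit of étale $P_0$-algebras, hence ind-smooth over $k$; the map $P_0\ra A$ extends over $\widetilde R$ because étale maps lift uniquely along the nil-extension $A\ra\pi_0(A)$; and $\widetilde R\ra A$ is a henselian surjection. Applying the same construction to the terms of its Čech nerve (each finitely presented) gives an augmented simplicial object $\widetilde R_{\bullet}\ra A$ which is levelwise ind-smooth and is a hypercover for the henselian-surjection ``topology''.

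The plan is then: (a) $\sX$ has descent along $\widetilde R_{\bullet}\ra A$ --- iterated use of (ii) and (iii) recovers $\sX(A)$ from the Čech cover $\widetilde R\ra A$, and (under suitable hypercompleteness) this promotes to the hypercover; (b) \emph{$F$ also satisfies (ii) and (iii)}, hence likewise has this descent; then $c(A)$ is the geometric realization of the map of simplicial spaces $F(\widetilde R_{\bullet})\ra\sX(\widetilde R_{\bullet})$, which is a levelwise equivalence since each $\widetilde R_m$ is ind-smooth, so $c(A)$ is an equivalence. Claim (b) is the technical heart: one must show the colimit defining $F$ is compatible with forming henselizations and with henselian Milnor squares, the key input being that a smooth morphism $R\ra B$ satisfies the infinitesimal lifting property against \emph{every} henselian surjection $A\ra B$ (lift a set of étale coordinates along $\pi_0(A)\sra\pi_0(B)$, invert the denominator using that $(\pi_0 A, I)$ is henselian, and solve the defining equations by Hensel's lemma since the Jacobian is a unit), so that smooth algebras over $B$ can be transported over $A$.

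The step I expect to be the main obstacle is precisely claim (b) --- showing that the left Kan extension from the smooth site inherits the henselian excision properties --- together with justifying hypercover, rather than merely Čech, descent in (a) for presheaves assumed only to satisfy (ii)--(iii). Should the hypercover descent not be available in this generality, (a)--(b) should instead be organized as an induction that peels off one generator (or relation) of the presentation of $A$ at a time, using (iii) at each stage to reattach the already-understood smooth piece along the closed subscheme $\Spec\pi_0(A)\hra\Spec\pi_0(\widetilde R)$.
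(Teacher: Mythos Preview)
The paper does not prove this proposition; it is quoted verbatim from \cite{EHKSY}*{Proposition A.0.1} and used as a black box. So there is no ``paper's own proof'' to compare against.

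That said, your outline is essentially the argument of \cite{EHKSY}. The reduction to finitely presented $A$ via (i) and compactness of smooth algebras is correct, as is the construction of the ind-smooth henselian surjection $\widetilde R\ra A$ by henselizing a polynomial cover. The point you flag as the obstacle---claim (b), that the left Kan extension $F$ inherits properties (ii) and (iii)---is indeed where the content lies, and your sketch of the lifting input (smooth algebras over $B$ lift along a henselian surjection $A\ra B$, via Hensel's lemma applied to an \'etale-local standard smooth presentation) is the right mechanism; this is essentially Elkik's theorem. One refinement: rather than proving $F$ satisfies (ii)--(iii) abstractly, the cleaner route in \cite{EHKSY} is to show directly that the indexing category $(\sAlg^{\rm sm}_k)_{/A}$ is cofinal in $(\sAlg^{\rm sm}_k)_{/\widetilde R}$ (and similarly for the fiber products in the \v{C}ech nerve), which follows immediately from the lifting statement and sidesteps having to verify descent for $F$ separately.

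Your worry about hypercover versus \v{C}ech descent is well-placed: conditions (ii)--(iii) only give \v{C}ech descent, not hyperdescent. The fix is not to build a hypercover but to observe that the terms of the \v{C}ech nerve $\widetilde R^{\times_A\bullet}$ are themselves henselizations of smooth algebras along the same ideal (since $\widetilde R\otimes_{P_0}\widetilde R$ is ind-smooth and one henselizes along the kernel to $\pi_0 A$), hence again ind-smooth; so the levelwise equivalence $F(\widetilde R^{\times_A n})\simeq\sX(\widetilde R^{\times_A n})$ holds and \v{C}ech descent for $\sX$ alone suffices.
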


\subsection{Application: the prestack of vector bundles}

In this section we prove that the stack of vector bundles is smooth-extended. We start with a review of the stack of vector bundles. Then we describe this stack as the colimit of a disjoint union of classifying spaces. Using the criterion from the previous section we prove that this colimit is smooth-extended, which gives our result\footnote{The reader is invited to see \cite{EHKSY}*{Appendix A} for arguments that prove that $\sVect$ is smooth-extended as a \emph{prestack}.}.

For $S$ an affine scheme, we let $\QCoh(S)$ denote its category of quasi-coherent sheaves, and $\QCoh(S)^{\geq 0}$ the subcategory of connective objects. One way to define vector bundles over a derived ring is as\footnote{Recall that dualizable means that there exists $\sG \in \QCoh(S)^{\geq 0}$ and maps $e:\sF\otimes \sG \ra \sO_S$ and $\epsilon: \sO_S \ra \sG\otimes \sF$, s.t.\ 
\[
\sF \simeq \sF\otimes \sO_S \overset{\id_{\sF}\otimes\epsilon}{\ra} \sF\otimes\sG \otimes \sF \overset{e\otimes\id_{\sF}}{\ra} \sO_S\otimes \sF \simeq \sF
\]
is isomorphic to $\id_{\sF}$. One also ask a similar condition for $\sG$.}
\[\Vect(S) = \{\sF \in \QCoh(S)^{\geq 0} \; | \; \sF \; \mbox{is dualizable in }\QCoh(S)^{\geq 0}\}\]

For our convenience we recall some equivalent characterizations of $\Vect(S)$. 
% (see \cite{HA}*{Proposition 7.2.4.20.} and \cite{SAG}*{2.9.1.5.} for the results (1), (2) and (3).).

\begin{prop}
\label{prop:equivalent-conditions-projective-modules}
For $\sF \in \QCoh(S)^{\geq 0}$ the following are equivalent:
\begin{enumerate}[(1)]
    \item $\sF$ is a retract of $\sO^{\oplus n}_{S}$, for some $n\geq 1$;
    \item $\sF$ is dualizable in $\QCoh(S)^{\geq 0}$;
    \item $\sF$ is compact and projective as an object of $\QCoh(S)^{\geq 0}$, i.e. $\Hom_R(\sF,-)$ commutes with sifted colimits\footnote{The condition of being compact corresponds to $\Hom(\sF,-)$ commuting with filtered colimits, whereas projective means $\Hom(\sF,-)$ commutes with geometric realizations. By \cite{HTT}*{Lemma 5.5.8.14.} any sifted colimit is a combination of those.};
    \item $\sF$ is flat and almost perfect;
    \item $\sF$ is flat and $\pi_0(\sF)$ is locally free with respect to $\sO_{\classical{S}}$\footnote{I.e.\ $\pi_0(\sF)$ is a vector bundle over $\classical{S}$.}.
\end{enumerate}
\end{prop}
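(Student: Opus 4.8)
The plan is to prove the equivalence of all five conditions by establishing a cycle of implications, using the fact that over an affine scheme $S = \Spec R$ one has $\QCoh(S) \simeq \Mod_R$ and $\QCoh(S)^{\geq 0} \simeq \Mod_R^{\geq 0}$, so that $\Vect(S)$ as defined by dualizability matches the $\Vect(R)$ of Section~1. Most of these implications are either standard or recycle work already done in Lemma~\ref{lem:heart-is-projective} and Lemma~\ref{lem:weight-structure-on-Perf}; the role of this proposition is to collect the characterizations in a usable form.

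First I would establish $(1) \Leftrightarrow (2)$. The implication $(1) \Rightarrow (2)$ is formal: $\sO_S^{\oplus n}$ is dualizable (self-dual), the unit object $\sO_S$ is dualizable, and dualizable objects are closed under finite direct sums and retracts in any symmetric monoidal category. For $(2) \Rightarrow (3)$, I would argue that a dualizable object $\sF$ with dual $\sF^\vee$ has $\Hom_R(\sF, -) \simeq \sF^\vee \otimes_R (-)$, and tensoring with a fixed connective module commutes with all colimits, hence in particular with sifted colimits; this gives compactness and projectivity in $\QCoh(S)^{\geq 0}$. For $(3) \Rightarrow (1)$, this is precisely the argument in the proof of the direction $(2) \Rightarrow (1)$ of Lemma~\ref{lem:heart-is-projective}: projectivity lets one lift a generating set of $\pi_0(\sF)$ to a surjection $\sO_S^{\oplus n} \to \sF$, and then projectivity again (the composite $\sO_S^{\oplus n} \to \Cofib$ into a $1$-connective module is null) produces a splitting, exhibiting $\sF$ as a retract of $\sO_S^{\oplus n}$. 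Here one uses that $\pi_0(\sF)$ is finitely generated over $\pi_0(R)$; this follows from compactness of $\sF$ as in Lemma~\ref{lem:heart-is-projective}, or can be built into the cycle by first passing through conditions $(4)$ or $(5)$.

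Next I would handle $(1) \Rightarrow (4)$ and $(1) \Rightarrow (5)$: a retract of $\sO_S^{\oplus n}$ is flat since $\sO_S^{\oplus n}$ is flat and flatness passes to retracts, and it is almost perfect since $\sO_S^{\oplus n}$ is perfect hence almost perfect and almost perfect modules are closed under retracts; for $(5)$, $\pi_0$ of a retract of $\sO_S^{\oplus n}$ is a retract of $\pi_0(R)^{\oplus n}$, i.e.\ a finitely generated projective $\pi_0(R)$-module, which is the same as a locally free sheaf on $\classical{S}$. The remaining implications $(4) \Rightarrow (5)$ and $(5) \Rightarrow (1)$ are where I would invoke known structure theory: $(4) \Rightarrow (5)$ uses that a flat almost perfect module is determined by $\pi_0$ (flatness forces $\sF \simeq \pi_0(\sF) \otimes_{\pi_0 R} R$ in an appropriate sense), and then almost perfectness of $\sF$ together with flatness makes $\pi_0(\sF)$ a finitely presented flat, hence projective, $\pi_0(R)$-module. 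For $(5) \Rightarrow (1)$: flatness gives $\sF \simeq \pi_0(\sF) \otimes_{\pi_0(R)}^{\bL} R$ (no higher Tor), and a locally free finite-rank $\pi_0(R)$-module is a retract of $\pi_0(R)^{\oplus n}$, so base-changing the splitting along $\pi_0(R) \to R$ exhibits $\sF$ as a retract of $\sO_S^{\oplus n}$.

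The main obstacle, I expect, will be the careful treatment of $(4) \Leftrightarrow (5)$ and pinning down the precise sense in which a flat module is the derived base change of its $\pi_0$; this requires citing the structure of flat modules over connective $\Einf$-rings (e.g.\ \cite{HA}*{\S 7.2.2} or \cite{SAG}), and being attentive to the distinction between ``flat'' (all $\Tor_i$ vanish against discrete modules) and the naive notion. Everything else is either formal nonsense about dualizable/retract objects or a direct citation of Lemma~\ref{lem:heart-is-projective}. Once the cycle $(1) \Rightarrow (2) \Rightarrow (3) \Rightarrow (1)$ and $(1) \Rightarrow (4) \Rightarrow (5) \Rightarrow (1)$ is closed, all five conditions are equivalent and the proposition follows.
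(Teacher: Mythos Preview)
Your proposal is correct but takes a somewhat different route from the paper. The paper largely dispatches the equivalences among (1), (2), (3), and (4) by direct citation---\cite{SAG}*{Proposition 2.9.1.5} for (1)$\Leftrightarrow$(2), \cite{HA}*{Proposition 7.2.2.7} for (1)$\Leftrightarrow$(3), and \cite{HA}*{Proposition 7.2.4.20} for (1)$\Leftrightarrow$(4)---and only argues (4)$\Rightarrow$(5) and (5)$\Rightarrow$(1) by hand. You instead give a self-contained cycle, in particular recycling the argument of Lemma~\ref{lem:heart-is-projective} for (3)$\Rightarrow$(1); this is fine, and your (4)$\Rightarrow$(5) matches the paper's (flat plus almost perfect forces $\pi_0(\sF)$ to be finitely presented and flat, hence projective). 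The one genuine divergence is (5)$\Rightarrow$(1): the paper computes that the dual $\sN = \Hom(\sF,\sO_S)$ is connective (so $\sF$ is dualizable in $\QCoh(S)^{\geq 0}$, recovering (2)), whereas you use that flatness gives $\sF \simeq \pi_0(\sF)\otimes^{\bL}_{\pi_0 R} R$ and then base-change a splitting of $\pi_0(\sF)$ out of $\pi_0(R)^{\oplus n}$. Your argument here is more elementary and arguably cleaner; the paper's is terser overall because it outsources most of the work to \cite{HA} and \cite{SAG}.
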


\begin{proof}
The equivalence between (1) and (2) is \cite{SAG}*{Proposition 2.9.1.5.}, the equivalence between (1) and (3) is \cite{HA}*{Proposition 7.2.2.7.} and the equivalence between (1) and (4) is \cite{HA}*{Proposition 7.2.4.20.}. 

(4) $\Rightarrow$ (5): Notice that by definition $\sF$ is flat if and only if $\pi_0(\sF)$ is flat and $\pi_k(\sF)\simeq \pi_k(\sO_S)\otimes_{\sO_{\classical{S}}}\pi_0(\sF)$ for all $k \geq 0$. Moreover, since $\sF$ is almost perfect and connective, we have that $\tau_{\leq 0}\sF \simeq \pi_0(\sF)$ is compact in $\QCoh(S)^{\heartsuit}$, thus finitely presented as a $\sO_{\classical{S}}$ sheaf. Now, it is a classical fact \cite{Rotman}*{Theorem 3.56.} that a flat and finitely presented module is projective, thus $\pi_0(\sF)$ is projective.

(5) $\Rightarrow$ (1): Consider $\sN = \Hom(\sF,\sO_S)$, we can compute
\[\pi_i\sN \simeq \Ext^i_{S}(\sF,\sO_S) \simeq \Tor^i_S(\sF^{\vee},\sO_S),\]
since $\sF^{\vee}$ is also flat, one has
\[\pi_{i}\sN \simeq \pi_0(\sF)^{\vee}\otimes_{\pi_0(\sO_S)}\pi_i(\sO_S)\]
which vanishes for $i < 0$, because $\sO_S$ is connective.
\end{proof}

\begin{rem}
\label{rem:projective-commutes-with-geometric-realization}
For $S$ an affine scheme, recall $\sF \in \QCoh(S)$ is a projective object if the functor $\Hom_S(\sF,-)$ commutes with geometric realizations. Moreover, if $\sF$ is finitely generated, then $\Hom_S(\sF,-)$ commutes with sifted colimits. In particular, for any $\sF \in \Vect(S)$, the functor $\Hom_S(\sF,-)$ commutes with sifted colimits.
\end{rem}

The main object of interest for us is the prestack
\begin{align*}
    \sVect: \Schaffop & \ra \Spc \\
    S & \mapsto \Vect(S)^{\simeq}.
\end{align*}

We first observe the following standard fact.

\begin{lem}
\label{lem:sVect-is-a-stack}
The prestack $\sVect$ satisfies descent with respect to the flat topology, i.e. it is a stack.
\end{lem}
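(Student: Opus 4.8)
The plan is to show that $\sVect$, as a functor $\Schaffop \to \Spc$ sending $S$ to $\Vect(S)^{\simeq}$, satisfies descent for the flat (fppf) topology by checking it satisfies descent along a single faithfully flat map $R \to R'$, i.e. the natural map
\[
\Vect(R)^{\simeq} \ra \lim_{[n] \in \Delta} \Vect(R'^{\otimes_R (n+1)})^{\simeq}
\]
is an equivalence. Since $\Vect$ has been identified (Proposition \ref{prop:equivalent-conditions-projective-modules}) with the full subcategory of $\QCoh$ spanned by dualizable connective objects, and $\QCoh$ itself satisfies flat descent by general derived algebraic geometry (e.g. \cite{SAG}*{Corollary D.6.3.3} or \cite{HA}*{\S 7.2.4}), the statement reduces to checking that the property of being a vector bundle is local for the flat topology. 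Concretely, I would argue: (a) faithfully flat descent of modules holds, so $\Mod_R \simeq \lim_{[n]} \Mod_{R'^{\otimes_R(n+1)}}$; (b) this equivalence is compatible with tensor products, hence preserves dualizable objects and connective objects in each direction; (c) therefore it restricts to an equivalence on the subcategories of dualizable connective modules, and passing to underlying $\infty$-groupoids gives the descent statement for $\sVect$.

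The key steps, in order, are: first, recall that $\QCoh$ (equivalently $S \mapsto \Mod_{\sO_S}$ on affines) is a flat sheaf of symmetric monoidal $\infty$-categories — this is the main external input and I would simply cite it. Second, observe that under the descent equivalence $\Mod_R \simeq \lim_{[n]} \Mod_{R_n}$ (writing $R_n = R'^{\otimes_R(n+1)}$), a module $M$ corresponds to a compatible family $(M_n)$ with $M_n \simeq R_n \otimes_R M$; since base change is symmetric monoidal, $M$ is dualizable iff each $M_n$ is dualizable iff (by faithful flatness and conservativity of $R \to R'$, i.e. descent of the vanishing of homotopy groups) $M_0 = M \otimes_R R'$ is dualizable. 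Similarly $M$ is connective iff $M \otimes_R R'$ is connective, using that $R \to R'$ is flat (so base change is $t$-exact) and faithfully flat (so conservative on the heart and on connectivity). Third, conclude that the descent equivalence for $\Mod$ restricts to an equivalence $\Vect(R) \simeq \lim_{[n]} \Vect(R_n)$ of the dualizable-connective subcategories, and take maximal $\infty$-subgroupoids — a functor $\lim$ commutes with $(-)^{\simeq}$ — to obtain $\Vect(R)^{\simeq} \simeq \lim_{[n]} \Vect(R_n)^{\simeq}$, which is exactly the sheaf condition for $\sVect$.

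The main obstacle, such as it is, is purely bookkeeping: verifying that "being a vector bundle" descends requires knowing that faithfully flat base change $R \to R'$ is conservative and detects connectivity and dualizability, and that the descent equivalence for modules is genuinely symmetric monoidal so that dualizable objects are matched on the nose. All of these are standard but must be invoked in the right form; I would lean on \cite{SAG}*{\S D.6} or \cite{HA}*{\S 7.2.4} for the symmetric monoidal descent of $\QCoh$ and on Proposition \ref{prop:equivalent-conditions-projective-modules} to translate between the various characterizations of $\Vect(S)$. There is no delicate new idea here — the lemma is labelled "standard" precisely because it follows formally from flat descent for quasi-coherent sheaves together with the stability of the relevant finiteness conditions under faithfully flat base change — so the proof should be short, with the only care needed being to phrase the connectivity and dualizability checks using faithful flatness correctly.
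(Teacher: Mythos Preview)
Your proposal is correct and follows essentially the same approach as the paper: both reduce to flat descent for $\QCoh$ (resp.\ $\QCoh^{\geq 0}$), citing \cite{SAG}*{Corollary D.6.3.3} and \cite{SAG}*{Theorem D.6.3.1}, and then check that the property of being a vector bundle is flat-local. The only difference is that where you sketch the argument that dualizability and connectivity are detected by faithfully flat base change, the paper simply cites \cite{SAG}*{Proposition 2.9.1.4} for this step.
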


\begin{proof}
Since the assignment $S \mapsto \QCoh(S)$ satisfies descent with respect to the flat topology (see \cite{SAG}*{Corollary D.6.3.3.}) and $S \mapsto \QCoh(S)^{\geq 0}$ satisfies descent as well, by \cite{SAG}*{Theorem D.6.3.1}. It is enough to check that the condition of being a vector bundle, i.e.\ to be dualizable, is local with respect to the flat topology. This is exactly \cite{SAG}*{Proposition 2.9.1.4}.
\end{proof}

Before giving the proof that the stack of vector bundles is smooth-extended we introduce an auxiliary stack that we can describe more concretely than $\sVect$. For each $n\geq 1$ let
\[
\GL_n: \sAlg_k \ra \Spc
\]
be defined as the pullback
\begin{equation}
\label{eq:pullback-square-GLn}
    \begin{tikzcd}
    \GL_n(R) \ar[r] \ar[d] & \M_n(R) \ar[d] \\
    \GL_n(\pi_0(R)) \ar[r] & \M_n(\pi_0(R)) 
    \end{tikzcd}
\end{equation}
where $\M_n(R)$ is the $n^2$-affine space over $R$ of $n \times n$-matrices. We notice that $\GL_n$ is represented by an affine scheme over $k$, namely
\[
\GL_n(R) \simeq \Hom_{\sAlg_k}(A_n,R),
\]
where 
\[
A_n \simeq k[(x_{i,j})_{1 \leq i,j \leq n}]/[\det(x_{i,j})^{-1}],
\]
where $\det(x_{i,j})$ is the determinant formula for the entries of an $n$ by $n$ matrix.

\begin{prop}
\label{prop:GL_n-is-smooth-extended}
For any $n \geq 1$ the canonical map
\begin{equation}
    \label{eq:smooth-extended-functor-satisfies-descent}
    \LLKEsm(\smooth{\GL_n}) \ra \GL_n
\end{equation}
is an equivalence.
\end{prop}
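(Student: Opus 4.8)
The plan is to verify the three hypotheses of Proposition \ref{prop:Mathew-criterion} for $\sX = \GL_n$ and then upgrade from the prestack statement to the stack statement by observing that $\GL_n$ is already a (representable, hence fppf) sheaf, so that sheafification after the left Kan extension is harmless. More precisely, since $\GL_n$ is representable by the affine scheme $\Spec A_n$ over $k$, it satisfies fppf descent; and the left Kan extension $\LKEsm(\smooth{\GL_n})$ maps to $\GL_n$ by a map which, after fppf sheafification, we want to be an equivalence. The cleanest route is: first show $\LKEsm(\smooth{\GL_n}) \ra \GL_n$ is an isomorphism of \emph{prestacks} via Mathew's criterion; then apply sheafification to both sides — the right side is unchanged because $\GL_n$ is already a stack, and one gets the claim.

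So the heart of the matter is checking conditions (i)--(iii) of Proposition \ref{prop:Mathew-criterion} for $\GL_n$. For (i), $\GL_n$ preserves filtered colimits because it is corepresented by the compact object $A_n \in \sAlg_k$ (a finitely presented $k$-algebra); one reads this off the defining pullback square \eqref{eq:pullback-square-GLn} together with the fact that $\M_n = \bA^{n^2}$ and $\GL_n$ on discrete rings both preserve filtered colimits, and finite limits commute with filtered colimits in $\Spc$. For (iii), given a henselian surjection $A \ra C \la B$, I would use that $\M_n(-) \simeq (-)^{n^2}$ manifestly preserves all limits, that $\pi_0$ preserves the pullback $A\times_C B$ when $A \ra C$ is surjective on $\pi_0$ (this is where the henselian/surjectivity hypothesis enters, giving a Milnor-type square on $\pi_0$), and that $\GL_n$ of a ring is a finite-limit construction out of $\pi_0$; assembling these via the pullback square \eqref{eq:pullback-square-GLn} and the fact that limits commute with limits gives the Cartesian square. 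Condition (ii) — that $\GL_n(A) \ra \GL_n(B)$ is an effective epimorphism for a henselian surjection $A \ra B$ — is the step I expect to be the main obstacle: it amounts to a lifting statement for invertible matrices along a henselian pair, i.e. that an invertible matrix over $B$ (together with descent data) lifts to one over $A$, which reduces to surjectivity of $\GL_n(A) \ra \GL_n(B)$ on $\pi_0$ plus connectivity/obstruction bookkeeping on higher homotopy. The key input is the classical fact that idempotents and units lift along henselian pairs (\cite{stacks-project}*{Tag 09XE} and its consequences), applied to $\pi_0(A) \ra \pi_0(B)$, together with the observation that the homotopy fiber of $\M_n(A) \ra \M_n(B)$ is a module, hence connective, so there are no higher obstructions once $\pi_0$ is handled.

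An alternative, possibly slicker, argument for (ii) and a sanity check on the whole statement: one can instead exhibit $\GL_n$ concretely as $\Spec A_n$ with $A_n = k[x_{ij}][\det^{-1}]$ a \emph{smooth} $k$-algebra, so that $\GL_n$ literally lies in the essential image of $\sAlg^{\rm sm}_k$, and for a representable prestack corepresented by a smooth algebra the counit $\LKEsm(\smooth{\GL_n}) \ra \GL_n$ is formally an equivalence (the left Kan extension along a fully faithful inclusion is the identity on objects of the subcategory, and corepresentables are determined by their restriction to a cofinal subcategory — here $\sAlg^{\rm sm}_k$ contains the corepresenting object). I would present the Mathew-criterion proof as the main argument since it is the one that generalizes to $\sVect$ in the next section, but remark that the smoothness of $A_n$ gives an immediate independent verification. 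The only genuine care needed is to confirm that "smooth" in the sense of the Definition above (formally smooth plus finite presentation) does apply to $A_n$: finite presentation is clear, and formal smoothness follows because $\bL_{A_n/k} \simeq \Omega^1_{A_n/k}$ is a finite projective $A_n$-module placed in degree $0$, hence concentrated in degrees $\leq -1$ is \emph{not} quite the condition — rather one wants $\bL_{A_n/k}$ to be a retract of a free module in degree $0$, which is the classical smoothness of the localization $k[x_{ij}][\det^{-1}]$; I would cite the standard cotangent-complex computation for this.
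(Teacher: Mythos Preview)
Your approach is the paper's: verify Mathew's criterion (Proposition~\ref{prop:Mathew-criterion}) for $\GL_n$, then pass to the sheafified statement using that $\GL_n$ is already an fppf sheaf. The paper's execution is leaner in two places. For condition~(iii) the paper simply observes that $\GL_n$ is representable by $A_n$, so $\GL_n(-) \simeq \Hom_{\sAlg_k}(A_n,-)$ commutes with all limits; your Milnor-square argument through the pullback \eqref{eq:pullback-square-GLn} is correct but does more work than needed. For condition~(ii) the paper cites Gruson \cite{Gruson}*{Th\'eor\`eme I.8}, which says that for a smooth affine scheme the map on points along a henselian surjection admits a \emph{section}, hence is an effective epimorphism; your hands-on lifting sketch is essentially rederiving the special case of this needed here.

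Your alternative route---noting that $A_n = k[x_{ij}][\det^{-1}]$ is itself a smooth $k$-algebra, so that $\GL_n$ is corepresented by an object already in $\sAlg^{\rm sm}_k$ and the counit of the Kan extension is tautologically an equivalence---is a valid and arguably cleaner shortcut that the paper does not take. Your hesitation about whether $A_n$ meets the paper's stated definition of formally smooth is well-founded: the condition ``$\bL_{R'/R} \in \Mod(R)^{\leq -1}$'' as written appears to be a typo (for a classically smooth algebra the cotangent complex is a finite projective module in degree~$0$), but this does not affect either argument, since the Mathew-criterion proof never uses smoothness of $A_n$ and the alternative argument only needs that $A_n$ lies in whatever subcategory one is Kan-extending from.
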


\begin{proof}
We will apply Proposition \ref{prop:Mathew-criterion}. 

We notice that in the defining diagram (\ref{eq:pullback-square-GLn}) the terms $\GL_n(\pi_0(R))$, $\M_n(\pi_0(R))$ and $\M_n(R)$ all commute with filtered colimits, since filtered colimits commute with finite limits, so condition (i) holds for $\GL_n(R)$.

Since $\GL_n$ is representable by \cite{Gruson}*{Th\'eor\`eme I.8} for any henselian surjection $f:R \ra R'$ the map $\GL_n(R) \ra \GL_n(R')$ has a section, which implies that it is an effective epimorphism.

For condition (iii), we notice that since $\GL_n$ is representable by $A_n$ it commutes with limits. 

Thus, the canonical map
\[
\LKEsm(\smooth{GL_n}) \ra \GL_n
\]
is an equivalence.

The equivalence of (\ref{eq:smooth-extended-functor-satisfies-descent}) follows from the fact that $\GL_n$ satisfies flat descent, i.e.\ the map
\[
\LLKEsm(\smooth{\GL_n}) \ra \LLKEsm(\smooth{\GL_n})
\]
is an equivalence.
\end{proof}

Now consider the prestack defined by
\begin{align*}
    \Schaffop & \ra \Spc \\
    S & \mapsto \BGL(S) = \left|\B_{\bullet}\GL_n(S)\right|
\end{align*}
where $\B_{\bullet}\GL_n(S)$ is the simplicial object whose value on $[m] \in \Delta^{\rm op}$ consists of $m$ copies of $\GL_n(S)$ and maps are the canonical projections and inner multiplications. Let $\sBGL_n$ denote its fppf sheafification, thus we have the stack
\[\sBGL = \bigsqcup_{n\geq 0}\sBGL_n.\]

\begin{prop}
\label{prop:BGL-is-classical}
The canonical map
\[
\LKEclStk(\classical{\sBGL}) \ra \sBGL
\]
is an equivalence of stacks.
\end{prop}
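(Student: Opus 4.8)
The plan is to reduce the statement to the analogous fact for each $\GL_n$, which is already available via Proposition \ref{prop:GL_n-is-smooth-extended} together with a classicality input. Concretely, since $\sBGL = \bigsqcup_{n \geq 0}\sBGL_n$ and both left Kan extension (along $\Schaffcl \hra \Schaff$) and fppf sheafification commute with coproducts, it suffices to prove the claim componentwise, i.e.\ that $\LKEclStk(\classical{\sBGL_n}) \to \sBGL_n$ is an equivalence for each $n$. For this I would first show that the \emph{presheaf} $\BGL_n\colon S \mapsto |\B_\bullet \GL_n(S)|$, before sheafification, is left Kan extended from classical affine schemes. The key observation is that $\GL_n$ is representable by the affine scheme $\Spec A_n$ with $A_n = k[(x_{i,j})]/[\det(x_{i,j})^{-1}]$ a \emph{classical} (in fact smooth) $k$-algebra, hence $\GL_n$ is itself classical: for any $R$ one has $\GL_n(R) = \Hom_{\sAlg_k}(A_n, R) = \Hom_{\sAlg_k}(A_n, \pi_0 R) = \GL_n(\pi_0 R)$, so $\GL_n$ is the left Kan extension of its own restriction to $\Schaffcl$ (equivalently, $\GL_n$ is pulled back along $\pi_0$). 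Since $\B_\bullet$ and geometric realization are computed levelwise and objectwise in $\Spc$, the presheaf $\BGL_n$ inherits the same property: $\BGL_n(R) \simeq \BGL_n(\pi_0 R)$, so $\LKEcl(\classical{(\BGL_n)}) \to \BGL_n$ is an equivalence of presheaves.

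Next I would pass from presheaves to stacks. Write $L$ for fppf sheafification on $\PStk$. We want $\LLKEcl(\classical{(L\,\BGL_n)}) \simeq L\,\BGL_n$, where $\LLKEcl$ denotes sheafification after the classical left Kan extension. The point is that sheafification does not change the restriction to classical affine schemes up to the needed extent — more precisely, restriction to $\Schaffcl$ commutes suitably with $L$ since the fppf topology on $\Schaff$ restricts to the fppf topology on $\Schaffcl$ and sheafification is computed by the same local-epimorphism/covering data. Thus $\classical{(L\,\BGL_n)} \simeq L^{\cl}(\classical{(\BGL_n)}) \simeq L^{\cl}(\classical{(\sBGL_n)})$, and applying $\LLKEcl$ and using the presheaf-level equivalence together with the fact that $L\,\LKEcl$ is left adjoint to restriction on the categories of stacks gives the comparison map an inverse. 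Equivalently and perhaps more cleanly: the canonical map $\LKEcl(\classical{(\sBGL_n)}) \to \BGL_n$ (before the second sheafification) is an equivalence on classical test objects by the previous paragraph applied to $\sBGL_n$ in place of $\BGL_n$ — here I use that $\classical{(\sBGL_n)}$ and $\classical{(\BGL_n)}$ have the same sheafification over $\Schaffcl$ — and then sheafifying both sides yields the claim, since sheafification of a map that is an equivalence after restriction to a topology-generating subcategory is an equivalence.

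The main obstacle I anticipate is the bookkeeping around how fppf sheafification interacts with the two adjoint pairs (restriction/left Kan extension between $\Schaffcl$ and $\Schaff$), in particular verifying that $\classical{(L\,\sX)} \simeq L^{\cl}(\classical{\sX})$ — this requires knowing that every fppf cover of a classical affine scheme can be refined by one whose source is again classical, and that descent data are unaffected, which is true but deserves a careful citation (e.g.\ to \cite{SAG}) rather than an offhand remark. The rest is formal: the coproduct reduction is immediate, the classicality of $\GL_n$ is an explicit consequence of representability by a classical algebra, and the propagation of classicality through $\B_\bullet(-)$ and $|-|$ is because these are objectwise colimit constructions in $\Spc$. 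I would therefore structure the write-up as: (1) reduce to each $n$; (2) show $\GL_n$, hence $\BGL_n$, is classical as a presheaf; (3) handle sheafification via the compatibility of $L$ with restriction to $\Schaffcl$; (4) conclude.
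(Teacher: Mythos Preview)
Your argument contains a genuine error at step (2). The claim that $\GL_n(R) = \Hom_{\sAlg_k}(A_n, R) = \Hom_{\sAlg_k}(A_n, \pi_0 R)$ is false: representability by a \emph{classical} algebra $A_n$ does not make the functor factor through $\pi_0$. Already for $k[x]$ one has that $\Hom_{\sAlg_k}(k[x], R)$ is the underlying space of $R$, with $\pi_i = \pi_i R$ for all $i$, whereas $\Hom_{\sAlg_k}(k[x], \pi_0 R) = \pi_0 R$ is discrete. Thus $\GL_n$ is neither ``pulled back along $\pi_0$'' nor left Kan extended from classical affines by the argument you give; your subsequent claim that $\BGL_n(R) \simeq \BGL_n(\pi_0 R)$ fails for the same reason. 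What \emph{is} true is that $A_n$ is smooth, and the relevant input is not classicality of $A_n$ but Proposition~\ref{prop:GL_n-is-smooth-extended}, which uses Mathew's criterion (Proposition~\ref{prop:Mathew-criterion}) to show that $\GL_n$ is left Kan extended from \emph{smooth} algebras.

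The paper's proof proceeds entirely through smooth extension: it invokes Proposition~\ref{prop:GL_n-is-smooth-extended}, then uses that geometric realization commutes with left Kan extension and with $\smooth{(-)}$ to deduce $\LKEsm(\smooth{\BGL_n}) \simeq \BGL_n$, then sheafifies and takes the disjoint union over $n$. (Note that the statement as printed uses $\LKEclStk$ while the proof and the downstream application in Theorem~\ref{thm:sVect-is-smooth-extended} work with $\LLKEsm$; since over a discrete $k$ every smooth $k$-algebra is classical, the smooth-extended statement is what is actually proved and used.) Your overall architecture---reduce to each $n$, handle the presheaf level, then sheafify, then take coproducts---matches the paper's, but the substantive step must go through smooth extension and Mathew's criterion rather than the incorrect $\pi_0$-factorization.
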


\begin{proof}
Fix $n\geq 0$, we first notice that passing to classifying spaces, is given by a geometric realization, which commutes with Left Kan extensions. Thus, from Proposition \ref{prop:GL_n-is-smooth-extended} one has
\begin{equation}
\label{eq:BGL_n-is-classical}
\B(\LKEsm(\smooth{\GL_n})) \simeq \LKEsm(\B(\smooth{\GL_n})) \simeq \LKEsm(\smooth{\BGL_n}) \overset{\simeq}{\ra} \BGL_n,   
\end{equation}
since $\smooth{(-)}$ also commutes with geometric realizations.

If we sheafify both sides of (\ref{eq:BGL_n-is-classical}) one obtains an equivalence
\[
\LLKEsm(\smooth{\BGL_n}) \simeq \LLKEsm(\smooth{\sBGL_n}) \overset{\simeq}{\ra}\sBGL_n,
\]
where the last somorphism follows from the fact that sheafification is also given by a colimit.

Finally, since $\LLKEsm$ and $\smooth{}$ commute with disjoint unions, we are done.
\end{proof}

We notice that there are canonical maps of prestack
\[\BGL_n \ra \sVect\]
that send the unique $S$-point of $\BGL_n$ to $\sO^{\oplus n}_{S}$. In particular, these assemble to a map of prestacks
\begin{equation}
\label{eq:map-between-BGL-and-sVect}
\BGL \ra \sVect.    
\end{equation}

By Lemma \ref{lem:sVect-is-a-stack} the map (\ref{eq:map-between-BGL-and-sVect}) factors through the sheafification of $\BGL$:
\[f:\sBGL \ra \sVect.\]

\begin{lem}
\label{lem:Vect-is-BGL}
The map
\begin{equation}
    \label{eq:BGL-to-sVect}
    f: \sBGL \ra \sVect
\end{equation}
is an equivalence of stacks.
\end{lem}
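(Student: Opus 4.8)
The plan is to show that the map $f\colon \sBGL\to\sVect$ is an equivalence of stacks by checking that it induces an equivalence on sections over every affine scheme $S$, i.e.\ that $\BGL(S)^{+}\to\Vect(S)^{\simeq}$ is an equivalence after sheafification, where $\BGL(S)=\bigsqcup_{n\geq 0}|\B_{\bullet}\GL_n(S)|$. Since both sides are stacks for the fppf (indeed flat) topology --- $\sVect$ by Lemma \ref{lem:sVect-is-a-stack} and $\sBGL$ by construction as a sheafification --- it suffices to verify that $f$ is an equivalence locally, that is, that every object of $\Vect(S)^{\simeq}$ is flat-locally in the image and that $f$ is fully faithful. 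Concretely I would argue as follows.

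First, essential surjectivity after sheafification: for $\sF\in\Vect(S)$, by Proposition \ref{prop:equivalent-conditions-projective-modules} (equivalence of (1) and (2)) $\sF$ is a retract of $\sO_S^{\oplus n}$ for some $n$; but more is true flat-locally. By Proposition \ref{prop:equivalent-conditions-projective-modules}(5), $\pi_0(\sF)$ is a vector bundle over $\classical{S}$, hence Zariski-locally on $\classical{S}$ --- and therefore flat-locally on $S$ --- free of some rank $n$. Since $\sF$ is flat, $\sF$ is determined by $\pi_0(\sF)$ in the sense that $\sF\simeq\sO_S\otimes_{\sO_{\classical S}}\pi_0(\sF)$ (flatness means exactly $\pi_k(\sF)\simeq\pi_k(\sO_S)\otimes_{\pi_0(\sO_S)}\pi_0(\sF)$, as recalled in the proof of Proposition \ref{prop:equivalent-conditions-projective-modules}), so flat-locally $\sF\simeq\sO_S^{\oplus n}$. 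Thus the $S$-point $\sF$ of $\sVect$ lifts, after a flat cover, to the distinguished $S$-point of $\BGL_n$, which shows $f$ is an effective epimorphism of stacks onto each component, hence essentially surjective after sheafification.

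Second, full faithfulness: I must compare mapping spaces. The point of the classifying-space presentation is that $|\B_{\bullet}\GL_n(S)|$ is the classifying space of the group $\GL_n(S)=\Aut_{\QCoh(S)^{\geq 0}}(\sO_S^{\oplus n})$ of automorphisms of the trivial bundle of rank $n$ --- recall $\GL_n$ was \emph{defined} via the pullback square (\ref{eq:pullback-square-GLn}) precisely so that $\GL_n(R)$ is the space of invertible $n\times n$ matrices, which is the automorphism space of $R^{\oplus n}$. Hence the fiber of $\B_{\bullet}\GL_n(S)\to\Vect(S)^{\simeq}$ over $\sO_S^{\oplus n}$, before sheafification, is the connected component of $\sO_S^{\oplus n}$ inside $\Vect(S)^{\simeq}$ having the right loop space $\GL_n(S)$; and since every object of $\Vect(S)$ is flat-locally of this form, $f$ identifies $\sBGL_n$ with the substack of $\sVect$ on rank-$n$ bundles. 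Summing over $n$ and using that $\LLKEsm$, $\smooth{(-)}$ and sheafification all commute with disjoint unions (as in the proof of Proposition \ref{prop:BGL-is-classical}), the two disjoint unions match up and $f$ is an equivalence.

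The main obstacle I anticipate is the bookkeeping around sheafification: the maps $\B_{\bullet}\GL_n(S)\to\Vect(S)^{\simeq}$ are \emph{not} equivalences on the nose (a general $\sF$ is only locally trivial, and the prestack $\BGL$ is not a stack before sheafifying), so I have to argue entirely at the level of sheaves --- checking that $f$ is a monomorphism and an effective epimorphism of stacks, which together force it to be an equivalence. Making the monomorphism claim precise amounts to showing that for a flat cover trivializing two bundles the resulting descent data agree, i.e.\ that the relative diagonal of $f$ is an equivalence; this is where one genuinely uses that $\GL_n(S)$ computes the automorphisms of $\sO_S^{\oplus n}$ in $\QCoh(S)^{\geq 0}$, together with flat descent for $\QCoh(-)^{\geq 0}$ (\cite{SAG}*{Theorem D.6.3.1}), which was already invoked in Lemma \ref{lem:sVect-is-a-stack}. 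Once the local triviality and the automorphism identification are in hand, the rest is formal.
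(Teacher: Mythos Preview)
Your argument is correct and follows the same route as the paper: both reduce to showing that every vector bundle is Zariski-locally (hence flat-locally) free, with the paper citing \cite{SAG}*{Proposition 2.9.2.3} directly while you extract it from Proposition~\ref{prop:equivalent-conditions-projective-modules}(5) together with flatness. The paper leaves the full-faithfulness step implicit whereas you spell it out; one cosmetic slip to fix is the formula $\sF\simeq\sO_S\otimes_{\sO_{\classical S}}\pi_0(\sF)$ (there is no ring map $\pi_0(R)\to R$ in general), but the conclusion you draw from it---lift a basis of $\pi_0(\sF)$ and use flatness to see the resulting map $\sO_S^{\oplus n}\to\sF$ is an equivalence---is sound.
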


\begin{proof}
We need to check that for any $R \in \sAlg_k$ and $\sF \in \sVect(R)$, there exists a flat cover $R \ra R'$ such that $\sF\otimes_{R}R'$ is a finitely generated free $R'$-module.

Proposition 2.9.2.3 from \cite{SAG} says that we can do even better, that is even Zariski locally any vector bundle is trivial. More precisely, the result states that there exists a collection of elements $x_1,\ldots,x_m \in \pi_0(R)$ such that
\begin{enumerate}[(i)]
    \item $(x_i)$ generate the unit ideal of $R$, i.e.\ $\{R \ra R[x^{-1}_i]\}_{i \in I}$ is a Zariski cover; and
    \item $\sF\otimes_{R}R[x^{-1}_i]$ is finitely generated and free for every $i$.
\end{enumerate}

Since any Zariski sheaf is also an fppf sheaf, this finishes the proof.
\end{proof}

\begin{thm}
\label{thm:sVect-is-smooth-extended}
The canonical map
\[
\LLKEsm(\smooth{\sVect}) \ra \sVect
\]
is an equivalence of stacks.
\end{thm}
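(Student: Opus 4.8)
The plan is to reduce the statement about $\sVect$ to the corresponding statement about $\sBGL$, which has already been established. By Lemma~\ref{lem:Vect-is-BGL} we have an equivalence of stacks $f: \sBGL \overset{\simeq}{\ra} \sVect$. Since $f$ is an equivalence, it restricts to an equivalence $\smooth{f}: \smooth{\sBGL} \overset{\simeq}{\ra} \smooth{\sVect}$ of functors on $\sAlg^{\rm sm}_k$. Applying the functor $\LLKEsm$ (left Kan extension followed by fppf sheafification), which is itself a functor, we obtain an equivalence $\LLKEsm(\smooth{\sBGL}) \overset{\simeq}{\ra} \LLKEsm(\smooth{\sVect})$. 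The naturality of the counit map~(\ref{eq:defn-map-smooth-extended-stacks}) in the stack then gives a commutative square whose top and bottom maps are $\LLKEsm(\smooth{f})$ and $f$ respectively, and whose left vertical map is the canonical map for $\sBGL$.

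The key step is then to invoke Proposition~\ref{prop:BGL-is-classical} together with the observation that over an ordinary commutative ring $k$ all smooth simplicial $k$-algebras are discrete. Explicitly: every smooth $R \in \sAlg^{\rm sm}_k$ has vanishing higher homotopy groups, so $\sAlg^{\rm sm}_k$ is an ordinary (1-)category sitting inside $\Schaffclop$. Hence left Kan extension from $\sAlg^{\rm sm}_k$ and left Kan extension from $\Schaffclop$ agree when applied to $\smooth{\sBGL} = \classical{\sBGL}|_{\sAlg^{\rm sm}_k}$, and the map $\LLKEsm(\smooth{\sBGL}) \ra \sBGL$ factors through $\LKEclStk(\classical{\sBGL}) \ra \sBGL$, which is an equivalence by Proposition~\ref{prop:BGL-is-classical}. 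More carefully, one should check that $\LKEclStk(\classical{\sBGL}) \simeq \LLKEsm(\smooth{\sBGL})$: this uses that the restriction of $\classical{\sBGL}$ along $\sAlg^{\rm sm}_k \hra \Schaffclop$, then left Kan extended back to $\Schaffclop$, recovers $\classical{\sBGL}$ (because $\sBGL_n = \BGL_n$ is already smooth-extended as a classical stack, being a quotient of the smooth scheme $\GL_n$), and that sheafification commutes with this comparison. Therefore the canonical map for $\sBGL$ is an equivalence.

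Combining these two steps: in the commutative square from the first paragraph, the top map $\LLKEsm(\smooth{f})$ is an equivalence, the bottom map $f$ is an equivalence, and the left vertical map (the canonical map for $\sBGL$) is an equivalence; hence the right vertical map, which is precisely the canonical map $\LLKEsm(\smooth{\sVect}) \ra \sVect$ of~(\ref{eq:defn-map-smooth-extended-stacks}), is an equivalence by two-out-of-three.

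I expect the main obstacle to be the bookkeeping in the second step, namely verifying cleanly that $\LLKEsm(\smooth{\sBGL}) \simeq \LKEclStk(\classical{\sBGL})$ rather than merely that both map to $\sBGL$. This requires knowing that $\classical{\sBGL}$ is itself left Kan extended from its restriction to smooth classical affine schemes before sheafification (so that no information is lost in passing from $\Schaffclop$ down to $\sAlg^{\rm sm}_k$), which in turn rests on the concrete description $\sBGL = \bigsqcup_n \sBGL_n$ with each $\BGL_n$ the classifying stack of the smooth group scheme $\GL_n$, exactly as used in the proof of Proposition~\ref{prop:BGL-is-classical}. Once the identification of the two left Kan extensions is in hand, the rest is a formal two-out-of-three argument.
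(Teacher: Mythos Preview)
Your proposal is correct and follows essentially the same two-out-of-three argument as the paper: set up the commutative square comparing the canonical maps for $\sBGL$ and $\sVect$, use Lemma~\ref{lem:Vect-is-BGL} for the vertical equivalences, and invoke Proposition~\ref{prop:BGL-is-classical} for the top horizontal arrow. Your additional bookkeeping worry about matching $\LLKEsm(\smooth{\sBGL})$ with $\LKEclStk(\classical{\sBGL})$ is in fact unnecessary: although Proposition~\ref{prop:BGL-is-classical} is \emph{stated} for $\LKEclStk(\classical{\sBGL})$, its \emph{proof} (via equation~(\ref{eq:BGL_n-is-classical}) and the subsequent sheafification) directly establishes the equivalence $\LLKEsm(\smooth{\sBGL_n}) \overset{\simeq}{\ra} \sBGL_n$, which is exactly the form needed here, so the paper simply cites that proposition without further comment.
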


\begin{proof}
We notice that we have the following commutative diagram
\[
\begin{tikzcd}
\LLKEsm(\smooth{\sBGL}) \ar[r,"\simeq"] \ar[d,"\simeq"] & \sBGL \ar[d,"\simeq"] \\
\LLKEsm(\smooth{\sVect}) \ar[r,"\simeq"] & \sVect.
\end{tikzcd}
\]
Here the right vertical arrow is an equivalence because of Lemma \ref{lem:Vect-is-BGL}, the left vertical arrow is an equivalence because $\LLKEsm\circ\smooth{(-)}$ is a functor. Finally, the top horizontal arrow is an equivalence by Lemma \ref{prop:BGL-is-classical}.
\end{proof}

We consider the stacks of line bundles $\sPic$ defined as
\begin{align*}
    \sPic(R) := \left\{ \sF \in \sVect(R) \;| \; \sF \; \mbox{is locally free of rank 1} \right\}
\end{align*},
where locally free of rank $1$ means that after a Zariski base change as described in the proof of Lemma \ref{lem:Vect-is-BGL} $\sF_i$ is finitely generated of rank $1$. 

We will also consider the stack of graded line bundles $\sPicgr$ defined as
\[
\sPicgr(R) := \left\{\sF \in \Mod(R) \; | \; \sF \mbox{ is invertible with respect to the symmetric monoidal structure}\;\right\}.
\]

\begin{cor}
\label{cor:graded-Pic-classical}
The canonical maps
\[
\LLKEsm(\smooth{\sPic}) \ra \sPic \;\;\; \mbox{and} \;\;\;
\LLKEsm(\smooth{\sPicgr}) \ra \sPicgr
\]
are equivalences of stacks.
\end{cor}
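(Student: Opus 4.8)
The plan is to deduce Corollary~\ref{cor:graded-Pic-classical} from Theorem~\ref{thm:sVect-is-smooth-extended} by the same kind of argument, observing that $\sPic$ and $\sPicgr$ are obtained from $\sVect$ (and a graded variant) by operations that are compatible with both restriction to smooth algebras and sheafification. First I would treat $\sPic$. The point is that $\sPic$ is an open-and-closed subfunctor of $\sVect$: for any $R$ the space $\sPic(R)$ is the union of those connected components of $\sVect(R)^{\simeq}$ on which the rank function (a locally constant $\bZ_{\geq 0}$-valued invariant, well-defined after the Zariski cover of Lemma~\ref{lem:Vect-is-BGL}) takes the value $1$. Equivalently, under the identification $\sVect \simeq \sBGL = \bigsqcup_{n \geq 0}\sBGL_n$ of Lemma~\ref{lem:Vect-is-BGL}, one has $\sPic \simeq \sBGL_1 = \sB\Gm$. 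Since $\LLKEsm$ and $\smooth{(-)}$ both commute with disjoint unions (as already used at the end of the proof of Proposition~\ref{prop:BGL-is-classical}), restricting the equivalence $\LLKEsm(\smooth{\sBGL}) \overset{\simeq}{\ra} \sBGL$ to the $n=1$ summand gives $\LLKEsm(\smooth{\sPic}) \overset{\simeq}{\ra} \sPic$.

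For $\sPicgr$ I would argue that it is likewise a disjoint union indexed by $\bZ$ of copies of $\sB\Gm$, one for each degree, so that $\sPicgr \simeq \bigsqcup_{n \in \bZ}\sBGL_1$; the degree of an invertible module is the locally constant function recording in which cohomological degree (a shift of) the module is concentrated after a Zariski cover, and each graded piece is equivalent to the ungraded $\sPic = \sB\Gm$ by forgetting the shift. Granting this description, the same restriction-of-a-disjoint-union argument applied to the equivalence of Proposition~\ref{prop:BGL-is-classical}, together with the fact that smooth restriction and sheafification commute with the (now $\bZ$-indexed) disjoint union, yields $\LLKEsm(\smooth{\sPicgr}) \overset{\simeq}{\ra} \sPicgr$.

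Concretely the steps are: (1) recall from Lemma~\ref{lem:Vect-is-BGL} that $\sVect \simeq \bigsqcup_{n\geq 0}\sBGL_n$, and identify $\sPic$ with the summand $\sBGL_1$, using that "locally free of rank $1$" picks out exactly that component; (2) identify $\sPicgr$ with $\bigsqcup_{n\in\bZ}\sBGL_1$, using that an invertible $R$-module, after a Zariski cover as in Lemma~\ref{lem:Vect-is-BGL}, is a shift of a rank-one vector bundle by a locally constant integer; (3) invoke that $\smooth{(-)}$ (restriction) and fppf sheafification are both colimit-preserving, hence commute with arbitrary disjoint unions, exactly as in the last line of the proof of Proposition~\ref{prop:BGL-is-classical}; (4) conclude by restricting the equivalences of Proposition~\ref{prop:BGL-is-classical} (equivalently Theorem~\ref{thm:sVect-is-smooth-extended}) to the relevant summands.

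I expect step~(2) to be the only real obstacle: one must check carefully that $\sPicgr$ decomposes as the claimed disjoint union, i.e.\ that every invertible object of $\Mod(R)$ becomes, Zariski-locally on $\Spec \pi_0(R)$, a shift $\sO_R[n]$ of the structure sheaf for a single integer $n$, and that the degree $n$ is a well-defined locally constant invariant compatible with base change. This follows from Proposition~2.9.2.3 of \cite{SAG} applied to $\pi_0$ of the module together with connectivity/coconnectivity bookkeeping (an invertible module is flat, and its inverse is flat, forcing it to be concentrated in a single degree locally), but it is the place where the argument is not purely formal. Everything else is a formal consequence of the disjoint-union decomposition and the colimit-compatibility already established.
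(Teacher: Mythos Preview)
Your argument for $\sPic$ is exactly the paper's: identify $\sPic\simeq\sBGL_1$ and restrict the equivalence of Proposition~\ref{prop:BGL-is-classical} to the $n=1$ summand.

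For $\sPicgr$ your route differs from the paper's. The paper invokes \cite{SAG}*{Remark 2.9.5.8} to write $\sPicgr \simeq \sPic \times {^{\rm L,et}(\underline{\bZ})}$ as a \emph{product}, and then argues separately that the sheafified constant stack ${^{\rm L,et}(\underline{\bZ})}$ is smooth-extended, using the topological invariance of the \'etale site (the equivalence $\sAlg_k^{\rm et}\simeq {^{\cl}\sAlg_k}^{\rm et}$ from \cite{HA}*{Theorem 7.5.0.6} and the analogous statement for smooth algebras). You instead phrase the same decomposition as a $\bZ$-indexed \emph{coproduct} $\sPicgr\simeq\bigsqcup_{n\in\bZ}\sBGL_1$ in stacks and reuse verbatim the ``$\LLKEsm$ and $\smooth{(-)}$ commute with disjoint unions'' step from the end of the proof of Proposition~\ref{prop:BGL-is-classical}. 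Your version is more economical: it avoids the separate analysis of ${^{\rm L,et}(\underline{\bZ})}$, since that analysis is subsumed by the disjoint-union compatibility already granted in Proposition~\ref{prop:BGL-is-classical}. Conversely, the paper's explicit treatment of ${^{\rm L,et}(\underline{\bZ})}$ makes transparent exactly which nontrivial input is being used (invariance of the \'etale and smooth sites under passage to $\pi_0$), whereas in your argument this is hidden inside the sheafification implicit in the coproduct of stacks. Your step~(2) is indeed the substantive point, and it is equivalent to the product decomposition the paper cites from \cite{SAG}.
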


\begin{proof}
For the first statement we notice that the restriction of (\ref{eq:BGL-to-sVect}) to $\sBGL_1$ factors through $\sPic$ and the induced map
\[
\sBGL_1 \ra \sPic
\]
is an equivalence of stacks. Indeed, by Proposition 2.9.4.2 a line bundle, i.e.\ invertible object of $\Vect(R)$ is a quasi-coherent sheaf that is locally free of rank $1$. The claim now follows from the proof of Proposition \ref{prop:BGL-is-classical}.

% the statement of Theorem \ref{thm:sVect-is-classical} to the substack of invertible elements of $\sVect$.

For the second statement we notice that by \cite{SAG}*{Remark 2.9.5.8.} one has an equivalence
\[
\sPicgr \simeq \sPic \times {^{\rm L, et}(\underline{\bZ})},
\]
where ${^{\rm L, et}(\underline{\bZ})}$ denotes the \emph{\'etale}\footnote{Indeed, by a result of To\"en (see \cite{Toen-descent-n-stacks}*{Theor\`eme 2.1}), the \'etale and fppf sheafification agree.} sheafification of the constant presheaf with value $\bZ$.  
Thus, it is enough to check that the map
\[
\LLKEsm(\smooth{({^{\rm L, et}(\underline{\bZ})})}) \ra {^{\rm L, et}(\underline{\bZ})}
\]
is an isomorphism.

By \cite{HA}*{Theorem 7.5.0.6.} one has an equivalence
\[
\sAlg_{k}^{\rm et} \simeq {^{\cl}\sAlg_k}^{\rm et},
\]
between the category of \'etale $k$-algebras and the nerve of the ordinary category of classical \'etale $k$-algebras, since we supposed $k \simeq \pi_0(k)$. Similarly (see \cite{HAG-II}*{Theorem 2.2.2.6}), we have
\[
\sAlg^{\rm sm}_k \simeq \classical{\sAlg^{\rm sm}_{k}}.
\]
Finally, since the \'etale and smooth topologies on classical $k$-algebras give the same notion of sheaves, one has
\[
{^{L,\rm et, c\ell}(\underline{\bZ})} \simeq {^{L,\rm sm, c\ell}(\underline{\bZ})}
\]
where ${^{L,\rm et, c\ell}(\underline{\bZ})}$ denotes the \'etale sheafification of the classical prestack into a classical stack, and ${^{L,\rm sm, c\ell}(\underline{\bZ})}$ denotes the smooth sheafification. So, we obtain
\[
\LLKEsm(\smooth{({^{L,\rm sm, c\ell}(\underline{\bZ})})}) \simeq {^{\rm L, et}(\underline{\bZ})},
\]
where tautological we have
\[
\smooth{({^{L,\rm sm, c\ell}(\underline{\bZ})})} \simeq \smooth{({^{\rm L, et}(\underline{\bZ})})}.
\]
\end{proof}

\section{Determinant map for perfect complexes}
\label{sec:determinant}

In this section we construct the main input for the determinant map of Tate objects. Namely, we construct a determinant map for the prestack of perfect complexes. We follow the ideas of \cite{STV}.

\subsection{Recollection on the determinant of classical rings}

In this section $R_0$ will always denote an ordinary commutative ring. We let $\K_{\rm Spc}(R_0)$ denote the K-theory space of $\Vect(R_0) = \N(\oModfgp_{R_0})$, i.e.\ the nerve of the ordinary exact category of finitely generated projective $R_0$-modules. We denote by ${^{\rm c}\sK}$ the classical prestack associated to this construction.

\begin{rem}
\label{rem:K-theory-space-is-not-a-prestack}
Notice that ${^{\rm c}\sK}$ is not a classical stack. The main reason is that it doesn't satisfy \'etale descent. However if one constructs connective K-theory as a presheaf of connective spectra, then it does satisfy Zariski descent (when restricted to qcqs schemes, by \cite{TT}*{Theorem 8.1}) however it still does not satisfy \'etale (nor flat) descent (cf.\ \cite{Thomason-etale}).
\end{rem}

We also consider ${^{\rm c}\sPic}$ the classical prestack that sends an ordinary ring $R_0$ to the underlying $\infty$-groupoid of the subcategory $\Pic(R_0) \subset \Vect(R_0)$ of invertible elements, i.e. $\sL \in \Pic(R_0)$ such that there exists a dual $\sL^{\vee}$ and the evaluation (equivalently, coevaluation map)
\[
\sL^{\vee}\otimes \sL \ra R_0
\]
is an equivalence. 

From the descent of $\sVect(R_0)$ and the fact that the invertibility condition is local for the flat topology we have that ${^{\rm c}\sPic}$ is a classical stack. It is actually a classical group stack\footnote{I.e. a group object in the category of classical stacks.}, with group structure given by the restriction of the tensor product on $\Vect(R_0)$. More generally, we will be interested in the group stack ${^{\rm c}\sPicgr}$ whose $R_0$-points are the underlying groupoid of category of graded lines over $R_0$\footnote{More formally we define ${^{\rm c}\sPicgr}(R_0)$ to be the sub $\infty$-groupoid of $(\Mod_{R_0})^{\simeq}$ spanned by its invertible elements.}, with tensor product that takes into account the degree.

The following is a well-known fact.

\begin{thm}
\label{thm:determinant-K-theory-of-ordinary-rings}
There exists a map of classical prestacks
\[
{^{\rm c}D}: {^{\rm c}\sK} \ra {^{\rm c}\sPicgr}.
\]
Moreover, the truncation of $^{\rm c}{D}$ to a map of $1$-groupoids\footnote{By that we mean the composition of the functor defined by each prestack with the truncation functor $\tau_{\leq 1}:\Spc \ra \Spc^{\leq 1}$ from $\infty$-groupoids to $1$-groupoids, i.e. the left adjoint to the natural inclusion.} followed by Zariski sheafification is an isomorphism.
\end{thm}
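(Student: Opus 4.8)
The plan is to produce the map $^{\rm c}D$ by a universal-property argument and then identify its truncation using the classical determinant. First I would recall that, for an ordinary commutative ring $R_0$, the K-theory space $\K_{\rm Spc}(R_0) = \Omega|S_\bullet \Vect(R_0)|$ is the group completion of the symmetric monoidal groupoid $(\Vect(R_0)^{\simeq},\oplus)$; concretely, by Theorem \ref{thm:additive-K-theory-agrees-when-split-cofibrations} applied to $\Vect(R_0)$ with split cofibrations (its homotopy category is additive), one has $\K_{\rm Spc}(R_0)\simeq \Omega|\B_\bullet \Vect(R_0)^{\simeq}|$, so that $^{\rm c}\sK \simeq (\Vect^{\simeq})^{\rm gp}$ as presheaves of $\Einf$-monoids. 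On the target side, $^{\rm c}\sPicgr$ is already a grouplike $\Einf$-monoid (a classical group stack) under $\otimes$. Hence a map $^{\rm c}\sK \to {}^{\rm c}\sPicgr$ of presheaves is the same datum as a map of $\Einf$-monoids $\Vect(R_0)^{\simeq}\to {}^{\rm c}\sPicgr(R_0)$ natural in $R_0$, i.e.\ a symmetric monoidal functor $(\Vect(R_0)^{\simeq},\oplus)\to({}^{\rm c}\sPicgr(R_0),\otimes)$.

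Next I would construct exactly such a functor: for a finitely generated projective module $P$ over $R_0$, send $P$ to the graded line $\det(P) := (\bwedge{\rank P}P,\ \rank P)$, placed in degree equal to its rank (locally constant, hence a well-defined element of $\underline{\bZ}$). Functoriality in isomorphisms of $P$ is clear, and the isomorphism $\bwedge{m+n}(P\oplus Q)\simeq \bwedge{m}P\otimes\bwedge{n}Q$ together with additivity of rank gives the symmetric monoidal (indeed braided-compatible, once one inserts the usual Koszul sign on the $\bZ$-grading) structure; naturality in $R_0$ is base change for top exterior powers. This produces the desired map of classical prestacks $^{\rm c}D\colon {}^{\rm c}\sK\to{}^{\rm c}\sPicgr$.

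For the second assertion, apply the truncation functor $\tau_{\leq 1}$ to both prestacks. The fundamental group $\pi_1$ of $\K_{\rm Spc}(R_0)$ at the basepoint is $K_1(R_0)$ and its $\pi_0$ is $K_0(R_0)$; on the target, $\pi_0({}^{\rm c}\sPicgr(R_0)) = \Pic(R_0)\times\bZ$ and $\pi_1$ is the unit group $R_0^\times$. The truncation $\tau_{\leq 1}{}^{\rm c}D$ is thus encoded by the classical determinant homomorphisms $K_0(R_0)\to\Pic(R_0)\times\bZ$ (rank and $\det$) and $K_1(R_0)\to R_0^\times$ (the determinant of an automorphism). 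Zariski-locally every projective module is free, so on an affine Zariski cover $K_0$ becomes $\bZ$ (rank), $\Pic$ becomes trivial, and $K_1$ of a local ring agrees with $R_0^\times$ by the classical computation (e.g.\ via the stability/Whitehead-lemma identification $K_1 = \GL_1 = R_0^\times$ over a local ring). Therefore after Zariski sheafification $\tau_{\leq 1}{}^{\rm c}\sK$ and $\tau_{\leq 1}{}^{\rm c}\sPicgr$ have matching sheafified homotopy sheaves and the map between them is an isomorphism; since both are $1$-truncated, this suffices.

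The main obstacle I expect is bookkeeping rather than conceptual: making the symmetric monoidal structure on $P\mapsto(\det P,\rank P)$ coherent as a map of $\Einf$-monoids of \emph{spaces} (not just an isomorphism class level statement), including the correct sign conventions on the $\bZ$-grading so that the braiding is respected, and then verifying that group completion interacts correctly with these structures so that the induced map on $^{\rm c}\sK$ is well-defined and natural in $R_0$. Once the monoidal functor is pinned down, the truncation computation reduces to the standard identifications of $K_0$ and $K_1$ recalled above, which are routine.
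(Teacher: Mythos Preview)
Your proposal is correct and matches the paper's approach: the paper's formal proof simply cites \cite{Bhatt-Scholze}*{Corollary 12.17 and Proposition 12.18}, but the surrounding text (Lemma~\ref{lem:det-as-symmetric-monoidal-functor} through Construction~\ref{cons:det-as-map-of-right-fibrations}) spells out exactly the construction you describe---the symmetric monoidal functor $P\mapsto(\bwedge{\rank P}P,\rank P)$ followed by group completion, using that $\sPicgr$ is already grouplike. Your identification of the $\Einf$-coherence bookkeeping as the main technical point is precisely what the paper addresses via straightening/unstraightening in Construction~\ref{cons:det-as-map-of-right-fibrations}, and your $\pi_0/\pi_1$ computation for the second claim is the content of the cited Bhatt--Scholze result.
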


\begin{proof}
The first claim follows from \cite{Bhatt-Scholze}*{Corollary 12.17} by passing to the underlying space of their functorial map of connective spectra.

The second claim is \cite{Bhatt-Scholze}*{Proposition 12.18}.
\end{proof}

% For completeness we give a discussion of how to obtain the map ${^{\rm c}D}$ as described in Theorem \ref{thm:determinant-K-theory-of-ordinary-rings}.

For our goal in the next section, it will be convenient to make explicit the construction that produces the map ${^{\rm c}D}$ from Theorem \ref{thm:determinant-K-theory-of-ordinary-rings}. For $R_0$ an ordinary commutative ring, we let $\Vect(R_0)^{\simeq}$ denote its groupoid of finitely generated projective modules, and $\Pic^{\gr}(R_0)$ denote the groupoid of graded line bundles. Both $(\Vect(R_0)^{\simeq},\oplus)$ and $(\Pic^{\gr}(R_0),\otimes_{R_0})$ are symmetric monoidal groupoids. 

% $\Vect(R_0)$ with respect to $\oplus$ and $\Pic^{\gr}(R_0)$ with respect to $\otimes$.
% \[
% \begin{tikzcd}
% \Vect(R_0)^{\simeq} \times \Vect(R_0)^{\simeq} \ar[r,"\oplus"] & \Vect(R_0)^{\simeq} \\
% (M_1,M_2) \ar[r,mapsto] & M_1\oplus M_2
% \end{tikzcd}
% \;\;\;\mbox{and}\;\;\;
% \begin{tikzcd}
% \Pic^{\gr}(R_0) \times \Pic^{\gr}(R_0) \ar[r,"\otimes"] & \Pic^{\gr}(R_0) \\
% (L_1,L_2) \ar[r,mapsto] & L_1\otimes L_2
% \end{tikzcd}
% \]

The following is a standard fact.

\begin{lem}
\label{lem:det-as-symmetric-monoidal-functor}
One has a symmetric monoidal functor
\begin{equation}
\label{eq:det-symmetric-monoidal-functor}
\det^{\gr}: \Vect(R_0)^{\simeq} \ra \Pic^{\gr}(R_0)
\end{equation}
given by 
\[\det^{\gr}(M) = \left(\bwedge{\rank (M)}M,\rank(M)\right).\]
\end{lem}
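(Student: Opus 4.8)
The plan is to define the functor on objects by the stated formula and check the symmetric monoidal structure via the universal property of the groupoid of finitely generated projective modules under direct sum. First I would note that by Proposition \ref{prop:equivalent-conditions-projective-modules} every object of $\Vect(R_0)$ is, Zariski-locally on $\Spec R_0$, a finite free module, so it has a well-defined locally constant rank function $\rank: \Spec R_0 \ra \bZ_{\geq 0}$; on the connected components where the rank equals $r$, the top exterior power $\bwedge{r}M$ is an invertible $R_0$-module. Globally this assembles into an invertible module together with its $\bZ$-valued rank, i.e.\ an object of $\Pic^{\gr}(R_0)$. Thus the assignment $M \mapsto (\bwedge{\rank(M)}M, \rank(M))$ is well-defined on objects and sends isomorphisms to isomorphisms, giving a functor $\Vect(R_0)^{\simeq} \ra \Pic^{\gr}(R_0)$.

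Next I would produce the monoidal structure. The key classical computation is the canonical isomorphism $\bwedge{n}(M\oplus N) \simeq \bigoplus_{p+q=n} \bwedge{p}M \otimes \bwedge{q}N$; restricting to $n = \rank(M) + \rank(N)$ and using that $\bwedge{p}M = 0$ for $p > \rank(M)$ locally, only the summand $p = \rank(M)$, $q = \rank(N)$ survives, yielding a natural isomorphism
\[
\det^{\gr}(M\oplus N) \simeq \det^{\gr}(M) \otimes \det^{\gr}(N)
\]
in $\Pic^{\gr}(R_0)$, where the tensor product on the right is the one that adds degrees. The unit is sent to $(R_0, 0)$, the monoidal unit of $\Pic^{\gr}(R_0)$. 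To upgrade this to a genuine symmetric monoidal functor of $\infty$-groupoids (rather than just an ordinary monoidal functor up to coherent isomorphism), I would invoke the fact that a symmetric monoidal structure on a groupoid with coCartesian (here: direct-sum) monoidal structure is determined by very little data, so it suffices to check the compatibility of the above isomorphism with associativity and the symmetry constraint. The compatibility with the symmetry is the Koszul-sign computation: swapping the two factors in $\bwedge{\rank M}M \otimes \bwedge{\rank N}N$ introduces the sign $(-1)^{\rank(M)\rank(N)}$, which is exactly the symmetry constraint in $\Pic^{\gr}(R_0)$ — this is why the target must be \emph{graded} lines rather than ordinary ones.

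The main obstacle, and really the only subtle point, is precisely this sign bookkeeping and the associated coherence: one must verify that the hexagon and pentagon axioms hold, i.e.\ that the exterior-algebra decomposition isomorphisms are compatible with the graded symmetry and associativity of the Deligne tensor product on graded line bundles. All of this is standard (it is the content of Deligne's construction of the determinant functor), so I would either cite the classical references or reduce to the free case, where everything is an explicit finite-rank linear-algebra identity, and then extend by Zariski descent using that both $\sVect$ and $\sPicgr$ are stacks and the construction is manifestly local. Functoriality in $R_0$ is automatic since exterior powers commute with base change. This gives the desired symmetric monoidal functor $\det^{\gr}$.
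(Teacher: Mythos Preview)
The paper does not actually prove this lemma: it is introduced with ``The following is a standard fact'' and no argument is given. Your proposal is a correct and complete outline of the standard construction, including the key point that the Koszul sign forces the target to be $\Pic^{\gr}$ rather than $\Pic$; there is nothing to compare against.
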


For the next section, we will need a rephrasing of Lemma \ref{lem:det-as-symmetric-monoidal-functor}.

The assignment
\[
[n] \in \Delta^{\rm op} \; \mapsto \; (\Vect(R_0)^{\simeq})^{\times n} \in \Grpd
\]
defines a simplicial object in $\Grpd$, the $(2,1)$-category of groupoids. More precisely, one has weak functors\footnote{See \cite{stacks-project}*{Tag 003N} for what we concretely mean by that.}
\[
\B_{\bullet}\Vect(R_0): \Delta^{\rm op} \ra \Grpd,
\]
and
\[
\B_{\bullet}\Pic^{\gr}(R_0): \Delta^{\rm op} \ra \Grpd.
\]

\begin{lem}
\label{lem:det-lax-natural-transformation}
The symmetric monoidal functor (\ref{eq:det-symmetric-monoidal-functor}) determines a weak natural transformation\footnote{We refer the reader to \cite{Sketches-I}*{\S B.1} for a definition of this. Notice that in \cite{Sketches-I} this is called a pseudo-natural transformation.}
\[
\det^{\gr}_{\bullet,R_0}: \B_{\bullet}\Vect(R_0) \ra \B_{\bullet}\Pic^{\gr}(R_0)
\]
functorial in $R_0$.
\end{lem}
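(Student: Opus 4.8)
The plan is to package the symmetric monoidal functor $\det^{\gr}$ of Lemma \ref{lem:det-as-symmetric-monoidal-functor} into the data of a weak natural transformation between the two simplicial weak functors by spelling out the components and the coherence $2$-cells. Concretely, a weak natural transformation $\det^{\gr}_{\bullet,R_0}: \B_{\bullet}\Vect(R_0) \ra \B_{\bullet}\Pic^{\gr}(R_0)$ consists of, for each $[n] \in \Delta^{\rm op}$, a functor of groupoids $\det^{\gr}_{n,R_0} = (\det^{\gr})^{\times n}: (\Vect(R_0)^{\simeq})^{\times n} \ra (\Pic^{\gr}(R_0))^{\times n}$, together with, for each morphism $\alpha:[n] \ra [m]$ in $\Delta$, an invertible $2$-cell filling the square comparing $(\B_{\alpha}\Pic^{\gr}) \circ \det^{\gr}_{n}$ with $\det^{\gr}_{m} \circ (\B_{\alpha}\Vect)$; these $2$-cells must satisfy the usual cocycle compatibility for composites $[n] \ra [m] \ra [\ell]$.

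First I would recall that the simplicial maps in $\B_{\bullet}\Vect(R_0)$ and $\B_{\bullet}\Pic^{\gr}(R_0)$ are, on each degree, built out of the two basic operations of the symmetric monoidal structure: projections (forgetting a factor) and the monoidal product (combining two adjacent factors), together with the unit. Since $\det^{\gr}$ strictly commutes with projections and with insertion of the unit, the only face maps for which a nontrivial comparison $2$-cell is needed are the ``inner'' multiplication maps, and there the required $2$-cell is exactly the monoidal structure constraint $\det^{\gr}(M) \otimes_{R_0} \det^{\gr}(N) \overset{\simeq}{\ra} \det^{\gr}(M \oplus N)$ supplied by Lemma \ref{lem:det-as-symmetric-monoidal-functor} (which, unwound, is the classical canonical isomorphism $\bwedge{r}M \otimes \bwedge{s}N \simeq \bwedge{r+s}(M\oplus N)$ in degree $r+s$). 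I would then check that the coherence conditions for a weak natural transformation reduce precisely to the associativity, unit, and symmetry axioms of the symmetric monoidal functor $\det^{\gr}$ — i.e.\ the pentagon/triangle/hexagon already built into Lemma \ref{lem:det-as-symmetric-monoidal-functor} — so no new verification is needed beyond bookkeeping.

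Finally I would address functoriality in $R_0$: a ring map $R_0 \ra R_0'$ induces base-change functors $\Vect(R_0)^{\simeq} \ra \Vect(R_0')^{\simeq}$ and $\Pic^{\gr}(R_0) \ra \Pic^{\gr}(R_0')$ that are symmetric monoidal, and the natural isomorphism $(\det^{\gr} M)\otimes_{R_0} R_0' \simeq \det^{\gr}(M \otimes_{R_0} R_0')$ (exterior powers commute with flat, in fact arbitrary, base change on projective modules) is itself a monoidal natural isomorphism; this gives the compatibility $2$-cells, and their coherence is again inherited from that of $\det^{\gr}$. I expect the main obstacle to be purely organizational rather than mathematical: making precise the bicategorical framework of ``weak functors $\Delta^{\rm op} \ra \Grpd$ and weak natural transformations'' and verifying the cocycle identities with the correct handedness, since all the genuine content is already encapsulated in Lemma \ref{lem:det-as-symmetric-monoidal-functor}. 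For this reason I would keep the proof short, citing \cite{Sketches-I}*{\S B.1} and \cite{stacks-project}*{Tag 003N} for the definitions and simply indicating that the coherence data is the one transported from the symmetric monoidal functor $\det^{\gr}$.
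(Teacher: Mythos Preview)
Your proposal is correct and in fact gives considerably more detail than the paper itself: the paper states this lemma without proof, treating it as an immediate rephrasing of Lemma~\ref{lem:det-as-symmetric-monoidal-functor} once one recalls the definition of a weak natural transformation between weak functors $\Delta^{\rm op}\ra\Grpd$. Your unpacking of the components $(\det^{\gr})^{\times n}$, the $2$-cells coming from the monoidal constraint, and the reduction of coherence to the axioms of a symmetric monoidal functor is exactly the content one would supply if pressed, and your treatment of functoriality in $R_0$ via base change is likewise the standard argument.
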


\begin{rem}
\label{rem:equivalence-fibered-groupoids}
From \cite{Sketches-I}*{\S B.3 Theorem 1.3.6} one knows that there exists an equivalence between the strict $2$-category $\Fun(\Delta^{\rm op},\Grpd)$ of weak functors and weak natural transformations between them and the $2$-category of categories fibered in groupoids over $\Delta^{\rm op}$.

The construction that sends a weak functor $F:\Delta^{\rm op} \ra \Grpd$ to the category fibered in groupoids over $\widetilde{F} \ra \Delta^{\rm op}$ is the Grothendieck construction. Informally, the objects of $\widetilde{F}$ are $\sqcup_{[n] \in \Delta^{\rm op}}\widetilde{F}([n])$ and morphisms between $([n],x)$ and $([m],y)$, where $x \in F([n])$ and $y \in F([m])$, are given by
\[
\Hom_{\widetilde{F}}(([n],x),([m],y)) = \bigsqcup_{\alpha \in \Hom_{\Delta^{\rm op}}([n],[m])}\Hom_{F([m])}(F(\alpha)(x),y).
\]
\end{rem}

By Remark \ref{rem:equivalence-fibered-groupoids} one has a map
\begin{equation}
\label{eq:det-map-of-fibered-categories}
\widetilde{\det^{\gr}_{R_0}}: \widetilde{\B\Vect(R_0)} \ra \widetilde{\B\Pic(R_0)}    
\end{equation}
of categories fibered in groupoids over $\Delta^{\rm op}$.

The following is a compatibility result that we will need in the construction of the determinant as a map of simplicial objects in $\infty$-groupoids.

\begin{lem}
\label{lem:B-and-N-are-compatible}
Consider $\b{C}$ an (ordinary) symmetric monoidal groupoid, and let $\B_{\bullet}\b{C}: \Delta^{\rm op} \ra \Grpd$ be the simplicial object\footnote{This is a weak functor into the $(2,1)$-category of groupoids.} encoding its symmetric monoidal structure.
Then one has an equivalence of symmetric monoidal $\infty$-groupoids\footnote{Strictly speaking, the nerve functor on the lefthand side is a coherent nerve from $\Grpd$ into $\Spc$, and not simply the nerve functor from $1$-categories to categories.}
\[
\N(\B_{\bullet}\b{C}) \simeq \B_{\bullet}\N(\b{C}),
\]
here the righthand side is the simplicial object in $\Spc$ given by the Bar construction applied to $\N(\b{C})$.
\end{lem}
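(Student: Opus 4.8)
The plan is to unwind both sides of the claimed equivalence and exhibit them as realizations of the same (multi-)simplicial object. Recall that for an ordinary symmetric monoidal groupoid $\b{C}$ the weak functor $\B_{\bullet}\b{C}:\Delta^{\rm op}\ra\Grpd$ has $\B_{n}\b{C}=\b{C}^{\times n}$, and by Remark \ref{rem:equivalence-fibered-groupoids} it corresponds via the Grothendieck construction to a category fibered in groupoids $\widetilde{\B\b{C}}\ra\Delta^{\rm op}$. The coherent nerve $\N$ then sends this fibered category to a coCartesian fibration over $\N(\Delta^{\rm op})$ whose straightening is precisely a functor $\Delta^{\rm op}\ra\Spc$, and this functor is $\N(\B_{\bullet}\b{C})$ by definition. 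So the first step is to identify $\N(\B_{n}\b{C})\simeq\N(\b{C}^{\times n})\simeq \N(\b{C})^{\times n}=\B_{n}\N(\b{C})$ levelwise, which is immediate since the coherent nerve (being a right adjoint, or by direct inspection on groupoids) preserves finite products.

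The second and more substantial step is to check that these levelwise equivalences are compatible with the simplicial structure maps, i.e.\ that they assemble into an equivalence of simplicial objects in $\Spc$. On the left the face and degeneracy maps of $\B_{\bullet}\b{C}$ are the usual ones built from the multiplication $\otimes:\b{C}\times\b{C}\ra\b{C}$, the unit $\ast\ra\b{C}$, and the projections; applying $\N$ sends these to the maps built from $\N(\otimes)$, $\N(\mbox{unit})$, and the projections. On the right the face and degeneracy maps of $\B_{\bullet}\N(\b{C})$ are by definition built from the $\Einf$-multiplication on $\N(\b{C})^{\simeq}$, which is exactly the symmetric monoidal structure transported along $\N$ (this is the content of "$\N$ is a symmetric monoidal functor from symmetric monoidal groupoids to $\Einf$-monoids in spaces"). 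The key point is therefore that the coherent nerve carries the weak-functor data encoding associativity, unitality, and symmetry constraints of $\b{C}$ to precisely the coherence data defining the $\Einf$-structure on $\N(\b{C})$; once this is granted the two simplicial diagrams agree on the nose.

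The cleanest way to organize this is to invoke the compatibility of the Grothendieck construction with the straightening/unstraightening equivalence: the category fibered in groupoids $\widetilde{\B\b{C}}\ra\Delta^{\rm op}$ associated to the weak functor $\B_{\bullet}\b{C}$ has coherent nerve a left fibration over $\N(\Delta^{\rm op})$, and unstraightening this left fibration recovers a functor $\Delta^{\rm op}\ra\Spc$ which one checks to be $\B_{\bullet}\N(\b{C})$. Concretely, an object of $\widetilde{\B\b{C}}$ over $[n]$ is an $n$-tuple of objects of $\b{C}$, a morphism over $\alpha:[n]\ra[m]$ is a morphism in $\b{C}^{\times m}$ from $\B(\alpha)$ applied to the source to the target, and $\B(\alpha)$ is assembled out of $\otimes$ and the unit exactly as in the Bar construction; passing to nerves turns this into the standard description of $\B_{\bullet}\N(\b{C})$ via the commutative monoid structure. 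Finally one observes the whole construction is manifestly functorial in $\b{C}$ (and hence in $R_0$), and that the symmetric monoidal structures match because the monoidal structure on $\B_{\bullet}(-)$ is itself detected levelwise by the same product maps.

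The main obstacle I expect is purely bookkeeping: making precise that "the coherent nerve of the Grothendieck construction of $\B_{\bullet}\b{C}$" equals "the Bar construction on $\N(\b{C})$" requires matching two different but standard models for the same homotopy-coherent datum — a weak functor out of $\Delta^{\rm op}$ versus a coCartesian fibration over $\N(\Delta^{\rm op})$ — and then recognizing the transported multiplication as the $\Einf$-structure. No genuinely hard mathematics is involved; the difficulty is entirely in setting up enough of the $2$-categorical-to-$\infty$-categorical dictionary (as in \cite{Sketches-I}) that the identification becomes a formal consequence. I would therefore keep the proof short, cite the straightening equivalence and the functoriality of the coherent nerve on products, and leave the coherence verification to the reader as routine.
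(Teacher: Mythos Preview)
The paper states this lemma without proof --- it is immediately followed by Construction~\ref{cons:det-as-map-of-right-fibrations} with no intervening argument --- so there is no ``paper's own proof'' to compare against. Your approach is correct and is the natural one: the levelwise identification $\N(\b{C}^{\times n})\simeq \N(\b{C})^{\times n}$ is immediate, and the compatibility of the simplicial structure maps reduces to the standard fact that the coherent nerve of the Grothendieck construction of a weak functor $F:I\ra\Grpd$ is a left fibration over $\N(I)$ whose straightening is $\N\circ F$ (this is essentially \cite{HTT}*{\S 2.1.1 and \S 3.2}). Your diagnosis that the content is bookkeeping rather than genuine mathematics is accurate, and matches the paper's decision to omit the proof.

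One small remark: you oscillate between ``coCartesian fibration'' and ``left fibration''; since the fibers are groupoids these coincide, but for clarity you might fix one (the paper uses right fibrations in Construction~\ref{cons:det-as-map-of-right-fibrations}, consistent with the contravariant convention). Otherwise your sketch is sound and would serve as a perfectly adequate proof were one to be included.
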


\begin{construction}
\label{cons:det-as-map-of-right-fibrations}
By \cite{HTT}*{Proposition 2.1.1.3} one has that
\[
\N(\widetilde{\B\Vect(R_0)}) \ra \N\Delta^{\rm op} \;\;\; \mbox{and}\;\;\; \N(\widetilde{\B\Pic(R_0)}) \ra \N\Delta^{\rm op}
\]
are right fibrations of categories.

Moreover, the map $\widetilde{\det^{\gr}_{R_0}}$ induces a map of right fibrations $\N(\widetilde{\det^{\gr}_{R_0}})$. 

By Lemma \ref{lem:B-and-N-are-compatible} we notice that
\[\N(\widetilde{\B\Vect(R_0)}) \simeq \Un(\B_{\bullet}\N(\Vect(R_0))),
\;\;\;
\mbox{and}
\;\;\;
\N(\widetilde{\B\Pic(R_0)}) \simeq \Un(\B_{\bullet}\N(\Pic(R_0)))
\]
are isomorphic as right fibrations over $\N(\Delta^{\rm op})$, where $\Un(\B_{\bullet}\N(\Vect(R_0)))$ denotes the right fibration over $\N\Delta^{\rm op}$ obtained from the functor $\B_{\bullet}\N(\Vect(R_0)):\Delta^{\rm op} \ra \Spc$\footnote{See \cite{HTT}*{}.}; and similarly for $\N(\Pic(R_0))$.

Let
\begin{equation}
    \widetilde{\sDet}_{R_0}: \Un(\B_{\bullet}\N(\Vect(R_0))) \ra \Un(\B_{\bullet}\N(\Pic(R_0)))
\end{equation}
denote the map of right fibrations induced by $\N(\widetilde{\det^{\gr}_{R_0}})$. We notice that this map is functorial in $R_0$, since all the constructions were functorial.

By applying the straightening functor, one obtains the map
\begin{equation}
\label{eq:det-as-map-of-simplicial-objects-in-oo-categories}
\sDet_{\bullet,R_{0}}: \B_{\bullet}\N(\Vect(R_0)) \ra \B_{\bullet}\N(\Pic(R_0)).    
\end{equation}

In particular, one can restrict the functors above to the category of smooth algebras, we will denote the resulting functor\footnote{We use the same notation $\smooth{(-)}$ for the restriction of a prestack and a classical prestack to a functor from the category of smooth $k$-algebras and classical smooth $k$-algebras. Hopefully this will not cause confusion.}
\begin{equation}
    \label{eq:det-as-map-of-simplicial-objects-in-oo-categories-for-smooth-algebras}
    \sDet_{\bullet,R_{0}}: \B_{\bullet}\N(\smooth{\Vect}(R_0)) \ra \B_{\bullet}\N(\smooth{\Pic}(R_0)).
\end{equation}

Thus, by taking the loop space of the geometric realization\footnote{I.e. a concrete model for the group completion of our $\Einf$-monoid in spaces.} of (\ref{eq:det-as-map-of-simplicial-objects-in-oo-categories}) one obtains the map
\[
{^{\rm c}D}: {^{\rm c}\sK}(R_0) \ra {^{\rm c}\sPic}(R_0).
\]
\end{construction}

\begin{rem}
\label{rem:compatibility-with-Bhatt-Scholze}
It is clear that the map from Construction \ref{cons:det-as-map-of-right-fibrations} agrees with the one on Proposition 12.3 from \cite{Bhatt-Scholze}*{\S 12}.
\end{rem}

\subsection{Construction of determinant for derived rings}

In this section we fix a base commutative ring $k$.

The main goal of this section is to prove the following

\begin{thm}
\label{thm:determinant-of-perfect-complexes}
Let $R$ be a connective dg-algebra over $k$, there exists a map of group-like $\Einf$-monoids in spaces
\[
\sD^{\rm K}(R): \K(\Perf(R)) \ra \sPicgr(R)
\]
functorial in $R$.

In particular, one obtains a map of prestacks over $k$
\begin{equation}
\label{eq:the-determinant-map}
\sD: \sPerf \ra \sPicgr,    
\end{equation}
where $\sPerf$ denotes the prestack associated to $\Perf$, i.e.\ $\sPerf(R) := \Perf(R)^{\simeq}$.
\end{thm}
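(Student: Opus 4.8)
The plan is to bootstrap the classical determinant from Construction \ref{cons:det-as-map-of-right-fibrations} up to derived rings in two stages: first along the weight-structure equivalence in K-theory, then along the smooth-extension property. Since the second statement follows from the first by taking underlying $\infty$-groupoids (precomposing $\imath$ with $\sD^{\rm K}$, using Remark \ref{rem:map-additive-to-K-theory} and the map $\sPerf(R) \ra \K(\Perf(R))$), the real content is the functorial map $\sD^{\rm K}(R): \K(\Perf(R)) \ra \sPicgr(R)$.

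First I would use Corollary \ref{cor:K-theory-Perf} to replace $\K(\Perf(R))$ by $\K(\Vect(R))$; since $\Vect(R)$ has split cofibrations and additive homotopy category, Theorem \ref{thm:additive-K-theory-agrees-when-split-cofibrations} further identifies this with the additive K-theory $\Ka(\Vect(R)) = \Omega|\B_\bullet \sVect(R)|$. So it suffices to construct a functorial map of grouplike $\Einf$-monoids $\Omega|\B_\bullet \sVect(R)| \ra \sPicgr(R)$, equivalently a functorial map of simplicial $\Einf$-objects $\B_\bullet \sVect(R) \ra \B_\bullet \sPicgr(R)$ — i.e.\ a symmetric monoidal functor $\sVect(R) \ra \sPicgr(R)$ natural in $R \in \sAlg_k$. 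The second stage produces this. By Theorem \ref{thm:sVect-is-smooth-extended} and Corollary \ref{cor:graded-Pic-classical}, both $\sVect$ and $\sPicgr$ are smooth-extended, so a map between them is determined by a map between their restrictions $\smooth{\sVect} \ra \smooth{\sPicgr}$ to smooth affine schemes over $k$. Over a classical commutative ring $k$, every smooth simplicial $k$-algebra is discrete (as recalled in \S\ref{sec:prestacks}, using \cite{HAG-II}*{Theorem 2.2.2.6}), so on $\sAlg^{\rm sm}_k$ the functors $\sVect$ and $\sPicgr$ agree with their classical counterparts; there the symmetric monoidal determinant $\det^{\gr}$ of Lemma \ref{lem:det-as-symmetric-monoidal-functor} — packaged $\infty$-categorically as $\sDet_{\bullet,R_0}$ in \eqref{eq:det-as-map-of-simplicial-objects-in-oo-categories-for-smooth-algebras} — gives the required natural symmetric monoidal transformation. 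Left Kan extending and sheafifying (applying $\LLKEsm$) and using that this functor commutes with the geometric realizations defining $\B_\bullet(-)$ and with group completion, we transport $\det^{\gr}$ to a map $\sVect(R) \ra \sPicgr(R)$ for all $R$, hence the desired $\sD^{\rm K}(R)$.

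I would then check functoriality and that the construction recovers $^{\rm c}D$ after restriction to classical rings (Remark \ref{rem:compatibility-with-Bhatt-Scholze} and Construction \ref{cons:det-as-map-of-right-fibrations}): all intermediate equivalences — Corollary \ref{cor:K-theory-Perf}, Theorem \ref{thm:additive-K-theory-agrees-when-split-cofibrations}, the smooth-extension identifications — are natural in $R$, so this is essentially bookkeeping. Finally, passing to underlying $\infty$-groupoids and precomposing with the canonical map $\sPerf(R) = \Perf(R)^{\simeq} \ra \K(\Perf(R))$ yields the map of prestacks $\sD: \sPerf \ra \sPicgr$ of \eqref{eq:the-determinant-map}.

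The main obstacle I expect is stage one: making precise and functorial the chain of equivalences $\K(\Perf(R)) \simeq \K(\Vect(R)) \simeq \Ka(\Vect(R)) \simeq \Omega|\B_\bullet\sVect(R)|$ in $R$ simultaneously. Corollary \ref{cor:K-theory-Perf} is stated as an equivalence of spectra for fixed $R$, and one needs the weight structure $(\Perf(R)_{w\le 0},\Perf(R)_{w\ge 0})$ — together with the identifications of the heart with $\Vect(R)$ and the resulting K-theory equivalence — to be natural in ring maps $R \ra R'$; this requires knowing that base change $-\otimes_R R'$ is exact for the relevant Waldhausen/weight-structure data and carries connective perfect modules and modules of projective amplitude $\le 0$ to modules of the same type, which is true but needs care. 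A secondary subtlety is that the determinant on the classical side was only built as a weak natural transformation of $2$-functors (Lemmas \ref{lem:det-lax-natural-transformation}, \ref{lem:B-and-N-are-compatible}) and then rectified via straightening/unstraightening; one must confirm that this rectified $\sDet_{\bullet,R_0}$ is compatible with the $\LLKEsm$ procedure, i.e.\ that left Kan extension along $\sAlg^{\rm sm}_k \hra \sAlg_k$ commutes with the relevant colimits, which it does since $\LLKEsm$ and $\smooth{(-)}$ both commute with geometric realizations as already used in the proof of Proposition \ref{prop:BGL-is-classical}.
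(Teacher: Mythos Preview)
Your proposal is correct and follows essentially the same three-step route as the paper: reduce $\K(\Perf(R))$ to $\K(\Vect(R))$ via the weight-structure heart (Corollary \ref{cor:K-theory-Perf}), identify this with additive K-theory $\Omega|\B_\bullet\sVect(R)|$ via Theorem \ref{thm:additive-K-theory-agrees-when-split-cofibrations}, and then use the smooth-extension results (Theorem \ref{thm:sVect-is-smooth-extended}, Corollary \ref{cor:graded-Pic-classical}) to transport the classical $\sDet_{\bullet,R_0}$ of \eqref{eq:det-as-map-of-simplicial-objects-in-oo-categories-for-smooth-algebras} to all of $\sAlg_k$. The paper's proof adds one small observation you left implicit, namely that a Picard $\infty$-groupoid is already grouplike so $\sPicgr(R)\simeq\Ka(\sPicgr(R))$, and it organizes Step 3 levelwise by noting $\LLKEsm$ commutes with finite products rather than with geometric realizations, but the strategy is identical.
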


\begin{proof}
The proof consists of a couple of steps.

\textbf{Step 1.} By Corollary \ref{cor:K-theory-Perf} we have an equivalence $\K(\Vect(R)) \ra \K(\Perf(R))$, thus it is enough to construct a map out of $\K(\Vect(R))$.

\textbf{Step 2.} By Theorem \ref{thm:additive-K-theory-agrees-when-split-cofibrations} we notice that
\[\Ka(\Vect(R)) \overset{\simeq}{\ra} \K(\Vect(R))\]
is an equivalence. We also notice that for a Picard $\infty$-groupoid\footnote{I.e. a symmetric monoidal $\infty$-groupoid where all objects are invertible.} $\sP$ one has an equivalence
\[\sP \overset{\simeq}{\ra} \Ka(\sP)\]
as group-like $\Einf$-monoids in spaces. Indeed, by \cite{HA}*{Definition 5.2.6.2} an $\Einf$-monoid $\sP$ is grouplike if the canonical map
\[(m,p_2): \sP \times \sP \ra \sP \times \sP\]
is an equivalence. Since any $X \in \sP$ has an inverse $X^{-1}$, we notice that the map $\varphi: \sP \times \sP \ra \sP \times \sP$ given by
\[
\varphi(X,Y) = (X\otimes Y^{-1},Y)
\]
is an inverse to $(m,p_2)$. Since $\Ka(\sP)$ is the group completion of $\sP^{\simeq} \simeq \sP$ this gives the claim.

% \todo{Reference for this. For instance \cite{Bhatt-Scholze}*{Proposition 12.15.} does this for Picard $1$-groupoids.}

Thus, it is enough to construct a map of simplicial objects in spaces
\begin{equation}
\label{eq:map-of-simplicial-objects-that-gives-additive-K-theory-map}
\B_{\bullet}\Vect(R) \ra B_{\bullet}\sPic^{\gr}(R).    
\end{equation}

Indeed, by taking loop space on the geometric realization of (\ref{eq:map-of-simplicial-objects-that-gives-additive-K-theory-map}) one obtains
\[
\K(\Vect(R)) \simeq \Ka(\Vect(R)) \simeq \Omega\left|\B_{\bullet}\Vect(R)\right| \ra \Omega\left|B_{\bullet}\sPic^{\gr}(R)\right| = \K(\sPic^{\gr}(R)) \simeq \sPic^{\gr}(R).
\]

\textbf{Step 3.} By Theorem \ref{thm:sVect-is-smooth-extended} and the fact that $\LLKEsm$ commutes with products for each $[n] \in \Delta^{\rm op}$ there is an equivalence
\[
\LLKEsm(\B_{n}\smooth{\sVect}(R)) \simeq \B_n\sVect(R)
\]
and similarly for $\sPic^{\gr}(R)$ by Corollary \ref{cor:graded-Pic-classical}. 

Since
\[
\B_{n}\smooth{\sVect}(R) \simeq \B_n\smooth{\Vect}(\pi_0(R)),
\]
one can take the sheafification of the left Kan extension of (\ref{eq:det-as-map-of-simplicial-objects-in-oo-categories-for-smooth-algebras}) from smooth $k$-algebras to all $k$-algebras to obtain the map
\begin{equation}
\label{eq:simplicial-determinant}
\sD_{\bullet}: \B_{\bullet}\sVect(R) \ra \B_{\bullet}\sPic^{\gr}(R).    
\end{equation}

Taking the geometric realization of (\ref{eq:simplicial-determinant}) and loop spaces yields $\sD^{\rm K}(R)$. 

It is clear from the construction that $\sD^{\rm K}$ is functorial and the map $\sD$ is obtained by precomposing $\sD^{\rm K}$ with $\imath: \sPerf(R) \ra \K(\sPerf(R))$ from (\ref{eq:imath-for-spectra}).
\end{proof}

Suppose that $\sX$ is any derived stack, let
\[\sPerf(\sX) = \Maps_{\Stk}(\sX,\sPerf).\]

By applying the map (\ref{eq:the-determinant-map}) we obtain a map of stacks
\begin{equation}
\label{eq:determinant-for-sX}
    \sD_{\sX}: \sPerf(\sX) \ra \sPicgr(\sX).    
\end{equation}

\begin{rem}
The result of Theorem \ref{thm:determinant-of-perfect-complexes} appears in a similar form in \cite{STV} in the particular case where $k$ is a field of characteristic $0$. In the case where $\sX$ is a classical scheme, the truncation $\tau_{\leq 1}(\sD_{\sX})$ is the classical determinant map as constructed in \cite{Knudsen-Mumford}.
\end{rem}

\begin{rem}
Notice that it is unclear if we can expect that the determinant from Theorem \ref{thm:determinant-of-perfect-complexes} can be generalized for (even connective) $\Einf$-rings. The main point is that for $\Einf$-algebras there are less functors which are left Kan extended from smooth $\Einf$-algebras, the reason being that this notion is not so well-behaved as in the case of simplicial commutative rings.

A piece of evidence that such an extension might not exist is Example 4.5 from \cite{Ausoni-Rognes}, where the authors show that it is not possible to construct a map from the K-theory of the sphere spectrum to its \emph{non-graded} Picard stack of line bundles.
\end{rem}

\subsection{Computation: The map induced on tangent spaces}

In this section we specialize to the case where $k$ is a field of characteristic $0$. See \cite{GR-I}*{Chapter 2} for the conventions on derived geometry for this set up. The reason that we restrict to this case is that we use the theory of deformation theory as developed in \cite{GR-II}*{Chapter 1} to compute the cotangent complexes involved.

Let
\[
f: \sX \ra \sY
\]
be a map between prestacks with deformation theory and locally almost of finite type, according to \cite{GR-II}*{Chapter 7}, for every point $x: S \ra \sX$ one obtains a map
\[
T_{x}(\sX)[-1] \ra T_{f(x)}(\sY)[-1]
\]
between dg Lie algebras. In this section we will describe more explicitly what this map of Lie algebras is for (\ref{eq:the-determinant-map}).

Let's recall the following construction. For any $S \in \Schaff$ and $\sG \in \QCoh(S)^{\leq 0}$ we can consider the square-zero extension of $S$ defined as $S_{\sG} = \Spec(\Gamma(S,\sO_S\oplus \sG))$. This comes equipped with maps
\[\imath_{\sG}: S \ra S_{\sG} \;\;\; \mbox{and}\;\;\; p_{\sG}: S_{\sG} \ra S\]
induced by the maps of sheaves of algebras $\sO_{S}\oplus \sG \ra \sO_S$ and $\sO_{S} \ra \sO_S\oplus \sG$, respectively. In the case where $\sG = \sO_{S}$ we will abbreviate those maps as $\imath$ and $p$.

The \emph{tangent complex} $T_{x}(\sVect)$ is characterized by the property that for any 
\[\sG \in \QCoh(S)^{-} = \cup_{k \geq 0}\QCoh(S)^{\leq k}\] 
one has
\begin{equation}
\label{eq:definition-tangent-complex}
\Hom_{\QCoh(S)^{-}}(\sG,T_{x}(\sVect)) \simeq \Omega^k\Maps_{S/}(S_{\sG}[k],\sVect),
\end{equation}
for some $k$ such that $\sG[k] \in \QCoh(S)^{\leq 0}$. The subscript on the mapping space means we consider maps $y$ that fit into the diagram
\[
\begin{tikzcd}
S_{\sG} \ar[rd,dashed,"y"] & \\
S \ar[u,"\imath_{\sG}"] \ar[r,"x"] & \sVect
\end{tikzcd}
\]

In the particular case where $\sG = \sO_{S}$, let $\sF_x \in \sVect(S)$ denote the vector bundle corresponding to $x: S \ra \sVect$ we can understand the right-hand side of (\ref{eq:definition-tangent-complex}) as
\begin{equation}
\label{eq:extension-of-modules}
\{\sF_y \in \QCoh(S_{\sO_S}) \; | \; \imath^*(\sF_y) \simeq \sF_x\}.
\end{equation}   

\begin{lem}
\label{lem:tangent-complex-computation}
The tangent complex of $\sVect$ is
\[T_{x}(\sVect) \simeq \Hom_{S}(\sF_{x},\sF_{x})[1] \simeq \{\sF_y \in \QCoh(S_{\sO_S}) \; | \; \imath^*(\sF_y) \simeq \sF_x\}.\]
\end{lem}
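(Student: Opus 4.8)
The plan is to compute the right-hand side of the defining property \eqref{eq:definition-tangent-complex} in the case $\sG = \sO_S$ directly, and show it is governed by $\Hom_S(\sF_x,\sF_x)$ shifted by one. First I would unwind \eqref{eq:extension-of-modules}: a point $y \colon S_{\sO_S} \to \sVect$ lying over $x$ is the data of a quasi-coherent sheaf $\sF_y$ on the square-zero extension $S_{\sO_S} = \Spec(\sO_S \oplus \sO_S)$ together with an identification $\imath^*(\sF_y) \simeq \sF_x$; moreover one must check that $\sF_y$ is again a vector bundle, which follows from Proposition \ref{prop:equivalent-conditions-projective-modules} since $\imath$ is a nilpotent (hence, on $\pi_0$, surjective with nilpotent kernel) thickening, so dualizability descends along $\imath^*$. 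Thus the mapping space in question is the space of such pairs $(\sF_y,\text{triv})$.

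The key step is to identify this space of square-zero extensions of the quasi-coherent sheaf $\sF_x$ (as a module over the trivial square-zero extension $\sO_S \oplus \sO_S$ of $\sO_S$) with the mapping space $\Map_{\QCoh(S)}(\sF_x, \sF_x)$, i.e.\ with the underlying space of $\Hom_S(\sF_x,\sF_x)$. This is the standard deformation-theoretic computation: deformations of a module $\sF_x$ along the square-zero extension $\sO_S \oplus M$ of $\sO_S$ by a module $M$ (here $M = \sO_S$) form a torsor, and the relevant obstruction/classifying object is $\Map_{\sO_S}(\sF_x, \sF_x \otimes_{\sO_S} M)$ — concretely, because $\sF_x$ is flat the groupoid of extensions of $\sF_x$ over $\sO_S \oplus \sO_S$ restricting to $\sF_x$ is equivalent to the space $\Map_{\QCoh(S)}(\sF_x \otimes_{\sO_S} \sO_S, \sF_x) \simeq \Map_{\QCoh(S)}(\sF_x,\sF_x)$. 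Passing from the mapping space back to the complex via the characterization \eqref{eq:definition-tangent-complex} (applied with general $\sG$ and the appropriate shift $k$), one concludes $T_x(\sVect) \simeq \Hom_S(\sF_x,\sF_x)[1]$; I would phrase this cleanly by noting both sides corepresent the same functor $\sG \mapsto \Omega^k \Map_{S/}(S_{\sG}[k],\sVect)$ on $\QCoh(S)^{-}$, using that $\Hom_S(\sF_x,\sF_x)$ is perfect (as $\sF_x$ is dualizable) so the shift by $1$ is well-defined and the corepresentability makes sense.

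The main obstacle I expect is being careful with the shift and with the precise meaning of "square-zero deformation of a sheaf" in the derived setting: one must verify that $\imath^*$ is the correct restriction, that $\Map(S_{\sG}[k],\sVect)$ is $k$-fold loop space of something (so the $\Omega^k$ on the left of \eqref{eq:definition-tangent-complex} makes the answer independent of $k$), and that the flatness of $\sF_x$ is genuinely used to trivialize the $\Tor$-terms that would otherwise appear in comparing $\sF_y$ with $\sF_x \oplus (\sF_x \otimes \sG)$. Since $\sVect \subset \sPerf$ and $\sPerf$ has a well-understood cotangent complex (the deformation theory of perfect complexes is standard), an alternative is to cite that $T_{\sPerf,x} \simeq \Hom_S(E,E)[1]$ for $E$ the perfect complex at $x$ and restrict, but doing it by hand for $\sVect$ is short and self-contained, so I would carry out the direct argument and only remark on the compatibility with $\sPerf$.
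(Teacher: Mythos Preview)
Your proposal is correct and follows essentially the same route as the paper: both identify the fiber $\Maps_{S/}(S_{\sO_S},\sVect)$ with extension classes of $\sF_x$ over the dual numbers and read off the shifted endomorphism complex. The only difference is one of explicitness at the key step. Where you invoke ``the standard deformation-theoretic computation'' and flatness to assert that deformations are classified by $\Map_{\QCoh(S)}(\sF_x,\sF_x)$, the paper makes this concrete: it tensors the fiber sequence $p^*\sO_S \to \sO_{S}[\epsilon] \to \imath_*\sO_S$ of $\sO_S[\epsilon]$-modules with $\sF_y$ to obtain $p^*\sF_x \to \sF_y \to \imath_*\sF_x$, so that $\sF_y$ is determined by the boundary map $\imath_*\sF_x \to p^*\sF_x[1]$, and then uses the $(p^*,p_*)$ adjunction to rewrite the space of such maps as $\Hom_S(\sF_x,\sF_x[1])$. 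This is exactly the content of the black box you cite, so nothing is lost; the explicit fiber-sequence argument just avoids any ambiguity about where the shift comes from (and incidentally makes the flatness hypothesis visible, since tensoring the sequence with $\sF_y$ must preserve it). Your remark about checking that $\sF_y$ is again a vector bundle and your alternative via the known cotangent complex of $\sPerf$ are both reasonable additions, though the paper does not spell out the former and does not take the latter route.
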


\begin{proof}
We prove the right isomorphism. The left isomorphism follows from the definition of the tangent complex.

Let's identify $\sO_{\sO_S} \simeq \sO_S[\epsilon]$, where $\epsilon^2 = 0$. Let $\sF_{y}$ be a $\sO_{S}[\epsilon]$-module such that $\imath^*\sF_y \simeq \sF_x$. Consider the fiber sequence
\[p^*(\sO_S) \ra \sO_{S}[\epsilon] \ra \imath_{*}\sO_S\]
of $\sO_S[\epsilon]$-modules. Tensoring it with $\sF_{y}$ yields
\[p^*(\sF_x) \ra \sF_y \ra \imath_{*}(\sF_{x}).\]
Hence, $\sF_y$ is determined by the map
\[\imath_{*}(\sF_{x}) \ra p^*(\sF_x)[1]\]
of $\sO_{S}[\epsilon]$-modules. Since
\[\Hom_{S_{\sO_S}}(p^*(\sF_x),\imath_*(\sF_x\otimes \sO_S[1])) \simeq \Hom_{S}(\sF_x,p_*\circ \imath_*(\sF_x\otimes \sO_{S}[1])) \simeq \Hom_S(\sF_x,\sF\otimes\sO_{S}[1]).\]

This finishes the proof.
\end{proof}

Let's analyze more concretely what the correspondence of Lemma \ref{lem:tangent-complex-computation} gives us.

Suppose one has $\beta \in \pi_0\Hom_{S}(\sO_S,T_{x}\sVect)$, this corresponds to an element
\[\beta \in \Ext^{1}_{S}(\sF_x,\sF_x).\]
Since $\sF_x$ is a vector bundle this Ext-group vanishes (\cite{HA}*{Proposition. 7.2.2.6. (3)}). In other words, up to a contractible space of choice there is only one $\sO_{S}[\epsilon]$-module $\sF_y$ whose restriction to $S$ recovers $\sF_x$.

However, if one considers $\alpha \in \pi_0\Hom_S(\sO_S[1],T_{x}\sVect)$, this gives an element in
\[\alpha \in \Ext^0(\sF_x,\sF_x),\]
i.e. an actual endomorphism of $\sF_x$. By (\ref{eq:definition-tangent-complex}) one obtains that $\alpha$ corresponds to an automorphism of $y$ over $x$, i.e. a map $\varphi$ between the fiber sequences
\begin{equation}
    \label{eq:fiber-sequence-square-zero-extension}
    \begin{tikzcd}
    \sF_x \ar[r] \ar[d,"\id_{\sF_x}"]  & \sF_y \ar[r] \ar[d,"\varphi"] & \sF_x \ar[d,"\id_{\sF_x}"] \\
    \sF_x \ar[r] & \sF_{y} \ar[r] & \sF_x
    \end{tikzcd}
\end{equation}

To determine $\varphi$ we fix an isomorphism
\[
\sF_{y} \simeq \sF_x[\epsilon]
\]
as $\sO_S$-modules. It is easy to see that
\[
\varphi(f + g \epsilon) = f + (g + \alpha(f))\epsilon,
\]
where $f,g$ are local section of $\sF_x$.

Recall that for $\sE \in \sPerf(S)$ a perfect complex, one has a canonical trace morphism
\[\tr: \Hom_S(\sE,\sE) \ra \sO_S.\]
Indeed, $\sE$ has a dual $\sE^{\vee}$ and given any map $f \in \Hom_S(\sE,\sE)$ one defines $\tr(f)$ to be the composite
\[\sO_S \overset{\coev_{\sE}}{\ra} \sE \otimes \sE^{\vee} \simeq \sE^{\vee} \otimes \sE \overset{\id_{\sE^{\vee}}\otimes f}{\ra} \sE^{\vee}\otimes\sE \overset{\ev_{\sE}}{\ra} \sO_S.\]

The following result has its proof sketched in \cite{STV}.

\begin{prop}
The map induced on tangent spaces by $\sD_S$
\[
\sD_{S,*}: \Hom_S(\sF_x,\sF_{x})[1] \ra \Hom_{S}(\bwedge{\rank(\sF_x)}\sF_x,\bwedge{\rank(\sF_x)}\sF_x)[1] \simeq \sO_{S}[1]
\]
is the trace morphism on perfect complexes.
\end{prop}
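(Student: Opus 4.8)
Following the sketch in \cite{STV}, the plan is to unwind the square-zero-extension description of the tangent complex recalled above and reduce to the elementary identity $\det(\id+\epsilon A) = 1 + \epsilon\,\tr(A)$ modulo $\epsilon^2$. First observe that $\sD_{S,*}$ is $\sO_S$-linear, being the dual of the natural map of cotangent complexes, and that its source $T_x\sVect \simeq \Hom_S(\sF_x,\sF_x)[1]$ and target $\Hom_S(\bwedge{\rank(\sF_x)}\sF_x,\bwedge{\rank(\sF_x)}\sF_x)[1] \simeq \sO_S[1]$ are shifts of vector bundles. Since $\Hom_S(\sF_x,\sF_x) \simeq \sF_x^{\vee}\otimes\sF_x$ is a vector bundle, two $\sO_S$-linear maps $\Hom_S(\sF_x,\sF_x) \to \sO_S$ that induce the same map $\Ext^0_S(\sF_x,\sF_x) \to \pi_0\sO_S$ are homotopic; hence $\sD_{S,*}$ is determined by its effect on $\pi_0$. (The $\Ext^1$-summand of $T_x\sVect$ vanishes, as recalled after Lemma \ref{lem:tangent-complex-computation}, so there is no further datum to match.) Thus it suffices to show that for $\alpha \in \Ext^0_S(\sF_x,\sF_x) = \pi_0\Hom_S(\sO_S[1],T_x\sVect)$ one has $\sD_{S,*}(\alpha) = \tr(\alpha)$ in $\pi_0\sO_S$, where $\tr$ is the trace morphism recalled above. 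By the discussion following Lemma \ref{lem:tangent-complex-computation}, $\alpha$ corresponds to the automorphism $\varphi_\alpha = \id_{\sF_x[\epsilon]} + \epsilon\alpha$ of $\sF_y \simeq \sF_x[\epsilon]$ over $x$, and $\sD_{S,*}(\alpha)$ is, by definition, read off from the automorphism of $\sD_{S_{\sO_S}}(\sF_x[\epsilon])$ over $\sD_S(\sF_x)$ induced by $\varphi_\alpha$.

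\textbf{Identifying $\sD$ on $\sVect$ with the derived determinant.} By Steps 2 and 3 in the proof of Theorem \ref{thm:determinant-of-perfect-complexes}, the composite $\sVect \hookrightarrow \sPerf \overset{\sD}{\to} \sPicgr$ is, under the identifications $\sVect(R)^{\simeq} \simeq \Ka(\Vect(R))$ and $\sPicgr(R) \simeq \Ka(\sPicgr(R))$, induced by the map of simplicial objects (\ref{eq:simplicial-determinant}), which is the fppf sheafification of the left Kan extension from smooth $k$-algebras of $\B_{\bullet}$ applied to the symmetric monoidal functor $\det^{\gr}$ of Lemma \ref{lem:det-as-symmetric-monoidal-functor}. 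On the other hand, $M \mapsto (\bwedge{\rank(M)}M,\rank(M))$ defines a symmetric monoidal functor $\det^{\gr}_R \colon \Vect(R) \to \Pic^{\gr}(R)$, natural in $R \in \sAlg_k$ (here one uses that over a simplicial commutative ring the top derived exterior power of a flat finitely generated projective module is invertible), which over a smooth, hence discrete, $k$-algebra recovers the functor of Lemma \ref{lem:det-as-symmetric-monoidal-functor}. Since $\sVect$ and $\sPicgr$ are smooth-extended (Theorem \ref{thm:sVect-is-smooth-extended} and Corollary \ref{cor:graded-Pic-classical}, applied levelwise as in Step 3 of Theorem \ref{thm:determinant-of-perfect-complexes}), the two natural transformations $\B_{\bullet}\sVect \to \B_{\bullet}\sPicgr$ coincide; hence $\sD_S(\sE) \simeq (\bwedge{\rank(\sE)}\sE,\rank(\sE))$ functorially in $(S,\sE)$. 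In particular $\sD_{S_{\sO_S}}(\sF_x[\epsilon]) \simeq (\bwedge{r}\sF_x)[\epsilon]$ with $r := \rank(\sF_x)$, and the automorphism induced by $\varphi_\alpha$ is $\bwedge{r}\varphi_\alpha$.

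\textbf{Linear algebra and globalization.} By Proposition 2.9.2.3 of \cite{SAG} (as in Lemma \ref{lem:Vect-is-BGL}) we may trivialize $\sF_x \simeq \sO_S^{\oplus r}$ Zariski-locally; on such a chart $\alpha$ becomes a matrix $A$ over $\pi_0\sO_S$, $\varphi_\alpha$ becomes the $\sO_S[\epsilon]$-linear automorphism $\id + \epsilon A$ of $\sO_S[\epsilon]^{\oplus r}$, and $\bwedge{r}\varphi_\alpha$ is multiplication by $\det(\id + \epsilon A) = 1 + \epsilon\,\tr(A)$ on $\bwedge{r}\sO_S^{\oplus r} \simeq \sO_S$. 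Hence on this chart $\sD_{S,*}(\alpha) = \tr(A)$, which is the restriction of the intrinsic $\tr(\alpha)$. As an element of $\pi_0\sO_S$ is determined by its restrictions to a Zariski cover, $\sD_{S,*}(\alpha) = \tr(\alpha)$ globally, and by the first step $\sD_{S,*}$ is the shift by $[1]$ of the trace morphism $\Hom_S(\sF_x,\sF_x) \to \sO_S$, as claimed.

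\textbf{Main obstacle.} The delicate point is the second step: reconciling the homotopy-theoretic construction of $\sD$ — which passes through $K$-theory, the equivalences $\K(\Vect(R)) \simeq \Ka(\Vect(R))$ and $\sP \simeq \Ka(\sP)$, and a left Kan extension from smooth algebras — with the honest functor $M \mapsto \bwedge{\rank(M)}M$; in particular one must make sense of the top derived exterior power of a flat finitely generated projective module over a simplicial commutative ring, verify that it is invertible, and check naturality and compatibility with those group-completion equivalences. Once this identification is in hand, tracking the degree shift in the square-zero description of the tangent complex and globalizing $\det(\id + \epsilon A) = 1 + \epsilon\,\tr(A)$ are routine.
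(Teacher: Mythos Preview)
Your proposal is correct and follows essentially the same route as the paper: both unwind the square-zero-extension description of the tangent complex, identify the induced automorphism of $\sD(\sF_x[\epsilon])$ with $\bwedge{r}(\id+\epsilon\alpha)$, and conclude via $\det(\id+\epsilon A)=1+\epsilon\,\tr(A)$. The paper's argument is terser---it simply asserts that applying $\sD$ yields $\bwedge{\rank(\sF_y)}\varphi$ and differentiates---whereas you spell out the reduction to $\pi_0$, the identification of $\sD|_{\sVect}$ with the top exterior power via smooth-extendedness, and the Zariski localization; these are exactly the points the paper (following \cite{STV}) leaves implicit, and your ``main obstacle'' paragraph correctly isolates the only nontrivial one.
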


\begin{proof}
We notice that the discussion about the tangent space applies to the stack $\sPerf$. Given any $\sF_x \in \sPerf(S)$ and $\alpha \in \Hom_S(\sF_x,\sF_{x})[1]$ we have the diagram (\ref{eq:fiber-sequence-square-zero-extension}) associated to $\varphi = \id_{\sF_y} + \epsilon\cdot\alpha$. If we apply the map $\sD$ to this diagram one obtains
\[
\begin{tikzcd}
    \bwedge{\rank(\sF_x)}\sF_x \ar[r] \ar[d,"\id_{\sF_x}"]  & \bwedge{\rank(\sF_y)}\sF_y \ar[r] \ar[d,"\bwedge{\rank(\sF_y)}\varphi"] & \bwedge{\rank(\sF_x)}\sF_x \ar[d,"\id_{\sF_x}"] \\
    \bwedge{\rank(\sF_x)}\sF_x \ar[r] & \bwedge{\rank(\sF_y)}\sF_{y} \ar[r] & \bwedge{\rank(\sF_x)}\sF_x
\end{tikzcd}
\]
Thus, the map $\gamma = \sD_{S,*}(\alpha) \in \Hom_{S}(\bwedge{\rank(\sF_x)}\sF_x,\bwedge{\rank(\sF_x)}\sF_x)[1]$ is determined by
\[
\id_{\bwedge{\rank(\sF_y)}\sF_y} + \gamma\cdot\epsilon = \bwedge{\rank(\sF_y)}(\id_{\sF_y} + \alpha\cdot\epsilon) = \bwedge{\rank(\sF_y)}\varphi.
\]
Since $\epsilon^2 = 0$, one obtains that
\[\gamma = \frac{d}{d\epsilon}\left(\det(\id_{\sF_y} + \alpha\cdot\epsilon) - \id_{\bwedge{\rank(\sF_y)}\sF_y}\right) = \tr(\alpha).\]

This finishes the proof.
\end{proof}

\section{Higher determinant maps}
\label{sec:higher-determinant}

In this section we will bootstrap the determinant map of perfect complexes to a map from the prestack of Tate objects. In section \ref{subsec:recollection-on-Tate} we recall the definition of Tate objects, in section \ref{subsec:relative-S-construction} we introduce the construction that allows one to deloop the determinant map. The last two sections \ref{subsec:higher-determinant} and \ref{subsec:central-extensions} construct the higher determinant map and apply it to obtain central extensions of loop groups.

\subsection{Recollection on Tate objects}
\label{subsec:recollection-on-Tate}

The notion of Tate objects for $\infty$-categories was first introduced in \cite{Hennion-Tate}*{\S 2}. Here we take a slightly different approach where we keep track of cardinalities to avoid having to consider a bigger universe, which is closer to the approach of \cite{BGW-Tate}, where Tate objects are defined for ordinary exact categories, and was suggested in \cite{AGH}*{Remark 2.34}.

Given $\lambda$ a regular infinite cardinal we denote by $\Catl$ the category of essentially $\lambda$-small categories\footnote{See \cite{HTT}*{\S 1.2.15} for a discussion of this and \cite{HTT}*{Proposition 5.4.1.2} for a definition.} and $\Catstl$ the subcategory of stable categories and exact functors between these.

For the rest of the paper we fix $\kappa < \lambda$ regular infinite cardinals.

\begin{defn}
For any $\sC \in \Catl$ one has
\begin{itemize}
    \item (Ind-objects in $\sC$) the category $\Indk(\sC)$ is the subcategory of $\Fun(\sC^{\rm op},\Spc)$ generated by those functors which commute with $\kappa$-small colimits;
    \item (Pro-objects in $\sC$) the category $\Prok(\sC)$ is the subcategory of $\Fun(\sC,\Spc)^{\rm op}$ generated by those functors which commute with $\kappa$-small limits;
    \item (elementary Tate-objects in $\sC$) the category $\Tateelk(\sC)$ is the smallest subcategory of $\Indk(\Prok(\sC))$ containing the essential images of  $\Prok(\sC)$ and $\Indk(\sC)$ and closed under finite extensions;
    \item (Tate-objects in $\sC$) the category $\Tatek(\sC)$ is the idempotent completion of $\Tateelk(\sC)$.
\end{itemize}
\end{defn}

The reason we consider the constructions with respect to a smaller cardinal is to obtain the following result.

\begin{lem}
\label{lem:smallness-of-Tate-category}
The categories $\Indk(\sC)$, $\Prok(\sC)$, $\Tateelk(\sC)$ and $\Tatek(\sC)$ are essentially $\lambda$-small, i.e. equivalent to some category which is $\lambda$-small.
\end{lem}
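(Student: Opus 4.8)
The plan is to derive all four cases from the single assertion that $\Indk$ takes essentially $\lambda$-small categories to essentially $\lambda$-small categories, together with two elementary closure properties.

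First the reductions. Since $\Prok(\sD) \simeq \Indk(\sD^{\rm op})^{\rm op}$ and $\sD^{\rm op}$ is essentially $\lambda$-small whenever $\sD$ is, the statement for $\Prok$ follows from the one for $\Indk$. Granting this, $\Indk(\Prok(\sC))$ is essentially $\lambda$-small; as $\Tateelk(\sC)$ is by construction a (full) subcategory of $\Indk(\Prok(\sC))$, and a subcategory of an essentially $\lambda$-small category is again essentially $\lambda$-small, $\Tateelk(\sC)$ is essentially $\lambda$-small. Finally $\Tatek(\sC) = \Idem(\Tateelk(\sC))$, whose objects are pairs consisting of an object of $\Tateelk(\sC)$ and an idempotent endomorphism; since $\lambda > \kappa \geq \omega$ is in particular uncountable, passing to idempotent completion does not increase the number of equivalence classes past $\lambda$, so $\Tatek(\sC)$ is essentially $\lambda$-small as well.

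It remains to show: if $\sD$ is essentially $\lambda$-small, then $\Indk(\sD)$ is essentially $\lambda$-small. Replacing $\sD$ by an equivalent category we may take it to be literally $\lambda$-small, and view $\Indk(\sD)$ as the full subcategory of $\Fun(\sD^{\rm op},\Spc)$ generated under $\kappa$-filtered colimits by the representables (cf.\ \cite{HTT}*{Proposition 5.3.5.12}, and \cite{HTT}*{\S 5.4} for its $\kappa$-accessibility, the $\kappa$-compact objects being the retracts of objects of $\sD$). The mapping spaces of $\Indk(\sD)$ are visibly $\lambda$-small, so the content is to bound the number of equivalence classes of objects. Here one uses the hypothesis $\kappa < \lambda$: because representables are $\kappa$-compact in $\Fun(\sD^{\rm op},\Spc)$, for $F \in \Indk(\sD)$ the values $F(d) \simeq \Map(\mathbf{y}(d),F)$ are $\kappa$-filtered colimits of diagrams whose terms lie among the $\lambda$-small mapping spaces of $\sD$, which (together with the regularity of $\lambda$ and weak contractibility of $\kappa$-filtered diagrams) forces each $F(d)$ to be $\lambda$-small; consequently the slice $\sD_{/F}$ — whose objects are the pairs $(d, x\in F(d))$ and whose mapping spaces are inherited from $\sD$ — is $\kappa$-filtered and $\lambda$-small, and $F \simeq \colim_{\sD_{/F}}$ of the forgetful diagram. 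Running over $\lambda$-small $\kappa$-filtered indexing categories and functors from them into the $\lambda$-small $\sD$ then bounds the equivalence classes of objects of $\Indk(\sD)$ by $\lambda$. The main obstacle, and the only step that is not purely formal, is exactly this cardinality bookkeeping — verifying that $\kappa < \lambda$ really does keep the values $F(d)$, and hence the indexing slices, strictly $\lambda$-small; everything else reduces to the stability of essential $\lambda$-smallness under passage to (full) subcategories, opposites, and idempotent completion.
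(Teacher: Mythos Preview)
Your reductions for $\Prok$, $\Tateelk$, and $\Tatek$ are exactly what the paper does: pass to opposites for $\Prok$, and observe that $\Tateelk(\sC)$ and its idempotent completion sit inside $\Indk(\Prok(\sC))$. So the only content is the $\Indk$ case, and there the paper does not argue by hand: it invokes \cite{HTT}*{Proposition 5.3.5.12} to identify $\Indk(\sC)$ with the $\kappa$-compact objects of $\sP(\sC)$, and then \cite{HTT}*{Example 5.4.1.8} to conclude that this subcategory is essentially $\lambda$-small.

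Your hands-on replacement for that citation has a genuine gap. You write that each value $F(d)\simeq\Map(\mathbf{y}(d),F)$ is a $\kappa$-\emph{filtered} colimit of $\lambda$-small mapping spaces of $\sD$, and that regularity of $\lambda$ then forces $F(d)$ to be $\lambda$-small. But ``$\kappa$-filtered'' imposes no cardinality bound on the indexing diagram, so a $\kappa$-filtered colimit of $\lambda$-small spaces can be arbitrarily large (e.g.\ take $\sD$ a point and $\kappa=\omega$: then $\Ind_\omega(\ast)$ contains all spaces). This makes the argument circular: you need $F(d)$ $\lambda$-small to bound the slice $\sD_{/F}$, but you are using a presentation of $F$ over an a priori unbounded diagram to bound $F(d)$. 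What actually makes the lemma work is the identification the paper cites: $\kappa$-compact objects of $\sP(\sD)$ are retracts of colimits of $\kappa$-\emph{small} diagrams of representables, and it is the $\kappa$-smallness of the indexing diagram (together with $\kappa<\lambda$ and regularity of $\lambda$) that yields the size bound. If you want to keep the hands-on style, replace ``$\kappa$-filtered'' by ``$\kappa$-small'' at that step and justify it via the description of $\kappa$-compact presheaves; otherwise just cite the two HTT results as the paper does.
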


\begin{proof}

For $\Indk(\sC)$ this is an immediate consequence of \cite{HTT}*{Proposition 5.3.5.12}, i.e.\ $\Indk(\sC)$ is equivalent to the $\kappa$-compact objects in $\sP(\sC) = \Fun(\sC^{\rm op},\Spc)$ and the latter category is essentially $\lambda$-small by \cite{HTT}*{Example 5.4.1.8}.

The result for $\Prok(\sC)$ follows by passing to opposite categories and for $\Tateelk(\sC)$ and $\Tate(\sC)$ follows from the result for $\Indk(\Prok(\sC))$.
\end{proof}

Moreover, if one consider $\sC$ a $\lambda$-small stable category, then one has

\begin{lem}
For $\sC \in \Catstl$, the categories $\Indk(\sC)$, $\Prok(\sC)$ and $\Tatek(\sC)$ are all $\lambda$-small stable categories.
\end{lem}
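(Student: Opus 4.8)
The plan is to note that, by Lemma~\ref{lem:smallness-of-Tate-category}, all of the categories in question are already essentially $\lambda$-small, so the only thing left to verify is stability; and then to deduce stability from the single fact that the $\kappa$-Ind-completion of a ($\lambda$-small) stable category is again stable, since the remaining constructions are obtained from $\Indk(-)$ by passing to opposite categories, taking an extension-closed subcategory, and idempotent-completing — none of which destroys stability.

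The first step is $\Indk(\sC)$ itself. The Yoneda embedding $\sC \hra \Indk(\sC)$ exhibits the target as the free cocompletion of $\sC$ under $\kappa$-filtered colimits; since $\sC$ admits finite colimits (being stable), these are computed via $\sC$, and the $\kappa$-indexed analogue of \cite{HA}*{Proposition 1.1.3.6} — whose proof goes through verbatim, as finite limits and colimits are $\kappa$-small — shows that $\Indk(\sC)$ is stable. For $\Prok(\sC)$ I would use the identification $\Prok(\sC) \simeq \Indk(\sC^{\rm op})^{\rm op}$: the opposite of a stable category is stable, so $\sC^{\rm op} \in \Catstl$, whence $\Indk(\sC^{\rm op})$ is stable by the previous point, and hence so is its opposite.

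For $\Tatek(\sC)$ I would argue in two steps. Iterating the first point, $\Indk(\Prok(\sC))$ is a stable category. The canonical functors $\Indk(\sC) \to \Indk(\Prok(\sC))$ and $\Prok(\sC) \to \Indk(\Prok(\sC))$ are fully faithful and exact (cf.\ \cite{Hennion-Tate}, \cite{BGW-Tate}), so their essential images are stable subcategories; in particular each contains a zero object and is closed under the autoequivalences $\Sigma$ and $\Omega$. Since $\Sigma$ is an exact autoequivalence of $\Indk(\Prok(\sC))$, it carries cofiber sequences to cofiber sequences, so the smallest full subcategory containing these two essential images and closed under extensions — namely $\Tateelk(\sC)$ — is itself closed under $\Sigma^{\pm 1}$ and contains $0$. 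A full subcategory with these properties is a stable subcategory: given $A,B$ in it, $\Cofib(A\to B)$ sits in the cofiber sequence $B \to \Cofib(A\to B) \to \Sigma A$, so extension-closure applies, and dually for fibers using $\Omega$. Finally $\Tatek(\sC) = \Tateelk(\sC)^{\natural}$, and the idempotent completion of a stable $\infty$-category is again stable (a standard fact; see e.g.\ \cite{BGT}), which completes the argument.

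I expect the only real fuss to be the two bookkeeping points flagged above: that the cited statements of \cite{HA} carry over to the $\kappa$-indexed setting, and that the structural embeddings of $\Prok(\sC)$ and $\Indk(\sC)$ into $\Indk(\Prok(\sC))$ are exact. Both are routine, so I do not anticipate a genuine obstacle — the remainder is formal.
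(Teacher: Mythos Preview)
Your argument is correct. The paper's own proof is a one-line citation to \cite{Hennion-Tate}*{Corollary 2.7}, so you have essentially unpacked what that reference contains: smallness from the preceding lemma, stability of $\Indk$ via the $\kappa$-indexed version of \cite{HA}*{Proposition 1.1.3.6}, $\Prok$ by duality, $\Tateelk$ as an extension- and shift-closed full subcategory of the stable category $\Indk(\Prok(\sC))$, and $\Tatek$ by idempotent completion. There is no genuinely different route here --- the paper defers and you spell out --- and the two bookkeeping points you flag (the $\kappa$-analogue of the cited \cite{HA} result, and exactness of the structural embeddings) are indeed routine and handled in \cite{Hennion-Tate}.
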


\begin{proof}
The proof follows from \cite{Hennion-Tate}*{Corollary 2.7}.
\end{proof}

Notice that because of Lemma \ref{lem:smallness-of-Tate-category} we can iterate the construction above. 

\begin{defn}
For any $n \in \bN$, let $\Tatek^n(\sC)$ be the $n$th iteration of the Tate construction applied to $\sC$.
\end{defn}

\begin{rem}
From now we will drop $\kappa$ from the notation of $\Ind$, $\Pro$ and $\Tate$. As we won't consider any other cardinal this should not cause any confusion.
\end{rem}

\subsection{Relative $S_{\bullet}$--construction}
\label{subsec:relative-S-construction}

The idea for the construction in this section is originally due to Brauling, Groechenig and Wolfson in \cite{BGW-Index}*{\S 3.1}. We essentially mimic their construction adapting it to $\infty$-categories.

Let $n \geq 1$, we have two inclusion maps
\[
s_{\rm R}: \Ar_{n-1} \ra \Ar_{n} \;\;\; \mbox{and}\;\;\; s_{\rm C}: \Ar_{n-1} \ra \Ar_n
\]
given by $s_{\rm R}(i,j) = (i+1,j)$ and $s_{\rm C}(i,j)=(i,j)$, i.e. $s_{\rm R}$ adds a row to the arrow diagram and $s_{\rm C}$ adds a column. We denote by $s^*_{\rm R},s^{*}_{\rm C}: \sS_{n}\sC \ra \sS_{n-1}\sC$ the corresponding restriction functors.

\begin{defn}
\label{defn:relative-S-construction}
For every $n \geq 0$, we define $\sGr_{n}\sC$ as the following pullback
\[
\begin{tikzcd}
\sGr_{n}\sC \ar[r] \ar[d] & \sS_{n+1}\Pro(\sC) \times \sS_{n+1}\Ind(\sC) \ar[d] \\
\sS_{n+2}\Ind(\Pro(\sC)) \ar[r,"s^{*}_{\rm R} \times s^{*}_{\rm C}"] & \sS_{n+1}\Ind(\Pro(\sC)) \times \sS_{n+1}\Ind(\Pro(\sC))
\end{tikzcd}
\]

It is equipped with canonical maps $\sGr_{n}\sC \overset{e_n}{\ra} \Tateel(\sC)$ and $\sGr_{n}\sC \overset{s^{*}_{\rm R}\circ s^{*}_{\rm C}}{\ra} \sS_{n}\sC$. The first map is explicitly given by evaluation at $(n,n) \in \Ar_n$, and the second map is given by forgetting the last column, and then further forgetting the first row.
\end{defn}

\begin{rem}
For $n=0$, $\sGr_0\sC$ consists of diagrams
\[
\begin{tikzcd}
    X_{(0,0)} \ar[r] & X_{(0,1)} \ar[r] \ar[d] & X_{(0,2)} \ar[d] \\
    & X_{(1,1)} \ar[r] & X_{(1,2)} \ar[d] \\
    & & X_{(2,2)}
\end{tikzcd}
\]
in $\Ind(\Pro(\sC))$ where the subdiagram
\[
\begin{tikzcd}
    X_{(0,0)} \ar[r] & X_{(0,1)} \ar[d] \\
    & X_{(1,1)}
\end{tikzcd}
\]
belongs to $\Ind(\sC)$ and the subdiagram
\[
\begin{tikzcd}
    X_{(1,1)} \ar[r] & X_{(1,2)} \ar[d] \\
    & X_{(2,2)}
\end{tikzcd}
\]
belongs to $\Pro(\sC)$. 

Since $X_{(0,0)} \simeq X_{(1,1)} \simeq X_{(2,2)} \simeq 0$ by definition, we have that $\sGr_{0}\sC$ is equivalent to pushout squares
\[
\begin{tikzcd}
    X_{(0,1)} \ar[r] \ar[d] & X_{(0,2)} \ar[d] \\
    0 \ar[r] & X_{(1,2)}
\end{tikzcd}
\]
in $\Ind(\Pro(\sC))$, where $X_{(0,1)} \in \Ind(\sC)$ and $X_{(1,2)} \in \Pro(\sC)$. Notice that by definition $X_{(0,2)}$ belongs to $\Tateel(\sC)$.
\end{rem}

The following observation is behind the construction of the index map.

\begin{lem}
\label{lem:Grassmannian-factors}
For any $V \in \Tateel(\sC)$ let $\sGr_n(V) = e^{-1}_n(V)$. One has
\[\sGr_n(V) \simeq \Fun(\Delta^n,\sGr_0(V)).\]
\end{lem}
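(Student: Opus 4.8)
The plan is to strip the iterated and relative $S_{\bullet}$-data defining $\sGr_n(V)$ down to a single flag, and then to recognize such flags as chains of $n+1$ lattices in $V$; the right-hand side $\Fun(\Delta^n,\sGr_0(V))$ parametrizes precisely such chains, so the identification should be essentially by inspection once both sides are put in the same form.

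First I would invoke the standard identification of the $S_{\bullet}$-construction of a stable category with its filtered objects. Since $\Ind(\Pro(\sC))$ is stable, a point of $\sS_m\Ind(\Pro(\sC))$ is determined by its underlying flag $0=X_{0,0}\ra X_{0,1}\ra\cdots\ra X_{0,m}$ (the remaining $X_{i,j}$ being the iterated cofibers forced by axioms (ii) and (iii) of the $S_{\bullet}$-construction), and this gives an equivalence $\sS_m\Ind(\Pro(\sC))\simeq\Fun(\Delta^{m-1},\Ind(\Pro(\sC)))$, functorially in $m$. Since $\Ind(\sC)\hra\Ind(\Pro(\sC))$ and $\Pro(\sC)\hra\Ind(\Pro(\sC))$ are fully faithful and exact, $\sS_m\Ind(\sC)$ and $\sS_m\Pro(\sC)$ are exactly the full subcategories of $\sS_m\Ind(\Pro(\sC))$ on the diagrams valued in $\Ind(\sC)$, resp.\ $\Pro(\sC)$; and because $\Ind(\sC)$ is closed under cofibers and $\Pro(\sC)$ under fibers inside $\Ind(\Pro(\sC))$, a flag lies in $\sS_m\Ind(\sC)$ iff each object $X_{0,k}$ does, and lies in $\sS_m\Pro(\sC)$ iff each cofiber $X_{0,m}/X_{0,k}$ does. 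In particular each of the three conditions defining $\sGr_n\sC$ is a full-subcategory condition, so $\sGr_n(V)=e_n^{-1}(V)$ is the full subcategory of $\sS_{n+2}\Ind(\Pro(\sC))$ on the diagrams $X$ with $e_n(X)\simeq V$ satisfying the two restriction conditions.

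Next I would translate those conditions along the flag equivalence. Writing $F_k:=X_{0,k}$ (so $F_0\simeq 0$ and $F_{n+2}\simeq V\simeq e_n(X)$), the condition $s_{\rm C}^{*}X\in\sS_{n+1}\Ind(\sC)$ becomes $F_k\in\Ind(\sC)$ for $1\le k\le n+1$, and the condition $s_{\rm R}^{*}X\in\sS_{n+1}\Pro(\sC)$ becomes $V/F_k\in\Pro(\sC)$ for $1\le k\le n+1$ — on the $\Pro$-side one first gets a condition on the cofibers $F_k/F_1$ (the objects of the ``first-row-deleted'' diagram) and then rewrites it, a routine rearrangement using stability of $\Ind(\Pro(\sC))$ and closure of $\Pro(\sC)$ under fibers and cofibers. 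Discarding the forced zero $F_0$ and re-indexing, $\sGr_n(V)$ is thus identified with the full subcategory of $\{G\colon\Delta^{n+1}\ra\Ind(\Pro(\sC))\mid G(n+1)\simeq V\}$ (morphisms being the identity on the final vertex) spanned by the $G$ with $G(j)\in\Ind(\sC)$ and $V/G(j)\in\Pro(\sC)$ for $0\le j\le n$. Taking $n=0$ here recovers exactly the description of $\sGr_0(V)$ in the remark after Definition~\ref{defn:relative-S-construction}: it is the full subcategory of the slice $\Ind(\Pro(\sC))_{/V}$ on the ``lattices'', i.e.\ the objects $L\ra V$ with $L\in\Ind(\sC)$ and $\Cofib(L\ra V)\in\Pro(\sC)$.

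Finally I would compare with $\Fun(\Delta^n,\sGr_0(V))$. By the universal property of slice categories there is a natural equivalence $\Fun(\Delta^n,\Ind(\Pro(\sC))_{/V})\simeq\Fun(\Delta^{n+1},\Ind(\Pro(\sC)))\times_{\Fun(\Delta^0,\Ind(\Pro(\sC)))}\{V\}$, under which the $j$-th vertex of a $\Delta^n$-diagram corresponds to the object $G(j)\ra V$ of the slice. Since $\sGr_0(V)$ is a \emph{full} subcategory of that slice, $\Fun(\Delta^n,\sGr_0(V))$ is the full subcategory of $\Fun(\Delta^n,\Ind(\Pro(\sC))_{/V})$ on the functors all of whose vertices lie in $\sGr_0(V)$, i.e.\ on the $G$ with $G(j)\in\Ind(\sC)$ and $V/G(j)\in\Pro(\sC)$ for $0\le j\le n$ — precisely the subcategory produced in the previous step. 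Hence $\sGr_n(V)\simeq\Fun(\Delta^n,\sGr_0(V))$; since every step was functorial, the equivalence is natural in $V$ and compatible with the simplicial structure in $n$. The hard part will be the index bookkeeping in the translation step: matching the $\Ar_m$-indexed arrow diagrams with flags under the correct shift, and checking that the two relative $S_{\bullet}$-conditions really become the pointwise lattice conditions on $G$ — in particular on the $\Pro(\sC)$-side, where the naive reading constrains the cofibers $F_k/F_1$ rather than $V/F_k$ and one must rearrange using stability and the closure properties recalled in the first step. Everything else (the $S_{\bullet}$-of-a-stable-category identification, the slice/join description, and the compatibility of $\Fun(\Delta^n,-)$ with full subcategories) is standard.
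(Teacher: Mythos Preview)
Your proof is correct and follows essentially the same approach as the paper's: both invoke the identification $\sS_m\sC \simeq \Fun(\Delta^{m-1},\sC)$ (which the paper cites as \cite{HA}*{Lemma 1.2.2.4}) and then analyze the defining pullback for $\sGr_n\sC$ through this lens, finally passing to fibers over $V$. The paper's version is terser---it simply asserts that one may ``commute $\Fun(\Delta^n,-)$ with the pullback''---whereas you unpack the two restriction conditions explicitly as pointwise lattice conditions on the flag and match them against $\Fun(\Delta^n,\sGr_0(V))$ via the slice description; this is exactly the index bookkeeping the paper suppresses, and your rearrangement on the $\Pro$-side (from conditions on $F_k/F_1$ to conditions on $V/F_k$ using closure of $\Pro(\sC)$ under fibers and cofibers) is the correct way to fill that gap.
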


\begin{proof}
Indeed, by \cite{HA}*{Lemma 1.2.2.4} one has an isomorphism $\sS_{n}\sC \simeq \Fun(\Delta^{n-1},\sC)$ functorial in $\sC$. Applied to the pullback diagram defining $\sGr_n\sC$ we obtain
\[
\begin{tikzcd}
\sGr_{n}\sC \ar[r] \ar[d] & \Fun(\Delta^n,\Pro(\sC)) \times \Fun(\Delta^{n+1},\Ind(\sC)) \ar[d] \\
\Fun(\Delta^{n+1},\Ind(\Pro(\sC))) \ar[r,"s^*_{\rm R} \times s^{*}_{\rm C}"] & \Fun(\Delta^n,\Ind(\Pro(\sC)))^{\times 2}
\end{tikzcd}
\]
Since $\Fun(\Delta^{n+1},\Ind(\Pro(\sC))) \simeq \Fun(\Delta^n,\Fun(\Delta^1,\sC))$, and we can commute $\Fun(\Delta^n,-)$ with the pullback to obtain the equivalence
\[
\sGr_n\sC \simeq \Fun(\Delta^n,\sGr_0(\sC)).
\]
The result for $V$ follows from taking the fibers.
\end{proof}

By the naturality of the construction the categories $\sGr_n\sC$ assemble into a simplicial object. We let $\Gr_{\bullet}(\sC)$ denote the simplicial space obtained by passing to the underlying $\infty$-groupoid levelwise and $e_{\bullet}: \Gr_{\bullet}(\sc) \ra \Tateel(\sC)^{\simeq}$ the map to the underlying $\infty$-groupoid of the category of elementary Tate objects seen as a constant simplicial object in spaces.

\begin{cor}
\label{cor:invertible-projection}
The map induced by $e_{\bullet}$ upon geometric realization:
\begin{equation}
\label{eq:contractible-equivalence}
e: \left|\Gr_{\bullet}\sC\right| \ra \Tateel(\sC)^{\simeq}
\end{equation}
is an equivalence.
\end{cor}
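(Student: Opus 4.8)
The plan is to exhibit the map $e_\bullet \colon \Gr_\bullet\sC \to \Tateel(\sC)^\simeq$ (targeting the \emph{constant} simplicial space) as a map that becomes an equivalence after geometric realization, by showing that each fiber $\Gr_\bullet(V)$, for $V \in \Tateel(\sC)^\simeq$, has contractible geometric realization. Because geometric realization of simplicial spaces is compatible with the decomposition over the target — more precisely, $|\Gr_\bullet\sC| \to \Tateel(\sC)^\simeq$ is the realization of a map of simplicial spaces whose target is constant, so its fibers over a point $V$ are computed by $|\Gr_\bullet(V)|$ — it suffices to prove $|\Gr_\bullet(V)| \simeq \ast$ for every $V$.

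First I would invoke Lemma \ref{lem:Grassmannian-factors}, which identifies $\sGr_n(V) \simeq \Fun(\Delta^n, \sGr_0(V))$ functorially in $[n]$. Passing to underlying $\infty$-groupoids levelwise, this says that $\Gr_\bullet(V)$ is (equivalent to) the simplicial space $[n] \mapsto \Map(\Delta^n, \sGr_0(V)^\simeq)$, i.e. the \emph{path space} / décalage-type simplicial object associated to the space $\sGr_0(V)^\simeq$; concretely it is the nerve-like simplicial space whose $n$-simplices are strings of $n$ composable morphisms (= paths) in the $\infty$-groupoid $\sGr_0(V)^\simeq$. The key general fact is then that for any space (or any $\infty$-category) $\sE$, the simplicial space $[n] \mapsto \Fun(\Delta^n, \sE)^\simeq$ has geometric realization equivalent to $\sE$ itself: this is just the statement that $\sE \simeq |\mathrm{N}_\bullet \sE|$ when $\sE$ is an $\infty$-groupoid, or equivalently that a Kan complex is the realization of its own simplicial set of simplices. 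Hence $|\Gr_\bullet(V)| \simeq \sGr_0(V)^\simeq$.

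It then remains to show $\sGr_0(V)^\simeq$ is contractible. By the $n=0$ description recalled in the remark after Definition \ref{defn:relative-S-construction}, an object of $\sGr_0(V)$ is a pushout (equivalently biCartesian) square
\[
\begin{tikzcd}
X_{(0,1)} \ar[r] \ar[d] & V \ar[d] \\
0 \ar[r] & X_{(1,2)}
\end{tikzcd}
\]
in $\Ind(\Pro(\sC))$ with $X_{(0,1)} \in \Ind(\sC)$ and $X_{(1,2)} \in \Pro(\sC)$; that is, $\sGr_0(V)$ is the space of \emph{lattices} in $V$, a sub-Ind-object $L \hookrightarrow V$ with $L \in \Ind(\sC)$ and $V/L \in \Pro(\sC)$ — the $\infty$-categorical incarnation of the Sato Grassmannian. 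The contractibility of this space is the content of the theory of elementary Tate objects: $V$ admits a lattice (by definition of $\Tateel$, $V$ is a finite extension of Ind- and Pro-objects, and one builds a lattice by induction on the length of such a presentation, using that extensions of lattices are lattices), and the space of lattices is a filtered (cofiltered) poset-shaped diagram — any two lattices $L, L'$ admit a common refinement $L \cap L'$ and a common enlargement $L + L'$ — hence its nerve is contractible. I would spell this out either directly, showing $\sGr_0(V)^\simeq$ is the nerve of a poset that is both filtered and cofiltered (so weakly contractible), or by citing the analogous statement from \cite{BGW-Tate}/\cite{Hennion-Tate} adapted to the $\infty$-categorical setting.

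The main obstacle I anticipate is the last step: carefully establishing that the space of lattices $\sGr_0(V)^\simeq$ is contractible in the $\infty$-categorical setting, rather than merely connected or merely nonempty. Existence of a lattice is a straightforward induction on the presentation of $V$ as an iterated extension; the delicate part is organizing the "any two lattices have a common refinement and a common coarsening" argument into a genuine contractibility statement — one wants to know the relevant diagram category is sifted (or weakly contractible) as an $\infty$-category, which in the exact-category model of \cite{BGW-Tate} is handled by a poset argument but here requires checking the $\infty$-categorical pushouts/pullbacks of lattices behave as expected (e.g. that $L \cap L'$ and $L + L'$ computed in $\Ind(\Pro(\sC))$ are again lattices, using that $\Ind(\sC)$ and $\Pro(\sC)$ are closed under the relevant operations inside $\Ind(\Pro(\sC))$). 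Everything upstream — the identification $\Gr_\bullet(V) \simeq \Fun(\Delta^\bullet, \sGr_0(V))$ from Lemma \ref{lem:Grassmannian-factors} and the décalage/path-space realization computation — is formal.
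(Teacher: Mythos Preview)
Your overall strategy matches the paper's: reduce to the fibers $|\Gr_\bullet(V)|$ over each $V$, invoke Lemma~\ref{lem:Grassmannian-factors} to identify $\sGr_n(V)\simeq\Fun(\Delta^n,\sGr_0(V))$, and then deduce contractibility from the (co)filteredness of the Sato Grassmannian $\sGr_0(V)$, for which the paper simply cites \cite{Hennion-Tate}*{Theorem 3.15}.

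There is, however, a genuine slip in your middle step. You write that passing to underlying groupoids gives $\Gr_n(V)\simeq\Map(\Delta^n,\sGr_0(V)^\simeq)$ and hence $|\Gr_\bullet(V)|\simeq\sGr_0(V)^\simeq$. This conflates two different objects: $\Fun(\Delta^n,\sGr_0(V))^\simeq$ is the space of chains of $n$ not-necessarily-invertible morphisms in the \emph{category} $\sGr_0(V)$, whereas $\Map(\Delta^n,\sGr_0(V)^\simeq)$ only sees the groupoid core. The simplicial space $[n]\mapsto\Fun(\Delta^n,\sE)^\simeq$ is the Rezk nerve of $\sE$, and its geometric realization is the classifying space $|\sE|$ (the localization of $\sE$ at all maps), not $\sE^\simeq$. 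In particular $\sGr_0(V)^\simeq$ is typically \emph{not} contractible: for a classical Tate vector space the underlying groupoid of lattices is an infinite discrete set.

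Your final paragraph already contains the correct fix, just with the wrong label: the statement that the lattice category is filtered and cofiltered is exactly what gives $|\sGr_0(V)|\simeq\ast$, i.e.\ weak contractibility of the \emph{category}, and that is precisely what you need once you identify $|\Gr_\bullet(V)|$ with $|\sGr_0(V)|$ rather than with $\sGr_0(V)^\simeq$. So replace $\sGr_0(V)^\simeq$ by the classifying space $|\sGr_0(V)|$ throughout and drop the d\'ecalage/path-space language (which is a statement about groupoids, not categories). The paper's own proof is terser: it records that $\sGr_0(V)$ is cofiltered by Hennion's theorem and passes directly from this to contractibility of the realization.
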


\begin{proof}
For any $V \in \Tateel(\sC)^{\simeq}$ the fiber of $e$ is the geometric realization of $\sGr_{\bullet}(V)^{\simeq}$.

The latter is levelwise isomorphic to $\Fun(\Delta^n,\sGr_0(V))$ by Lemma \ref{lem:Grassmannian-factors}. By \cite{Hennion-Tate}*{Theorem 3.15} one has that $\sGr_0(V)$ is cofiltered, which implies that for each $[n] \in \Delta^{\rm op}$ the space $\Gr_n(V)$ is contractible, and so is the geometric realization of $\Gr_{\bullet}(\sC)$.
\end{proof}

Corollary \ref{cor:invertible-projection} allows us to define the map
\begin{equation}
\label{eq:zeroth-delooping-map}
\widetilde{\sD}: \Tateel(\sC)^{\simeq} \overset{\simeq}{\la} \left|\Gr_{\bullet}(\sC)\right| \overset{s^*_{\rm R}\circ s^{*}_{\rm C}}{\ra} \left|S_{\bullet}(\sC)\right|,    
\end{equation}
where we keep the same names $s^*_{\rm R}$ and $s^*_{\rm C}$ for the maps induced upon the geometric realization of the underlying simplicial spaces.

\begin{rem}
\label{rem:corollary-is-general}
For any $k\geq 1$ we can consider $\sS_{\bullet_k}(\sC)$ instead of $\sC$ in the statements of Lemma \ref{lem:Grassmannian-factors} and Corollary \ref{cor:invertible-projection} and they can be proved in the same way. Moreover, the corresponding equivalence maps
\[
e_k: \left|\Gr_{\bullet}\sS_{\bullet_k}(\sC)\right| \overset{\simeq}{\ra} \sS_{\bullet_k}(\Tateel(\sC))^{\simeq}
\]
are compatible with the suspension maps exhibiting the structure of a spectrum on
\[
\left(\left|S_{\bullet}(\Tateel(\sC))\right|,\left|S_{\bullet_2}(\Tateel(\sC))\right|,\ldots\right).
\]

% The proofs of Lemma \ref{lem:Grassmannian-factors} and Corollary \ref{cor:invertible-projection} go through to $\sS_{\bullet_k}(\Tateel(\sC))$ for any $k\geq 1$, instead of $\Tateel(\sC)$. Moreover, the corresponding equivalence maps (\ref{eq:contractible-equivalence}) are compatible with the suspension maps exhibiting the structure of a spectrum on $\left(\left|S_{\bullet}(\Tateel(\sC))\right|,\left|S_{\bullet_2}(\Tateel(\sC))\right|,\ldots\right)$.
\end{rem}

Similarly to (\ref{eq:zeroth-delooping-map}) by Remark \ref{rem:corollary-is-general} we obtain maps
\begin{equation}
\label{eq:kth-delooping-map}
\widetilde{\sD}_{k}: \left|S_{\bullet_k}(\Tateel(\sC))\right| \overset{\simeq}{\la} \left|\sGr_{\bullet}(\sS_{\bullet_k}(\sC))\right| \ra \left|S_{\bullet_{k+1}}\sC\right|
\end{equation}
for each $k\geq 1$.

Those assemble to give
\begin{cor}
There is a map of spectra
\[
\sD_{\sC}: \K(\Tate(\sC)) \ra \B\K(\sC),
\]
where $\B\K(\sC)$ is the suspension of the spectrum $\K(\sC)$.
\end{cor}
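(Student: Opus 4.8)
The plan is to assemble the zig-zags $\widetilde{\sD}_{k}$ of (\ref{eq:kth-delooping-map}), together with $\widetilde{\sD}$ of (\ref{eq:zeroth-delooping-map}), into a single morphism in $\Spctr$. By Definition \ref{defn:K-theory-spectrum} the spectrum $\K(\Tateel(\sC))$ is presented by the sequence $\left(\left|S_{\bullet}\Tateel(\sC)\right|,\left|S_{\bullet_2}\Tateel(\sC)\right|,\ldots\right)$, and using the equivalences $\Sigma\left|S_{\bullet_j}\sC\right| \simeq \left|S_{\bullet_{j+1}}\sC\right|$ of Remark \ref{rem:S-construction-is-delooping}, the suspension $\B\K(\sC) = \Sigma\K(\sC)$ is presented by the shifted sequence $\left(\left|S_{\bullet_2}\sC\right|,\left|S_{\bullet_3}\sC\right|,\ldots\right)$, i.e.\ its $k$-th structure space is $\left|S_{\bullet_{k+2}}\sC\right|$. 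Thus, level by level, $\widetilde{\sD}_{k+1}$ is precisely a map $\left|S_{\bullet_{k+1}}\Tateel(\sC)\right| \ra \left|S_{\bullet_{k+2}}\sC\right|$ — obtained by inverting the equivalence $e_{k+1}$ of Remark \ref{rem:corollary-is-general} and composing with $s^{*}_{\rm R}\circ s^{*}_{\rm C}$ — and $\widetilde{\sD}_{1}$ (together with $\widetilde{\sD}$ at the level of underlying groupoids) takes care of the bottom of the tower. What remains is coherence.

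For coherence I would note that everything in sight is functorial: the multisimplicial object $\sGr_{\bullet}(\sS_{\bullet_k}(\sC))$ is produced from $\sC$ by first applying the iterated $S_{\bullet}$-construction and then the relative Grassmannian construction of Definition \ref{defn:relative-S-construction}, and both the evaluation $e_k$ and the projection $s^{*}_{\rm R}\circ s^{*}_{\rm C}$ are natural in $\sC$ and defined uniformly in $k$. Concretely, the content is that for every $k$ the square relating $e_k$, $e_{k+1}$ and the two instances of the $1$-skeletal comparison map (\ref{eq:geometric-realization-1-skeleton-inclusion}) commutes — this is exactly the compatibility of the $e_k$ with the suspension maps asserted in Remark \ref{rem:corollary-is-general} — and that the analogous square for $s^{*}_{\rm R}\circ s^{*}_{\rm C}$ commutes, which holds by naturality since this projection only affects the ``new'' simplicial direction. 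Together these say that the towers $\left(\left|S_{\bullet_k}\Tateel(\sC)\right|\right)_{k}$ and $\left(\left|S_{\bullet_{k+1}}\sC\right|\right)_{k}$, linked by the $\widetilde{\sD}_k$, form a zig-zag of prespectra whose backward leg is a level-wise equivalence; passing to $\Spctr = \colim_{\Sigma^{\circ n}}\Spc$ then yields a morphism $\K(\Tateel(\sC)) \ra \B\K(\sC)$.

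Finally, to replace $\Tateel(\sC)$ by $\Tate(\sC)$: since $\Tate(\sC)$ is by definition the idempotent completion of $\Tateel(\sC)$, the induced map of connective $K$-theory spectra $\K(\Tateel(\sC)) \ra \K(\Tate(\sC))$ is an equivalence on $\pi_i$ for all $i \geq 1$ (cofinality), while $\B\K(\sC) = \Sigma\K(\sC)$ is $1$-connective because $\K(\sC)$ is connective. Hence $\Map_{\Spctr}(\K(\Tate(\sC)),\B\K(\sC)) \ra \Map_{\Spctr}(\K(\Tateel(\sC)),\B\K(\sC))$ is an equivalence, and the morphism built above corresponds to an essentially unique $\sD_{\sC}$. (Alternatively, one may re-run the whole construction verbatim with $\Tate(\sC)$ in place of $\Tateel(\sC)$, the only extra input being that $\sGr_0(V)$ remains cofiltered for every Tate object $V$, not merely for the elementary ones.)

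I expect the coherence step of the second paragraph to be the real obstacle: upgrading the pointwise statement ``compatible with the suspension maps'' of Remark \ref{rem:corollary-is-general} into an honest morphism of spectra — equivalently, inverting the infinitely many backward equivalences $e_k$ coherently — is the only non-formal bookkeeping. It is cleanest to organize it as a map of prespectra (or of $\Gamma$-/symmetric-spectrum objects) from the start, so that no ad hoc compatibilities need to be checked by hand.
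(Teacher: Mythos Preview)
Your approach matches the paper's: assemble the $\widetilde{\sD}_k$ into a map of prespectra with target $(\left|S_{\bullet_{k+1}}\sC\right|)_k \simeq \B\K(\sC)$ via Remark~\ref{rem:S-construction-is-delooping}, then pass from $\Tateel$ to $\Tate$. The only difference is in this last step: the paper simply cites Morita invariance of $K$-theory to assert $\K(\Tateel(\sC)) \simeq \K(\Tate(\sC))$ outright, whereas your cofinality-plus-$1$-connectivity argument (or your alternative of rerunning the construction for $\Tate(\sC)$) is a more careful way to say the same thing for connective $K$-theory.
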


\begin{rem}
One notices that $\K(\Tate(\sC)) \simeq \K(\Tateel(\sC))$ because of Morita invariance of K-theory (see \cite{BGT}*{} or \cite{Barwick}*{}). Another reason why one obtains $\B\K(\sC)$ as the target of $\sD_{\sC}$ above is that the target spectrum is explicitly given by the sequence of spaces
\[
(\left|S_{\bullet_2}\sC\right|,\left|S_{\bullet_3}\sC\right|,\ldots),
\]
and one has equivalences $\Sigma\left|S_{\bullet_k}\sC\right| \simeq \left|S_{\bullet_{k+1}}\sC\right|$ for all $k \geq 1$, by Remark \ref{rem:S-construction-is-delooping}.
\end{rem}

\subsection{Determinant of Tate objects}
\label{subsec:higher-determinant}

For any integer $n\geq 1$ and derived ring $R$, i.e.\ connective dg $k$-algebra we can inductively define the category 
\[
\Tate^{n}(R) = \Tate(\Tate^{n-1}(\Perf(R))),
\]
where $\Tate^0(\Perf(R)) = \Perf(R)$. 

In particular, for any $n\geq 1$ one obtain prestacks
% \[
% \begin{tikzcd}
%     \sTate^{(n)}: \Schaffop \ar[r] & \Spc \\
%     S \ar[r,mapsto] & \Tate^{n}(\Perf(S))^{\simeq}.
% \end{tikzcd}
% \]
\begin{align*}
    \sTate^{(n)}: \Schaffop & \ra \Spc \\
    S & \mapsto \Tate^{n}(\Perf(S))^{\simeq}.
\end{align*}

Similarly, one can also consider the prestacks
% \[
% \begin{tikzcd}
%     \sBPicgrn{n}: \Schaffop \ar[r] & \Spc \\
%     S \ar[r,mapsto] & \B\sBPicgrn{n-1},
% \end{tikzcd}
% \]
\begin{align*}
    \sBPicgrn{n}: \Schaffop & \ra \Spc \\
    S & \mapsto \B\sBPicgrn{n-1},
\end{align*}

where $\sBPicgrn{0} = \sPicgr$, and for any group object $\sA$ in prestacks one has
\[
\B\sA = \colim_{\Delta^{[n] \in \rm op}}\sA^{\times n},
\]
with simplicial maps induced by the group structure.

The construction from the previous section can be iterated to give
\begin{equation}
\label{eq:determinant-n-K-theory-categories}
\sD^{(n)}_{\Perf(S)}: \K(\Tate^{n}(\Perf(S))) \ra \B^{n}\K(\Perf(S))
\end{equation}
for any $S \in \Schaffop$.

In particular, since there are maps
\[
\imath: \sTate^{n}(S) \ra \K(\Tate^{n}(\Perf(S))) \;\;\; \mbox{and} \;\;\; \tau^{\leq (n+2)}: \B^{n}\K(\Perf(S)) \ra \sBPicgrn{n}(S),
\]
one obtains the following

\begin{cor}
\label{cor:higher-determinant-for-prestacks}
For any $n\geq 1$ there exists a map of prestacks
\begin{equation}
\label{eq:determinant-n-prestacks}
\sD^{(n)}: \sTate^{(n)} \ra \sBPicgrn{n}.    
\end{equation}
\end{cor}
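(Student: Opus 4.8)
The plan is to obtain $\sD^{(n)}$ by splicing together the three maps that the preceding discussion already makes available, and then checking that the result is natural in $S \in \Schaff$. Concretely, for each $S$ I would form the composite
\[
\sTate^{(n)}(S) = \Tate^{n}(\Perf(S))^{\simeq} \overset{\imath}{\ra} \K(\Tate^{n}(\Perf(S))) \overset{\sD^{(n)}_{\Perf(S)}}{\ra} \B^{n}\K(\Perf(S)) \overset{\tau^{\leq(n+2)}}{\ra} \sBPicgrn{n}(S),
\]
where $\imath$ is the canonical map of (\ref{eq:imath-for-spectra}), the middle arrow is the map (\ref{eq:determinant-n-K-theory-categories}), and the last arrow is the map $\tau^{\leq(n+2)}$ displayed just above the statement, obtained from $\B^{n}$ of the determinant $\sD^{\rm K}$ of Theorem \ref{thm:determinant-of-perfect-complexes} (whose target $\B^{n}\sPicgr$ is, up to the indicated truncation, the stack $\sBPicgrn{n}$).

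The real content sits in the middle arrow, which I would build by iterating the delooping of \S\ref{subsec:relative-S-construction}. Starting from $\sD_{\sC}\colon \K(\Tate(\sC)) \ra \B\K(\sC)$, one applies it at the $j$-th stage with $\sC = \Tate^{j-1}(\Perf(S))$ and deloops the map produced so far, obtaining
\[
\sD^{(n)}_{\Perf(S)} = \B^{n-1}\sD_{\Perf(S)}\circ\B^{n-2}\sD_{\Tate(\Perf(S))}\circ\cdots\circ\sD_{\Tate^{n-1}(\Perf(S))}\colon \K(\Tate^{n}(\Perf(S))) \ra \B^{n}\K(\Perf(S)).
\]
Here one uses that, by Remark \ref{rem:corollary-is-general}, $\sD_{\sC}$ is compatible with the spectrum structure maps of $\K(\sC)$ and $\K(\Tate(\sC))$, so that each delooping $\B^{j}\sD_{(-)}$ is well defined and the composition makes sense. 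Naturality in $S$ then comes essentially for free: $\imath$ is natural by the discussion following (\ref{eq:imath-for-spectra}); $\sD^{(n)}_{\Perf(S)}$ is natural because the (relative) $S_{\bullet}$-construction and its iterations are functorial in the input stable category while $\Perf(-)$ and $\Tate(-)$ are functorial in $S$; and $\tau^{\leq(n+2)}$ is natural by functoriality of $\sD^{\rm K}$. Hence the composite assembles to a morphism of prestacks $\sD^{(n)}\colon \sTate^{(n)} \ra \sBPicgrn{n}$, and for $n=1$ one recovers the map $\sD\colon\sTate\ra\sBPicgr$ announced in the introduction.

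The step I expect to be the main obstacle --- really the only point requiring care --- is precisely the coherence underlying this iteration: one must know that $\sD_{\sC}$ is a morphism of connective spectra (equivalently, as $\infty$-fold deloopings), not merely of underlying spaces, so that delooping it $j$ times and splicing it with the next stage is legitimate, and likewise that $\B^{n}$ of the $\Einf$-map $\sD^{\rm K}$ is compatible with these deloopings at the bottom. Granting this bookkeeping --- exactly what Remark \ref{rem:corollary-is-general} and the $\Einf$-level statement of Theorem \ref{thm:determinant-of-perfect-complexes} supply --- together with the fact that every construction in sight commutes with restriction along ring maps $S' \to S$, the corollary is immediate; no new geometric input is needed.
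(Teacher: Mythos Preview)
Your proposal is correct and follows the paper's own argument essentially verbatim: the corollary is deduced by composing $\imath$, the iterated delooping $\sD^{(n)}_{\Perf(S)}$ of (\ref{eq:determinant-n-K-theory-categories}), and the truncation map $\tau^{\leq(n+2)}$, with naturality in $S$ coming from the functoriality of all constructions involved. If anything, you are more explicit than the paper about how the iteration is assembled and about the coherence bookkeeping required to deloop $\sD_{\sC}$; the paper simply records that ``the construction from the previous section can be iterated'' and then writes down the composite.
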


\begin{rem}
Since the map (\ref{eq:determinant-n-prestacks}) is obtained as the restriction of a map (\ref{eq:determinant-n-K-theory-categories}) of K-theory spectra, one can informally say that (\ref{eq:determinant-n-prestacks}) can be enhanced to witness the multiplicative property of the determinant. More precisely, if one thinks of the category of $n$-Tate objects in perfect complexes as a (coCartesian) symmetric monoidal category with respect to \emph{direct sum}, then the functor $\sD^{(n)}$ has a symmetric monoidal structure. 
\end{rem}

Suppose that $\sX$ is any prestack, one can consider the moduli of $n$-Tate objects over $\sX$ as follows:\footnote{Notice that here $\Maps_{\PStk}$ denotes simply taking hom as prestacks and not the mapping stack, i.e.\ there is not stackification after.}
\[
\sTate^{(n)}(\sX) = \Maps_{\PStk}(\sX,\sTate^{(n)}).
\]

By composing with (\ref{eq:determinant-n-prestacks}) one obtains a map
\begin{equation}
    \label{eq:higher-determinant-for-sX}
    \sD^{(n)}_{\sX}: \sTate^{(n)}(\sX) \ra  \sBPicgrn{n}(\sX),
\end{equation}
where $\sBPicgrn{n}(\sX)$ is the space of $n$-$\sPicgr$-gerbes over $\sX$.

\begin{rem}
In \cite{STV} they used the map (\ref{eq:determinant-for-sX}) for $n=0$ and $\sX$ a K3 surface to prove that the (derived) moduli of simple perfect complexes with non-negative self-Ext's and fixed determinant is smooth. We reiterate their remark that we expect the map (\ref{eq:higher-determinant-for-sX}) to be useful in moduli space problems which can be related to the moduli space of Tate objects.
\end{rem}

\begin{rem}
More generally, if one wants to formulate the property that the determinant also splits non-split fiber sequences of $n$-Tate objects in perfect complexes then one needs to formulate the map $\sD^{(n)}$ as the $0$th level map of certain (higher) Segal objects in the category of prestacks. We plan to pursue this characterization in some future work.
\end{rem}

\subsection{Application: central extension of iterated loop groups}
\label{subsec:central-extensions}

% \todo{Is $k$ a Noetherian ring of characteristic $0$, or a field of characteristic $0$ in this section?}
% In this section $k$ is a field of characteristic $0$. 
We can now reap the fruits of our higher determinant map, in this section we take $k$ to be an arbitrary commutative ring. Given $V \in \Tate^n(k)$ one considers the group prestack
\[
\sGL_V(S) = \Aut_{\Tate^n(S)}(V\otimes_k \sO_S).
\]
In particular, we let $\sLGL{n} = \sGL_{k((t_1))\cdots ((t_n))}$ denote the group prestack whose $R$-points are
\[
\sLGL{n}(R) = \Aut_{\Tate^n(R)}(R((t_1))\cdots((t_n))).
\]

The determinant map (\ref{eq:determinant-n-prestacks}) gives a map
\begin{equation}
\label{eq:line-bundle-on-loop-stack}
\sD^{(n)}(S)_*: \sLGL{n}(S) \ra \Aut_{\sBPicgrn{n}}(\sD^{(n)}(S)(\sO_{S}((t_1))\cdots ((t_n))) \simeq \sBPicgrn{n-1}(S).
\end{equation}

Indeed, one notices that for any $\sG \in \sBPicgrn{n}(S)$ one has a canonical equivalence
\[
\Aut_{\sBPicgrn{n}}(\sG) \simeq S \times_{\sBPicgrn{n}} S \simeq \sBPicgrn{n-1}(S).
\]

\begin{lem}
The map (\ref{eq:line-bundle-on-loop-stack}) is a map of group prestacks.
\end{lem}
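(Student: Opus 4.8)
The plan is to unwind the definition of the map \eqref{eq:line-bundle-on-loop-stack} and exhibit the group structure as inherited from the ambient multiplicative structure on the determinant functor. First I would recall that, as remarked just after Corollary \ref{cor:higher-determinant-for-prestacks}, the functor $\sD^{(n)}$ arises as the restriction of a map of $K$-theory spectra $\sD^{(n)}_{\Perf(S)}: \K(\Tate^n(\Perf(S))) \to \B^n\K(\Perf(S))$, and in particular $\sD^{(n)}(S)$ can be promoted to a \emph{symmetric monoidal} functor from $(\Tate^n(\Perf(S))^{\simeq},\oplus)$ (or, more to the point here, from the groupoid of automorphisms with composition) toward $\sBPicgrn{n}(S)$ viewed with its natural multiplicative structure. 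The key observation is that for a symmetric monoidal (indeed, merely monoidal) functor $F$ between pointed monoidal $\infty$-groupoids, the induced map on automorphism groups of any object, $\Aut(V) \to \Aut(F(V))$, is automatically a homomorphism of grouplike $\Einf$-monoids (equivalently of group prestacks when everything is taken $S$-functorially), because $F$ sends the composition of loops at $V$ to the composition of loops at $F(V)$.

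Concretely, the steps I would carry out are: (1) Identify $\sLGL{n}(S) = \Aut_{\Tate^n(S)}(V\otimes_k\sO_S)$ with the based loop space $\Omega_V \sTate^{(n)}(S)$, so that its group structure is loop concatenation; the map $\sD^{(n)}(S)$, being a map of prestacks (in particular of pointed spaces once we fix the basepoint $V$), induces $\Omega_V\sTate^{(n)}(S) \to \Omega_{\sD^{(n)}(S)(V)}\sBPicgrn{n}(S)$, which is a loop map, hence an $\A_\infty$- (so in particular group-) homomorphism. (2) Identify $\Omega_{\sG}\sBPicgrn{n}(S)$ with $\sBPicgrn{n-1}(S)$ via the canonical equivalence $\Aut_{\sBPicgrn{n}}(\sG) \simeq S\times_{\sBPicgrn{n}}S$ displayed after \eqref{eq:line-bundle-on-loop-stack}, noting that this equivalence is one of group prestacks because the loop-space group structure on $S\times_{\sBPicgrn{n}}S$ agrees with the $\B^{(n-1)}$-delooping structure on $\sBPicgrn{n-1}$ (this is the standard fact $\Omega\B\sA \simeq \sA$ for a grouplike $\sA$, applied with $\sA = \sBPicgrn{n-1}$). (3) Compose and check everything is natural in $S$, which is immediate since each construction — the delooping maps \eqref{eq:kth-delooping-map}, the identification $\Tate^n = \Tate^n(\Perf(-))$ applied to $S$, and the loop-space identifications — was produced functorially in the affine scheme $S$.

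The main obstacle, and the only point that requires genuine care, is step (1): making precise the sense in which $\sD^{(n)}(S)$ is a \emph{based/pointed} map so that the loop-space functoriality applies, and in particular pinning down the basepoint. The determinant map is built from the zig-zag \eqref{eq:kth-delooping-map} through the relative $S_\bullet$-construction, which involves the non-canonical equivalence $e_k$ of Corollary \ref{cor:invertible-projection}; one must verify that the chosen basepoint $V\otimes_k\sO_S \in \Tate^n(S)^{\simeq}$ (the trivial Tate object $\sO_S((t_1))\cdots((t_n))$, which sits in the image of the $\Pro$/$\Ind$ generators and hence lifts canonically through the $\sGr_\bullet$-construction) maps to a point of $\sBPicgrn{n}(S)$ whose automorphism space is the one appearing in \eqref{eq:line-bundle-on-loop-stack}, compatibly in $S$. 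Once the pointedness is set up, the homomorphism property is formal: a pointed map of spaces induces a map on loop spaces respecting concatenation, and under the deloopings this is exactly the assertion that \eqref{eq:line-bundle-on-loop-stack} is a map of group prestacks. I would also remark that, since $\sD^{(n)}$ in fact refines to a map of $\Einf$-monoids with respect to $\oplus$, the induced map on automorphisms is even a map of \emph{$\Einf$}-group prestacks, but only the underlying group-prestack structure is asserted here.
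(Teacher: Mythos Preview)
Your proposal is correct and is essentially an unpacking of the paper's own proof, which consists of the single sentence ``This follows automatically from the fact that $\sD^{(n)}$ is a map of prestacks.'' Your steps (1)--(3) make explicit precisely why that one-liner suffices: a map of prestacks induces a map of based loop spaces at any chosen point, and loop maps are automatically maps of group objects; the paper simply takes this as understood.
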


\begin{proof}
This follows automatically from the fact that $\sD^{(n)}$ is a map of prestacks.
\end{proof}

\begin{defn}
\label{defn:n-central-extension-prestack}
The central extension of $\sLGL{n}$ is defined as
\[
\sLGL{n}^{\wedge} = \Fib(\sD^{(n)}(S)_*),
\]
where the fiber is taken in the category of group objects in prestacks.
\end{defn}

\begin{rem}
\label{rem:center-of-central-extension}
For $V = k((t_1))\ldots((t_n))$, the map $\sD^{(n)}(V): \Spec(k) \ra \sBPicgrn{n}$, determines
\[
\left.\sD^{(n)}\right|_{V} \ra \Spec(k)
\]
a $\sBPicgrn{n-1}$-torsor over $\Spec(k)$. The fibers of $\sLGL{n}^{\wedge} \ra \sLGL{n}$ are canonically equivalent to $\Aut(\left.\sD^{(n)}\right|_{V})$.
\end{rem}

We now compare the central extension $\sLG^{\wedge} = \sLGL{1}^{\wedge}$ with previous constructions in the literature. We need some notation first.

Let $\classical{\sTate}$ denote the restriction of $\sTate$ to the category of classical schemes $(\Schaffcl)^{\rm op}$, namely
\[\classical{\sTate}(R) = \Tate(\Perf(\pi_0(R)))^{\simeq}.\]

Notice that for $\pi_0(R)$ an ordinary commutative ring, one can consider $\oTate(\pi_0(R))$ the ordinary exact category of Tate objects over $\oModfgp_{\pi_0(R)}$, the exact category of finitely generated projective modules over $\pi_0(R)$. We claim that there is a map\footnote{Here $\h\sC$ denotes the ordinary homotopy category underlying the ($\infty$-)category $\sC$.}
\begin{equation}
\label{eq:Tate-exact-to-infty}
    \oTate(\pi_0(R))^{\simeq} \ra \h\,\classical{\Tate}(R)^{\simeq}.
\end{equation}
Indeed, for $A$ an ordinary commutative ring the canonical inclusion of\footnote{Recall that the category of perfect modules over a commutativer ring $A$ can be concretely described as compact objects in the category $\N_{\rm dg}(\Ch(A))$, i.e.\ the dg-nerve (see \cite{HA}*{Construction 1.3.1.6}) of the differential graded ordinary category of chains complexes of $A$-modules.} 
\[\oMod_{A} \ra \Perf(A)\]
induces a map between the associated Tate constructions
\[\oTate(\oModfgp_A) \ra \Tate(\Perf(A)).\]
By passing to the underlying ordinary homotopy category and underlying ordinary groupoid one obtains the map (\ref{eq:Tate-exact-to-infty}).

Let $\classical{\sD}: \classical{\sTate} \ra \classical{\sBPicgr}$ be the map of classical prestacks obtained by applying $\classical{(-)}$ to (\ref{eq:determinant-n-prestacks}) for $n=1$. Consider $\classical{\sD}^{\leq 1}$ the composite of $\classical{\sD}$ with $\tau^{\leq 1}$ the localization from spaces to $1$-truncated spaces.

\begin{lem}
\label{lem:set-of-trivializations}
The restriction of $\classical{\sD}^{\leq 1}(R)$ to $\oTate(\pi_0(R))^{\simeq}$ agrees with the map
\[\B(\mbox{det}^{\rm gr}) \circ \mbox{Index}: \oTate(\pi_0(R))^{\simeq} \ra B\mbox{Pic}^{\rm gr}_{\pi_0(R)}\]
of \cite{BGW-Index}*{Section 3}\footnote{In \cite{BGW-Index} they use the notation $\mbox{Pic}^{\bZ}$ for $\mbox{Pic}^{\rm gr}$, and similarly for the determinant map.}.

In particular, the space of sections $\Gamma(\Spec(\pi_0(R)),\left.\classical{\sD}(R)\right|_{\{[V]\}})$ is as described in Proposition 5.1 of \cite{BGW-Index}.
\end{lem}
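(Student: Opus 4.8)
Recall that $\classical{\sD}$ is the map of classical prestacks obtained by applying $\classical{(-)}$, and then the relevant truncation, to $\sD^{(1)}$ of (\ref{eq:determinant-n-prestacks}); and that $\sD^{(1)}$ is itself the composite of the index map $\sD_{\Perf(S)}\colon \K(\Tate(\Perf(S))) \ra \B\K(\Perf(S))$ of \S\ref{subsec:relative-S-construction} (the relative $S_\bullet$-construction applied with $\sC=\Perf(S)$) with the delooping $\B\sD^{\rm K}$ of the determinant of perfect complexes of Theorem \ref{thm:determinant-of-perfect-complexes}, precomposed with $\imath\colon \sTate(S)\ra \K(\Tate(\Perf(S)))$. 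The plan is to identify each of these two ingredients, after restriction along the inclusion $\oModfgp_{\pi_0(R)}\ra \Perf(\pi_0(R))$ underlying (\ref{eq:Tate-exact-to-infty}) and after passing to homotopy categories, with the corresponding ingredient of \cite{BGW-Index}*{Section 3}, and then to compose.

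First I would treat the index map. Our $\sGr_\bullet$ of Definition \ref{defn:relative-S-construction} is the $\infty$-categorical transcription of the relative $S_\bullet$-construction of \cite{BGW-Index}*{\S 3.1}; and for an ordinary exact category $\b{E}$, Waldhausen's $S_\bullet$-construction of $\N\b{E}$ (with admissible monomorphisms as cofibrations) agrees levelwise, after passing to underlying $\infty$-groupoids, with the nerve of Waldhausen's classical $S_\bullet$ of $\b{E}$; moreover this compatibility is preserved under the $\Ind$, $\Pro$, $\Tateel$ and $\Tate$ constructions applied to $\b{E}=\oModfgp_{\pi_0(R)}$. Next, Hennion's cofiltered Grassmannian $\sGr_0(V)$ of \cite{Hennion-Tate}*{Theorem 3.15} restricts, on the nerve of $\oTate(\oModfgp_{\pi_0(R)})$, to the Grassmannian of lattices used in \cite{BGW-Index}. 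Consequently the equivalence $e$ of Corollary \ref{cor:invertible-projection} and the projection $s^*_{\rm R}\circ s^*_{\rm C}$ restrict to the corresponding classical maps, so that $\sD_{\Perf(\pi_0(R))}$, restricted along (\ref{eq:Tate-exact-to-infty}) and passed to homotopy categories, is a delooping of the classical $\mbox{Index}$ map.

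Next I would treat the determinant. By the Remarks following Theorem \ref{thm:determinant-of-perfect-complexes}, together with Construction \ref{cons:det-as-map-of-right-fibrations} and Remark \ref{rem:compatibility-with-Bhatt-Scholze}, the truncation of $\sD^{\rm K}$ over the ordinary ring $\pi_0(R)$, precomposed with $\oModfgp_{\pi_0(R)}\ra \Perf(\pi_0(R))$, recovers the classical symmetric monoidal functor $\det^{\rm gr}$ of Lemma \ref{lem:det-as-symmetric-monoidal-functor}; concretely this uses that over $\pi_0(R)$ the restriction $\smooth{\sVect}$ is discrete, so that Step 3 of the proof of Theorem \ref{thm:determinant-of-perfect-complexes} left Kan extends exactly the classical map (\ref{eq:det-as-map-of-simplicial-objects-in-oo-categories-for-smooth-algebras}). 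Delooping this and composing with the previous paragraph gives the claimed agreement of $\classical{\sD}^{\leq 1}(R)$, restricted to $\oTate(\pi_0(R))^{\simeq}$, with $\B(\det^{\rm gr})\circ\mbox{Index}$.

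Finally, for the last assertion: by Remark \ref{rem:center-of-central-extension} the $\classical{\sPicgr}$-torsor $\left.\classical{\sD}(R)\right|_{\{[V]\}}\ra \Spec(\pi_0(R))$ is the fiber of $\classical{\sD}(R)$ over the point $[V]$, so its space of global sections is the homotopy fiber of $\classical{\sD}^{\leq 1}(R)$ over $[V]$; by the identification just established this is the groupoid of trivializations of $(\B(\det^{\rm gr})\circ\mbox{Index})([V])$, which is precisely the object computed in \cite{BGW-Index}*{Proposition 5.1}. The main obstacle is the first matching: verifying carefully that the pullback-defined $\sGr_\bullet$, its iterations and the cofiltered Grassmannian of \cite{Hennion-Tate} reduce on nerves of ordinary exact categories to the relative $S_\bullet$-construction and the lattice Grassmannian of \cite{BGW-Index}, i.e.\ that the two delooping machines coincide; the remainder is routine bookkeeping with truncations and nerves.
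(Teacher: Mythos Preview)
Your proposal is correct and follows essentially the same route as the paper: both argue that the restriction agrees with $\B(\det^{\gr})\circ\mbox{Index}$ by observing that the relative $S_\bullet$-construction and the determinant map are, by design, $\infty$-categorical transcriptions of the corresponding classical constructions in \cite{BGW-Index}, so functoriality forces the agreement. The paper's own proof is a two-line sketch (citing \cite{SAG}*{Proposition 2.9.6.2} to identify the target $\tau^{\leq 1}\sBPicgr \simeq B\mbox{Pic}^{\gr}_{\pi_0(R)}$ and then invoking ``functoriality of the construction of the index map in both cases''), whereas you have spelled out the decomposition into the index piece and the determinant piece and flagged the genuine verification needed---that $\sGr_\bullet$ and the cofiltered Grassmannian restrict correctly to nerves of ordinary exact categories---which the paper leaves implicit.
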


\begin{proof}
We first notice that by \cite{SAG}*{Proposition 2.9.6.2}, one has an equivalence
\[
\tau^{\leq 1}\sBPicgr \simeq B\tau^{\leq 0}(\sPicgr) \simeq B\mbox{Pic}^{\rm gr}_{\pi_0(R)}.
\]

The compatibility of the maps follows from the functoriality of the construction of the index map in both cases.
\end{proof}

Let $\LG$ denote the functor
\[
\tau^{\leq 0}\circ \classical{\sLG}: \Schaffclop \ra \Spc^{\leq 0} \simeq \Set,
\]
where $\tau^{\leq 0}$ is the Postnikov truncation of spaces to $0$-trucanted spaces, i.e. discrete sets. By applying the same functors to $\sLG^{\wedge}$ we obtain a central extension
\[
1 \ra \Gm \ra \widehat{\LG} \ra \LG \ra 1,
\]
i.e.\ $\widehat{\LG} \simeq \tau^{\leq 0}\classical{\sLG^{\wedge}}$.

\begin{prop}
\label{prop:1-dim-central-agrees}
The central extension $\widehat{\LG}$ agrees with the central extension denoted by $\widehat{\GL}'_{\infty}$ in \cite{FZ}*{\S 1.2.3}.
\end{prop}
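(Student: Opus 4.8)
The plan is to peel off all the derived and higher-categorical structure, reducing the statement to a comparison between the index map of Braunling--Groechenig--Wolfson and the classical determinantal central extension, which is the form in which Frenkel--Zhu's $\widehat{\GL}'_{\infty}$ is built.

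First I would make $\widehat{\LG}$ explicit. By construction $\sLG^{\wedge} = \Fib(\sD^{(1)}(S)_*)$ with $\sD^{(1)}(S)_* \colon \sLG(S) \to \Aut_{\sBPicgr}(\sD^{(1)}(S)(\sO_S((t)))) \simeq \sPicgr(S)$, so $\sLG^{\wedge}$ sits in a fibre sequence $\sLG^{\wedge} \to \sLG \to \sPicgr$ whose fibre over the identity of $\sLG$ is $\Omega\sPicgr \simeq \Gm$ (automorphisms of the unit graded line). Applying $\tau^{\leq 0}\classical{(-)}$ gives $1 \to \Gm \to \widehat{\LG} \to \LG \to 1$, where $\LG(R_0) = \Aut_{\h\,\classical{\Tate}(R_0)}(R_0((t)))$, which for $R_0 = k$ is precisely Frenkel--Zhu's $\GL_{\infty}$. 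Since only $\classical{\sD}$, further truncated to $1$-types, enters, the entire extension is determined by the map $\classical{\sD}^{\leq 1}$ of Lemma~\ref{lem:set-of-trivializations}.

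Next I would feed in Lemma~\ref{lem:set-of-trivializations}: on $\oTate(\pi_0(R))^{\simeq}$ the map $\classical{\sD}^{\leq 1}(R)$ coincides with $\B(\det^{\gr})\circ\mbox{Index}$ of \cite{BGW-Index}*{\S 3}, valued in $B\mbox{Pic}^{\rm gr}_{\pi_0(R)}$. Restricting to the automorphism group $\GL_{\infty}(R_0) = \Aut_{\oTate(R_0)}(R_0((t)))$ and passing to the fibre of $\sD^{(1)}_*$ over the identity therefore identifies $\widehat{\LG}(R_0)$ with the $\Gm$-central extension of $\GL_{\infty}(R_0)$ produced by the index/graded-determinant construction of \cite{BGW-Index} applied to the elementary Tate object $R_0((t))$ with its standard lattice $L = R_0[[t]]$; concretely the defining $2$-cocycle is the one built from the graded relative determinant lines $\det(gL \mid L)$ and their multiplicativity $\det(ghL\mid L) \simeq \det(ghL\mid gL)\otimes\det(gL\mid L)$ under composition.

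Finally I would match this with \cite{FZ}*{\S 1.2.3}. Frenkel--Zhu's $\widehat{\GL}'_{\infty}$ is, by its very construction, the central extension governing the action of $\GL_{\infty}$ on the graded determinant torsor of lattices in $k((t))$, i.e.\ it is assembled from exactly the relative determinant lines $\det(gL\mid L)$; hence both extensions are computed by the same recipe applied to the same standard lattice. The cleanest conclusion is to invoke the comparison between the index-map central extension and the classical Tate/Kapranov-type determinantal central extension of $\Aut$ of an elementary Tate object already present in \cite{BGW-Tate} and \cite{BGW-Index}; alternatively one checks directly that the two $2$-cocycles agree, graded sign conventions included. I expect the only genuine obstacle to be bookkeeping: reconciling Frenkel--Zhu's topological conventions on $k((t))$ (the linearly locally compact topology of \cite{FZ}*{\S 1.1}) and their normalization of $\widehat{\GL}'_{\infty}$ relative to the variant $\widehat{\GL}_{\infty}$ with the $\kappa$-bounded $\Ind$/$\Pro$ model of \S\ref{subsec:recollection-on-Tate}; once the dictionary between lattices on the two sides is fixed, the cocycles coincide and the identification is one of central extensions.
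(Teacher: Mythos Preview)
Your proposal is correct and follows essentially the same approach as the paper: both reduce via Remark~\ref{rem:center-of-central-extension} to identifying the fibre, both invoke Lemma~\ref{lem:set-of-trivializations} to pass to the BGW index/graded-determinant description, and both then match this with the determinantal construction in \cite{FZ}. The paper is slightly more direct in that it identifies $\Aut(\left.\classical{\sD}^{\leq 1}\right|_{k((t))})$ with automorphisms of the determinant line bundle over the affine Grassmannian at $L_0=k[[t]]$ and appeals to the proof of \cite{FZ}*{Proposition~1.6}, rather than passing through a $2$-cocycle comparison, but the content is the same.
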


\begin{proof}
By Remark \ref{rem:center-of-central-extension} one notices that 
\[\Fib(\widehat{\LG} \ra \LG) \simeq \Aut(\left.\classical{\sD}^{\leq 1}\right|_{k((t))}).\]
Hence it is enough to identify the later with the description in Frenkel-Zhu. 

From Lemma \ref{lem:set-of-trivializations} one has that $\Aut(\left.\classical{\sD}^{\leq 1}\right|_{V})$ is non-canonically isomorphic to automorphism of the determinantal theory that sends $L_0 = k[[t]]$ to $k$ and any other lattice $L \subset k((t))$ to
\[
\bwedge{\rank((L+k[[t]])/L\cap k[[t]])}((L+k[[t]])/L\cap k[[t]]).
\]
This is clearly equivalent to the set of automorphism the determinant line bundle over the affine Grassmannian $\Gr$ at $L_0$, as one can see from the proof of Proposition 1.6 from \cite{FZ}.
\end{proof}

\begin{rem}
When $k$ is a field of characteristic $0$ we expect the restriction of $\sLGL{2}^{\wedge}$ to classical prestacks recover the central extension of \cite{FZ}*{Section 3.3} denoted $\mathbf{G}\mathbf{L}_{\infty,\infty}$ in \emph{loc. cit.}. 
\end{rem}

\begin{rem}
More generally, for $k$ an arbitrary commutative ring we also expect that the restriction of $\sLGL{2}^{\wedge}$ to classical prestacks to recover the central extension defined in \S 5.2 of \cite{Osipov-Zhu}.
\end{rem}

\appendix

\section{Proofs for Section \ref{sec:K-theory}.}

In this section we prove some of the main results about K-theory that we summarized in \S \ref{sec:K-theory}. Our goal is to present some of the arguments in a slicker form than what can be found in the literature, though all of those results were already in \cite{Barwick}, \cite{Lurie-K-theory} and \cite{Fontes}.

\subsection{Realization fibrations}
\label{subsec:realization-fibration}

In this section we introduce a preliminary technical tool, developed by Rezk in \cite{Rezk}, which allows for a simple proof of the Additivity theorem in the next section.

\begin{defn}
Let $I$ be a small category and $f:X \ra Y$ a morphism in $\Fun(I^{\rm op},\Spc)$. We say that $f$ is a \emph{realization fibration} if for every pullback
\[
\begin{tikzcd}
X' \ar[r] \ar[d] & X \ar[d,"f"] \\
Y' \ar[r] & Y
\end{tikzcd}
\]
the diagram obtained by taking the colimit over $I$
\[
\begin{tikzcd}
\colim_I X' \ar[r] \ar[d] & \colim_I X \ar[d,"f"] \\
\colim_I Y' \ar[r] & \colim_I Y
\end{tikzcd}
\]
is a pullback diagram in $\Spc$.
\end{defn}

\begin{rem}
\label{rem:properties-of-RF}
Notice that any map $f$ which induces an equivalence upon passing to colimits is a realization fibration diagram, and that the class of such is stable under pullbacks.
\end{rem}

In \cite{Rezk}, the following definition is introduced as part of the local-to-global principle to check that a map is a realization fibration.

\begin{defn}
\label{defn:equifibered}
Let $J$ be a small category, $F,G:J \ra \sC$  be functors and $p:F \ra G$ a natural transformation, one says that $p$ is \emph{$J$-equifibered} if for every morphism $f:j_2 \ra j_1$ in $J$ the diagram
\[
\begin{tikzcd}
F(j_2) \ar[r,"F(f)"] \ar[d,"p"] & F(j_1) \ar[d,"p"] \\
G(j_2) \ar[r,"G(f)"] & G(j_1) 
\end{tikzcd}
\]
is a pullback square.
\end{defn}

Rezk then proves the following useful criterion\footnote{Notice that in \cite{Rezk} this is phrased in the language of model categories, but the argument goes also works for the $\infty$-category that we consider here.}.

\begin{prop}[\cite{Rezk}*{Theorem 2.6}]
Let $J$ be a small category, $F,G: J \ra \sP(I)$\footnote{Here $\sP(I)$ is the category of space-valued presheaves on a small category $I$.} functors and $p:F \ra G$ a natural transformation, suppose that $p$ is a $J$-equifibered map, and that for each $j \in J$ the map $p(j):F(j) \ra G(j)$ is a realization fibration. Then
\[
\colim_J F(j) \ra \colim_J G(j)
\]
is a realization fibration.
\end{prop}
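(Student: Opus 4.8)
The plan is to verify the defining property of a realization fibration for $\colim_J p$ directly, reducing over $J$ to the hypothesis that each $p(j)$ is a realization fibration; the $J$-equifibered condition will supply exactly the compatibility needed to commute colimits past the relevant pullbacks, via descent in the $\infty$-topos of spaces. Throughout write $F_j = F(j)$, $G_j = G(j)$ and $\overline{(-)} = \colim_I(-)\colon \sP(I)\ra\Spc$; recall that $\sP(I)$ and $\Spc$ are $\infty$-topoi (so colimits in them are universal) and that $\colim_I$ preserves all colimits, hence commutes with $\colim_J$.

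First I would fix an arbitrary pullback square in $\sP(I)$ whose lower-right corner is $\colim_J G$, whose lower-left corner is some $Y'$, whose upper-right corner is $\colim_J F$ mapping down by $\colim_J p$, and whose upper-left corner is therefore $Z' \simeq Y'\times_{\colim_J G}\colim_J F$; the goal is to show $\colim_I$ carries this to a pullback in $\Spc$. Setting $Y'_j = Y'\times_{\colim_J G}G_j$, universality of colimits in $\sP(I)$ gives $Y'\simeq\colim_J Y'_j$ and $Z'\simeq\colim_J\bigl(Y'_j\times_{G_j}F_j\bigr)$, where the inner fibre product is the pullback of $p(j)$ along $Y'_j\ra G_j$ (the composite $F_j\ra\colim_J F\ra\colim_J G$ factors through $p(j)$). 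Applying $\colim_I$ and interchanging it with $\colim_J$, it remains to prove that the $J$-diagram of squares $\overline{Y'_j}\la\overline{Y'_j\times_{G_j}F_j}\ra\overline{F_j}$ over $\overline{G_j}$ has a pullback square as its colimit over $J$.

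Two observations reduce this to descent. (a) For each fixed $j$, since $p(j)$ is a realization fibration and $Y'_j\times_{G_j}F_j$ is a pullback along $p(j)$, the square at level $j$ is already a pullback, i.e.\ $\overline{Y'_j\times_{G_j}F_j}\simeq\overline{Y'_j}\times_{\overline{G_j}}\overline{F_j}$. (b) The transformation $\overline{F_{(-)}}\ra\overline{G_{(-)}}$ of $J$-diagrams is again $J$-equifibered: for a morphism $f\colon j_2\ra j_1$, the $J$-equifiberedness of $p$ identifies $F_{j_2}$ with $G_{j_2}\times_{G_{j_1}}F_{j_1}$, a pullback of the realization fibration $p(j_1)$, so $\colim_I$ sends this square to a pullback in $\Spc$, and that is precisely the naturality square of $\overline{F_{(-)}}\ra\overline{G_{(-)}}$ along $f$.

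Finally I would invoke descent in the $\infty$-topos $\Spc$: a Cartesian ($=J$-equifibered) transformation of $J$-diagrams has the property that each coprojection square is a pullback, so (b) gives $\overline{F_j}\simeq\overline{G_j}\times_{\colim_J\overline{G_{(-)}}}\colim_J\overline{F_{(-)}}$. Substituting into (a) collapses the base of the fibre product, yielding $\overline{Y'_j\times_{G_j}F_j}\simeq\overline{Y'_j}\times_{\colim_J\overline{G_{(-)}}}\colim_J\overline{F_{(-)}}$; taking $\colim_J$ and applying universality of colimits in $\Spc$ turns the right side into $\bigl(\colim_J\overline{Y'_{(-)}}\bigr)\times_{\colim_J\overline{G_{(-)}}}\colim_J\overline{F_{(-)}}$, and recommuting $\colim_I$ with $\colim_J$ rewrites this as $\colim_I Y'\times_{\colim_I(\colim_J G)}\colim_I(\colim_J F)$ --- exactly the assertion that the original square remains a pullback after $\colim_I$. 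I expect the only delicate point to be the clean invocation of descent in this last step (passing from ``$\overline{F_{(-)}}\ra\overline{G_{(-)}}$ is $J$-equifibered'' to ``each coprojection square is a pullback''), together with keeping the bookkeeping of the two interchanged colimits $\colim_I$, $\colim_J$ straight; everything else is formal manipulation valid in any presheaf $\infty$-topos, and in fact only mild properties of $\Spc$ --- universality of colimits together with this one descent consequence --- are used.
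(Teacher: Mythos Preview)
The paper does not supply its own proof of this proposition; it simply records the statement and cites Rezk's Theorem~2.6. Your argument is therefore not being compared against anything in the paper, and on its own merits it is correct: you unwind the definition of realization fibration, use universality of colimits in the presheaf $\infty$-topos $\sP(I)$ to decompose $Y'$ and $Z'$ over $J$, apply the realization-fibration hypothesis levelwise in~(a), transport $J$-equifiberedness across $\colim_I$ in~(b), and then invoke descent in $\Spc$ to pass to the colimit. Each step is sound.

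The one place worth a sentence of extra care is exactly the one you flag: the passage from ``$\overline{F_{(-)}}\to\overline{G_{(-)}}$ is Cartesian over $J$'' to ``each coprojection square $\overline{F_j}\to\colim_J\overline{F}$ over $\overline{G_j}\to\colim_J\overline{G}$ is a pullback''. This is the half of the descent axiom for an $\infty$-topos that does \emph{not} follow from universality of colimits alone (universality gives the other composite, i.e.\ that pulling back $\colim_J\overline{F}$ along the $\overline{G_j}$ and then taking the colimit recovers $\colim_J\overline{F}$). It holds in $\Spc$ because $\Spc$ is an $\infty$-topos (e.g.\ \cite{HTT}*{Theorem 6.1.3.9~(3)}), and you might cite that explicitly. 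Rezk's original argument is essentially the same idea phrased model-categorically; your $\infty$-categorical packaging is, if anything, cleaner.
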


In the particular case when $I = \Delta$ Rezk proves the following result, which can be seen as a way to bypass the $\pi_*$-Kan condition.

\begin{lem}[\cite{Rezk}*{Proposition 5.4}]
\label{lem:Rezk-criterion-simplicial-spaces}
For $f: X \ra Y$ a map of simplicial spaces, if for all $[m] \in \Delta^{\rm op}$ the map
\[Y([m]) \ra Y([0])\]
induces an isomorphism on $\pi_0$, then $f$ is a realization fibration.
\end{lem}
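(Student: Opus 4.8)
The plan is to deduce this from the local-to-global principle for realization fibrations (\cite{Rezk}*{Theorem 2.6}) stated above, together with the elementary observation that realization fibrations are stable under base change. Concretely, I would first record: if $g\colon X \ra Y$ is a realization fibration and $q\colon Y'' \ra Y$ is any map, then $X\times_Y Y'' \ra Y''$ is again a realization fibration. This follows from pasting of pullback squares — a pullback of $X\times_Y Y'' \ra Y''$ along some $Z \ra Y''$ is the pullback of $g$ along the composite $Z \ra Y'' \ra Y$ — so that the realization of the witnessing square is a pullback in $\Spc$ by the realization-fibration property of $g$ applied to the maps $Z \ra Y$ and $q$. (This is not contained in Remark \ref{rem:properties-of-RF}, which records the analogous stability only for the smaller class of maps that induce equivalences on realizations.)

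Next I would reduce to the case that $Y$ is levelwise connected. The hypothesis is precisely that the simplicial set $[m] \mapsto \pi_0 Y([m])$ is constant, with value $T := \pi_0 Y([0])$; hence $Y$ splits as a coproduct $Y \simeq \coprod_{t \in T} Y^{(t)}$ in $\Fun(\Delta^{\rm op},\Spc)$, where $Y^{(t)}([m])$ is the summand over $t$ and is connected for every $m$. Since $\colim_{\Delta^{\rm op}}$ and pullback in $\Spc$ both commute with (disjoint) coproducts, $f$ is a realization fibration as soon as each base change $Y^{(t)}\times_Y X \ra Y^{(t)}$ is; so we may assume $Y$ is levelwise connected.

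The main step is then to present $Y$ via a cellular presentation — e.g.\ its skeletal filtration $Y \simeq \colim_n \sk_n Y$, with $\sk_n Y$ built from $\sk_{n-1}Y$ by attaching the cells $Y_n \cdot \Delta^n$ along $Y_n\cdot\partial\Delta^n$, the base case being the constant simplicial space $\mbox{const}(Y_0)$ — and to run the local-to-global principle over the diagram $J$ indexing this presentation. Setting $F(j) = G(j)\times_Y X$ for the elementary pieces $G(j)$, the natural transformation $F \ra G$ is $J$-equifibered because it is a base change, and $X \simeq \colim_J F(j)$ because colimits are universal in the $\infty$-topos $\Fun(\Delta^{\rm op},\Spc)$; so it suffices to check that each $F(j) \ra G(j)$ is a realization fibration. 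For a constant piece $G(j) = \mbox{const}(S)$ this is true for \emph{every} map into it: under the identification $\Fun(\Delta^{\rm op},\Spc)_{/\mbox{const}(S)} \simeq \Fun(\Delta^{\rm op},\Spc_{/S})$ the relevant pullbacks become finite products, and $\colim_{\Delta^{\rm op}}$ preserves finite products since $\Delta^{\rm op}$ is sifted. The subtle point — and the only place where the $\pi_0$-hypothesis on $Y$ is genuinely used — is the cell-attaching pieces built from $\Delta^n$: a map into $\Delta^n$ need not be a realization fibration (already a vertex inclusion $\Delta^0 \ra \Delta^1$ fails, as one sees by pulling back along the opposite vertex), so one must show that the levelwise connectivity of $Y$ makes the pullbacks of $f$ that actually arise over these cells into realization fibrations. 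I expect this inductive analysis along the skeletal filtration to be the main obstacle; it is the technical core of \cite{Rezk}*{Proposition 5.4}, and I would follow that argument to complete the proof.
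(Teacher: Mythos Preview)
The paper does not give its own proof of this lemma; it is quoted directly from \cite{Rezk}*{Proposition 5.4} without argument. So there is no proof in the paper to compare your attempt against.

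On its own merits: the stability of realization fibrations under base change is correct, and your reduction to levelwise connected $Y$ via the coproduct decomposition over $\pi_0 Y([0])$ is the right first move (and is indeed how Rezk begins). But the proposal does not complete the proof. You aim to apply the local-to-global principle over a skeletal presentation of $Y$, yet the attaching cells involve $\Delta^n$ for $n\geq 1$, which do \emph{not} satisfy the $\pi_0$-hypothesis themselves (already $(\Delta^1)_0$ has two points while $(\Delta^1)_m \ra (\Delta^1)_0$ is not a bijection), and maps into them are not automatically realization fibrations --- precisely the obstacle you flag with the vertex inclusion $\Delta^0 \hra \Delta^1$. So neither the levelwise-connected reduction nor the ``map to a constant simplicial space'' argument covers these pieces, and you give no argument for them; you defer that case to ``follow that argument'' in \cite{Rezk}. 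That is honest, but it means what you have written is a correct setup and reduction together with a pointer to the literature for the step that carries the actual content, rather than a proof of the lemma.
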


An important consequence of Lemma \ref{lem:Rezk-criterion-simplicial-spaces} is the following

\begin{lem}
\label{lem:only-calculation}
For any $[m] \in \Delta^{\rm op}$ and any Waldhausen category $\sC$ the maps
\begin{enumerate}[(i)]
    \item 
    \[q^{(m)}: S_{\bullet+m+1}\sC \ra S_{m}\sC\]
    \item 
    \[p^{(m)}: S_{\bullet+m+1}\sC \ra S_{\bullet}\sC,\]
\end{enumerate}
where $q^{(m)}$ is the projection onto the last $(m-1)$ elements and $p^{(m)}$ is given by the projection onto the first elements of the simplicial set, induce equivalences upon geometric realization\footnote{We are considering $S_{m}\sC$ as the constant simplicial object in spaces.}. 
\end{lem}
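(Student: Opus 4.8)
The plan is to reduce both statements to the Rezk criterion (Lemma \ref{lem:Rezk-criterion-simplicial-spaces}) combined with a homotopy-invariance argument for the fibers. First I would treat statement (ii), the map $p^{(m)}: S_{\bullet+m+1}\sC \ra S_{\bullet}\sC$. The key observation is that the indexing shift $[n] \mapsto [n+m+1]$ is a variant of the "extra degeneracy'' / décalage phenomenon: precomposition with the functor $\Delta \to \Delta$, $[n] \mapsto [n] \star [m] = [n+m+1]$ sends the simplicial space $S_{\bullet}\sC$ to $S_{\bullet+m+1}\sC$, and the natural map to $S_{\bullet}\sC$ given by projecting onto the first block is, up to reindexing, induced by the canonical inclusion $[n] \hookrightarrow [n]\star[m]$. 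I would check that for every $[n]$, the target $S_n\sC$ satisfies the hypothesis of Lemma \ref{lem:Rezk-criterion-simplicial-spaces} when we view the relevant simplicial directions correctly — more precisely, apply the Rezk criterion in the $\bullet$-direction, noting that the base simplicial space here is $S_\bullet \sC$ and $S_m\sC \to S_0\sC \simeq \ast$ need not be $\pi_0$-iso, so instead I would apply the criterion with the roles arranged so that the \emph{source} variable is the one being realized; the correct framing is that $p^{(m)}$ is a realization fibration because its fibers, computed degreewise, are the spaces of compatible filtrations extending a fixed object, and these are the $S_\bullet$-spaces whose realizations we understand.

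Concretely, for (ii) I would argue: fix a point of $S_n\sC$; its fiber under $p^{(m)}$ at simplicial level $\bullet$ is, by the pushout-square definition of $\sS_\bullet$, equivalent to $S_{\bullet}(\sC)$-type data relative to that fixed flag, and by the pullback-stability of the $\sS$-construction (Definition \ref{defn:iterated-S-construction}) this fiber is again a Waldhausen $S_\bullet$-construction of an auxiliary category which is \emph{pointed} (the fixed flag becomes the zero object), hence its realization is connected; a connected-fibers realization fibration with the Rezk $\pi_0$-condition then forces the fiber of $|p^{(m)}|$ to be the realization of a simplicial space with contractible pieces — so I would instead directly exhibit an extra degeneracy on the fiber simplicial space, making it a split simplicial space with contractible realization. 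This is the standard argument that $|S_{\bullet+1}\sC| \to |S_\bullet\sC|$ has contractible fibers; the $m+1$-fold shift is the same with more bookkeeping.

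For (i), the map $q^{(m)}: S_{\bullet+m+1}\sC \ra S_m\sC$ onto the \emph{last} block: here I would use that projecting onto the last $m$-many entries of a flag of length $n+m+1$ is, after passing to geometric realization in the $\bullet$-direction, the composite of $p^{(m')}$-type maps and a map whose fibers are again $S_\bullet$-constructions relative to a fixed tail-flag. Since $S_m\sC$ is viewed as the constant simplicial space, I would invoke Lemma \ref{lem:Rezk-criterion-simplicial-spaces} directly with $Y = S_m\sC$ constant (so $Y([k]) \to Y([0])$ is the identity, trivially a $\pi_0$-iso), concluding that $q^{(m)}$ is a realization fibration; then it remains to identify the fiber's realization with a point, which follows because the fiber simplicial space carries an extra degeneracy coming from repeating the chosen tail — exactly as in Waldhausen's original "$S_\bullet$ of an exact category has the homotopy type of its iterated delooping'' argument, and as used in Remark \ref{rem:S-construction-is-delooping}.

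The main obstacle I anticipate is organizing the combinatorics of the two different projections (first block versus last block) uniformly, and in particular verifying carefully that the fiberwise simplicial spaces genuinely admit extra degeneracies compatible with the cofibration structure — that is, that the degeneracy maps one writes down land in $\sS_\bullet$ (satisfy the pushout condition (iii) of the $S_\bullet$-construction) and are maps of Waldhausen categories. Once that bookkeeping is pinned down, both parts follow formally from Rezk's local-to-global principle and the contractibility of realizations of simplicial spaces with an extra degeneracy; the rest is routine reindexing. I would present (ii) in full and then remark that (i) follows by the same method applied to the complementary projection, factoring $q^{(m)}$ through an intermediate $S_{\bullet+1}$-level if a cleaner inductive statement is wanted.
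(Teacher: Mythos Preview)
Your approach is essentially the paper's: both reduce the statement to Rezk's realization-fibration criterion (Lemma~\ref{lem:Rezk-criterion-simplicial-spaces}) together with a contractibility input coming from the pointedness of $\sC$. The paper's proof is much terser --- it simply notes that pointedness makes the relevant realization contractible and that $S_{m+1}\sC$ being pointed handles Rezk's $\pi_0$-hypothesis, dispatching (i) and (ii) in three lines without the explicit d\'ecalage and extra-degeneracy bookkeeping on fibers that you work through.
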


\begin{proof}
We prove (i); the argument for (ii) is completely analogous. Since $\sC$ is pointed, $|S_{m}\sC|$ is contractible. Hence, it is enough to check that the map $p^{(m)}$ is a realization fibration. By Lemma \ref{lem:Rezk-criterion-simplicial-spaces} it is enough to check that it induces an equivalence on $\pi_0$, and that again follows from the fact that $S_{m+1}\sC$ is pointed.
\end{proof}

\subsection{Additivity Theorem}
\label{subsec:additivity-appendix}

Our exposition is heavily influenced by the paper \cite{McCarthy}. First we introduce some notation and prove a couple of preliminary results. For $X \in \Fun(N\Delta^{\rm op},\sC)$ a simplicial object in a category $\sC$, we denote by $X^R$ and $X^L$ the bisimplicial objects obtained as
\[X^L_{n,m} = X_n \;\;\; \mbox{and}\;\;\; X^R_{n,m} = X_m.\]

Suppose that $F: \sC \ra \sD$ is a functor between Waldhausen categories. Then we have an induced functor $\sS_{\bullet}F: \sS_{\bullet}\sC \ra \sS_{\bullet}\sD$ of simplicial objects in $\Wald$. We consider the bisimplicial object $\sS_{\bullet_{2}}\left.F\right|_{\sD}$ whose value on $([n],[m])$ is defined as the following pullback

\begin{equation}
\label{eq:definition-relative-double-S-construction}
    \begin{tikzcd}
    \sS_{n,m}\left.F\right|_{\sD} \ar[r] \ar[d] & \sS_{n}\sC\ar[d,"F_{n}"] \\
    \sS_{n+m+1}\sD\ar[r,"p^{(m)}_{n}"] & \sS_{n}\sD 
    \end{tikzcd}
\end{equation}
where $p^{(m)}_{n}$ is the projection onto the first $n+1$ elements of $\sS_{n+m+1}\sD$\footnote{I.e.\ the composition of the $m+1$ face maps $d^{k}$ for $n+1 \leq k \leq n+m+1$.}. As before we will denote by unscripted $S$ the corresponding objects obtained by passing to the underlying $\infty$-groupoid.

By construction we have maps of bisimplicial objects
\[\pi^{F}_{\bullet_2}: S_{\bullet_{2}}\left.F\right|_{\sD} \ra S^{\rm L}_{\bullet_2}\sC\]
and 
\[
\rho^{F}_{\bullet_2}: S_{\bullet_{2}}\left.F\right|_{\sD} \ra S^{\rm R}_{\bullet_2}\sD,
\]
where $\rho^F_{n,m}$ is the composite of the projection onto $S_{n+m+1}\sD$ with $d^{n+1}_0: S_{n+m+1}\sD \ra S_{m}\sD$.

\begin{prop}
\label{prop:trick-with-bisimplicial-construction}
The following are equivalent
\begin{enumerate}[a)]
    \item the map $S_{\bullet}F$ induces an equivalence upon geometric realization;
    \item the map $\rho^{F}_{\bullet_2}$ induces an equivalence upon geometric realization.
\end{enumerate}
\end{prop}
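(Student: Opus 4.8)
The plan is to exploit the two projection maps $\pi^F_{\bullet_2}$ and $\rho^F_{\bullet_2}$ out of $S_{\bullet_2}\left.F\right|_{\sD}$ and analyze what happens upon total geometric realization, using the realization-fibration technology from \S\ref{subsec:realization-fibration}. First I would show that $\pi^F_{\bullet_2}$ induces an equivalence upon geometric realization \emph{unconditionally}: indeed, fixing the first simplicial direction at level $[n]$, the map $S_{n,\bullet}\left.F\right|_{\sD} \ra S_n\sC$ (constant in the second direction) is, by the defining pullback square (\ref{eq:definition-relative-double-S-construction}), the base change of $p^{(n)}: S_{\bullet+n+1}\sD \ra S_n\sD$ along $F_n: S_n\sC \ra S_n\sD$. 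By Lemma \ref{lem:only-calculation}(ii), $p^{(n)}$ becomes an equivalence upon realization, and by Lemma \ref{lem:Rezk-criterion-simplicial-spaces} it is a realization fibration (the target $S_\bullet\sD$ has all its $\pi_0$-maps to $S_0\sD$ isomorphisms since $S_0\sD \simeq \ast$); hence the base change $S_{n,\bullet}\left.F\right|_{\sD} \ra S_n\sC$ also becomes an equivalence upon realization in the second variable. Realizing now in the first variable gives $|S_{\bullet_2}\left.F\right|_{\sD}| \overset{\simeq}{\ra} |S^{\rm L}_{\bullet_2}\sC| \simeq |S_\bullet\sC|$.

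Next I would perform the symmetric analysis for $\rho^F_{\bullet_2}$ in the other order. Fix the second simplicial direction at level $[m]$; then $S_{\bullet,m}\left.F\right|_{\sD}$ is the pullback of $S_\bullet\sC \overset{F}{\ra} S_\bullet\sD \overset{?}{\la} S_{\bullet+m+1}\sD$, where the right-hand map is $p^{(m)}$, and $\rho^F$ at this level is the composite with $d^{\bullet+1}_0 \colon S_{\bullet+m+1}\sD \ra S_m\sD$. Realizing in the first variable and using that $p^{(m)}$ is a realization fibration by Lemma \ref{lem:Rezk-criterion-simplicial-spaces} applied to $S_\bullet\sD$, I obtain that $|S_{\bullet,m}\left.F\right|_{\sD}|$ fits in a pullback square
\[
\begin{tikzcd}
|S_{\bullet,m}\left.F\right|_{\sD}| \ar[r] \ar[d] & |S_\bullet\sC| \ar[d,"|S_\bullet F|"] \\
|S_{\bullet+m+1}\sD| \ar[r,"\sim"] & |S_\bullet\sD|,
\end{tikzcd}
\]
so that $|S_{\bullet,m}\left.F\right|_{\sD}| \ra |S_{\bullet+m+1}\sD|$ is the base change of $|S_\bullet F|$ along an equivalence, hence an equivalence if and only if $|S_\bullet F|$ is. Now I realize in the $m$-direction: the left vertical map there is (after the identification $|S_{\bullet+m+1}\sD|\simeq |S_\bullet\sD|$) essentially $\rho^F$, and since the map $[m]\mapsto |S_{\bullet+m+1}\sD|$ with the face maps $d^{\bullet+1}_0$ is, up to the realization equivalences, the constant-to-itself structure, realizing over $m$ is harmless. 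The upshot is that $|\rho^F_{\bullet_2}|$ is an equivalence if and only if $|S_\bullet F|$ is.

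The one place to be careful — and the main obstacle — is the bookkeeping with the two orders of realization: a priori $|S_{\bullet_2}\left.F\right|_{\sD}|$ can be computed by realizing either variable first (the total realization of a bisimplicial space is symmetric in the two variables), but one must check that the map $\rho^F_{\bullet_2}$, which is visibly a map of bisimplicial spaces, really does become $|\rho^F|$ no matter which order is chosen, and that the chain of identifications $|S_{\bullet+m+1}\sD|\simeq|S_\bullet\sD|$ for varying $m$ is compatible with the simplicial structure maps in $m$ — i.e.\ that the simplicial object $[m]\mapsto|S_{\bullet+m+1}\sD|$ realizes to something whose realization agrees with $|S^{\rm R}_{\bullet_2}\sD|\simeq|S_\bullet\sD|$ under $\rho^F$. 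Both are formal consequences of the fact that the relevant maps $q^{(m)},p^{(m)}$ of Lemma \ref{lem:only-calculation} are compatible with all face and degeneracy operators; I would dispatch this by noting that all the constructions in (\ref{eq:definition-relative-double-S-construction}) are natural in both $[n]$ and $[m]$, so the identifications assemble into maps of bisimplicial spaces, and then invoke that a levelwise equivalence of bisimplicial spaces induces an equivalence on total realizations. Combining the two halves: $|S_\bullet F|$ equivalence $\iff$ $|\rho^F_{\bullet_2}|$ equivalence, which is the claim.
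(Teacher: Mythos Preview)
Your argument is correct and uses the same ingredients (Lemma \ref{lem:only-calculation} and Lemma \ref{lem:Rezk-criterion-simplicial-spaces}), but the organization differs from the paper's. You analyze $\pi^F_{\bullet_2}$ and $\rho^F_{\bullet_2}$ directly: you show $|\pi^F_{\bullet_2}|$ is always an equivalence, and then you argue that $|\rho^F_{\bullet_2}|$ is an equivalence if and only if $|S_\bullet F|$ is, by fixing $m$, realizing in $n$, and using that $p^{(m)}$ is a realization fibration so the defining pullback square survives. The paper instead writes down a single six-term commutative diagram comparing the $F$-row to the $\id_{\sD}$-row,
\[
\begin{tikzcd}
S^{\rm R}_{\bullet_2}\sD & S_{\bullet_2}\left.F\right|_{\sD} \ar[l,"\rho^F"'] \ar[r,"\pi^F"] \ar[d] & S^{\rm L}_{\bullet_2}\sC \ar[d,"S^{\rm L}F"] \\
S^{\rm R}_{\bullet_2}\sD \ar[u,"\id"] & S_{\bullet_2}\left.\id_{\sD}\right|_{\sD} \ar[l,"\rho^{\id_{\sD}}"'] \ar[r,"\pi^{\id_{\sD}}"] & S^{\rm L}_{\bullet_2}\sD
\end{tikzcd}
\]
shows that $\pi^F$, $\pi^{\id_{\sD}}$, $\rho^{\id_{\sD}}$ and the left vertical become equivalences on realization, and then reads off the equivalence (a) $\Leftrightarrow$ (b) by two-out-of-three in the two squares. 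The payoff of the paper's setup is exactly the point you flagged as ``the main obstacle'': the commutative diagram makes the compatibility of all the identifications automatic, so no tracking of how $|\rho^F|$ matches $|S_\bullet F|$ under the various equivalences is needed. Your approach trades that diagram for a more explicit levelwise pullback computation; it works, but the bookkeeping paragraph you added is genuinely doing work that the paper's diagram absorbs for free.
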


\begin{proof}
Consider the diagram
\begin{equation}
\label{eq:key-diagram-for-rho}
    \begin{tikzcd}
    S^{\rm R}_{\bullet_2}\sD \ar[d,"S_{\bullet_2}\id_{\sD}"] & S_{\bullet_2}\left.F\right|_{\sD} \ar[r,"\pi^{F}_{\bullet_2}"] \ar[l,"\rho^F_{\bullet_2}"] \ar[d,"S_{\bullet_2}F"] & S^{\rm L}_{\bullet_2}\sC \ar[d,"S^{\rm L}_{\bullet_2}F"] \\
    S^{\rm R}_{\bullet_2}\sD & S_{\bullet_2}\left.\id_{\sD}\right|_{\sD} \ar[r,"\pi^{\id_{\sD}}_{\bullet_2}"] \ar[l,"\rho^{\id_{\sD}}_{\bullet_2}"] & S^{\rm L}_{\bullet_2}\sD
    \end{tikzcd}
\end{equation}

We claim that (1) $S_{\bullet_2}\id_{\sD}$, (2) $\rho^{\id_{\sD}}_{\bullet_2}$, (3) $\pi^{\id_{\sD}}_{\bullet_2}$ and (4) $\pi^{F}_{\bullet_2}$ induce equivalences upon geometric realization.

The claim for (1) is clear, i.e. isomorphisms are realization fibrations.

For (2), notice that for each $[m] \in \Delta^{\rm op}$, $\rho^{\id_{\sD}}_{\bullet,m}$ is the composite
\[\left.S_{\bullet,m}F\right|_{\sD} \ra S_{\bullet+m+1}\sD \overset{q^{(m)}}{\ra} S_m\sD,\]
where the first map is induced by the pullback of $\id_{\sD,\bullet}$ in the defining square of $\left.\sS_{\bullet,m}F\right|_{\sD}$. Thus, the first map of the composition induces an equivalence upon geometric realization, by Lemma \ref{lem:only-calculation} (i), the second map also induces an equivalence upon geometric realization.

For (3) and (4), notice that for any functor $F:\sC \ra \sD$ and $[m] \in \Delta^{\rm op}$ one has
\[\Fib(\pi^F_{\bullet,m}) \simeq \Fib(p^{(m)}_{\bullet}),\]
where $p^{(m)}_{n}: \sS_{n+m+1}\sD \ra \sS_{n}\sD$. So the result follows again from Lemma \ref{lem:only-calculation} (ii).
\end{proof}

Here is essentially a reformulation of Proposition \ref{prop:trick-with-bisimplicial-construction} in the language of \S \ref{subsec:realization-fibration}.

\begin{prop}
\label{prop:RF-for-F-from-rho}
If the map of bi-simplicial objects $\rho^F_{\bullet_2}$ is a realization fibration, then so is $S_{\bullet}F$.
\end{prop}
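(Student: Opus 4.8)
The plan is to run the diagram chase from the proof of Proposition~\ref{prop:trick-with-bisimplicial-construction} with the property \emph{is a realization fibration} in place of \emph{induces an equivalence on geometric realization}, exploiting the stability properties recorded in Remark~\ref{rem:properties-of-RF}. I would first collect the formal facts needed: in any $\sP(I)$, a map inducing an equivalence on colimits is a realization fibration and realization fibrations are stable under base change (Remark~\ref{rem:properties-of-RF}); realization fibrations are also closed under composition (a one-line pullback-pasting argument); and, crucially, one has the two cancellation principles: (a) if $g$ is a realization fibration with $|g|$ an equivalence and $g\circ f$ is a realization fibration, then $f$ is a realization fibration; (b) if $g$ is a realization fibration with $|g|$ an equivalence and $f\circ g$ is a realization fibration, then $f$ is a realization fibration. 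To prove (a) and (b) one rewrites a pullback $B'\times_B X$ formed over the source of $f$ as a base change, along a section whose colimit is an equivalence, of the corresponding pullback formed over the target of $g$; the latter is computed by colimits since $g$ is a realization fibration, and the two expressions agree because pullback along an equivalence is an equivalence. I expect establishing (a) and (b) cleanly to be the main technical point.

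Granting this, the argument is short. By the computations already carried out in the proof of Proposition~\ref{prop:trick-with-bisimplicial-construction}, the bisimplicial maps $\rho^{\id_\sD}_{\bullet_2}$, $\pi^{\id_\sD}_{\bullet_2}$ and $\pi^{F}_{\bullet_2}$ induce equivalences on total geometric realization, hence are realization fibrations by Remark~\ref{rem:properties-of-RF}. From the left square of~\eqref{eq:key-diagram-for-rho} one has $\rho^{F}_{\bullet_2}=\rho^{\id_\sD}_{\bullet_2}\circ S_{\bullet_2}F$ (up to the isomorphism $S_{\bullet_2}\id_\sD$), so applying (a) to the hypothesis that $\rho^{F}_{\bullet_2}$ is a realization fibration yields that $S_{\bullet_2}F$ is a realization fibration. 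From the right square, $S^{\rm L}_{\bullet_2}F\circ\pi^{F}_{\bullet_2}=\pi^{\id_\sD}_{\bullet_2}\circ S_{\bullet_2}F$; the right-hand side is now a composite of realization fibrations, hence a realization fibration, so applying (b) with $g=\pi^{F}_{\bullet_2}$ gives that $S^{\rm L}_{\bullet_2}F$ is a realization fibration of bisimplicial spaces.

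It remains to descend from bisimplicial to simplicial spaces. Let $X\mapsto X^{\rm L}$ denote the embedding of simplicial spaces into bisimplicial spaces as those constant in the second variable; it preserves limits, and the total geometric realization of $X^{\rm L}$ is canonically $|X|$ (realizing the redundant direction of a constant simplicial object changes nothing). Hence any base change of $S_\bullet F$ along a map $Z\to S_\bullet\sD$ becomes, after applying $(-)^{\rm L}$, a base change of $S^{\rm L}_{\bullet_2}F$, and the previous step identifies its total realization with the pullback of the realizations; unwinding, $|Z\times_{S_\bullet\sD}S_\bullet\sC|\simeq|Z|\times_{|S_\bullet\sD|}|S_\bullet\sC|$, so $S_\bullet F$ is a realization fibration. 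The only genuinely new content beyond Proposition~\ref{prop:trick-with-bisimplicial-construction} is thus the cancellation lemmas (a), (b) and this last bit of simplicial/bisimplicial bookkeeping; the diagram~\eqref{eq:key-diagram-for-rho} and the fact that its auxiliary maps are equivalences on realization are reused verbatim.
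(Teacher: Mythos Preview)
Your proposal is correct and follows essentially the same route as the paper: both arguments pivot on diagram~\eqref{eq:key-diagram-for-rho} and the fact, established in Proposition~\ref{prop:trick-with-bisimplicial-construction}, that the auxiliary maps $\rho^{\id_\sD}_{\bullet_2}$, $\pi^{\id_\sD}_{\bullet_2}$, $\pi^{F}_{\bullet_2}$ induce equivalences on realization. The only difference is organizational: the paper starts from an arbitrary test pullback $\sE'\to\sE$ over $S_\bullet F$, transports it into bisimplicial spaces, builds a six-term diagram sitting over~\eqref{eq:key-diagram-for-rho} by iterated pullback, and reads off the conclusion directly; you instead isolate the two cancellation principles (a), (b) and the bisimplicial-to-simplicial descent as reusable lemmas and then apply them. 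Your packaging is arguably cleaner and makes the logical dependencies more transparent, while the paper's version keeps everything inside one concrete diagram chase.
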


\begin{proof}
Let $\sE_{\bullet}$ be a simplicial space and consider the pullback diagram
\[
\begin{tikzcd}
\sE' \ar[r] \ar[d] & S_{\bullet}\sC \ar[d,"S_{\bullet}F"] \\
\sE \ar[r] & S_{\bullet}\sD.
\end{tikzcd}
\]
This induces two pullback diagrams of bi-simplicial space by considering the functors $(-)^{L}$ and $(-)^R$ applied to everything in sight. Now consider the $\tilde{\sE}_{\bullet_2}$ and $\tilde{\sE'}_{\bullet_2}$ defined as the following pullbacks
\[
\begin{tikzcd}
\tilde{\sE}_{\bullet_2} \ar[r] \ar[d] & \sE^{\rm L}_{\bullet_2} \ar[d] \\
S_{\bullet_2}\left.\id_{\sD}\right|_{\sD} \ar[r,"\pi^{\id_{\sD}}_{\bullet_2}"] & S^{\rm L}_{\bullet_2}\sD
\end{tikzcd}
\;\;\;
\mbox{and}
\;\;\;
\begin{tikzcd}
\tilde{\sE'}_{\bullet_2} \ar[r] \ar[d] & \sE'_{\bullet_2} \ar[d] \\
\tilde{\sE}_{\bullet_2} \ar[r] & \sE_{\bullet_2}.
\end{tikzcd}
\]

Thus, we obtain the following diagram living over the diagram (\ref{eq:key-diagram-for-rho})
\begin{equation}
\label{eq:proof-RF-rho-implies-RF-f}
\begin{tikzcd}
S^{\rm R}_{\bullet_2}\sE' \ar[d,"s^{\rm R,\id_{\sD}}"] & \tilde{\sE'}_{\bullet_2} \ar[d,"\tilde{s}^{F}"] \ar[l,"r^F"] \ar[r,"p^F"] & \sE'_{\bullet_2} \ar[d,"s^{\rm L, F}"] \\
S^{\rm R}_{\bullet_2}\sE & \tilde{\sE}_{\bullet_2} \ar[l,"r^{\id_{\sD}}"] \ar[r,"p^{\id_{\sD}}"] & \sE_{\bullet_2}
\end{tikzcd}
\end{equation}
where the maps are the pullback of the corresponding maps in diagram (\ref{eq:key-diagram-for-rho}).

By considering the new six-term diagram
\[\Fib\left((\ref{eq:proof-RF-rho-implies-RF-f}) \ra (\ref{eq:key-diagram-for-rho})\right)\]
and passing to its geometric realization we see that if $\pi^F_{\bullet_2}$, $S_{\bullet_2}\id_{\sD}$, $\rho^{\id_{\sD}}_{\bullet_2}$, $\pi^{\id_{\sD}}_{\bullet_2}$ and $\rho^F_{\bullet_2}$ are realization fibrations, then so is $S_{\bullet_2}F$. However, by Proposition \ref{prop:trick-with-bisimplicial-construction} the maps $S_{\bullet_2}\id_{\sD}$, $\rho^{\id_{\sD}}_{\bullet_2}$, $\pi^{\id_{\sD}}_{\bullet_2}$ and $\pi^F_{\bullet_2}$ induce equivalences upon geometric realization. This finishes the proof.
\end{proof}

The final piece to prove the Additivity Theorem is the following.

\begin{prop}
\label{prop:any-rho-is-RF}
For any functor $F: \sC \ra \sD$ between Waldhausen categories, the induced map of bi-simplicial spaces
\[\rho^F_{\bullet_2}: S_{\bullet_2}\left.F\right|_{\sD} \ra S^R_{\bullet_2}\sD\]
is a realization fibration.
\end{prop}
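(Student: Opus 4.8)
The plan is to verify the realization fibration property one simplicial direction at a time, exploiting that the target $S^R_{\bullet_2}\sD$ is essentially constant in the first simplicial variable. Throughout, the first (``$n$'') direction of $S_{\bullet_2}F|_{\sD}$ is the one built from $\sS_n\sC$ and $\sS_{n+m+1}\sD$ in the defining pullback (\ref{eq:definition-relative-double-S-construction}), and we use that $S^R_{n,m}\sD = S_m\sD$ does not depend on $n$.

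\textbf{Step 1: the first variable.} Fix $[m]\in\Delta^{\rm op}$ and view $\rho^F_{\bullet,m}\colon S_{\bullet,m}F|_{\sD}\to S^R_{\bullet,m}\sD$ as a map of simplicial spaces in the first variable. Here $S^R_{\bullet,m}\sD$ is the \emph{constant} simplicial space with value $S_m\sD$, so the maps $S^R_{\ell,m}\sD\to S^R_{0,m}\sD$ are all identities, in particular $\pi_0$-isomorphisms; hence Lemma \ref{lem:Rezk-criterion-simplicial-spaces} shows that $\rho^F_{\bullet,m}$ is a realization fibration, for every $m$. Since realization fibrations are stable under base change (Remark \ref{rem:properties-of-RF}) and the formation of a total realization proceeds by realizing the two simplicial directions in turn, it now suffices to prove that the map obtained after realizing the first variable,
\[
\overline{\rho^F}\colon \bigl|S_{\bullet,\bullet}F|_{\sD}\bigr|_{n}\longrightarrow S_{\bullet}\sD,
\]
is a realization fibration in the remaining (``$m$'') variable.

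\textbf{Step 2: the second variable.} The point is to identify the source $\bigl|S_{\bullet,\bullet}F|_{\sD}\bigr|_{n}$ together with the map $\overline{\rho^F}$. Applying Step 1 to the other projection $\pi^F$ (whose target $S^L_{\bullet,m}\sC$ is likewise constant in the first variable) shows that realizing the first variable turns the square (\ref{eq:definition-relative-double-S-construction}) into a pullback; combined with Lemma \ref{lem:only-calculation}, which makes the extra stages of the $S_{\bullet}$-construction coming from $\sS_{n+m+1}\sD$ disappear after realization, and with the fact established in the proof of Proposition \ref{prop:trick-with-bisimplicial-construction} that $\pi^F_{\bullet_2}$ is an equivalence on geometric realization, this computes $\bigl|S_{\bullet,m}F|_{\sD}\bigr|_{n}$ and exhibits the canonical section $s$ --- sending a filtered object $A$ to the pair consisting of $A$ together with $F(A)$ extended by zeros --- as a simplicial section of $\pi^F$ that splits it. Because $\rho^F\circ s$ is the constant map at the zero object, $\overline{\rho^F}$ factors through the base point of $S_{\bullet}\sD$ up to homotopy, and one then deduces that it is a realization fibration in $[m]$ from the behaviour of $s$ with respect to the face and degeneracy maps in the $[m]$-direction, which is governed by the simplicial identities and the décalage-type contractibility statements of Lemma \ref{lem:only-calculation}. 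Assembling Steps 1 and 2 gives that $\rho^F_{\bullet_2}$ is a realization fibration.

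\textbf{Main obstacle.} Step 1 is a formal consequence of Rezk's criterion, but Step 2 is the genuine content: identifying $\bigl|S_{\bullet,\bullet}F|_{\sD}\bigr|_{n}$ as a simplicial space over $S_{\bullet}\sD$ and checking that $\overline{\rho^F}$ stays a realization fibration in the second direction is combinatorial --- in effect a baby case of the Additivity Theorem itself --- and must be carried out by hand, keeping careful track of how the ``extend by zeros'' section interacts with the maps $q^{(m)}$, $p^{(m)}$ appearing in Lemma \ref{lem:only-calculation}.
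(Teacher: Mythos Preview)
Your Step~1 is correct and coincides with one of the two checks in the paper's proof: for each fixed $m$ the target $S^R_{\bullet,m}\sD$ is constant in $n$, so Lemma~\ref{lem:Rezk-criterion-simplicial-spaces} applies and $\rho^F_{\bullet,m}$ is a realization fibration. Your reduction at the end of Step~1 is also sound: if $\overline{\rho^F}$ were a realization fibration in the remaining variable, you would indeed be done.

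The gap is in Step~2. The argument there does not establish that $\overline{\rho^F}$ is a realization fibration. Splitting $\pi^F$ (which lands in $S^L_{\bullet_2}\sC$) tells you nothing direct about $\rho^F$, and the assertion that ``$\overline{\rho^F}$ factors through the base point of $S_{\bullet}\sD$ up to homotopy'' is false in general: the map $\overline{\rho^F}$ records genuine data in $\sD$ and is not null. What you have really shown is that each $|S_{\bullet,m}F|_{\sD}|_n$ is equivalent to $|S_\bullet\sC|$ via $\pi^F$, but you have not identified the simplicial structure in $m$ on these realizations, nor the map $\overline{\rho^F}$, in any way that lets you invoke Lemma~\ref{lem:Rezk-criterion-simplicial-spaces} or Lemma~\ref{lem:only-calculation}. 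The ``Main obstacle'' paragraph is an honest acknowledgement that this step is not actually carried out.

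The paper sidesteps Step~2 entirely by using Rezk's local-to-global principle (the Proposition cited just before Lemma~\ref{lem:Rezk-criterion-simplicial-spaces}): a $J$-diagram of realization fibrations whose total map is \emph{$J$-equifibered} has a colimit which is again a realization fibration. Taking $J=\Delta^{\rm op}$ in the $m$-direction, condition~(2) is exactly your Step~1, while condition~(1) is the equifibered condition: for every $[m]\to[m']$ the square
\[
\begin{tikzcd}
S_{\bullet,m'}F|_{\sD} \ar[r] \ar[d] & S_{\bullet,m}F|_{\sD} \ar[d] \\
S_{m'}\sD \ar[r] & S_{m}\sD
\end{tikzcd}
\]
is a pullback. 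This is immediate from the defining pullback~(\ref{eq:definition-relative-double-S-construction}), since the $m$-structure maps only act on the $\sS_{n+m+1}\sD$ factor. That one-line check replaces all of your Step~2.
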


\begin{proof}
Notice that the geometric realization of any bi-simplicial set $\sE_{\bullet_2}$ is equivalent to
\[\colim_{[m] \in \Delta^{\rm op}}\left|\sE_{\bullet,m}\right|,\]
where $\left|\sE_{\bullet,m}\right|$ is the geometric realization of $[n] \mapsto \sE_{n,m}$. Thus, we are in the situation to apply \cite{Rezk}*{Theorem 2.6}. We check its two conditions:
\begin{enumerate}[1)]
    \item for all $[m] \ra [m']$ in $\Delta$ the diagram
    \[
    \begin{tikzcd}
    \left.S_{\bullet,m'}F\right|_{\sD} \ar[r] \ar[d] & \left.S_{\bullet,m}F\right|_{\sD} \ar[d] \\
    S_{m'+1}\sD \ar[r] & S_{m+1}\sD
    \end{tikzcd}
    \]
    is a pullback diagram. This follows immediately from the definition of $\left.S_{\bullet_2}F\right|_{\sD}$, see diagram (\ref{eq:definition-relative-double-S-construction}).
    \item for all $[m] \in \Delta^{\rm op}$ the map
    \[\rho^F_{\bullet,m}: \left.S_{\bullet,m}\right|_{\sD} \ra S_{m+1}\sD\]
    is a realization fibration, where the right-hand side is the constant simplicial space $S_{m+1}\sD$. This follows from Lemma \ref{lem:Rezk-criterion-simplicial-spaces}.
\end{enumerate}
\end{proof}

Recall the set up of the Additivity Theorem, for $\sC$ a Waldhausen category we considered the functor
\[
F: \Fun([1],\sC) \ra \sC\times \sC
\]
given by evaluation on the source and taking the fiber of the morphism.

\begin{prop}
\label{prop:additivity-appendix}
For any Waldhausen category $\sC$ the functor $F$ induces an equivalence of K-theory spectra.
\end{prop}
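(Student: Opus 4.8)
The plan is to deduce the proposition from the realization-fibration results just proved, together with an explicit computation of the fibers of the $S_{\bullet}$-construction applied to $F$. The two key inputs are already in hand. First, by Proposition~\ref{prop:any-rho-is-RF} the comparison map $\rho^{G}_{\bullet_2}$ is a realization fibration for \emph{every} functor $G$ of Waldhausen categories, so Proposition~\ref{prop:RF-for-F-from-rho} applies to $F$ and shows that the map of simplicial spaces $S_{\bullet}F\colon S_{\bullet}\Fun(\Delta^1,\sC)\to S_{\bullet}(\sC\times\sC)$ is a realization fibration. Second, the $S_{\bullet}$-construction commutes with finite products, so $S_{\bullet}(\sC\times\sC)\simeq S_{\bullet}\sC\times S_{\bullet}\sC$; this identifies the target of $\K(F)$ with $\K(\sC)\times\K(\sC)$, in agreement with Theorem~\ref{thm:additivity}.

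Next I would record the routine identification $\sS_{n}\Fun(\Delta^1,\sC)\simeq\Fun(\Delta^1,\sS_{n}\sC)$ of Waldhausen categories: both sides are the subcategory of $\Fun(\Ar_{n},\Fun(\Delta^1,\sC))$ cut out by the objectwise cofibration, diagonal-triviality and pushout conditions, all of which are checked objectwise in $\Fun(\Delta^1,\sC)$. Under this identification, the functor $F_{n}$ obtained by applying $F$ levelwise (Remark~\ref{rem:functoriality-S-construction}) sends an arrow $\mathcal X\to\mathcal Y$ in $\sS_{n}\sC$ to the pair $(\mathcal X,\Cofib(\mathcal X\to\mathcal Y))$, the cofiber being computed in $\sS_{n}\sC$. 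The same identification, carried out one simplicial direction at a time, gives $\sS_{n^{(k)}}\Fun(\Delta^1,\sC)\simeq\Fun(\Delta^1,\sS_{n^{(k)}}\sC)$.

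Now I would compute the fiber of $|S_{\bullet}F|$ over the (unique) basepoint. Pulling $S_{\bullet}F$ back along the basepoint of $S_{\bullet}(\sC\times\sC)$ produces a simplicial space $E_{\bullet}$ whose $n$-th space is the fiber of $F_{n}$ over $(0,0)$, i.e.\ the space of arrows $\mathcal X\to\mathcal Y$ in $\sS_{n}\sC$ with $\mathcal X\simeq0$ and $\Cofib(\mathcal X\to\mathcal Y)\simeq0$. But $\mathcal X\simeq 0$ forces $\Cofib(\mathcal X\to\mathcal Y)\simeq\mathcal Y$, hence $\mathcal Y\simeq0$, so $E_{n}$ is the classifying space of the automorphism space of the zero arrow $0\to0$; since $\Map(0,0)\simeq\ast$ in any pointed category, this automorphism space is contractible and $E_{n}\simeq\ast$ for every $n$, whence $|E_{\bullet}|\simeq\ast$. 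By the realization-fibration property, $|E_{\bullet}|$ is the homotopy fiber of $|S_{\bullet}F|$ over the basepoint; and since both $|S_{\bullet}\Fun(\Delta^1,\sC)|$ and $|S_{\bullet}\sC|\times|S_{\bullet}\sC|$ are connected (their $0$-simplices form a point), the long exact sequence of homotopy groups forces $|S_{\bullet}F|$ to be an equivalence. Bootstrapping this one-variable statement, applied to the Waldhausen categories $\sS_{n^{(k-1)}}\sC$ and then realized in the remaining simplicial directions, shows that $|S_{\bullet_k}F|$ is an equivalence for every $k\geq1$; these equivalences are compatible with the structure maps of Definition~\ref{defn:K-theory-spectrum}, so $\K(F)\colon\K(\Fun(\Delta^1,\sC))\to\K(\sC)\times\K(\sC)$ is a levelwise equivalence, hence an equivalence of spectra.

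Since the analytic content sits entirely in Propositions~\ref{prop:any-rho-is-RF} and~\ref{prop:RF-for-F-from-rho}, the hard part here is only organizational: checking carefully that the pointwise fibers $E_{n}$ really are contractible (in particular that the automorphisms of the zero arrow contribute nothing), and that the whole computation is natural in the extra simplicial variables so that it upgrades from the $K$-theory space to the $K$-theory spectrum. No further geometric input is required.
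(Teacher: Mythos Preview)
Your argument is correct and rests on the same two Propositions (\ref{prop:any-rho-is-RF} and \ref{prop:RF-for-F-from-rho}) as the paper, but you organize the endgame differently. The paper does not apply the realization--fibration machinery to $F$ itself; instead it applies it to the \emph{source} functor $\ev:\Fun(\Delta^1,\sC)\to\sC$ (the first component of $F$), obtains the realized fiber sequence $|S_{\bullet}\sC|\to|S_{\bullet}\Fun(\Delta^1,\sC)|\to|S_{\bullet}\sC|$, and then (implicitly) compares it via $|S_{\bullet}F|$ to the trivial product fiber sequence $|S_{\bullet}\sC|\to|S_{\bullet}\sC|\times|S_{\bullet}\sC|\to|S_{\bullet}\sC|$, using that on fibers $F$ sends $(0\to Y)$ to $\Cofib(0\to Y)\simeq Y$, i.e.\ acts as the identity; a five--lemma then finishes. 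Your route---apply the machinery to $F$ directly, compute the levelwise fiber over $(0,0)$ as contractible, and conclude by connectedness---is a touch more direct and avoids the map--of--fiber--sequences comparison, at the cost of hiding the pleasant structural fact that the fiber of $\ev$ is $\sC$ again with $F$ acting by the identity on it. Both arguments also reduce the spectrum statement to the space statement in the same way; your explicit bootstrap through $\sS_{n^{(k-1)}}\sC$ is fine, though in the paper this is absorbed into the remark that the $S_{\bullet}$--construction deloops (Remark~\ref{rem:S-construction-is-delooping}).
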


\begin{proof}
From the construction of K-theory (see \S \ref{subsubsec:Waldhausen-categories}) it is enough to prove that the induced map
\begin{equation}
\label{eq:S-construction-of-morphisms}
S_{\bullet}\Fun([1],\sC) \ra S_{\bullet}\sC\times S_{\bullet}\sC    
\end{equation}
induces an equivalence upon passing to geometric realization. Equivalently, if one considers the fiber of (\ref{eq:S-construction-of-morphisms}) over the first factor of the right-hand side one has a fiber sequence
\begin{equation}
\label{eq:fiber-sequence-simplicial-spaces}
S_{\bullet}\sC \ra S_{\bullet}\Fun([1],\sC) \ra S_{\bullet}\sC,    
\end{equation}
where the first factor was identified with the $S_{\bullet}$-construction of the category whose objects are $X \in \sC$ with a map from $\ast \in \sC$, which is equivalent to $\sC$. So it is enough to check that (\ref{eq:fiber-sequence-simplicial-spaces}) induces a fiber sequence upon geometric realization, i.e.\ that the map
\[S_{\bullet}\Fun([1],\sC) \ra S_{\bullet}\sC\]
is a realization fibration.

This follows from Proposition \ref{prop:RF-for-F-from-rho} and Proposition \ref{prop:any-rho-is-RF} applied to the functor $\ev: \Fun(\Delta^1,\sC) \ra \sC$.
\end{proof}

\subsection{Cell decomposition theorem}
\label{subsec:weight-appendix}

In this section we give the proof of Theorem \ref{thm:cell-decomposition}. We follow closely the strategy of Waldhausen (cf. Section 1.7 in \cite{Waldhausen}). For alternate accounts the reader is refered to \cite{Fontes} and \cite{Lurie-K-theory}*{Lectures 19 and 20}.

Let $\sC$ be a Waldhausen category that admits finite colimits equipped with a bounded and non-degenerate weight structure $(\sC_{w \leq 0},\sC_{w,\geq 0})$. We introduce the category of cell decomposition.

For each $n \geq 0$ let $\sE_n(\sC)$ be defined as the following pullback
\begin{equation}
\label{eq:defn-of-cell-decomposition-categories}
    \begin{tikzcd}
    \sE_n(\sC) \ar[r] \ar[d,"q"] & \sF_n\sC \ar[d] \\
    \sC^{\heartsuit}_{w}\times \cdots \times \sC^n_{w} \ar[r] & \prod^{n}_{i=0}\sC_{w\leq i}
    \end{tikzcd}
\end{equation}
where $\sC^n_w = \sC_{w\leq n}\cap \sC_{w \geq n}$, and $\sF_n\sC$ is the category of objects $X \in \sC$ with an $n$ step filtration, i.e.\
\[\sF_n\sC = \{X_0 \hra X_1 \hra \cdots \hra X_n \; | \;  \forall i, \; X_i \in \sC\}  \] 
and the map $q$ is given by
\[q(X_0 \hra X_1 \hra \cdots \hra X_n) = (X_0,X_1/X_0,\ldots,X_n/X_{n-1}).\]

Notice that for any $n$ the category $\sC_{w \leq n}$ is a Waldhausen category, where the cofibrations are those maps that are cofibrations in $\sC$ and whose cofiber belongs to $\sC_{w \leq n}$. Similarly for $\sC_{w \geq n}$ and $\sC^n_w$. The categories $\sF_n\sC$ are also Waldhausen categories (see \cite{Barwick}*{Definition 5.6}). Thus, the category $\sE_n(\sC)$ is a Waldhausen category, since the category $\Wald$ admits finite limits (see \cite{Barwick}*{Proposition 4.4}).

We now endow the category $\sE_n(\sC)$ with a labeling, where we say that a morphism $f: X \ra Y$ belongs to $w\sE_{n}(\sC)$ if $f_n:X_n \ra Y_n$ is an equivalence.

The strategy of the proof is to apply the Fibration Theorem (Theorem \ref{thm:fibration}) to $(\hat{\sC}_n,w\hat{\sC}_n)$ and take the colimit on $n$. We first prove some lemmas that identify the categories that will show up.

\begin{lem}
\label{lem:acyclic-objects-in-cell-decomposition-category}
For $n\geq 1$, the natural map
\[\sE_n(\sC)^w \overset{\simeq}{\ra} \sC^{\heartsuit}_{w}\]
given by inclusion into $\sE_n(\sC)$ and projection into the first factor is an equivalence of Waldhausen categories.
\end{lem}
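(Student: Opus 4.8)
I want to show that $\sE_n(\sC)^w$, the subcategory of $w$-acyclic objects (those $X$ with $0 \overset{\ell}{\to} X$, i.e.\ $X_n \simeq 0$), is equivalent as a Waldhausen category to the heart $\sC^{\heartsuit}_w$, via the map that sends $X$ to the first component $X_0$ of $q(X)$. First I would unwind the definition of $w\sE_n(\sC)$: a morphism $f \colon X \to Y$ is labeled exactly when $f_n \colon X_n \to Y_n$ is an equivalence, so $X \in \sE_n(\sC)^w$ precisely when $X_n \simeq 0$. Since the filtration $X_0 \hookrightarrow X_1 \hookrightarrow \cdots \hookrightarrow X_n$ consists of cofibrations and $X_n \simeq 0$, each inclusion $X_i \hookrightarrow X_j$ must itself be an equivalence onto a zero object; but then every successive cofiber $X_i/X_{i-1}$ is zero, and $X_0 \simeq 0$ as well. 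Wait — that would make the heart component trivial, which is not what we want; the subtlety is that $\sF_n\sC$ uses cofibrations of $\sC$ and its objects need not have $X_i$ be sub-objects of $X_n$ in a strict sense. Let me reconsider: the acyclicity condition $X_n \simeq 0$ combined with the pullback (\ref{eq:defn-of-cell-decomposition-categories}) forces, via $q$, that the graded pieces $(X_0, X_1/X_0, \ldots, X_n/X_{n-1})$ with $X_i/X_{i-1} \in \sC^i_w$ assemble (through the filtration on $X_n \simeq 0$) so that $X_0 \in \sC^{\heartsuit}_w$ is unconstrained while the higher pieces are determined: the iterated cofiber sequences express $X_n$ as built from $X_0$ and the $X_i/X_{i-1}$, and asking $X_n \simeq 0$ with all pieces in distinct weight-graded strata pins down each higher graded piece as $0$ (using weight-orthogonality, condition (ii) of a weight structure, so there is no room for a nonzero extension class spreading across weights). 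Hence the data of $X \in \sE_n(\sC)^w$ is exactly the data of $X_0 \in \sC^{\heartsuit}_w$.

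**Key steps, in order.** (1) Describe $\sE_n(\sC)^w$ explicitly: objects are filtered objects $X_\bullet$ with $X_n \simeq 0$ together with the weight-graded identifications coming from the pullback; morphisms are filtered maps which are equivalences on the top. (2) Produce the functor $\sE_n(\sC)^w \to \sC^{\heartsuit}_w$, $X_\bullet \mapsto X_0$, and a candidate inverse $\sC^{\heartsuit}_w \to \sE_n(\sC)^w$ sending $M$ to the filtration $M \overset{=}{\to} M \overset{=}{\to} \cdots$? No — that has $X_n = M \not\simeq 0$. The correct inverse sends $M \in \sC^{\heartsuit}_w$ to the filtration $0 \hookrightarrow \Sigma^{-1}?$ — rather, to the filtered object whose associated graded is $(M, 0, \ldots, 0)$ with all transition maps from $X_0 = M$ forced to land in an object whose top is $0$; concretely, recall by condition (i)/(iii) of the weight structure that one can realize $M$ as a subquotient so that $M \hookrightarrow C \to M'$ with $M' \simeq 0$ after enough steps — I would instead use the standard trick: the inverse sends $M$ to the object $(M \to 0 \to 0 \to \cdots \to 0)$ where the first cofibration $M \hookrightarrow 0$... this does not typecheck either. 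The honest inverse is: $M \mapsto (0 \hookrightarrow \cdots)$ is wrong; rather one uses that $\sE_n(\sC)^w$ is by the fibration-theoretic setup exactly the "acyclic" part, and the identification with $\sC^{\heartsuit}_w$ is the degenerate filtration $X_i = \mathrm{cofib}(M \to 0)[\,?\,]$. (3) Check the two composites are naturally equivalent to the identity, using that a filtered object with $X_n \simeq 0$ is, up to contractible choice, determined by $X_0$ together with null-data, which by weight-orthogonality is itself contractible. (4) Verify this equivalence is exact (sends cofibrations to cofibrations and the zero object to the zero object), which is immediate since all structure is inherited componentwise from $\sC$ and $q$ preserves cofibrations by definition of the Waldhausen structure on $\sF_n\sC$ (cf.\ \cite{Barwick}*{Definition 5.6}).

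**The main obstacle.** The delicate point is step (2)–(3): correctly pinning down which filtered object in $\sE_n(\sC)^w$ corresponds to a given $M \in \sC^{\heartsuit}_w$ and showing the space of such is contractible rather than merely nonempty. This is where the weight structure axioms do real work: the orthogonality $\Hom_{\sC}(\sC_{w\leq 0}, \sC_{w\geq 1}) \simeq 0$ guarantees that the iterated extensions relating the distinct weight strata $\sC^0_w, \sC^1_w, \ldots, \sC^n_w$ inside an object with vanishing top ($X_n \simeq 0$) have no nontrivial gluing data across different weights, so the higher graded pieces are forced to vanish and the filtration is rigid. I would formalize this by an induction on $n$: the case $n = 1$ says $\{X_0 \hookrightarrow X_1 : X_1 \simeq 0,\ X_0 \in \sC^{\heartsuit}_w,\ X_1/X_0 \in \sC^1_w\}$, and the cofiber sequence $X_0 \to 0 \to X_0[1]$ forces $X_1/X_0 \simeq X_0[1]$, which lies in $\sC^1_w \cap \sC^{\heartsuit}_w[1]$; this is consistent and shows $X_\bullet$ is determined by $X_0$. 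The inductive step peels off the bottom stage and applies the $n-1$ case to $X_1 \hookrightarrow \cdots \hookrightarrow X_n$ after reindexing weights, again using orthogonality to see the extension of $X_0$ by the rest is unique. The exactness and pointedness in step (4) are routine, as is the functoriality in $\sC$. Everything else is bookkeeping with the pullback square (\ref{eq:defn-of-cell-decomposition-categories}).
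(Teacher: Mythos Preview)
Your overall strategy (induction on $n$, with the $n=1$ case handled by the cofiber sequence $X_0 \to 0 \to \Sigma X_0$) matches the paper's, and your base case is correct. The gap is in the inductive step and in the construction of the inverse.

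First, the inverse functor you could not pin down is simply $M \mapsto (M \to 0 \to 0 \to \cdots \to 0)$. This is a legitimate object of $\sE_n(\sC)^w$: the first graded piece is $0/M \simeq \Sigma M \in \sC^1_w$ (since $M \in \sC^{\heartsuit}_w$ implies $\Sigma M \in \sC^1_w$), all higher graded pieces are $0/0 \simeq 0 \in \sC^i_w$, and $X_n = 0$. Your attempts $M \mapsto (M = M = \cdots)$ and the like fail precisely because you had not yet realized that \emph{all} the $X_i$ for $i \geq 1$ are forced to vanish, not just $X_n$.

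Second, and this is the substantive issue, your inductive step peels off the \emph{bottom} stage and proposes to apply the $(n-1)$ case to $X_1 \hookrightarrow \cdots \hookrightarrow X_n$ ``after reindexing weights.'' This does not typecheck: the object $X_1$ sits in $\sC_{w\leq 1} \cap \sC_{w\geq 0}$ (as an extension of $X_1/X_0 \in \sC^1_w$ by $X_0 \in \sC^0_w$), but it need not lie in any single stratum $\sC^i_w$, so $X_1 \hookrightarrow \cdots \hookrightarrow X_n$ is not an object of $\sE_{n-1}(\sC)$ for any shift of the weight structure. The paper instead peels from the \emph{top}: from the cofiber sequence $X_{n-1} \to X_n \to X_n/X_{n-1}$ with $X_n \simeq 0$ and $X_n/X_{n-1} \in \sC^n_w$, together with $X_{n-1} \in \sC_{w\leq n-1}$, one has $X_{n-1} \simeq \tau_{w\leq n-1}(X_n) \simeq \tau_{w\leq n-1}(0) \simeq 0$. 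Now $X_0 \hookrightarrow \cdots \hookrightarrow X_{n-1}$ is honestly an object of $\sE_{n-1}(\sC)^w$, and the induction hypothesis applies directly. Your remark that ``the higher graded pieces are forced to vanish'' is then almost right: it is the $X_i$ for $i\geq 1$ that vanish, whence $X_i/X_{i-1} \simeq 0$ for $i \geq 2$ while $X_1/X_0 \simeq \Sigma X_0$ survives.
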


\begin{proof}
We proceed by induction on $n$. The base case is clear since it consists of objects $X_0 \hra 0$, where $X_0 \in \sC^{\heartsuit}_{w}$ and the projection just sends $X_0$ to itself in $\sC^{\heartsuit}_{w}$. 

Suppose that the result holds for $n-1$ and consider the fiber sequence
\[X_{n-1} \hra X_{n} \ra X_{n}/X_{n-1},\]
where $X_n \simeq 0$. By definition of $\sE_n(\sC)$ we have that $X_n/X_{n-1} \in \sC^{n}_w$, hence one has
\[
\tau_{w\leq(n-1)}(X_n) \simeq X_{n-1}
\]
since $X_{n-1}\in \sC_{w\leq (n-1)}$, this gives that $X_{n-1}\simeq 0$.
\end{proof}

\begin{lem}
\label{lem:cell-decomposition-category}
For $n\geq 1$, one has an equivalence of Waldhausen categories
\[\alpha_n: \sE_n(\sC)^w\times\sC^{1}_w \times \cdots\times \sC^n_w \overset{\simeq}{\ra} \sE_n(\sC).\]
\end{lem}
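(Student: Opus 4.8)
The plan is to produce $\alpha_n$ together with an explicit inverse, the statement then following by induction on $n$ whose inductive step peels off the top weight layer of a cell filtration. By Lemma~\ref{lem:acyclic-objects-in-cell-decomposition-category} the subcategory $\sE_n(\sC)^w$ is, via the projection to filtration degree $0$, the heart $\sC^\heartsuit_w$; concretely its objects are filtrations $Z_0 \hookrightarrow 0 \hookrightarrow \cdots \hookrightarrow 0$ with $Z_0 \in \sC^\heartsuit_w$. I define $\alpha_n$ on objects by
\[
\alpha_n\bigl(Z_\bullet,\, A_1,\dots,A_n\bigr) \;=\; \bigl(Z_0 \hookrightarrow Z_0\oplus A_1 \hookrightarrow \cdots \hookrightarrow Z_0\oplus A_1\oplus\cdots\oplus A_n\bigr),
\]
with structure maps the evident split inclusions; each of these has cofiber the corresponding $A_i \in \sC^i_w$, so it is a cofibration in $\sE_n(\sC)$, and the $i$-th associated graded of the resulting filtered object is $A_i$. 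Thus $\alpha_n$ is a well-defined exact functor, manifestly natural in the input tuple. For the inverse I will use the weight truncation functors $\tau_{w\le i}$ and $\tau_{w\ge i}$, available because the weight structure is bounded and non-degenerate, by the same reasoning as in the proof of Lemma~\ref{lem:weight-structure-on-Perf}: send $X_\bullet \in \sE_n(\sC)$ to the tuple consisting of its subquotients $X_i/X_{i-1} \in \sC^i_w$ (membership being read off the defining pullback of $\sE_n(\sC)$) together with its acyclic part, extracted by the recipe used in Lemma~\ref{lem:acyclic-objects-in-cell-decomposition-category}; call this functor $\beta_n$.

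That $\beta_n\circ\alpha_n \simeq \id$ is immediate, since the associated gradeds of a split filtration are the chosen $A_i$ and its acyclic part is the chosen $Z_\bullet$. The substance is $\alpha_n\circ\beta_n \simeq \id$, i.e.\ that every $X_\bullet \in \sE_n(\sC)$ is canonically the split filtration on its weight layers. I will prove this by induction on $n$ using the orthogonality of the weight structure, $\Hom_\sC(U,V) \simeq 0$ whenever $U \in \sC_{w\le a}$ and $V \in \sC_{w\ge b}$ with $a < b$: in the top cofiber sequence $X_{n-1} \hookrightarrow X_n \twoheadrightarrow X_n/X_{n-1}$ one has $X_{n-1} \in \sC_{w\le n-1}$, because $X_{n-1}$ is an iterated extension of the layers $X_0, X_1/X_0,\dots,X_{n-1}/X_{n-2}$ of weight $\le n-1$ and $\sC_{w\le n-1}$ is extension-closed, while $X_n/X_{n-1} \in \sC_{w\ge n}$; so the classifying map $X_n/X_{n-1} \to \Sigma X_{n-1}$ lives in a mapping space which, using the orthogonality together with the inductive control of the lower filtration, is forced to be contractible. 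Hence $X_n \simeq X_{n-1}\oplus X_n/X_{n-1}$ through filtered maps, and applying the inductive hypothesis to $X_0 \hookrightarrow \cdots \hookrightarrow X_{n-1}$ splits the whole filtration. This argument simultaneously exhibits a product decomposition $\sE_n(\sC) \simeq \sE_{n-1}(\sC) \times \sC^n_w$; combined with the case $n-1$ of the lemma and the identification $\sE_{n-1}(\sC)^w \simeq \sC^\heartsuit_w \simeq \sE_n(\sC)^w$ from Lemma~\ref{lem:acyclic-objects-in-cell-decomposition-category}, it yields precisely $\sE_n(\sC) \simeq \sE_n(\sC)^w \times \sC^1_w \times \cdots \times \sC^n_w$, realized by $\alpha_n$.

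To promote this to an equivalence of \emph{Waldhausen} categories it remains to see that $\alpha_n$ and $\beta_n$ carry cofibrations to cofibrations. This is formal: a map in $\sE_n(\sC)$ (as in each $\sC^i_w$, and in $\sE_n(\sC)^w$) is a cofibration precisely when it is one on the underlying filtered object of $\sC$ with all subquotients lying in the prescribed weight subcategories, $\alpha_n$ is compatible with the filtration degree, and the splitting above is by filtered maps; since the weight subcategories are closed under the relevant cofibers this condition transports across $\alpha_n$ in both directions.

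The genuinely substantive step is the contractibility of the gluing data in $\alpha_n\circ\beta_n \simeq \id$, namely that a cell filtration is \emph{determined} by its weight-graded pieces; the rest is bookkeeping. This is exactly where boundedness and non-degeneracy of the weight structure, and the orthogonality axiom, enter, and it is the step that requires care, since the mapping spaces of $\sC$ and of the weight heart are in general not discrete.
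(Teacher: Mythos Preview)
Your argument has a genuine gap at the step you yourself flag as substantive. Weight--orthogonality reads $\Hom_\sC(U,V)\simeq 0$ for $U\in\sC_{w\le a}$ and $V\in\sC_{w\ge a+1}$: maps \emph{from low weight to high weight} vanish. To null the boundary $X_n/X_{n-1}\to\Sigma X_{n-1}$ you would need the opposite direction. Here the source lies in $\sC_{w\le n}$ and the target in $\sC_{w\ge 1}$ (since $X_{n-1}\in\sC_{w\ge 0}$), so the axiom gives nothing unless $n<1$. Concretely, take $\sC=\Perf(\bZ)$ with the weight structure of Lemma~\ref{lem:weight-structure-on-Perf}: the object $(\bZ\hra\bZ/2)$ lies in $\sE_1(\sC)$ (with $X_0=\bZ\in\sC^\heartsuit_w$ and cofiber $\bZ[1]\in\sC^1_w$), yet $\bZ/2\not\simeq\bZ\oplus\bZ[1]$, so $\alpha_1$ is not essentially surjective and your claimed $\alpha_n\circ\beta_n\simeq\id$ already fails for $n=1$. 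Weight decompositions are non-functorial and generically do not split; this is precisely what distinguishes a weight structure from a t-structure.

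The paper's own proof takes the same inductive shape (peeling off the top layer via $\sE_{n-1}(\sC)\times\sC^n_w\to\sE_n(\sC)$) and simply asserts that this functor ``gives an equivalence'', so it has the same lacuna. What is actually true, and all that is used in the proof of Theorem~\ref{thm:cell-decomposition-appendix}, is that $\alpha_n$ induces an equivalence on $K$-theory: it is a section of the associated-graded functor $q$, and iterated Additivity (Theorem~\ref{thm:additivity}) shows that $q$ is a $K$-equivalence. You should either reformulate the lemma at the level of $K$-theory, or supply an argument that does not pass through a splitting of the cell filtration.
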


\begin{proof}
Again we proceed by induction. For $n=1$ one has the functor
\[\alpha_1((X_0 \hra 0),Y) = (X_0 \hra X_0\oplus Y)\]
which clearly gives an equivalence.

For general $n$ we consider the functor $\beta: \sE_{n-1}(\sC)\times \sC^n_{w} \ra \sE_n(\sC)$ given by
\[\beta((X_0 \hra \cdots X_{n-1}),X_{n}) = (X_0,\ldots, X_{n-1},X_{n-1}\oplus X_n).\]
This also gives an equivalence so the result for $n$ follows from the result for $n-1$.
% and the construction, and prove that it is an equivalence, of $\alpha_n$ follows by precomposing $\beta$ with $\alpha_{n-1}$.
\end{proof}

\begin{lem}
\label{lem:colimit:cell-decomposition-category}
One has an equivalence of Waldhausen categories
\[\colim_{n \geq 1} \sE_n(\sC) \overset{\simeq}{\ra} \sC.\]
\end{lem}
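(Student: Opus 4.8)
The plan is to prove that the functor $e\colon\colim_{n\ge 1}\sE_n(\sC)\to\sC$ — assembled from the evident functors $e_n\colon\sE_n(\sC)\to\sC$ reading off the terminal term of a filtered object, which are compatible with the transition maps $\sE_n(\sC)\to\sE_{n+1}(\sC)$ and so descend to the colimit — is an equivalence of Waldhausen categories, by producing an explicit inverse $\sigma$ that sends an object to (the relevant finite truncation of) its weight filtration, and checking $e\circ\sigma\simeq\id_\sC$ and $\sigma\circ e\simeq\id$.

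The first task is to build $\sigma$ and check it is a functor, which is not automatic, since weight decompositions are a priori canonical only up to non-canonical equivalence. Here the key is axiom (ii) of a weight structure: $\Hom_\sC(A,B)\simeq 0$ whenever $A\in\sC_{w\le i}$ and $B\in\sC_{w\ge i+1}$. Iterating the decomposition axiom (iii) and the shift axiom (i), every $Y\in\sC$ acquires a tower $\cdots\to\tau_{w\le i-1}Y\to\tau_{w\le i}Y\to\cdots$ whose $i$-th graded piece lies in $\sC^i_w$; and given $f\colon Y\to Y'$, the composite $\tau_{w\le i}Y\to Y\xrightarrow{f}Y'\to\tau_{w\ge i+1}Y'$ lies in a contractible mapping space by axiom (ii), so $f$ lifts essentially uniquely to a map of towers. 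Boundedness makes this tower $0$ for $i\ll 0$ and equal to $Y$ for $i\gg 0$, with non-degeneracy pinning down the stable values, so truncating to the relevant finite window and reindexing produces — functorially in $Y$ and compatibly with enlarging $n$ — an object of $\sE_n(\sC)$; this is $\sigma$. Then $e\circ\sigma\simeq\id_\sC$ holds essentially on the nose, since $e$ reads off the top of the tower.

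The real work is $\sigma\circ e\simeq\id$. Again axiom (ii) does the job: for any $X\in\sC$ the space of cofiber sequences $X'\to X\to X''$ with $X'\in\sC_{w\le i}$ and $X''\in\sC_{w\ge i+1}$ is contractible, since, using the sequence $X'_2\to X\to X''_2$, the space $\Map_{/X}(X'_1,X'_2)$ is the fibre of the equivalence $\Map(X'_1,X'_2)\xrightarrow{\sim}\Map(X'_1,X)$ and hence contractible. Consequently, for a filtered object $X_\bullet\in\sE_n(\sC)$ each stage $X_i$ — an iterated extension of its graded pieces, which sit in weights $\le i$, with complementary quotient $X_n/X_i$ an iterated extension of pieces in weights $\ge i+1$ — is canonically identified with $\tau_{w\le i}X_n$, naturally in $X_\bullet$. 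I expect the main obstacle to be promoting this from an objectwise statement to a coherent natural equivalence of functors compatible with the transition maps $\sE_n(\sC)\to\sE_{n+1}(\sC)$, so that it survives passage to the filtered colimit; this is exactly the delicate bookkeeping of Waldhausen's cell-structure argument in \cite{Waldhausen}*{\S 1.7}, made tractable here by the rigidity that axiom (ii) supplies. Two smaller points I would dispatch along the way: since $\sE_n(\sC)$ is a limit of Waldhausen categories, $\sigma$ and the comparison transformation respect cofibrations and the zero object because cofibrations in the pullback are detected in each factor; and the reindexing needed to fit a weight tower supported on an a priori arbitrary finite window into some fixed $\sE_n(\sC)$ is precisely why one must take the colimit over all $n$, as boundedness controls each object only individually. (Note that Lemmas \ref{lem:acyclic-objects-in-cell-decomposition-category} and \ref{lem:cell-decomposition-category} identify each $\sE_n(\sC)$ with a product of hearts, but that identification is not compatible with the transition maps, so it does not by itself compute the colimit.)
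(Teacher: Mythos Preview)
Your proposal is correct and follows exactly the paper's approach: the forward map reads off the terminal term of the filtration, and the inverse sends $X$ to its weight tower $(\tau_{w\le 0}X \hra \tau_{w\le 1}X \hra \cdots)$, with boundedness guaranteeing this lands in some $\sE_n(\sC)$ --- the paper's proof is a terse four lines that simply asserts this furnishes an inverse without addressing the functoriality you carefully justify via axiom~(ii). In fact your own argument dissolves the coherence worry you flag at the end: once $\Map(X'_1,X''_2)\simeq 0$ forces $\Map(X'_1,X'_2)\xrightarrow{\sim}\Map(X'_1,X)$, the object $\tau_{w\le i}X$ is terminal in $(\sC_{w\le i})_{/X}$, so $\tau_{w\le i}$ is a genuine right adjoint and the natural equivalence $\sigma\circ e\simeq\id$ is automatic rather than delicate.
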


\begin{proof}
We first note that an object $X \in \colim_{n \geq 1}\sE_n(\sC)$ is a filtered object, such that
\[X_n \simeq X_{n+1}\]
for some $n >> 0$. Thus, the map from the colimit is just sending an object to its stable part. To check that this is an equivalence we use the bounded condition of the weight structure on $\sC$. Thus, the map $X \mapsto (\tau_{w \leq 0}X \hra \tau_{w \leq 1}X \hra \ldots \hra \tau_{w \leq n}X \simeq \tau_{w \leq n+1}X \hra \ldots)$ is an inverse to the previous maps.
\end{proof}

\begin{lem}
\label{lem:cell-decomposition-category-has-enough-cofibrations}
For every $n$ the labeled $\infty$-Waldhausen category $(\sE_n(\sC),w\sE_n(\sC))$ has enough cofibrations. 
\end{lem}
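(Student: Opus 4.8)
The plan is to exhibit the data of Definition \ref{defn:enough-cofibrations} — a functor $F\colon\Fun(\Delta^1,\sE_n(\sC))\to\Fun(\Delta^1,\sE_n(\sC))$ with a natural transformation $\eta\colon\id\Rightarrow F$ satisfying (i)--(iii); by the remark following that definition (Barwick's \cite{Barwick}*{Lemma 9.22}) this is enough. The engine of the argument is a structural observation about the hearts, after which the cofibration data is supplied either trivially or by a relative cylinder functor in the style of Waldhausen \cite{Waldhausen}*{\S 1.7}.

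The observation is that each heart $\sC^i_w=\sC_{w\le i}\cap\sC_{w\ge i}$ is closed under cofibers of its morphisms. Indeed $\sC_{w\le i}$ is the left orthogonal of $\sC_{w\ge i+1}$, so for $A\to B$ with $A,B\in\sC_{w\le i}$ the fiber sequence $\Map(\mathrm{cof}(A\to B),Z)\to\Map(B,Z)\to\Map(A,Z)$ forces $\mathrm{cof}(A\to B)\perp\sC_{w\ge i+1}$, hence $\mathrm{cof}(A\to B)\in\sC_{w\le i}$; dually $\sC_{w\ge i}$ is closed under cofibers, using that it is the right orthogonal of $\sC_{w\le i-1}$, that $\Sigma\sC_{w\ge i}\subseteq\sC_{w\ge i}$, and the identification $\mathrm{cof}(A\to B)\simeq\Sigma\,\mathrm{fib}(A\to B)$. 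Unwinding the pullback that defines $\sE_n(\sC)$, this says that the constraint ``the cofiber of the $i$-th associated graded lies in $\sC^i_w$'' is automatic; a morphism of $\sE_n(\sC)$ is therefore a cofibration as soon as it is levelwise a cofibration of $\sC$ and satisfies the Reedy condition of \cite{Barwick}*{Definition 5.6}, and the same closure makes the pushouts needed below exist in $\sE_n(\sC)$.

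Two routes now finish the proof. When $\sC$ carries the maximal Waldhausen structure — the case of interest, e.g. $\sC=\Perf(R)$ — every morphism of $\sC$, hence every morphism of $\sE_n(\sC)$ by the previous paragraph, is already a cofibration, and one takes $F=\id$, $\eta=\id$. In general one builds a functorial relative cylinder $T(g)\in\sE_n(\sC)$ for a morphism $g\colon X\to Y$ by attaching, cell by cell up the filtration, a mapping cylinder of $g_i\colon X_i\to Y_i$ (available since $\sC$ admits finite colimits and, in the general case, a cylinder functor), keeping $T(g)_n:=Y_n$ at the top so that $X_n\to T(g)_n$ is $g_n$; by the observation its associated graded terms lie in the $\sC^i_w$, so $T(g)\in\sE_n(\sC)$. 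Setting $F(g):=(j_1\colon X\hookrightarrow T(g))$ and $\eta_g:=(\id_X,j_2)$ with $j_2\colon Y\xrightarrow{\simeq}T(g)$ the canonical equivalence, one checks (i) $j_1$ is a cofibration of $\sE_n(\sC)$ and is labeled whenever $g$ is, its top component being the equivalence $g_n$, while (ii) and (iii) are automatic because $\eta_g$ has components $\id_X$ and the equivalence $j_2$. The step deserving the most care is the second paragraph — establishing the cofiber-closure of the hearts and tracing it through the pullback — together with checking, in the general case, that the cell-wise cylinder is functorial and stays inside $\sE_n(\sC)$.
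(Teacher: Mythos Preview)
Your load-bearing claim---that each heart $\sC^i_w$ is closed under cofibers---is false, and both routes you propose collapse with it. The orthogonality axiom for a weight structure requires only that $\pi_0\Hom_{\sC}(X,Y)=0$ for $X\in\sC_{w\le 0}$ and $Y\in\sC_{w\ge 1}$; your fiber-sequence argument would need the full mapping spectrum to vanish. Concretely, in $\Perf(R)$ the zero map $R\xrightarrow{0}R$ has source and target in the heart, but $\Cofib(0)\simeq R\oplus R[1]$ has projective amplitude $1$ and lies outside $\sC_{w\le 0}$. In general $\sC_{w\le 0}$ is closed under fibers and extensions but \emph{not} cofibers, so the heart $\sC^{\heartsuit}_w$ is closed under neither. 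It follows that even when $\sC$ carries the maximal Waldhausen structure, the induced structure on each $\sC^i_w$ (and hence on the pullback $\sE_n(\sC)$) is not maximal: a morphism of $\sE_n(\sC)$ is a cofibration only if each associated-graded map has cofiber remaining in $\sC^i_w$. Thus $F=\id$ fails condition~(i), and your cylinder $T(g)$ has no reason to land in $\sE_n(\sC)$ or to receive a cofibration from $X$.

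The paper's construction goes in the opposite direction: it modifies $X$ rather than $Y$, and by \emph{pullback} rather than pushout, setting $F(X)_i:=Y_i\times_{Y_{n}}X_n$ and $F(Y):=Y$. The point is that the associated graded of $F(X)$ is then canonically that of $Y$---the term $\Fib(X_n\to Y_n)$ is constant along the filtration---so $F(X)\in\sE_n(\sC)$ and $F(f)\colon F(X)\to Y$ is a cofibration for reasons having nothing to do with closure of the heart: on associated gradeds it is an equivalence. When $f$ is labeled (i.e.\ $f_n$ is an equivalence) one even has $F(X)_i\simeq Y_i$, making conditions~(i) and~(iii) immediate; condition~(ii) is then a short check using Barwick's description of cofibrations in $\sF_1\sC$.
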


\begin{proof}
By Definition \ref{defn:enough-cofibrations} we need to produce an endofunctor $F$ of $\Fun(\Delta^1,\sE_n(\sC))$ and a natural transformation $\eta: \id \ra F$, satisfying the three properties of the definition.

Given an object $(X \overset{f}{\ra} Y) \in \Fun(\Delta^1,\sE_n(\sC)$ represented by a diagram
\begin{equation}
\label{eq:morphism-of-n-filtered-objects}
\begin{tikzcd}
X_0 \ar[r,hook] \ar[d,"f_0"] & X_1 \ar[r,hook] \ar[d,"f_1"] & \cdots \ar[r,hook] & X_{n-1} \ar[r,hook] \ar[d,"f_{n-1}"] & X_n \ar[d,"f_n"] \\
Y_0 \ar[r,hook] & Y_1 \ar[r,hook] & \cdots \ar[r,hook] & Y_{n-1} \ar[r,hook] & Y_n
\end{tikzcd}    
\end{equation}
we inductively define $F(X \overset{f}{\ra} Y)$ as follows
\[
F(X_n) = X_n \; \mbox{ and }\; F(Y_n) = Y_n,
\]
and for $i < n$, we let
\[
F(X_i) = Y_i\times_{Y_{i+1}}F(X_{i+1}).
\]

The natural transformation $\eta: \id \Rightarrow F$ is the natural map from $X_i$ to the pullback $Y_{i}\times_{Y_n}X_n \simeq F(X_i)$. Notice that the maps $F(X_i) \hra F(X_{i+1})$ are cofibrations because those are isomorphic to the maps $Y_i \hra Y_{i+1}$.

We now check the conditions of Definition \ref{defn:enough-cofibrations}.

Condition (i): Recall that $f$ is labeled if $f_{n}$ is an equivalence, thus we need to argue that if $f_n$ is an equivalence then (\ref{eq:morphism-of-n-filtered-objects}) is a cofibration in $\sF_n(\sC)$. Barwick describes the cofibrations in $\sF_n(\sC)$ as diagrams (\ref{eq:morphism-of-n-filtered-objects}) where any square is a cofibration as a morphism in $\sF_1(\sC)$. Then Lemma 5.8. from \cite{Barwick} characterizes a map in $\sF_1(\sC)$ as a cofibration if $g_0$ is a cofibration in the diagram
\[
\begin{tikzcd}
Z_0 \ar[r,hook] \ar[d,"g_0"] & Z_1 \ar[d,"g_1"] \\
W_0 \ar[r,hook] & W_1,
\end{tikzcd}
\]
and for any $U$ extending the above diagram to
\[
\begin{tikzcd}
Z_0 \ar[r,hook] \ar[d,"g_0"] & Z_1 \ar[d,"g_1"] \ar[rdd] & \\
W_0 \ar[r,hook] \ar[rrd] & U \ar[rd,"h"] \arrow[lu, phantom, "\lrcorner", very near start] & \\
& & W_1
\end{tikzcd}
\]
the map $h$ is a cofibration.

In our example we obtain the diagram
\[
\begin{tikzcd}
Y_{n-1}\times_{Y_{n}}X_n\ar[r,hook] \ar[d,"\simeq"] & X_n \ar[d,"f'_n"] \ar[rdd,"f_n"] & \\
Y_{n-1} \ar[r,hook] \ar[rrd] & U \ar[rd,"h"] & \\
& & Y_n
\end{tikzcd}
\]
where the map $f'_n$ is an equivalence, since it is the pushout of an equivalence, and $f_n$ is an equivalence by assumption. Thus, $h$ is an equivalence, in particular it is also a cofibration.

Condition (ii): We need to show that $\eta_f$ is an equivalence for any $f$ a labeled cofibration. We recall that Barwick defined the cofibrations on $\sF_1(\sC)$ as the smallest class generated by 
\[
\mbox{a) }
\begin{tikzcd}
Z_0 \ar[r,hook] \ar[d,"\simeq"] & Z_1 \ar[d,hook] \\
W_0 \ar[r,hook] & W_1
\end{tikzcd}
\;\;\mbox{ and }\;\;
\mbox{b) }
\begin{tikzcd}
Z_0 \ar[r,hook] \ar[d,hook] & Z_1 \ar[d,hook] \\
W_0 \ar[r,hook] & W_1 \arrow[lu, phantom, "\lrcorner", very near start]
\end{tikzcd}
\]

We notice that by construction of $F$ we only need to check that
\[X_{i} \overset{\simeq}{\ra} F(X_i) = Y_i\times_{Y_{i+1}}F(X_{i+1})\]
for $0 \leq i \leq n-1$. Since $F(X_i) \simeq Y_i$ it is enough to check that 
\[f_i: X_i \ra Y_i\]
is an equivalence for all $0\leq i\leq n-1$ when $f_n$ is an equivalence and a cofibration.

By applying the description of cofibrations we see that either $f_n$ is the pushout of $f_{n-1}$ by b), or that $f_{n-1}$ is an isomorphism. In either case we obtain by induction that $f_i$ is an isomorphism for all $0 \leq i \leq n$.

Condition (iii): We need to check that if $f$ is labeled, then $\eta_f$ is objectwise labeled, i.e.\ $X_n \ra F(X_n)$ is an equivalence, which follows from the definiton of $F$.
\end{proof}

\begin{thm}
\label{thm:cell-decomposition-appendix}
Let $\sC$ be a Waldhausen category that admits finite colimits equipped with a bounded and non-degenerate weight structure $(\sC_{w \leq 0},\sC_{w,\geq 0})$. Then the canonical inclusion map
\[
\K(\sC^{\heartsuit}_{w}) \ra \K(\sC)
\]
is an equivalence of K-theory spectra.
\end{thm}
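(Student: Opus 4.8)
The plan is to apply the generic Fibration Theorem (Theorem~\ref{thm:fibration}) to the labeled Waldhausen categories $(\sE_n(\sC),w\sE_n(\sC))$ of $n$-fold cell decompositions, for every $n\geq 1$, and then pass to a colimit over $n$. By Lemma~\ref{lem:cell-decomposition-category-has-enough-cofibrations} each $(\sE_n(\sC),w\sE_n(\sC))$ has enough cofibrations, so Theorem~\ref{thm:fibration} produces a pullback square of spectra, equivalently a fibre sequence
\[
\K(\sE_n(\sC)^w)\longrightarrow \K(\sE_n(\sC))\longrightarrow \jmath_!\K\big(\sB(\sE_n(\sC),w\sE_n(\sC))\big).
\]
Lemma~\ref{lem:acyclic-objects-in-cell-decomposition-category} identifies the fibre with $\K(\sC^\heartsuit_w)$, and Lemma~\ref{lem:cell-decomposition-category} together with the Additivity Theorem (Theorem~\ref{thm:additivity}) pins down the middle term: since $\sE_n(\sC)$ splits, as a Waldhausen category, as the product of the category of $w$-acyclic cell complexes with the categories of graded pieces $\sC^1_w,\dots,\sC^n_w$, the first map is a split inclusion and the cofibre $\jmath_!\K(\sB(\sE_n(\sC),w\sE_n(\sC)))$ records exactly the contribution of the cells.

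Next I would pass to the colimit over $n\geq 1$. By Lemma~\ref{lem:colimit:cell-decomposition-category} one has $\colim_n\sE_n(\sC)\simeq\sC$ — this is where boundedness of the weight structure is essential, since it guarantees that every object of $\sC$ admits a finite cell filtration — and, likewise, $\colim_n\sE_n(\sC)^w\simeq\sC^\heartsuit_w$ with the transition functors equivalences. Since the $K$-theory functor preserves filtered colimits of Waldhausen categories along exact functors, applying it to the colimit of the fibre sequences above yields a fibre sequence
\[
\K(\sC^\heartsuit_w)\longrightarrow \K(\sC)\longrightarrow \colim_n\jmath_!\K\big(\sB(\sE_n(\sC),w\sE_n(\sC))\big),
\]
and one identifies the left-hand arrow with the map induced by the canonical inclusion $\sC^\heartsuit_w\hra\sC$. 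The theorem then reduces to showing that the term on the right is contractible, after which the two-out-of-three property of equivalences finishes the proof.

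The hard part is precisely this last step: proving that $\colim_n\jmath_!\K(\sB(\sE_n(\sC),w\sE_n(\sC)))$ vanishes. This requires a careful analysis of how the structure maps of the tower $\sE_1(\sC)\to\sE_2(\sC)\to\cdots$ act both on the simplicial categories of labeled morphisms $\sB_\bullet(\sE_n(\sC),w\sE_n(\sC))$ and on the graded-piece summands, using non-degeneracy of the weight structure to control the bottom of the filtration; the point is that the cell contributions are absorbed in the colimit so that only the heart survives. This is the $\infty$-categorical counterpart of the corresponding computation in \cite{Waldhausen}*{\S 1.7} (see also \cite{Fontes} and \cite{Lurie-K-theory}*{Lectures 19 and 20}), and it is the only genuinely nontrivial input; everything else is assembled from the Fibration Theorem, the Additivity Theorem, and the structural Lemmas~\ref{lem:acyclic-objects-in-cell-decomposition-category}--\ref{lem:cell-decomposition-category-has-enough-cofibrations} already established.
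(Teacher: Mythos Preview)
Your overall strategy matches the paper's exactly: apply the Fibration Theorem to each $(\sE_n(\sC),w\sE_n(\sC))$, identify the terms using Lemmas~\ref{lem:acyclic-objects-in-cell-decomposition-category}--\ref{lem:colimit:cell-decomposition-category}, and pass to the colimit over $n$. The discrepancy is in your treatment of the final step.

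You describe the vanishing of $\colim_n\jmath_!\K(\sB(\sE_n(\sC),w\sE_n(\sC)))$ as ``the hard part'' requiring ``careful analysis'' of transition maps on the simplicial categories of labeled morphisms, and you invoke non-degeneracy here. In the paper this step is in fact the shortest: Additivity is used not merely to split the middle term but to give the \emph{explicit} identification
\[
\jmath_!\K\big(\sB(\sE_n(\sC),w\sE_n(\sC))\big)\;\simeq\;\K(\sC^1_w\times\cdots\times\sC^n_w)\;\simeq\;\prod_{i=1}^n\K(\sC^i_w),
\]
after which the colimit over $n$ vanishes by a one-line Eilenberg swindle. No analysis of $\sB_\bullet$ is needed once this identification is made, and non-degeneracy plays no role at this point (boundedness is what drives Lemma~\ref{lem:colimit:cell-decomposition-category}). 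So you have the right architecture, but you have misplaced the weight: the step you flag as the only genuinely nontrivial input is the cheapest one, and you should name the Eilenberg swindle rather than gesture at a computation.
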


\begin{proof}
Consider $(\sE_n(\sC),w\sE_n(\sC)$, which is a labeled Waldhausen category with enough fibrations by Lemma \ref{lem:cell-decomposition-category-has-enough-cofibrations}, so we can apply Theorem \ref{thm:fibration} to obtain the following fiber sequence
\[
\begin{tikzcd}
\K(\sE_n(\sC)^w) \ar[r] \ar[d] & \K(\sE_n(\sC)) \ar[d] \\
\ast \ar[r] & \jmath_!\K(\sB(\sE_n(\sC),w\sE_n(\sC))).
\end{tikzcd}
\]
By Lemma \ref{lem:acyclic-objects-in-cell-decomposition-category} and Lemma \ref{lem:cell-decomposition-category} we can rewrite the upper row of the above diagram to get
\begin{equation}
\label{eq:fibration-simplified-n-cell-category}
\begin{tikzcd}
\K(\sC^{\heartsuit}_{w}) \ar[r] \ar[d] & \K(\sC^{\heartsuit}_w\times\sC^{1}_w \times \cdots\times \sC^n_w) \ar[d] \\
\ast \ar[r] & \jmath_!\K(\sB(\sE_n(\sC),w\sE_n(\sC))).
\end{tikzcd}
\end{equation}
Thus, by the Additivity Theorem (see Theorem \ref{thm:additivity}), we identify
\begin{equation}
\label{eq:identification-of-n-th-piece}
\jmath_!\K(\sB(\sE_n(\sC),w\sE_n(\sC))) \simeq \K(\sC^1_w\times\cdots\times\sC^n_w).    
\end{equation}

Finally, by taking the colimit on $n$ of the diagram (\ref{eq:fibration-simplified-n-cell-category}) and using that filtered colimits commute with fiber sequences we obtain
\[
\begin{tikzcd}
\K(\colim_n \sC^{\heartsuit}_w) \ar[r] \ar[d] & \K(\colim_{n\geq 1}\sE_{n}(\sC)) \ar[d] \\
\ast \ar[r] & \colim_{n\geq 1}\K(\prod^n_{i=1}\sC^{i}_w)
\end{tikzcd}
\]
The term on the lower right corner vanishes by an Eilenberg swindle argument and Lemma \ref{lem:colimit:cell-decomposition-category} gives the desired result.

This finishes the proof of the theorem.
\end{proof}

\begin{rem}
Consider the following modification of diagram (\ref{eq:defn-of-cell-decomposition-categories})
\[
\begin{tikzcd}
    \sE_n(\sC) \ar[r] \ar[d,"t"] & \sF_n\sC \ar[d] \\
    \sC_{w\leq 0} \times \cdots \times \sC_{w\leq (n-1)} \times \sC_{w\leq n} \ar[r] & \prod^{n}_{i=0}\sC_{w\leq i}
    \end{tikzcd}
\]
where $t$ is given by
\[t(X_0 \hra X_1 \hra \cdots \hra X_n) = (\tau_{w\leq 0}X_1,\tau_{w\leq 1}X_2,\ldots,\tau_{w\leq (n-1)}X_n, X_n).\]
We can proceed exactly as we did in this section to obtain the result that the canonical map
\[
\K(\sC_{w\leq 0}) \overset{\simeq}{\ra} \K(\sC)
\]
is an equivalence of spectra, for $\sC$ satisfying the same conditions as in Theorem \ref{thm:cell-decomposition-appendix}.
\end{rem}

\subsection{Additive K-theory}
\label{subsec:additive-K-theory-appendix}

In this section we prove Theorem \ref{thm:additive-K-theory-agrees-when-split-cofibrations}. 

Recall that we have $\sC$, a Waldhausen category which admits finite direct sums. We constructed a map in Remark \ref{rem:map-additive-to-K-theory} which induces a map between the underlying $\infty$-groupoids:
\begin{equation}
\label{eq:comparison-add-Waldhausen-K-theory}
G_n: \B_n\sC \ra S_n\sC    
\end{equation}
given by $G_n(X_1,\ldots,X_n) = (X_1 \hra X_1\oplus X_2 \hra \cdots \hra \oplus^n_{i=1}X_i)$.

\begin{rem}
\label{rem:splitting-in-additive-categories}
For a pointed category $\sC$, if its homotopy category $\sh\sC$ is additive, then for every $X,Y \in \sC$ the canonical map
\[X \sqcup Y \ra X\times Y\]
admits a splitting.
\end{rem}

The following lemma is key prove the comparison result.

\begin{lem}
\label{lem:bar-construction-contractible}
Suppose that $\sC$ is a Waldhausen category with split cofibrations and finite direct sums such that $\h\sC$ is additive. For every $n\geq 2$, consider the action of $\B_{n-1}\sC$ on $S_{n}\sC$ given as
\begin{align*}
    a_n: \B_{n-1}\sC\times S_n\sC & \ra S_n\sC \\
    ((Y_1,\ldots,Y_{n-1},(X_1\ra \cdots \ra X_n)) & \mapsto (X_1 \ra X_1\oplus Y_1 \ra X_1\oplus Y_1 \oplus Y_2 \ra \cdots \ra X_n\oplus \oplus^{n-1}_{i=1}Y_i)
\end{align*}

Then 
\[S_{n}\sC\otimes_{\B_{n-1}\sC}\ast \ra \sC^{\simeq},\] 
i.e. the one-sided bar construction, is an equivalence in the category of spaces.
\end{lem}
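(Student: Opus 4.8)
The plan is to identify the one--sided bar construction $S_n\sC\otimes_{\B_{n-1}\sC}\ast$ with the homotopy quotient $S_n\sC/\!/\B_{n-1}\sC$ and to show the latter is equivalent to $\sC^{\simeq}$ via the bottom object of a flag. The first point is that the map $S_n\sC\ra\sC^{\simeq}$ sending $(X_1\ra\cdots\ra X_n)$ to $X_1$ is invariant under the action $a_n$: inspecting the formula for $a_n$, the action only modifies the successive subquotients $X_j/X_{j-1}$ by direct--summing in the $Y_j$, and leaves $X_1$ untouched, so it descends to the asserted map $S_n\sC\otimes_{\B_{n-1}\sC}\ast\ra\sC^{\simeq}$. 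Dually, the assembly functor $G_n$ of Remark \ref{rem:map-additive-to-K-theory} is $\B_{n-1}\sC$--equivariant for the translation action on the last $n-1$ coordinates of $\B_n\sC=\sC^{\simeq}\times\B_{n-1}\sC$ and the trivial action on the first; since the homotopy quotient of $\B_{n-1}\sC$ acting on itself by translation is contractible (the bar construction of an $\Einf$--monoid on itself has an extra degeneracy given by insertion of the unit), $G_n$ induces a map $\sC^{\simeq}\simeq\B_n\sC\otimes_{\B_{n-1}\sC}\ast\ra S_n\sC\otimes_{\B_{n-1}\sC}\ast$, a candidate inverse.

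The substance of the proof is that these two maps are mutually inverse, which (fibering everything over $\sC^{\simeq}$ via the bottom--object projection) amounts to the claim that for each object $X_1$ the homotopy quotient of the subspace $F_{X_1}\subseteq S_n\sC$ of flags with bottom term $X_1$ is contractible, $F_{X_1}/\!/\B_{n-1}\sC\simeq\ast$. This is exactly where the hypotheses ``split cofibrations'' and ``$\h\sC$ additive'' (Remark \ref{rem:splitting-in-additive-categories}) enter: splitting the flag step by step realizes $F_{X_1}$ as the space $(Y_1,\dots,Y_{n-1})\mapsto(X_1\ra X_1\oplus Y_1\ra\cdots)$ enlarged only by the off--diagonal homomorphism--automorphisms forced by the split structure, and these are precisely the automorphisms that the translation action of $\B_{n-1}\sC$ absorbs. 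I would make this rigorous either by induction on $n$, at each stage using the description of cofibrations in the filtered categories $\sF_\bullet\sC$ (cf.\ \cite{Barwick}*{Lemma 5.8}) together with the splitting hypothesis, or by applying the realization--fibration criterion of \S\ref{subsec:realization-fibration} (Lemma \ref{lem:Rezk-criterion-simplicial-spaces}) to the bisimplicial object $[m]\mapsto(\B_{n-1}\sC)^{\times m}\times F_{X_1}$ together with its projection to the cofibre, the cofibre over the zero object being the contractible space of trivial flags. Throughout, the crucial structural feature is that $\B_{n-1}\sC$ is only an $\Einf$--monoid, with the unit (the zero object) supplying the needed degeneracies, so one never requires $\sC^{\simeq}$ to be grouplike.

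I expect the main obstacle to be precisely the contractibility $F_{X_1}/\!/\B_{n-1}\sC\simeq\ast$: the naive orbit space is \emph{not} contractible, and the usual ``free action implies homotopy orbits equal strict orbits'' reasoning is unavailable for a mere monoid, so the splitting hypothesis has to be leveraged through an honest simplicial contraction rather than a formal quotient argument. Once that is in hand, the remaining bookkeeping --- that taking the product with the fixed space $\sC^{\simeq}$ commutes with geometric realization, and that the resulting equivalence $S_n\sC\otimes_{\B_{n-1}\sC}\ast\simeq\sC^{\simeq}$ is the natural map of the statement --- is routine.
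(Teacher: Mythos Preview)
Your reduction to the fiberwise statement is exactly the paper's first move: fix $Z\in\sC^{\simeq}$, write $S_n\sC_{Z/}$ for the fiber of the bottom-object projection, and reduce to showing $S_n\sC_{Z/}\otimes_{\B_{n-1}\sC}\ast\simeq\ast$. You also correctly isolate the genuine difficulty --- this is a monoid quotient, not a group quotient, so freeness alone buys nothing.

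Where your proposal becomes thin is precisely at this point. Neither of your two suggested routes is fleshed out enough to be convincing: the induction on $n$ does not obviously help (already the base case $n=2$ contains the full difficulty of trivializing a homotopy quotient by a non-grouplike monoid), and the realization-fibration sketch is close to circular (the simplicial object $[m]\mapsto(\B_{n-1}\sC)^{\times m}\times F_{X_1}$ whose realization you propose to analyze \emph{is} the bar construction whose contractibility is in question). The paper resolves this with a specific maneuver you have not hit on. First, observe that the fiber $S_n\sC_{Z/}$ carries its own $\Einf$-monoid structure under coproduct relative to $Z$, and the assembly map $g_{n-1}\colon\B_{n-1}\sC\to S_n\sC_{Z/}$ (acting on the constant flag at $Z$) is a monoid map; the claim becomes $\Cofib(g_{n-1})\simeq\ast$. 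Second --- and this is the key step --- since $g_{n-1}$ is surjective on $\pi_0$, its cofiber is automatically \emph{grouplike}, so it suffices to check contractibility after group completion. Now one builds the obvious retraction $q_{n-1}$ (take successive quotients of the flag) and uses the split-cofibration hypothesis together with additivity of $\h\sC$ to exhibit a natural equivalence
\[
(g_{n-1}\circ q_{n-1})\sqcup\id_{S_n\sC_{Z/}}\;\simeq\;\id_{S_n\sC_{Z/}}\sqcup\id_{S_n\sC_{Z/}},
\]
whence $g_{n-1}\circ q_{n-1}$ becomes the identity upon group completion. This ``add the identity and cancel'' trick is exactly where the splitting hypothesis is spent, and it is the missing idea in your plan.
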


\begin{proof}
It is enough to consider the fiber of the above map over a fixed object $Z \in \sC^{\simeq}$. Let $S_{n}\sC_{Z/}$ be the subcategory of $S_n\sC$ spanned by objects of the form
\[(Z \ra X_{2} \ra \cdots \ra X_n).\]
This subcategory has coproducts given by taking the coproduct relative to $Z$, thus its underlying $\infty$-groupoid $S_n\sC_{Z/}$ is an $\Einf$-monoid in spaces. The action $a_n$ on $S_n\sC$ restricts to the sub-$\infty$-groupoid $S_n\sC_{Z/}$ and we claim that 
\begin{equation}
\label{eq:one-sided-bar-construction-on-a-point}
S_n\sC_{Z/}\otimes_{\B_{n-1}\sC}\ast \simeq \ast.    
\end{equation}

We have a map
\[g_{n-1}: \B_{n-1}\sC \ra S_n\sC_{Z/}\]
given by $g_{n-1}((Y_i)_I) = a_{n}((Y_i)_I,(Z \ra Z \ra \cdots \ra Z))$ and equation (\ref{eq:one-sided-bar-construction-on-a-point}) is equivalent to 
\begin{equation}
\label{eq:cofib-g-contractible}
    \Cofib(g_{n-1}) \simeq \ast.
\end{equation}
We notice that $g_{n-1}$ is surjective on $\pi_0$, thus $\Cofib(g_{n-1})$ is automatically grouplike, hence it is contractible if and only if its group completion is contractible.

Now we consider the map
\begin{align*}
    S_n\sC_{Z/} & \overset{q_{n-1}}{\ra} \B_{n-1}\sC \\
    (Z \ra X_2 \ra X_3 \ra \cdots \ra X_n) & \mapsto (X_2/Z,X_3/X_2,\cdots,X_n/X_{n-1}).
\end{align*}

From the definition it is clear that $q_{n-1}\circ g_{n-1}$ is the identity on $\B_{n-1}\sC$. We check that the opposite composition naturally satisfies
\begin{equation}
\label{eq:disjoint-identity}
(g_{n-1}\circ q_{n-1}) \sqcup \id_{S_n\sC_{Z/}} \simeq \id_{S_n\sC_{Z/}} \sqcup \id_{S_n\sC_{Z/}},    
\end{equation}
which will prove that $(g_{n-1}\circ q_{n-1})$ is homotopic to the identity upon group completion.

Indeed, the left-hand side of (\ref{eq:disjoint-identity}) applied to $(Z\ra X_2 \ra \cdots \ra X_{n})$ gives
\[Z \ra X_2/Z \oplus Z \oplus Z_2/Z \ra X_3/X_2\oplus X_2/Z \oplus X \oplus X_3/X_2 \oplus X_2/Z \ra \cdots \]
whereas the right-hand side of (\ref{eq:disjoint-identity}) applied to $(Z\ra X_2 \ra \cdots \ra X_{n})$ gives
\[Z \ra X_2\sqcup_{Z}X_2 \ra X_3\sqcup_{Z}X_3 \ra \cdots.\]

However, Remark \ref{rem:splitting-in-additive-categories} implies that the fold map
\[X_2\sqcup_{Z}X_2 \ra X_2\]
has a canonical splitting by the inclusion of $X_2$, i.e.
\[X_2\sqcup_{Z}X_2 \simeq X_2 \oplus (X_2/Z).\]

This finishes the proof of the lemma.
\end{proof}

\begin{prop}
\label{prop:Kadd-is-K-for-split}
Let $\sC$ be a Waldhausen category with split cofibrations and finite direct sums. Moreover assume that $\sh\sC$ is an additive category\footnote{For $\sC$ a prestable category this condition is automatically satisfied, see \cite{HA}*{Lemma. 1.1.2.9}.}, then the maps (\ref{eq:comparison-add-Waldhausen-K-theory}) induce an equivalence
\[\Ka(\sC) \ra \K(\sC).\]
\end{prop}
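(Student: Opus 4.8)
The plan is to compare the two simplicial spaces $\B_\bullet\sC^{\simeq}$ and $S_\bullet\sC$ level by level, using the maps $G_n$ of \eqref{eq:comparison-add-Waldhausen-K-theory} and showing they assemble into a map of simplicial spaces whose geometric realization is an equivalence. First I would check that the collection $\{G_n\}_{n\geq 0}$ is compatible with the simplicial structure maps: on the $\B_\bullet$ side the face maps are the projections and the multiplication (direct sum), on the $S_\bullet$ side they are the $d^i$ built from taking cofibers and forgetting; since cofibers of split cofibrations $X_1\oplus\cdots\oplus X_i \hookrightarrow X_1\oplus\cdots\oplus X_j$ are canonically $X_{i+1}\oplus\cdots\oplus X_j$ (this is where split cofibrations and additivity of $\sh\sC$ enter), the $G_n$ do commute with faces and degeneracies up to coherent homotopy. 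Hence we obtain a map of simplicial spaces $G_\bullet: \B_\bullet\sC^{\simeq} \to S_\bullet\sC$, and after group completion and looping the realization it induces the claimed map $\Ka(\sC)\to\K(\sC)$ of Remark \ref{rem:map-additive-to-K-theory}.

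The heart of the argument is to show $|G_\bullet|$ is an equivalence. The plan is to use Rezk's realization-fibration machinery from \S\ref{subsec:realization-fibration} together with Lemma \ref{lem:bar-construction-contractible}. Concretely, $G_\bullet$ exhibits $S_\bullet\sC$ as obtained from $\B_\bullet\sC^{\simeq}$ by the relevant bar-type construction: at level $n$, $S_n\sC$ carries an action of $\B_{n-1}\sC$, and Lemma \ref{lem:bar-construction-contractible} says the one-sided bar construction $S_n\sC\otimes_{\B_{n-1}\sC}\ast$ recovers $\sC^{\simeq}$. So I would identify the simplicial space $S_\bullet\sC$ with (a model for) the two-sided bar construction $\B_\bullet(\ast,\B_\bullet\sC^{\simeq},\sC^{\simeq})$ — or rather recognize that the fibers of $G_\bullet$ over points of $\B_\bullet\sC^{\simeq}$, when realized, are contractible. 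To make this rigorous I would check via Lemma \ref{lem:Rezk-criterion-simplicial-spaces} that the relevant maps are realization fibrations (the $\pi_0$-surjectivity condition is easy because everything in sight is connected through the basepoint), so that the pullback computing the fiber commutes with geometric realization; then Lemma \ref{lem:bar-construction-contractible} shows that fiber is contractible, hence $|G_\bullet|$ is an equivalence.

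Alternatively — and this may be the cleaner route — I would run the Additivity Theorem (Theorem \ref{thm:additivity}) backwards. For a Waldhausen category with split cofibrations, every object of $S_n\sC$ is, up to the $\B_{n-1}\sC$-action, a constant diagram, so Additivity gives $|S_\bullet\sC|$ an explicit description as an iterated bar construction on $\sC^{\simeq}$ with its direct-sum monoidal structure; comparing with the defining bar construction for $\Ka(\sC)$ yields the equivalence directly. In either approach the key inputs are: (i) the hypothesis of split cofibrations, which lets one split off summands and identify cofibers; and (ii) additivity of $\sh\sC$, used in Lemma \ref{lem:bar-construction-contractible} via Remark \ref{rem:splitting-in-additive-categories} to split the fold map $X\sqcup_Z X \to X$.

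The main obstacle I anticipate is the bookkeeping needed to promote the levelwise maps $G_n$ to an honest map of simplicial objects in spaces with all the coherences, and then to identify $S_\bullet\sC$ with the appropriate two-sided bar construction so that Lemma \ref{lem:bar-construction-contractible} applies fiberwise after realization. The analytic content is entirely in Lemma \ref{lem:bar-construction-contractible} and the Rezk criterion, both already available; what remains is a careful but routine assembly, and the proof should conclude by invoking Lemma \ref{lem:bar-construction-contractible} to see that the homotopy fibers of $|G_\bullet|$ are contractible, whence $\Ka(\sC)\to\K(\sC)$ is an equivalence of connective spectra.
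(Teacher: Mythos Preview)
You have the right key input --- Lemma~\ref{lem:bar-construction-contractible} is indeed what drives the proof --- but your assembly of it is confused, and the route via realization fibrations does not obviously go through. First, you speak of ``the fibers of $G_\bullet$ over points of $\B_\bullet\sC^{\simeq}$'', but $\B_\bullet\sC^{\simeq}$ is the \emph{source} of $G_\bullet$; fibers are taken over the target $S_\bullet\sC$. Second, the identification of $S_\bullet\sC$ with a two-sided bar construction $\B_\bullet(\ast,\B_\bullet\sC^{\simeq},\sC^{\simeq})$ is not correct as stated (the latter has $n$-simplices $(\sC^{\simeq})^{n}\times\sC^{\simeq}$, which is $\B_{n+1}\sC$, not $S_n\sC$), and Lemma~\ref{lem:bar-construction-contractible} concerns the \emph{quotient} $S_n\sC\otimes_{\B_{n-1}\sC}\ast$, not a fiber of $G_n$. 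So there is no direct way to feed that lemma into a fiberwise-contractibility argument for $G_\bullet$.

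The paper's argument is different and avoids realization fibrations entirely. The trick is to group-complete \emph{levelwise} first: since group completion is a left adjoint it commutes with geometric realization, so it suffices to show each $G_n^{\rm gp}:(\B_n\sC)^{\rm gp}\to (S_n\sC)^{\rm gp}$ is an equivalence. Now one is in the stable world of grouplike $\Einf$-monoids (equivalently connective spectra), where pullback squares coincide with pushout squares. For each $n\geq 2$ one writes down the commutative diagram
\[
\begin{tikzcd}
\B_{n-1}\sC \ar[r] \ar[d] & \B_n\sC \ar[r,"G_n"] \ar[d,"\pi_1"] & S_n\sC \ar[d,"\ev_1"] \\
\ast \ar[r] & \sC^{\simeq} \ar[r,"\id"] & \sC^{\simeq}
\end{tikzcd}
\]
and observes: $G_n^{\rm gp}$ is an equivalence iff the right square is a pullback after group completion, iff it is a pushout; the left square is already a pushout; hence one is reduced to the outer square being a pushout after group completion, i.e.\ $S_n\sC\otimes_{\B_{n-1}\sC}\ast\simeq\sC^{\simeq}$ --- which is exactly Lemma~\ref{lem:bar-construction-contractible}. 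The passage to the stable setting is the step you are missing; it is what converts the ``quotient'' statement of the lemma into the ``fiber'' statement you need.
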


\begin{proof}
Note that due to the Dold-Kan equivalence, we can think of the spectra $\Ka(\sC)$ and $\K(\sC)$ as grouplike $\Einf$-monoids. That is, one has an adjuction
\begin{equation}
    \label{eq:Eoo-monoid-spaces-adjunction}
    \begin{tikzcd}
    \EinfMon \ar[r,swap,"(-)^{\rm gp}", shift right = 1] & \ar[l,swap,"\Omega^{\infty}",shift right = 1] \Spctr
    \end{tikzcd}
\end{equation}
between $\Einf$-monoids and spectra.

Thus, applying the unit of (\ref{eq:Eoo-monoid-spaces-adjunction}) to $\Sigma\Ka(\sC)$ we obtain
\[\Sigma\Ka(\sC) = \left|\B_{\bullet}\sC\right| \overset{\simeq}{\ra} \Omega^{\infty}\left|\B_{\bullet}\sC\right|^{\rm gp} \simeq \Omega^{\infty}\left|\B_{\bullet}\sC^{\rm gp}\right|,\]
where in the last term we apply the group completion level-wise and the equivalence follows from the fact that group completion commutes with colimits.

Similarly, one has
\[\Sigma\K(\sC) = \left|S_{\bullet}\sC\right| \overset{\simeq}{\ra} \Omega^{\infty}\left|S_{\bullet}\sC\right|^{\rm gp} \simeq \Omega^{\infty}\left|S_{\bullet}\sC^{\rm gp}\right|,\]
where as above we apply group completion level-wise and the last equivalence follows for the same reason.

Hence its is enough to show that the maps (\ref{eq:comparison-add-Waldhausen-K-theory}) induce equivalences after group completion.

For each $n\geq 2$ we consider the diagram
\begin{equation}
    \begin{tikzcd}
    \B_{n-1}\sC \ar[r,"\imath"] \ar[d] & \B_n\sC \ar{r}{f_n} \ar[d,"\pi_1"] & S_n\sC \ar[d,"\ev_1"] \\
    \ast \ar[r] & \sC^{\simeq} \ar{r}{\id} & \sC^{\simeq}
    \end{tikzcd}
\end{equation}
where $\imath'_1(X_1,\ldots,X_{n-1}) = (0,X_1,\ldots,X_{n-1})$, $\pi_1$ is the projection onto the first factor and $\ev_1(X_1 \ra \cdots \ra X_n) = X_1$.

We want to prove that the maps $f_{n}$ are equivalences after group completion. It is enough to prove that the right squares are pullback squares after group completion. Since the category $\EinfMon$ is stable, this is the same as proving that the right square is a pushout square after group completion. We notice that the left square is a pushout diagram, so we are reduced to proving that the outer square is a pushout diagram after group completion. This follows from Lemma \ref{lem:bar-construction-contractible}.
\end{proof}

% \subsection{K-theory of categories with cofibrations}

% \section{Vector bundles over an affine derived scheme}
% \label{sec:results-on-Vect}

% In this section we prove some auxiliary results about finitely generated projective modules over a connective $\Einf$-ring spectrum $R$. All of those results are covered in \cite{HA}*{\S 7.2.2.} or \cite{SAG}*{\S 2.9} and we don't claim any originality.

% \todo{Notice that this, in particular implies those results from $k$-cdgas for $k$ a field of characteristic $0$.}

% \bibliographystyle{abbrv}

% \bib, bibdiv, biblist are defined by the amsrefs package.
\begin{bibdiv}
\begin{biblist}

\bib{AGH}{article}{
      author={Antieau, Benjamin},
      author={Gepner, David},
      author={Heller, Jeremiah},
       title={K-theoretic obstructions to bounded t-structures},
        date={2019},
        ISSN={0020-9910},
     journal={Invent. Math.},
      volume={216},
      number={1},
       pages={241\ndash 300},
         url={https://doi.org/10.1007/s00222-018-00847-0},
      review={\MR{3935042}},
}

\bib{Ausoni-Rognes}{article}{
      author={Ausoni, Christian},
      author={Rognes, John},
       title={Rational algebraic {$K$}-theory of topological {$K$}-theory},
        date={2012},
        ISSN={1465-3060},
     journal={Geom. Topol.},
      volume={16},
      number={4},
       pages={2037\ndash 2065},
  url={https://doi-org.proxy2.library.illinois.edu/10.2140/gt.2012.16.2037},
      review={\MR{2975299}},
}

\bib{Barwick-heart}{article}{
      author={Barwick, Clark},
       title={On exact {$\infty$}-categories and the theorem of the heart},
        date={2015},
        ISSN={0010-437X},
     journal={Compos. Math.},
      volume={151},
      number={11},
       pages={2160\ndash 2186},
         url={https://doi.org/10.1112/S0010437X15007447},
      review={\MR{3427577}},
}

\bib{Barwick}{article}{
      author={Barwick, Clark},
       title={On the algebraic {$K$}-theory of higher categories},
        date={2016},
        ISSN={1753-8416},
     journal={J. Topol.},
      volume={9},
      number={1},
       pages={245\ndash 347},
         url={https://doi.org/10.1112/jtopol/jtv042},
      review={\MR{3465850}},
}

\bib{Bhatt-Scholze}{article}{
      author={Bhatt, Bhargav},
      author={Scholze, Peter},
       title={Projectivity of the {W}itt vector affine {G}rassmannian},
        date={2017},
        ISSN={0020-9910},
     journal={Invent. Math.},
      volume={209},
      number={2},
       pages={329\ndash 423},
         url={https://doi.org/10.1007/s00222-016-0710-4},
      review={\MR{3674218}},
}

\bib{BGT}{article}{
      author={Blumberg, Andrew~J.},
      author={Gepner, David},
      author={Tabuada, Gon\c{c}alo},
       title={A universal characterization of higher algebraic $k$-theory},
        date={2013},
        ISSN={1465-3060},
     journal={Geom. Topol.},
      volume={17},
      number={2},
       pages={733\ndash 838},
         url={https://doi.org/10.2140/gt.2013.17.733},
      review={\MR{3070515}},
}

\bib{Bondarko}{article}{
      author={Bondarko, M.~V.},
       title={Weight structures vs. {$t$}-structures; weight filtrations,
  spectral sequences, and complexes (for motives and in general)},
        date={2010},
        ISSN={1865-2433},
     journal={J. K-Theory},
      volume={6},
      number={3},
       pages={387\ndash 504},
         url={https://doi.org/10.1017/is010012005jkt083},
      review={\MR{2746283}},
}

\bib{Bondarko-summary}{article}{
      author={{Bondarko}, Mikhail~V.},
       title={{Weight structures and motives; comotives, coniveau and
  Chow-weight spectral sequences, and mixed complexes of sheaves: a survey}},
        date={2009Mar},
     journal={arXiv e-prints},
       pages={arXiv:0903.0091},
      eprint={0903.0091},
}

\bib{BGW-Tate}{article}{
      author={Braunling, Oliver},
      author={Groechenig, Michael},
      author={Wolfson, Jesse},
       title={Tate objects in exact categories},
        date={2016},
        ISSN={1609-3321},
     journal={Mosc. Math. J.},
      volume={16},
      number={3},
       pages={433\ndash 504},
        note={With an appendix by Jan \v{S}\v{t}ov\'{i}\v{c}ek and Jan
  Trlifaj},
      review={\MR{3510209}},
}

\bib{BGW-Index}{article}{
      author={Braunling, Oliver},
      author={Groechenig, Michael},
      author={Wolfson, Jesse},
       title={The index map in algebraic {$K$}-theory},
        date={2018},
        ISSN={1022-1824},
     journal={Selecta Math. (N.S.)},
      volume={24},
      number={2},
       pages={1039\ndash 1091},
         url={https://doi.org/10.1007/s00029-017-0364-0},
      review={\MR{3782417}},
}

\bib{EHKSY}{article}{
      author={{Elmanto}, Elden},
      author={{Hoyois}, Marc},
      author={{Khan}, Adeel~A.},
      author={{Sosnilo}, Vladimir},
      author={{Yakerson}, Maria},
       title={{Modules over algebraic cobordism}},
        date={2019-08},
     journal={arXiv e-prints},
       pages={arXiv:1908.02162},
      eprint={1908.02162},
}

\bib{EKMM}{book}{
      author={Elmendorf, Anthony~D},
       title={Rings, modules, and algebras in stable homotopy theory},
   publisher={American Mathematical Soc.},
        date={1997},
      number={47},
}

\bib{Faltings}{article}{
      author={Faltings, Gerd},
       title={Algebraic loop groups and moduli spaces of bundles},
        date={2003},
        ISSN={1435-9855},
     journal={J. Eur. Math. Soc. (JEMS)},
      volume={5},
      number={1},
       pages={41\ndash 68},
         url={https://doi.org/10.1007/s10097-002-0045-x},
      review={\MR{1961134}},
}

\bib{FHK}{article}{
      author={Faonte, Giovanni},
      author={Hennion, Benjamin},
      author={Kapranov, Mikhail},
       title={Higher {K}ac-{M}oody algebras and moduli spaces of
  {$G$}-bundles},
        date={2019},
        ISSN={0001-8708},
     journal={Adv. Math.},
      volume={346},
       pages={389\ndash 466},
         url={https://doi.org/10.1016/j.aim.2019.01.040},
      review={\MR{3910800}},
}

\bib{Fontes}{article}{
      author={{Fontes}, Ernest~E.},
       title={{Weight structures and the algebraic $K$-theory of stable
  $\infty$-categories}},
        date={2018Dec},
     journal={arXiv e-prints},
       pages={arXiv:1812.09751},
      eprint={1812.09751},
}

\bib{FZ}{article}{
      author={Frenkel, Edward},
      author={Zhu, Xinwen},
       title={Gerbal representations of double loop groups},
        date={2012},
        ISSN={1073-7928},
     journal={Int. Math. Res. Not. IMRN},
      number={17},
       pages={3929\ndash 4013},
         url={https://doi.org/10.1093/imrn/rnr159},
      review={\MR{2972546}},
}

\bib{GR-I}{book}{
      author={Gaitsgory, Dennis},
      author={Rozenblyum, Nick},
       title={A study in derived algebraic geometry. {V}ol. {I}.
  {C}orrespondences and duality},
      series={Mathematical Surveys and Monographs},
   publisher={American Mathematical Society, Providence, RI},
        date={2017},
      volume={221},
        ISBN={978-1-4704-3569-1},
      review={\MR{3701352}},
}

\bib{GR-II}{book}{
      author={Gaitsgory, Dennis},
      author={Rozenblyum, Nick},
       title={A study in derived algebraic geometry. {V}ol. {II}.
  {D}eformations, {L}ie theory and formal geometry},
      series={Mathematical Surveys and Monographs},
   publisher={American Mathematical Society, Providence, RI},
        date={2017},
      volume={221},
        ISBN={978-1-4704-3570-7},
      review={\MR{3701353}},
}

\bib{Gruson}{incollection}{
      author={Gruson, Laurent},
       title={Une propri\'{e}t\'{e} des couples hens\'{e}liens},
        date={1972},
   booktitle={Colloque d'{A}lg\`ebre {C}ommutative ({R}ennes, 1972), {E}xp.
  {N}o. 10},
       pages={13},
      review={\MR{0412187}},
}

\bib{Hennion-Tate}{article}{
      author={Hennion, Benjamin},
       title={Tate objects in stable {$(\infty,1)$}-categories},
        date={2017},
        ISSN={1532-0073},
     journal={Homology Homotopy Appl.},
      volume={19},
      number={2},
       pages={373\ndash 395},
         url={https://doi.org/10.4310/HHA.2017.v19.n2.a18},
      review={\MR{3731483}},
}

\bib{Sketches-I}{book}{
      author={Johnstone, Peter~T.},
       title={Sketches of an elephant: a topos theory compendium. {V}ol. 1},
      series={Oxford Logic Guides},
   publisher={The Clarendon Press, Oxford University Press, New York},
        date={2002},
      volume={43},
        ISBN={0-19-853425-6},
      review={\MR{1953060}},
}

\bib{Knudsen-Mumford}{article}{
      author={Knudsen, Finn},
      author={Mumford, David},
       title={The projectivity of the moduli space of stable curves. {I}:
  {Preliminaries} on "det" and "{Div}".},
    language={en},
        date={1976-06},
        ISSN={1903-1807},
     journal={MATHEMATICA SCANDINAVICA},
      volume={39},
       pages={19\ndash 55},
         url={https://www.mscand.dk/article/view/11642},
}

\bib{HTT}{book}{
      author={Lurie, Jacob},
       title={Higher topos theory},
      series={Annals of Mathematics Studies},
   publisher={Princeton University Press, Princeton, NJ},
        date={2009},
      volume={170},
        ISBN={978-0-691-14049-0; 0-691-14049-9},
         url={https://doi.org/10.1515/9781400830558},
      review={\MR{2522659}},
}

\bib{HA}{article}{
      author={Lurie, Jacob},
       title={Higher algebra},
        date={2012},
     journal={Preprint, available at
  \url{http://www.math.harvard.edu/~lurie/papers/higheralgebra.pdf}},
}

\bib{Lurie-K-theory}{unpublished}{
      author={Lurie, Jacob},
       title={Algebraic $k$-theory and manifold topology},
        date={2014},
        note={Lecture notes, available at
  \url{http://www.math.harvard.edu/~lurie/281.html}},
}

\bib{SAG}{unpublished}{
      author={Lurie, Jacob},
       title={Spectral algebraic geometry},
        date={2018},
}

\bib{McCarthy}{article}{
      author={McCarthy, Randy},
       title={On fundamental theorems of algebraic {$K$}-theory},
        date={1993},
        ISSN={0040-9383},
     journal={Topology},
      volume={32},
      number={2},
       pages={325\ndash 328},
         url={https://doi.org/10.1016/0040-9383(93)90023-O},
      review={\MR{1217072}},
}

\bib{Osipov-Zhu}{article}{
      author={Osipov, Denis},
      author={Zhu, Xinwen},
       title={The two-dimensional {C}ontou-{C}arr\`ere symbol and reciprocity
  laws},
        date={2016},
        ISSN={1056-3911},
     journal={J. Algebraic Geom.},
      volume={25},
      number={4},
       pages={703\ndash 774},
         url={https://doi-org.proxy2.library.illinois.edu/10.1090/jag/664},
      review={\MR{3533184}},
}

\bib{Rezk}{unpublished}{
      author={Rezk, Charles},
       title={When are homotopy colimits compatible with homotopy base
  change?},
        date={2014},
        note={Notes available at
  \url{https://faculty.math.illinois.edu/~rezk/i-hate-the-pi-star-kan-condition.pdf}},
}

\bib{Rotman}{book}{
      author={Rotman, Joseph~J.},
       title={An introduction to homological algebra},
     edition={Second},
      series={Universitext},
   publisher={Springer, New York},
        date={2009},
        ISBN={978-0-387-24527-0},
         url={https://doi.org/10.1007/b98977},
      review={\MR{2455920}},
}

\bib{STV}{article}{
      author={Sch\"{u}rg, Timo},
      author={To\"{e}n, Bertrand},
      author={Vezzosi, Gabriele},
       title={Derived algebraic geometry, determinants of perfect complexes,
  and applications to obstruction theories for maps and complexes},
        date={2015},
        ISSN={0075-4102},
     journal={J. Reine Angew. Math.},
      volume={702},
       pages={1\ndash 40},
         url={https://doi.org/10.1515/crelle-2013-0037},
      review={\MR{3341464}},
}

\bib{Sosnilo}{article}{
      author={{Sosnilo}, Vladimir},
       title={{Theorem of the heart in negative K-theory for weight
  structures}},
        date={2017May},
     journal={arXiv e-prints},
       pages={arXiv:1705.07995},
      eprint={1705.07995},
}

\bib{stacks-project}{misc}{
      author={{Stacks Project Authors}, The},
       title={\textit{Stacks Project}},
         how={\url{https://stacks.math.columbia.edu}},
        date={2018},
}

\bib{Thomason-etale}{article}{
      author={Thomason, R.~W.},
       title={Algebraic {$K$}-theory and \'{e}tale cohomology},
        date={1985},
        ISSN={0012-9593},
     journal={Ann. Sci. \'{E}cole Norm. Sup. (4)},
      volume={18},
      number={3},
       pages={437\ndash 552},
         url={http://www.numdam.org/item?id=ASENS_1985_4_18_3_437_0},
      review={\MR{826102}},
}

\bib{TT}{incollection}{
      author={Thomason, R.~W.},
      author={Trobaugh, Thomas},
       title={Higher algebraic {$K$}-theory of schemes and of derived
  categories},
        date={1990},
   booktitle={The {G}rothendieck {F}estschrift, {V}ol. {III}},
      series={Progr. Math.},
      volume={88},
   publisher={Birkh\"{a}user Boston, Boston, MA},
       pages={247\ndash 435},
         url={https://doi.org/10.1007/978-0-8176-4576-2_10},
      review={\MR{1106918}},
}

\bib{Tits}{incollection}{
      author={Tits, Jacques},
       title={Groups and group functors attached to {K}ac-{M}oody data},
        date={1985},
   booktitle={Workshop {B}onn 1984 ({B}onn, 1984)},
      series={Lecture Notes in Math.},
      volume={1111},
   publisher={Springer, Berlin},
       pages={193\ndash 223},
         url={https://doi.org/10.1007/BFb0084591},
      review={\MR{797422}},
}

\bib{Toen-descent-n-stacks}{article}{
      author={To\"{e}n, Bertrand},
       title={Descente fid\`element plate pour les {$n$}-champs d'{A}rtin},
        date={2011},
        ISSN={0010-437X},
     journal={Compos. Math.},
      volume={147},
      number={5},
       pages={1382\ndash 1412},
         url={https://doi.org/10.1112/S0010437X10005245},
      review={\MR{2834725}},
}

\bib{HAG-II}{article}{
      author={To\"{e}n, Bertrand},
      author={Vezzosi, Gabriele},
       title={Homotopical algebraic geometry. {II}. {G}eometric stacks and
  applications},
        date={2008},
        ISSN={0065-9266},
     journal={Mem. Amer. Math. Soc.},
      volume={193},
      number={902},
       pages={x+224},
         url={https://doi-org.proxy2.library.illinois.edu/10.1090/memo/0902},
      review={\MR{2394633}},
}

\bib{Waldhausen}{incollection}{
      author={Waldhausen, Friedhelm},
       title={Algebraic {$K$}-theory of spaces},
        date={1985},
   booktitle={Algebraic and geometric topology ({N}ew {B}runswick, {N}.{J}.,
  1983)},
      series={Lecture Notes in Math.},
      volume={1126},
   publisher={Springer, Berlin},
       pages={318\ndash 419},
         url={https://doi.org/10.1007/BFb0074449},
      review={\MR{802796}},
}

\end{biblist}
\end{bibdiv}

\end{document}